\documentclass{article}

\makeatletter
\def\input@path{{iclr2027/}{./}}
\makeatother

\usepackage{peppy}

\usepackage[bottom]{footmisc}
\usepackage{hyperref}
\usepackage{url}
\usepackage{graphicx}

\usepackage{amsmath}
\usepackage{amssymb}
\usepackage{mathtools}
\usepackage{amsthm}
\usepackage{booktabs}
\usepackage{float}
\usepackage{needspace}
\usepackage{tabularx}
\usepackage{colortbl}

\usepackage{tikz}
\usepackage[T1]{fontenc}
  \usepackage{times}

\usepackage[capitalize,noabbrev]{cleveref}

\usetikzlibrary{arrows.meta,positioning,calc,fit,backgrounds}

\definecolor{pepblue}{HTML}{2563EB}
\definecolor{pepteal}{HTML}{0F766E}
\definecolor{pepgreen}{HTML}{2F7D32}
\definecolor{peporange}{HTML}{D97706}
\definecolor{peppurple}{HTML}{6D3FA8}
\definecolor{pepnavy}{HTML}{172554}
\definecolor{pepslate}{HTML}{334155}
\definecolor{pepamber}{HTML}{F59E0B}

\let\peppytextcedilla\c
\let\peppytextcaron\v
\AtBeginEnvironment{thebibliography}{%
  \let\c\peppytextcedilla
  \let\v\peppytextcaron
}

\IfFileExists{iclr2027/commands.tex}{%
  \newcommand{\reals}                  {\mathbb R}

\newcommand{\inprod}[2]              {\langle #1, #2 \rangle}     %

\definecolor{uclablue}{RGB}{39,116,174}
\definecolor{ogreen}{RGB}{60,128,49}

\definecolor{myblue}{RGB}{0,114,178}
\definecolor{mygreen}{RGB}{0,158,115}
\definecolor{myorange}{RGB}{213,94,0}

\newcommand{\ie}{{\it i.e.}}
\newcommand{\etc}{{\it etc.}}

\newcommand{\A}{\boldsymbol{A}}
\newcommand{\B}{\boldsymbol{B}}

\newcommand{\D}{\boldsymbol{D}}

\newcommand{\Y}{\boldsymbol{Y}}

\renewcommand{\a}{\boldsymbol{a}}
\renewcommand{\b}{\boldsymbol{b}}
\renewcommand{\c}{\boldsymbol{c}}

\newcommand{\h}{\boldsymbol{h}}

\renewcommand{\k}{\boldsymbol{k}}

\renewcommand{\r}{\boldsymbol{r}}
\newcommand{\s}{\boldsymbol{s}}

\renewcommand{\v}{\boldsymbol{v}}

\newcommand{\x}{\boldsymbol{x}}
\newcommand{\y}{\boldsymbol{y}}

\newcommand{\cB}{\mathcal{B}}

\newcommand{\cS}{\mathcal{S}}
\newcommand{\cT}{\mathcal{T}}

\newcommand{\vc}{{\mathbf{c}}}
\newcommand{\vd}{{\mathbf{d}}}
\newcommand{\ve}{{\mathbf{e}}}

\newcommand{\vg}{{\mathbf{g}}}

\newcommand{\vu}{{\mathbf{u}}}
\newcommand{\vv}{{\mathbf{v}}}

\newcommand{\vx}{{\mathbf{x}}}

\newcommand{\vA}{{\mathbf{A}}}
\newcommand{\vB}{{\mathbf{B}}}
\newcommand{\vC}{{\mathbf{C}}}

\newcommand{\vG}{{\mathbf{G}}}

\newcommand{\vP}{{\mathbf{P}}}
\newcommand{\vQ}{{\mathbf{Q}}}

\newcommand{\vS}{{\mathbf{S}}}

\newcommand{\vV}{{\mathbf{V}}}

\newcommand{\oline}[1]{\mkern 1.5mu\overline{\mkern-1.5mu#1}}

\renewcommand{\hbar}{\oline{h}}

\makeatletter
\renewcommand*\env@matrix[1][c]{\hskip -\arraycolsep
	\let\@ifnextchar\new@ifnextchar
	\array{*\c@MaxMatrixCols #1}}
\makeatother

\usepackage{xparse}
\DeclareFontFamily{U}{ntxmia}{}
\DeclareFontShape{U}{ntxmia}{m}{it}{<-> ntxmia }{}
\DeclareFontShape{U}{ntxmia}{b}{it}{<-> ntxbmia }{}
\DeclareSymbolFont{lettersA}{U}{ntxmia}{m}{it}
\SetSymbolFont{lettersA}{bold}{U}{ntxmia}{b}{it}

\ExplSyntaxOn
\NewDocumentCommand{\varmathbb}{m}
{
	\tl_map_inline:nn { #1 }
	{
		\use:c { varbb##1 }
	}
}
\tl_map_inline:nn { ABCDEFGHIJKLMNOPQRSTUVWXYZ }
{
	\exp_args:Nc \DeclareMathSymbol{varbb#1}{\mathord}{lettersA}{\int_eval:n { `#1+67 }}
}
\exp_args:Nc \DeclareMathSymbol{varbbk}{\mathord}{lettersA}{169}
\ExplSyntaxOff

\newcommand{\gdstepone}{
\begin{examplebox}
\textbf{Setup:} $f$ is $L$-smooth convex function, characterized by %
\begin{equation} \label{eq:convex_smooth_ineq}
    \begin{aligned}
        \convexsmoothineq{x}{y} &:= f(y) - f(x) + \inner{\nabla f(y)}{x-y}
        + \frac{1}{2L}\norm{\nabla f(x)-\nabla f(y)}^2 \le 0, \quad \forall x,y.
    \end{aligned}
\end{equation}
\textbf{Performance metric:} $f(x_N) - f(x_\star)$ \\
\textbf{Initial condition:} $\|  x_0 - x_\star \| \le R$ \\
\textbf{Algorithm definition:}  \vspace{-3mm}
$$
x_{k+1} = x_k - \frac{1}{L} \nabla f(x_k).
$$
\end{examplebox}
}

\newcommand{\gdsteptwo}{
\begin{examplebox}
\vspace{-4mm}
\begin{align}
\label{eq:gd_full_pep}
    &\hspace{-1mm} f(x_N) - f(x_\star) - \frac{L}{4N+2} \| x_0 - x_\star \|^2 \\
    &\hspace{-1mm}=
\sum_{i=1}^{N} \lambda_{i-1,i} \convexsmoothineq{x_{i-1}}{x_{i}}
+\sum_{i=0}^{N} \lambda_{\star,i} \convexsmoothineq{x_\star}{x_i}
    - \sum_{i=0}^{N} d_i \norm{s_i}^2
     \nonumber
\end{align}
\end{examplebox}
}

\newcommand{\gdstepthree}{
\begin{examplebox}
\vspace{-4mm}
\begin{align*}
    \hspace{-1mm}V_k &\!:=
\!\sum_{i=1}^{k} \lambda_{i-1,i} \convexsmoothineq{x_{i-1}}{x_{i}}
+\!\sum_{i=0}^{k} \lambda_{\star,i} \convexsmoothineq{x_\star}{x_i}
     - \!\sum_{i=0}^{k} d_i \norm{s_i}^2
\end{align*}
\end{examplebox}
}

\newcommand{\gdstepfour}{
\begin{examplebox}
$$
\begin{aligned}
V_k &= \frac{k+1}{2N-k}\bigl(f(x_k)-f(x_{\star})\bigr)
-\frac{L}{4N+2}\|x_0-x_{\star}\|^2 \\
&\phantom{=}
-\frac{k+1}{2L(2N-k)}\|\nabla f(x_k)\|^2
+\frac{L(2N-2k-1)}{2(2N-k)^2}\|x_{k+1}-x_{\star}\|^2
\end{aligned}
$$
\end{examplebox}
}

\newcommand{\gdstepfivetheorem}{
\textbf{Theorem.}
Let $f$ be convex and $L$-smooth, and let $x_{\star}$ minimize $f$. If $\|x_{0}-x_{\star}\|^{2}\le R^2$, then the gradient descent iterates
$$
x_{k+1}=x_{k}-\frac{1}{L}\nabla f(x_{k})
$$
satisfy the following stronger Lyapunov statement. Set $V_{0}=0$ and, for $1\le k\le N-1$,
$$
\begin{aligned}
V_k &= \frac{k+1}{2N-k}\bigl(f(x_k)-f(x_{\star})\bigr)
-\frac{L}{4N+2}\|x_0-x_{\star}\|^2 \\
&\phantom{=}
-\frac{k+1}{2L(2N-k)}\|\nabla f(x_k)\|^2
+\frac{L(2N-2k-1)}{2(2N-k)^2}\|x_{k+1}-x_{\star}\|^2,
\end{aligned}
$$
and $V_N = f(x_{N})-f(x_{\star}) - \frac{L}{4N+2} \|x_0-x_{\star}\|^2$.
Then $V_{0}\ge V_{1}\ge\cdots\ge V_{N}$. Consequently, for every $N\ge 1$,
$$
f(x_{N})-f(x_{\star})\le \frac{LR^2}{4N+2}.
$$
}

\newcommand{\norm}[1]{\left\| #1 \right\|}
\newcommand{\inner}[2]{ \left\langle #1 ,  #2 \right\rangle }

\newcommand{\pr}[1]{ \left( #1 \right) }

\usepackage[most]{tcolorbox}

\newtcolorbox{examplebox}{
    breakable,
  colback=gray!5,
  colframe=gray!20,
  boxrule=0.5pt,
  arc=2pt,
  left=3pt,
  right=3pt,
  top=2pt,
  bottom=2pt,
  width=\textwidth,
  fontupper=\footnotesize,
}

\newtcolorbox{wideexamplebox}{
  breakable,
  colback=gray!5,
  colframe=gray!20,
  boxrule=0.5pt,
  arc=2pt,
  left=3pt,
  right=3pt,
  top=2pt,
  bottom=2pt,
  width=\textwidth,
  fontupper=\footnotesize,
}

\newtcolorbox{mdbox}{
  breakable,
  colback=gray!5,
  colframe=gray!20,
  boxrule=0.5pt,
  arc=2pt,
  left=3pt,
  right=3pt,
  top=2pt,
  bottom=2pt,
  width=\textwidth,
  fontupper=\sffamily\footnotesize,
}

\tcbuselibrary{listings, skins, breakable}

\newtcblisting{commandbox}[1][]{
  listing only,
  breakable,
  colback=gray!5,
  colframe=gray!30,
  boxrule=0.4pt,
  arc=2pt,
  left=3pt,
  right=3pt,
  top=2pt,
  bottom=2pt,
  listing options={
    basicstyle=\ttfamily\small,
    columns=fullflexible,
    breaklines=true,
    keepspaces=true,
    showstringspaces=false
  },
  #1
}

\newcommand{\convexsmoothineq}[2]{I(#1,#2)}

\newcommand{\SkillImplement}{\texttt{/pep-implement}}
\newcommand{\SkillFullProof}{\texttt{/pep-full-proof}}
\newcommand{\SkillLyapDefine}{\texttt{/lyap-define}}
\newcommand{\SkillLyapVectors}{\texttt{/lyap-vectors}}
\newcommand{\SkillLyapClosedForm}{\texttt{/lyap-closed-form}}

\usepackage{xspace}

\newcommand{\codeavailability}{%
      The code and accompanying examples are available at:
    \begin{center}
      {\color{blue}\url{https://github.com/pepflow-lib/PEPFlow/tree/peppy-v1}}
    \end{center}
}

\usepackage{wrapfig}
\usepackage{enumitem}

\definecolor{pepflowpurple}{RGB}{107,68,151}
\newcommand{\pepflowcolor}[1]{{\color{pepflowpurple}#1}}

\newcommand{\workflowname}{\texorpdfstring{\texttt{\textup{\color{pepflowpurple}Peppy}}}{Peppy}}

\newcommand{\pepflow}{{\texttt{PEPFlow}}\xspace}

\newcommand{\transpose}{\mathsf{T}}

\newcommand{\code}[1]{%
  \tcbox[
    on line,
    boxsep=1pt,
    left=1pt,
    right=1pt,
    top=0pt,
    bottom=0pt,
    arc=2pt,
    colback=gray!8,
    colframe=gray!18,
    boxrule=0.3pt
  ]{\ttfamily\small #1}%
}

\usepackage{array}
\newcolumntype{L}[1]{>{\raggedright\arraybackslash}p{#1}}

\usepackage{longtable}
}{%
}

\newtheorem{theorem}{Theorem}[section]
\newtheorem{lemma}[theorem]{Lemma}
\newcommand{\csinner}[2]{\left\langle #1,#2\right\rangle}

\title{\texorpdfstring{\raisebox{-0.15em}{\includegraphics[height=1.1em]{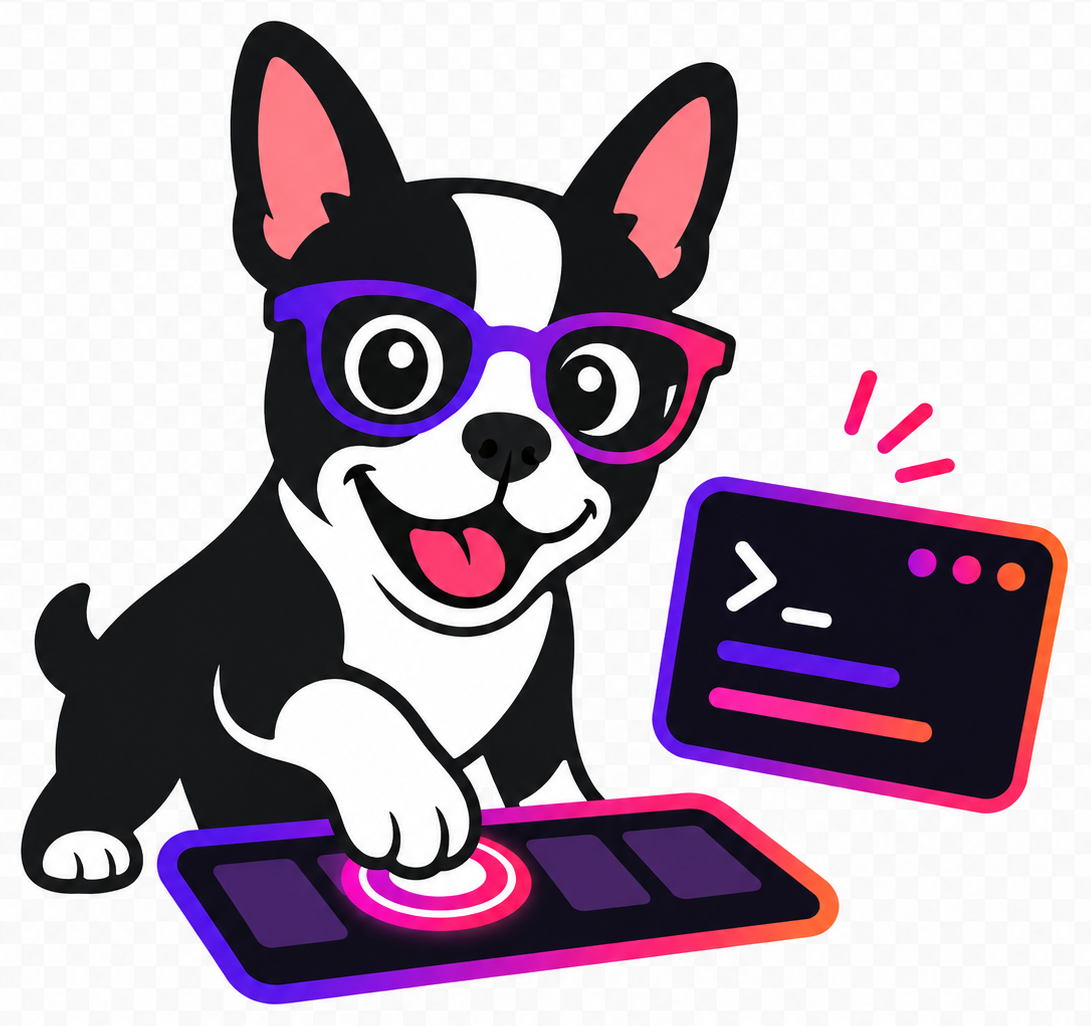}}\hspace{0.3em}}{}\workflowname: An AI-Assisted Workflow for \\Tight Convergence Analysis of Optimization Algorithms}

\author{%
\textbf{Jaewook J. Suh\textsuperscript{1,2}\quad
TaeHo Yoon\textsuperscript{2}\quad
Edward D. H. Nguyen\textsuperscript{3}}\\
\textbf{Bicheng Ying\textsuperscript{4}\quad
Shiqian Ma\textsuperscript{2,5}}\\[0.5em]
{\normalfont\small\textsuperscript{1}Department of Computational Applied Mathematics and Operations Research, Rice University}\\
{\normalfont\small\textsuperscript{2}Department of Applied Mathematics and Statistics, Johns Hopkins University}\\
{\normalfont\small\textsuperscript{3}Department of Electrical and Computer Engineering, Rice University}\\
{\normalfont\small\textsuperscript{4}Google Inc.}\\
{\normalfont\small\textsuperscript{5}Data Science and AI Institute, Johns Hopkins University}%
}

\hypersetup{hidelinks}
  \iclrfinalcopy
\begin{document}

\maketitle
  \fancyhead{}
  \renewcommand{\headrulewidth}{0pt}

\begin{abstract}
This paper presents \workflowname, an AI-assisted workflow for discovering tight, analytic convergence proofs for first-order optimization algorithms.
Generic approaches to using LLMs to conduct mathematical research target an unspecified, broad spectrum of problems and sometimes use the Lean 4 proof assistant for formalization. On the other hand, \workflowname\ leverages domain-specific knowledge more heavily and is thereby capable of constructing the proofs in a more structured manner, which allow a minimal and accessible verification through \texttt{SymPy}.
We experimentally demonstrate through examples that \workflowname\ provides a rigorous, practical, and reproducible paradigm for AI-assisted theorem synthesis in optimization. We further highlight its capability of closing several open problems on tight convergence analysis of first-order optimization algorithms, {including conjectures for Nesterov’s FGM}.
Overall, \workflowname\ is designed to turn the art of optimization algorithm analysis into a science.
\end{abstract}


\section{Introduction}

{Recent advances in large language models (LLMs) have fueled growing interest in AI-assisted research \citep{lu2024aiscientist,bubeck2025early}, including mathematical discoveries \citep{RomeraParedes2024_funsearch,georgiev2025mathematical}. In theorem proving, several systems pursue broad applicability through general-purpose proof assistants \citep{Yang2023_leandojo,thakur2024context,tsoukalas2026advancingmathematicsresearchaidriven}. However, such approaches are not necessarily tailored to structures and computational tools specific to individual mathematical subfields.
This motivates domain-specific workflows that integrate LLMs with a specialized set of tools and knowledge, enabling proof discovery to exploit the structure of the subfield at hand.
Such workflows can make the process of discovery more controlled and targeted.}

{This paper presents a complementary AI-assisted workflow \workflowname, focusing on searching for convergence proofs in optimization theory.
We newly and extensively combine AI with the mathematical framework developed by \citet{YoonSuhNguyenYingMa2026_systematic}, which
detects the Lyapunov function structure numerically identifiable via  Performance Estimation Problem (PEP) \citep{DT14, taylor2017smooth} and streamlines proofs alongside code.
This enables \workflowname\ to leverage AI agents to perform concrete, step-by-step guidelines, and produce novel results such as the first analytic proof of the tight convergence rate of Nesterov's FGM.

PEP itself is often viewed as a computer-assisted proof framework,
which has been proved useful for both finding tight convergence rates and designing novel optimization algorithms \citep{KimFessler2016_optimized, TaylorVanScoyLessard2018_lyapunov, TaylorBach2019_stochastic, RyuTaylorBergelingGiselsson2020_operator, Kim2021_accelerated, dragomir2022optimal, taylor2023optimal, DVR24, UpadhyayaBanertTaylorGiselsson2024_automated}.
However, PEP provides numerical---rather than analytic---certificates for a proof, and traditional research processes utilizing PEP typically involved a largely manual art of converting numerical data into analytic proofs, requiring significant effort from human experts.
While there exist packages such as \texttt{PESTO} \citep{TaylorHendrickxGlineur2017_performance}, \texttt{PEPit} \citep{pepit2024}, \texttt{AutoLyap} \citep{UpadhyayaTaylorBanertGiselsson2025_autolyap} or \texttt{PEPFlow} \citep{SuhYingJiangNguyen2025_pepflow} that support the computational side of helping users formulate PEP, workflows integrating LLMs with PEP for analytic proof synthesis remain scarce:
\citet{SuhYoonNguyenYingMa2026_peppy} and \citet{kim2026domainspecificharnessendtoendautomation} are among the few examples, the former of which being the preliminary version of this work.
}

{\workflowname\ guides AI to perform the procedure of \citet{YoonSuhNguyenYingMa2026_systematic}, which simulate the steps that were traditionally performed by humans.
Within the workflow, \pepflow~\citep{SuhYingJiangNguyen2025_pepflow} serves as the backend for implementing and solving PEPs while exposing the information needed for analytic extensions.
The completion of each step of the workflow can be verified with simple numerical or symbolic checks.
As a final output, \workflowname\ provides a rigorous \textit{theorem} stating an optimization algorithm's tight convergence rate with its proof and code scripts verifying it, with varying degrees of human guidance.

}

\subsection{Contributions}

We present \workflowname, an AI-assisted workflow for discovering analytic proofs of tight convergence bounds for first-order optimization algorithms.
Using \workflowname, we solve several open problems, including tight convergence rates for Nesterov's FGM. Details are provided below.

\begin{itemize}
    \item
    \workflowname\ has several advantages over existing approaches in ``AI4Math'' that typically combine LLMs with formal verification. We list them as follows:
        \begin{itemize}[label=$\circ$]
        \item \workflowname\ draws on the powerful
         \textbf{domain-specific machinery} of PEP to provide a tailored approach to AI-assisted optimization research.
        More specifically, AI agents are guided by concrete instructions based on the systematic framework of \citet{YoonSuhNguyenYingMa2026_systematic} for converting numerical PEP certificates into analytic Lyapunov analyses.
        Users can exploit the power of this well-established strategy %
        sharpened for optimization analysis while needing only a high-level understanding of how \workflowname~operates.

        \item
        \workflowname\ provides AI agents with the \textbf{domain-specific library} \pepflow, which abstracts the underlying PEP components behind an accessible interface.
        {Consequently, AI agents can efficiently handle PEP implementation from conceptual principles, based on an organized source of the necessary domain-specific language.}

        \item
        \workflowname\ uses \textbf{domain-specific proof structures for rapid verification,}
        {which boils down to simple checks of symbolic matrix/vector identities and positivity of algebraically expressed coefficients,
        accelerating the search for verified proofs.}\footnotemark

    \end{itemize}

    \item \textbf{Using \workflowname, we close several conjectures.}
    Specifically, \workflowname\ yields Lyapunov analyses that, to the best of our knowledge, provide the \textbf{first analytic proofs of tight convergence bounds for Nesterov's FGM}.
    These FGM analyses, together with our OGM result, resolve Conjectures~4--5 of \citet{taylor2017smooth}.
    Our analyses of FGM and FISTA with rational coefficients also resolve the corresponding conjectures in Table~1 of \citet{TaylorHendrickxGlineur2017_exacta}.
    Table~\ref{tab:conjecture-results} summarizes these results.

\end{itemize}

Notation and a detailed discussion of prior work are provided in Appendices~\ref{app:notation} and~\ref{app:prior-work}, respectively.
\codeavailability

  \begin{table}[H]
\centering
\caption{
    Conjectures closed by \workflowname. See Sections~\ref{sec:accelerated-case-study} and~\ref{sec:rational-coefficients-case-study} for algorithm definitions.
    }
\label{tab:conjecture-results}
\vspace{2pt}
\begin{tabularx}{\linewidth}{@{}>{\raggedright\arraybackslash}X@{\hspace{0.2em}}c@{\hspace{1.em}}@{}}
\toprule
\multicolumn{1}{c}{Conjecture} & \multicolumn{1}{c}{Corresponding theorem} \\
\midrule
\citet{taylor2017smooth}, Conjecture~4, \ref{eq:case-theta-recursive}, \(y_N\) & Theorem~\ref{thm:case-ofgm-primary} \\
\citet{taylor2017smooth}, Conjecture~5, \ref{eq:case-theta-recursive}, \(x_N\) & Theorem~\ref{thm:case-ofgm-secondary} \\
\citet{taylor2017smooth}, Conjecture~4, \ref{eq:case-ogm-parameters}, \(y_N\) & Theorem~\ref{thm:case-ogm} \\
\citet{TaylorHendrickxGlineur2017_exacta}, conjecture in Table~1, \ref{eq:case-theta-linear}, \(y_N\) & Theorem~\ref{thm:case-sfgm} \\
\citet{TaylorHendrickxGlineur2017_exacta}, conjecture in Table~1, \ref{eq:case-fista}, \(y_N\) & Theorem~\ref{thm:case-fista} \\
\bottomrule
\end{tabularx}
\end{table}

\begin{figure}[!b]
    \centering
    \resizebox{\textwidth}{!}{%
        \begingroup
\tikzset{
  stage/.style={rounded corners=8pt, minimum width=2.38cm, minimum height=2.75cm,
    align=center, line width=0.9pt, inner xsep=1pt, inner ysep=3pt},
  io/.style={rounded corners=5pt, draw=pepslate!45, fill=white, line width=.7pt,
    align=center, inner xsep=8pt, inner ysep=7pt, font=\sffamily\Large\bfseries, text=black},
  flow/.style={-{Latex[length=2.0mm,width=1.6mm]}, line width=0.85pt, draw=pepslate!75},
  dashflow/.style={-{Latex[length=1.8mm,width=1.4mm]}, dashed, line width=0.75pt, draw=pepamber!85!black},
  badge/.style={circle, minimum size=6.6mm, inner sep=0pt, text=white, font=\sffamily\bfseries\normalsize},
  layer/.style={rounded corners=4pt, draw=pepnavy!45, fill=pepnavy!3,
    line width=.75pt, align=center, inner xsep=7pt, inner ysep=4pt,
    font=\sffamily\normalsize\bfseries, text=pepnavy}
}

\newcommand{\StageCard}[8]{%
  \node[stage, draw=#2, fill=#7] (#1) at (#3,0) {%
    \begin{tabular}{@{}c@{}}
      \tikz\node[badge, fill=#8]{#4};\\[1.5mm]
      {\Large #5}\\[.5mm]
      {\parbox[c][1.0cm][c]{2.6cm}{\centering\large\bfseries #6}}
    \end{tabular}
  };
}

\begin{tikzpicture}[font=\sffamily, >=Latex]

\StageCard{s1}{pepslate!45}{0.00}{1}{$I(x,y)$}{Implement PEP}{white}{black}
\StageCard{s2}{pepslate!45}{3.15}{2}{$\lambda,S$}{Full-PEP certificate}{white}{black}
\StageCard{s3}{peppurple}{6.30}{3}{$V_k$}{Defined via\\partial sums}{peppurple!7}{peppurple}
\begin{scope}[stage/.append style={inner xsep=4pt}]
\StageCard{s4}{peppurple}{9.45}{4}{$\{z_i\}$}{\makebox[\linewidth][c]{\shortstack{Identify\\basis pattern}}}{peppurple!7}{peppurple}
\end{scope}
\StageCard{s5}{peppurple}{12.60}{5}{$V_{k+1}\leq V_k$}{Analytic theorem}{peppurple!7}{peppurple}

\draw[draw=black!65, dashed, line width=.7pt]
  ($(s1.north west)+(-.08,.10)$) -- ($(s1.north west)+(-.08,.92)$)
  -- ($(s5.north east)+(.08,.92)$) -- ($(s5.north east)+(.08,.10)$);
\draw[draw=black!65, dashed, line width=.7pt]
  ($(s4.north east)!0.50!(s5.north west)+(0,.92)$)
  -- ($(s4.north east)!0.50!(s5.north west)+(0,.10)$);
\node[font=\sffamily\Large\bfseries, text=black, fill=white, inner sep=2pt]
  at ($(s2.north)!0.50!(s3.north)+(0,.92)$) {Numerical discovery};
\node[font=\sffamily\Large\bfseries, text=black, fill=white, inner sep=2pt]
  at ($(s5.north)+(0,.92)$) {\shortstack{Symbolic\\synthesis}};

\node[io, left=1.15cm of s1] (input) {Algorithm\\\&\\Problem class};
\node[io, right=1.15cm of s5] (output) {Theorem\\\&\\Proof};
\draw[flow] (input.east) -- (s1.west);
\foreach \a/\b in {s1/s2,s2/s3,s3/s4,s4/s5}{\draw[flow] (\a.east) -- (\b.west);}
\draw[flow] (s5.east) -- (output.west);

\node[layer, minimum width=7.8cm] (pepflow) at (6.30,-2.55)
  {Programmable layer supported by \pepflow \quad {\normalsize Abstract vectors $\leftrightarrow$ Python vector objects}};
\foreach \s in {s1,s2,s3,s4,s5}{
  \draw[densely dotted, line width=.6pt, draw=pepnavy!35] (\s.south) -- (pepflow.north -| \s.south);
}

\foreach \s in {s1,s2,s3,s4,s5}{
  \fill[pepamber] ($(\s.south)+(0,-.72)$) circle[radius=1.7pt];
  \draw[densely dotted, line width=.6pt, draw=pepamber!80!black]
    ($(\s.south)+(0,-.10)$) -- ($(\s.south)+(0,-.67)$);
}

\end{tikzpicture}
\endgroup%
    }
    \vspace{-10pt}
    \caption{A schematic description of \workflowname's 5-stage workflow.}
    \label{fig:pepflow-workflow}
    \vspace{-3pt}
\end{figure}
\footnotetext{
{As the main requirement for verification is establishing symbolic identities, we present them through Jupyter notebooks which contain the outputs based on \texttt{SymPy}, a symbolic calculation engine.
Nevertheless, we provide an auxiliary skill \texttt{\textbackslash{}lean-from-ipynb} for users who wish to proceed with Lean formalization.}
}


\section{\workflowname: Design principle with an example}
\label{section:workflow-theory}

The goal of \workflowname~is to produce a convergence theorem with its proof based on Lyapunov analysis.
Given a user input of the form described in \Cref{sec:workflow_step1}, a successful run produces a theorem and its proof as in \Cref{sec:workflow_step5}, with varying degrees of human intervention.
Figure~\ref{fig:pepflow-workflow} summarizes the five stages of \workflowname.
Below, we outline the theoretical design principle of each step of the workflow via a representative example of gradient descent (GD) for smooth convex minimization.

\subsection{PEP implementation of input algorithm} \label{sec:workflow_step1}

In the first stage of \workflowname, the user feeds in the problem class, algorithm's update rule, performance metric and initial condition into the coding agent.
Input format is flexible, as agents can handle various inputs; we provide a standardized example format in \Cref{sec:basic-usage}.
Once the agent understands all this information, it writes \pepflow~code for setting up the PEP construction, which can be numerically solved in the next stage.\footnote{The characterization of the function class by inequality \eqref{eq:convex_smooth_ineq} is due to \citet{taylor2017smooth}.}

\gdstepone

\subsection{Solving PEP and determining certificates}

Given the PEP implementation, the second stage actually builds and solves the PEP problem to obtain numerical certificates, which provide guidance toward the analytical convergence proof.
The PEP certificates obtained here can be understood as the numerical version of the following generic identity:
\begin{align*} \nonumber
    &\texttt{(Performance metric)} - \tau_N
    \times \texttt{(Initial condition)} \\ %
    &= \texttt{(Weighted sum of  $I(\cdot,\cdot)$'s)}
    - \texttt{(Sum of squares)} .
\end{align*}
Specifically for GD, we can write
\gdsteptwo

If the above identity is analytically determined, with forms of PEP certificates $\lambda_{i,i+1}, \lambda_{\star,i}, d_i \ge 0$ and $s_i$ clarified, then from \eqref{eq:convex_smooth_ineq} we conclude that $f(x_N) - f(x_\star) \le \frac{L}{4N+2} \| x_0 - x_\star \|^2$.
However, the PEP certificates are only numerical and searching for their analytical form traditionally required human effort \citep{taylor2017smooth, KimFessler2016_optimized}.
In \workflowname, this is often successfully carried out by the coding agent instead.

{
Because the AI agent works with the identity~\eqref{eq:gd_full_pep} for some fixed value of $N$, it has to be converted into a pattern that can be generalized to arbitrary $N$ for an analytic proof.
Our strategy is to employ the Lyapunov-style proof approach, outlined in the subsequent sections.
Notably, the analytic formulas for certificates $\lambda_{i,i+1}, \lambda_{\star,i}, d_i \ge 0$ and $s_i$ may or may not be easily inferred from the numerical values.
In our workflow, this is \emph{not a requirement}—one may proceed to the next stage using only numerical information, which is sufficient for inferring a right form of Lyapunov function to use.
Missing closed forms can be determined symbolically in the final stage.
}

\subsection{Defining Lyapunov function}

In this stage, we decompose the PEP proof~\eqref{eq:gd_full_pep} into partial sums $V_k$ for $k=1,\dots,N$, as:
\gdstepthree
By definition, due to \eqref{eq:convex_smooth_ineq}, \(V_k\) is a nonincreasing \textit{Lyapunov function}.
This construction and the subsequent steps of \workflowname\ follow \citet{YoonSuhNguyenYingMa2026_systematic}, which provided a systematic procedure for utilizing the Lyapunov function construction of the above form and deriving a concise and interpretable analytic proof from it.
Notably, \citet{GoujaudDieuleveutTaylor2023_fundamental} presented a similar idea of using a partial sum of nonpositive terms as a Lyapunov function, while \citet{YoonSuhNguyenYingMa2026_systematic} can be viewed as an extension and completion of the theory surrounding it.

\subsection{Finding a summation-free expression for \texorpdfstring{\(V_k\)}{V\_k}} \label{sec:workflow_block4}

In this stage, we systematically simplify $V_k$ into a summation-free form.
This is done via nontrivial linear algebraic techniques %
that we have implemented.
These core functions are packaged into the new \pepflowcolor{\texttt{lyapunov\_utils.py}} component, built upon the \pepflow library.
{\Cref{appendix:workflow_detail} explains the mathematical construction and its implementation.}

Utilizing them, we can extract the basis vectors that can express $V_k$ as a simple (low-rank) quadratic form.
This is the most novel component introduced by \workflowname, which enables the conversion of~\eqref{eq:gd_full_pep} into an analytical, $N$-independent proof.
Applying this step to GD, in particular, we obtain:
\gdstepfour

Note that this stage, on its own, used to be a research-level task requiring human experts, as done in prior work \citep{ParkParkRyu2023_factorsqrt2,dAspremontScieurTaylor2021_acceleration,LPR21}.

\subsection{Analytic proof by Lyapunov analysis} \label{sec:workflow_step5}
In this final step, we verify $V_{k+1} \le V_k$ through symbolic calculation.
In a successful use case, we let \workflowname\ additionally produce LaTeX code for the corresponding proof and the resulting theorem statement, as shown below.
Examples of proofs generated by \workflowname\ are provided in \Cref{sec:experiments}.
\begin{examplebox}
\gdstepfivetheorem
\end{examplebox}


\section{\workflowname: Agentic implementation}
\label{section:workflow-implementation}

\workflowname\ is implemented as a sequence of reusable commands, each written in a Markdown file.
Here, a \emph{command} is a prompt specification for executing each modular block of the theoretical workflow described in \Cref{section:workflow-theory}.

The user feeds \workflowname\ with a prompt including the command name and the required input arguments.
Below, we briefly outline the function of each command at a high level, deferring detailed explanation to \Cref{appendix:implementation_detail}.
See \Cref{sec:basic-usage} for a concrete prompt example.

{
\begin{enumerate}[label=(\arabic*)]
    \item \textbf{\SkillImplement{}:}
    Turns the %
    algorithm description into an executable \pepflow\ setup module.

    \item \textbf{\SkillFullProof{}:}
    Solves PEP with a fixed \(N\) and extracts proof certificates, e.g., \(\lambda\), \(d\), and \(s\) in \eqref{eq:gd_full_pep}, and guesses their closed-form expressions.

    \item \textbf{\SkillLyapDefine{}:}
    Constructs a candidate Lyapunov function \(V_k\) as partial sums of weighted inequalities and square terms from the PEP proof.

    \item \textbf{\SkillLyapVectors{}:}
    Identifies the vectors that are used for expressing \(V_k\), and thereby simplifies it into a summation-free expression.

    \item \textbf{\SkillLyapClosedForm{}:}
    Finalizes the analytic coefficients in $V_k$, symbolically verifies the Lyapunov analysis, and drafts a theorem with a proof outline. %
\end{enumerate}}
The role of AI assistance in \workflowname\ is two-fold:
\begin{itemize}
    \item
    AI generates \pepflow-based codes for structured tasks such as solving PEPs, constructing Lyapunov groupings, and searching for basis vectors.

    \item
    AI assists in automating tasks that require mathematical insight, including selecting sparsity patterns, guessing closed-form rates and symbolic formulas, and analytically identifying \(V_k\). Here, to prevent proceeding with hallucinated outputs, \workflowname\ performs specific numerical or symbolic verification steps on the AI agent's outputs.
\end{itemize}

At the end of each block, the outputs are saved into a \texttt{.json} state file, and a human-readable summary is written into a Jupyter notebook,
which enables both \workflowname\ and the user to keep track of intermediate results.
The user may inspect the progress and provide additional guidance to \workflowname\ when needed.

\subsection{How to use}
\label{sec:how_to_use}

We describe direct use of the five-stage workflow.

\label{sec:basic-usage}

The user specifies an algorithm, a problem class, an initial condition,
and a performance metric, then runs the five commands in sequence.

While the input prompts can be flexible, we provide a standardized example format that we found successful, for the case of GD for smooth convex minimization.
We start by running the first command, \SkillImplement{}, together with the required arguments.

\begin{commandbox}
/pep-implement

Function: f is convex and L-smooth
Parameters: L, R
Initial condition: ||x_0 - x_star|| <= R, where grad f(x_star) = 0
Performance metric: f(x_N) - f(x_star)
Algorithm: Gradient descent with fixed step size 1/L

For k >= 0:
x_{k+1} = x_k - (1/L) * grad f(x_k)

Conjectured rate: unknown
\end{commandbox}

Alternatively, one can choose to input a screenshot of the algorithm's information, e.g., the box in \Cref{sec:workflow_step1}.

In successful cases, \SkillImplement{} produces an output that can be used as an input to the next block, \SkillFullProof{}.
This can be triggered by prompting ``command name + algorithm name'', e.g., the prompt ``\texttt{/pep-full-proof gd}''
will let \workflowname\ proceed to the next stage.
{A simpler alternative for interactive use is: ``\texttt{proceed with the next step}''.}
This is repeated until the final block is reached.

{%
\section{Evaluations on Known Results}
\label{sec:experiments}

\begin{table}[!b]
\centering
{%
\caption{Evaluation results. Block times (minutes) and generation tokens (millions) are means $\pm$ one sample standard deviation over five attempts.
}
\label{tab:experiment-costs}
\small
\setlength{\tabcolsep}{3pt}
\begin{tabular}{@{}lccccccc@{}}
\toprule
Condition & Verified & Block 1 & Block 2 & Block 3 & Block 4 & Block 5 & Tokens (M) \\
\midrule
OGM & 5/5 & $7.7 \pm 0.8$ & $11.3 \pm 1.8$ & $6.8 \pm 0.8$ & $7.8 \pm 1.6$ & $12.5 \pm 1.7$ & $11.5 \pm 2.4$ \\
BPPM & 5/5 & $7.4 \pm 1.1$ & $10.7 \pm 2.6$ & $8.1 \pm 1.3$ & $6.8 \pm 1.2$ & $9.5 \pm 2.1$ & $9.9 \pm 1.3$ \\
FEG & 5/5 & $7.4 \pm 0.8$ & $11.1 \pm 1.5$ & $7.2 \pm 1.0$ & $5.6 \pm 0.4$ & $9.2 \pm 1.4$ & $10.7 \pm 1.1$ \\
Dual-FEG & 5/5 & $7.6 \pm 0.3$ & $13.1 \pm 2.8$ & $7.4 \pm 0.7$ & $7.4 \pm 1.7$ & $11.2 \pm 0.5$ & $12.5 \pm 1.7$ \\
PGM (no SOS) & 5/5 & $7.1 \pm 0.9$ & $10.5 \pm 1.5$ & $11.3 \pm 4.2$ & $8.0 \pm 1.4$ & $23.5 \pm 4.7$ & $15.7 \pm 2.2$ \\
PGM (given SOS) & 5/5 & $6.9 \pm 1.0$ & $12.7 \pm 0.7$ & $13.6 \pm 2.3$ & $10.2 \pm 1.1$ & $14.6 \pm 3.7$ & $15.4 \pm 1.0$ \\
\bottomrule
\end{tabular}
}
\end{table}

We provide a simple evaluation of \workflowname's ability to recover known convergence rates through
analytic Lyapunov proofs for PGM, OGM \citep{KimFessler2016_optimized}, BPPM, FEG \citep{LeeKim2021_fast}, and Dual-FEG \citep{YoonKimSuhRyu2024_optimal}.
PGM is tested with and without a supplied sum-of-squares (SOS) decomposition, yielding six experiment settings with five attempts each.
Proof generation and evaluation both use Codex with GPT-6~Astra, medium reasoning.

We design a controlled clean-room experiment protocol, where the AI agent is instructed not to ``cheat'' by consulting external material or using outputs of other runs,
and to strictly adhere to a specified Python environment.
The purpose of the experiments is to demonstrate the capability and consistency of \workflowname\ in simulating an optimization-specific workflow that has been performed by domain experts.
All 30 trials generated verifiable proofs establishing prescribed convergence bounds, streamlined through Jupyter Notebooks.\footnote{
A clean-room protocol issue was flagged in one trial each for BPPM, FEG, and PGM (without SOS),
during final notebook execution within Block 5 \emph{after} a Jupyter notebook containing a correct proof had been generated.
The incidents involved generic Python/Jupyter dependencies outside the designated environment.
No prohibited mathematical exposure was identified, and subsequent verification confirmed the submitted proofs.
}
We report the time and generation tokens consumed in \cref{tab:experiment-costs}.

Notably, while these runs consider algorithms whose tight rates are already known for the purpose of evaluating the workflow,
the \emph{Lyapunov proof} has not appeared in the literature for the case of PGM, which we present below.

\subsection{Proximal gradient descent method (PGM)}\label{sec:pgm-five-run-example}
Let $f$ be convex and $L$-smooth, let $g$ be closed proper convex, and let
$x_\star$ minimize $h=f+g$. Consider
\[
x_{k+1}=\operatorname{prox}_{g/L}\left(x_k-L^{-1}\nabla f(x_k)\right).
\]
Both PGM conditions recover
$h(x_N)-h(x_\star)\le\frac{L}{4N}\|x_0-x_\star\|^2$
for every $N\ge1$ \citep{TeboulleVaisbourd2023_elementary}.

To illustrate the mathematical output, the first
attempt without supplied SOS yields, for $1\le k<N$,
\begin{examplebox}
{%
\begin{equation*}
\begin{aligned}
V_k&=
\frac{k+1}{2N-k}\left(f(x_k)-f(x_\star)\right)
+\frac{k}{2N-k}\left(g(x_k)-g(x_\star)\right)
-\frac{1}{2N-k}\langle x_k-x_\star,\nabla f(x_k)\rangle \\
&\quad-\frac{L}{4N}\|x_0-x_\star\|^2
+\frac{L(N-k)}{(2N-k)^2}\|x_k-x_\star\|^2
-\frac{k}{2L(2N-k)^2}\|\nabla f(x_k)-\nabla f(x_\star)\|^2.
\end{aligned}
\end{equation*}
}
\end{examplebox}
With separately defined endpoints, the proof establishes
$V_N\le\cdots\le V_0=0$ and the desired bound. The first supplied-SOS attempt
has a different gradient-square coefficient; both constructions
are reproduced in \Cref{appendix:pgm_output}.

}


{\section{Case Studies on Open Problems}}
\label{sec:case-study}

In this section, we show that \workflowname\ can obtain novel convergence analyses for settings in which analytic analyses had not previously been developed, including some problems that had remained open for about a decade.

\textbf{We close Conjecture~4 and Conjecture~5 of \citet{taylor2017smooth}} by providing the first analytic proofs of tight convergence bounds for Nesterov's Fast Gradient Method (FGM) \citep{Nesterov1983_method}.
FGM is a classical accelerated method dating back to 1983, yet its tight worst-case bounds had not been established analytically. These bounds were conjectured in \citet{taylor2017smooth}, based on numerical values obtained by PEP roughly a decade ago, and had remained open since then. We resolve these conjectures by providing Lyapunov analyses obtained by \workflowname. Additionally, we close related conjectures for OGM raised in \citet[Conjecture~4]{taylor2017smooth} and for FGM and FISTA with rational coefficients raised in \citet[Table~1]{TaylorHendrickxGlineur2017_exacta}.

For the conjectures considered in this section, we used the same input format as in Section~\ref{sec:how_to_use}, setting only the conjectured-rate field to the rate stated in the corresponding conjecture. We used \texttt{GPT-6 Astra} with \texttt{Ultra} reasoning effort.

{
As the proofs grew more complex, the
Codex-generated drafts required human editing to make them readable.
In particular, the drafts introduced many unnecessary substitutions;
we had to simplify these and choose variable names and notation that
made the arguments easier to follow. Detailed proofs are provided in
Appendix~\ref{app:case-study-proofs}.}

\subsection{Tight convergence analysis of FGM using \workflowname: %
Closing Conjectures~4 and~5 of \texorpdfstring{\citet{taylor2017smooth}}{Taylor et al. (2017)}}
\label{sec:accelerated-case-study}

For a parameter sequence \(\theta_k\) and a scalar \(\eta\), FGM, OGM,
and FGM with rational coefficients start from \(x_0=y_0=z_0\) and use, for \(0\leq k<N\),
the following updates:
\begin{equation}
\begin{aligned}
 y_{k+1}&=x_k-\frac1L\nabla f(x_k),\\
 z_{k+1}&=z_k-\frac{\eta\theta_k}{L}\nabla f(x_k),\\
 x_{k+1}&=\left(1-\frac1{\theta_{k+1}}\right)y_{k+1}
          +\frac1{\theta_{k+1}}z_{k+1}.
\end{aligned}
 \label{eq:case-ofgm}
\end{equation}

We first consider FGM, which selects
\begin{align}
 \theta_0&=1,\qquad
 \theta_{k+1}=\frac{1+\sqrt{1+4\theta_k^2}}2
 \quad(k\geq0),\qquad \eta=1.
 \tag{FGM}
 \label{eq:case-theta-recursive}
\end{align}
\begin{theorem}[{FGM at \(y_N\); Conjecture~4 in \citet{taylor2017smooth}}]
\label{thm:case-ofgm-primary}
Let \(f\in\mathcal F_L\).
Fix an integer \(N\geq1\) and \(R\geq0\), and assume
\(\norm{x_0-x_\star}\leq R\).
Then \ref{eq:case-theta-recursive} satisfies
\begin{equation}
 f(y_N)-f(x_\star)\leq
 \frac{LR^2\theta_{N-1}^2}{2\left(2\sum_{j=0}^{N-1}\theta_j^3+\theta_{N-1}^2\right)}.
 \label{eq:case-ofgm-primary-rate}
\end{equation}
\end{theorem}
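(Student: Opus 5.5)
The plan is a Lyapunov analysis of the form used for GD in \Cref{section:workflow-theory}. Write $\tau_N := \theta_{N-1}^2/\bigl(2(2\sum_{j=0}^{N-1}\theta_j^3+\theta_{N-1}^2)\bigr)$. I would construct a sequence $V_0=0\ge V_1\ge\cdots\ge V_N$ with $V_N=f(y_N)-f(x_\star)-L\tau_N\norm{x_0-x_\star}^2$. Each difference $V_{k+1}-V_k$ will be written as a nonnegative combination of the interpolation inequalities $I(\cdot,\cdot)\le 0$ from \eqref{eq:convex_smooth_ineq}, minus a sum of squares. The inequalities used involve the points $x_k$, $x_{k+1}$ and $x_\star$; the values at $y_{k+1}$ are handled through the descent step $y_{k+1}=x_k-\nabla f(x_k)/L$. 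Once this is in place, summing the differences gives \eqref{eq:case-ofgm-primary-rate}.

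\textbf{Ansatz for $V_k$.} Since $\tau_N$ depends on the whole horizon through $S_N:=\sum_{j<N}\theta_j^3$, the coefficients of $V_k$ must depend on both $k$ and $N$, as in the GD example. I would take
\[
V_k = a_k\bigl(f(y_k)-f(x_\star)\bigr) + b_k\norm{z_k-x_\star}^2 - L\tau_N\norm{x_0-x_\star}^2 + c_k\norm{\nabla f(x_{k})}^2 + e_k\inner{\nabla f(x_k)}{z_k-x_\star} + (\text{a few terms in } f(x_k)).
\]
The coefficients $a_k,b_k,c_k,e_k$ are rational functions of $\theta_k$, $\theta_{k-1}$, $S_k$ and $S_N$. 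They come from the partial-sum construction of \citet{YoonSuhNguyenYingMa2026_systematic}, which the workflow stages \SkillLyapDefine{} and \SkillLyapVectors{} implement. The key input for guessing them is the numerical PEP certificate. I expect the multipliers to be of the form $\lambda_{k,k+1}\propto \theta_k^2/(\cdots)$ and $\lambda_{\star,k}\propto \theta_k/(\cdots)$, with the denominators $2S_N+\theta_{N-1}^2$ producing the stated rate. The endpoint $V_N$ is defined separately, and $V_0=0$ is forced by $y_0=x_0=z_0$.

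\textbf{Verification of $V_{k+1}\le V_k$.} Fix $k$ and substitute the updates \eqref{eq:case-ofgm}. Then $V_{k+1}-V_k-\lambda_{k,k+1}I(x_k,x_{k+1})-\lambda_{\star,k+1}I(x_\star,x_{k+1})$ becomes a quadratic form in the basis vectors $\nabla f(x_k)$, $\nabla f(x_{k+1})$ and $z_k-x_\star$, plus linear terms in function values. For the function-value coefficients to cancel, the recursion $\theta_{k+1}^2-\theta_{k+1}=\theta_k^2$ and the identity $S_{k+1}=S_k+\theta_{k+1}^3$ must be used. The remaining quadratic form must be shown to equal $-d_k\norm{s_k}^2$, i.e.\ to be a rank-one negative semidefinite matrix. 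This check reduces to a symbolic polynomial identity in $\theta_k$, $\theta_{k+1}$, $S_k$, $S_N$ once $\theta_{k+1}$ is eliminated via the recursion, and it can be confirmed with SymPy.

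\textbf{Main obstacle.} The hard step is the last transition to $V_N$ at $y_N$, together with the nonnegativity of the coefficients. The multipliers involve differences such as $2S_N-2S_k+\theta_{N-1}^2-\dots$, so their positivity needs inequalities on the FGM sequence, notably $\theta_k\ge (k+2)/2$ and the monotonicity of $\theta_k^2/S_k$. I would first extract numerically which multipliers are exactly zero, to fix the sparsity. I would then try to write each multiplier as a manifestly positive expression in the tail sums $S_N-S_k$. Boundary transitions would be checked separately as special cases.
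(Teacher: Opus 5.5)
Your outline has the right global shape: a horizon-dependent potential whose decrements are nonnegative combinations of interpolation residuals minus squares. However, there is a genuine gap. The proposal never specifies the certificate (coefficients, multipliers, squares) or the sign argument, which are the entire content of the theorem. Moreover, the concrete ansatz you commit to cannot give the tight constant, for two reasons.

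\textbf{First, the $f(y_k)$ terms.} For $1\le k\le N-1$, $f(y_k)$ is not an oracle value, and the inequalities you allow (among $x_k,x_{k+1},x_\star$) contain no $f(y_k)$. Bridging them with the descent lemma $f(y_{k+1})\le f(x_k)-\frac1{2L}\|\nabla f(x_k)\|^2$ destroys tightness. Take the Huber instance of Lemma~\ref{lem:case-smooth-lower}, with parameter $\tau=R/(2C_N+1)$, which attains the bound. Since $C_N\ge1$, every $x_k$ and $y_{k+1}$ lies on the affine piece, so that inequality has slack $L\tau^2/2>0$. An exact identity putting positive weight on it would then certify a strictly smaller value at an instance where equality holds.

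\textbf{Second, the span of your quadratic part.} Your claim that the step remainder is a quadratic form in $\nabla f(x_k)$, $\nabla f(x_{k+1})$, $z_k-x_\star$ alone is false. We have $x_{k+1}-x_\star=(z_{k+1}-x_\star)+(1-1/\theta_{k+1})(y_{k+1}-z_{k+1})$, $x_k-x_{k+1}=\nabla f(x_k)/L+(y_{k+1}-z_{k+1})/\theta_{k+1}$, and $y_{k+1}-z_{k+1}=(1-1/\theta_k)(y_k-z_k)+(\theta_k-1)\nabla f(x_k)/L$. Hence $\lambda_{k,k+1}I(x_k,x_{k+1})+\lambda_{\star,k+1}I(x_\star,x_{k+1})$ contains $\langle\nabla f(x_{k+1}),y_k-z_k\rangle$ with coefficient proportional to $\lambda_{k,k+1}-(\theta_{k+1}-1)\lambda_{\star,k+1}$.

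For $k\ge2$ the direction $y_k-z_k$ is independent of your basis and never appears squared. A nonnegative remainder therefore forces $\lambda_{k,k+1}=(\theta_{k+1}-1)\lambda_{\star,k+1}$, which is the ratio of the classical Nesterov analysis. The tight multipliers violate it: in the paper's notation this coefficient is $-\theta_k^2\sigma_{k+1}$ with $\sigma_{k+1}>0$.

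At the last step (for $N\ge3$), the same argument, applied to $y_N-x_\star$ with the direction $y_{N-1}-z_{N-1}$, forces zero weight on $I(x_\star,y_N)$. Then no certificate of your type can exist. On the same Huber instance, replace $(f(y_N),\nabla f(y_N))$ by $(f(x_{N-1})-L\tau^2/2,\,0)$. Every inequality among $x_{N-1},y_N,x_\star$ other than $I(x_\star,y_N)\le0$ still holds, and all other inequalities are untouched. Yet $f(y_N)-f(x_\star)$ now exceeds the claimed bound by $L\tau^2/2$.

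\textbf{What the paper does instead.} It takes
\[
V_k=a_k\bigl(f(x_k)-f(x_\star)\bigr)-\tau_NL\|x_0-x_\star\|^2-\tfrac{a_k}{2L}\|\nabla f(x_k)\|^2+L\boldsymbol w_k^\top Q_k\boldsymbol w_k,
\]
with the two-dimensional state $\boldsymbol w_k=(z_{k+1}-x_\star,\,y_{k+1}-z_{k+1})$ and a diagonal $Q_k$ whose second entry is nonzero. That second direction is what absorbs the cross terms created by $\sigma_{k+1}>0$.

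The interior remainder is then two squares, $L\alpha_k\|s_k\|^2+L\beta_k\|t_k\|^2$, not the rank-one term you insist on. The real work lies in the explicit weights
\[
a_k=\frac{\theta_k^2\bigl(\theta_{N-1}^2-\theta_k^2+C_{k+1}\bigr)}{\theta_{N-1}^2D_{k+1}}
\]
and in the positivity proof. That proof uses $\rho_k=\sum_{i<k}\theta_i^2/\theta_k^2$, the identity $C_{k+1}=(\theta_k^2+\rho_k+1)/2$, and the bounds $\rho_k+1\le(2\theta_k+1)/3$ and $\rho_k-\rho_{k-1}>1/3$. It also needs a sign expansion for $\sigma_k$ and the tail identity
\[
\frac1{\theta_{N-1}^2}-\frac{a_k}{\theta_k^2}=\frac{\rho_{N-1}+1-\rho_k}{\theta_{N-1}^2D_{k+1}}.
\]
You list only $\theta_k\ge(k+2)/2$ and monotonicity of $\theta_k^2/S_k$, and these are not shown to suffice for any of this.
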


\begin{proof}[Proof outline]
Set \(D_j=2C_N+1-C_j\) and \(\tau_N=1/(2D_0)\). For the
interior iterates, set
\begin{equation*}
 \boldsymbol w_k=
 \begin{bmatrix}z_{k+1}-x_\star\\y_{k+1}-z_{k+1}\end{bmatrix}
\end{equation*}
and use \(\vQ_k\), whose diagonal entries are rational
expressions in \(a_k,C_{k+1},D_0,\theta_k\), and
\(\theta_{N-1}\). Define \(V_0=0\),
\begin{equation*}
 V_k=a_k\bigl(f(x_k)-f(x_\star)\bigr)
 -\tau_NL\norm{x_0-x_\star}^2
 -\frac{a_k}{2L}\norm{\nabla f(x_k)}^2
 +L\boldsymbol w_k^\top \vQ_k\boldsymbol w_k
\end{equation*}
for the interior iterates, and
\begin{equation*}
 V_N=f(y_N)-f(x_\star)-\tau_NL\norm{x_0-x_\star}^2.
\end{equation*}
Each decrement is an exact positive weighted sum of smooth convex
interpolation residuals minus one or two nonnegative squares. The explicit
coefficients and their positivity are verified in
Appendix~\ref{app:case-ofgm-primary-proof}. Hence
\(V_{k+1}\leq V_k\), so \(V_N\leq V_0=0\), which gives
\eqref{eq:case-ofgm-primary-rate}.

\end{proof}

\begin{theorem}[{FGM at \(x_N\); Conjecture~5 in \citet{taylor2017smooth}}]
\label{thm:case-ofgm-secondary}
Let \(f\in\mathcal F_L\).
Fix an integer \(N\geq1\) and \(R\geq0\), and assume
\(\norm{x_0-x_\star}\leq R\).
Then \ref{eq:case-theta-recursive} satisfies
\begin{equation}
 f(x_N)-f(x_\star)\leq
 \frac{LR^2\theta_N^2}{2\left(2\sum_{j=0}^{N}\theta_j^3-\theta_N^2\right)}.
 \label{eq:case-ofgm-secondary-rate}
\end{equation}
\end{theorem}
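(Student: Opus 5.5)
We will prove the bound with the same Lyapunov template as Theorem~\ref{thm:case-ofgm-primary}, changing only the terminal quantity from \(y_N\) to \(x_N\). Let \(C_k=\sum_{j=0}^{k-1}\theta_j^3\) (so that \(2\sum_{j=0}^N\theta_j^3=2C_{N+1}\)), and set
\[
\tau_N=\frac{\theta_N^2}{2\bigl(2C_{N+1}-\theta_N^2\bigr)}.
\]
The target is \(V_N=f(x_N)-f(x_\star)-\tau_N L\norm{x_0-x_\star}^2\). We start from \(V_0=0\). For interior indices we take
\[
V_k=a_k\bigl(f(x_k)-f(x_\star)\bigr)-\tau_NL\norm{x_0-x_\star}^2-\frac{a_k}{2L}\norm{\nabla f(x_k)}^2+L\,\boldsymbol w_k^\top\vQ_k\boldsymbol w_k,
\]
with the same basis \(\boldsymbol w_k\) built from \(z_{k+1}-x_\star\) and \(y_{k+1}-z_{k+1}\). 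The weights \(a_k\) and the diagonal \(\vQ_k\) are to be re-fitted.

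\textbf{Numerical step.} We run \SkillFullProof{} for \(N=1,\dots,8\) with metric \(f(x_N)-f(x_\star)\). This should recover the multipliers \(\lambda_{i-1,i}\) and \(\lambda_{\star,i}\), and we expect the same structure as in the \(y_N\) case. The difference is that the last iterate \(x_N\) is evaluated directly, without a final gradient step. So the terminal contribution \(\frac{a_N}{2L}\norm{\nabla f(x_N)}^2\) should disappear, which is what replaces \(+\theta_{N-1}^2\) in the denominator by \(-\theta_N^2\).

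\textbf{Closed form.} We guess \(a_k\) as a ratio \(\theta_k^2/D_k\), with \(D_k=2C_{N+1}-\theta_N^2+c-C_k\) analogous to \(D_j=2C_N+1-C_j\). The entries of \(\vQ_k\) would be rational in \(\theta_k, C_{k+1}, D_0, \theta_N\).

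\textbf{Symbolic verification.} For each \(0\le k<N-1\) we write
\[
V_k-V_{k+1}=-\lambda_{k,k+1}I(x_k,x_{k+1})-\lambda_{\star,k+1}I(x_\star,x_{k+1})+(\text{nonnegative squares})
\]
as an identity in the vectors \(x_\star\), \(z_k\), \(y_k\), \(\nabla f(x_k)\), \(\nabla f(x_{k+1})\) and the corresponding function values. The update rule~\eqref{eq:case-ofgm} eliminates \(z_{k+1}\), \(y_{k+1}\), \(x_{k+1}\). Matching coefficients should need only the recursion \(\theta_{k+1}^2-\theta_{k+1}=\theta_k^2\). Two edge cases are handled separately:
\begin{itemize}
\item the step \(V_0\to V_1\), using \(\theta_0=1\) and \(x_0=z_0\);
\item the terminal step \(V_{N-1}\to V_N\), where we only add \(I(x_{N-1},x_N)\) and \(I(x_\star,x_N)\) and need not introduce \(y_{N+1}\).
\end{itemize}
Telescoping then gives \(V_N\le V_0=0\), which is the bound~\eqref{eq:case-ofgm-secondary-rate}.

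\textbf{Main obstacle.} We expect the hardest part to be the nonnegativity of the multipliers and of the residual square coefficients, especially at the terminal step. The negative \(-\theta_N^2\) shifts every \(D_k\), so positivity of expressions such as \(D_k-\theta_k^2\) must be derived from \(C_k\ge\theta_{k-1}^2\)-type inequalities. Here we would first try the identity \(\sum_{j<k}\theta_j=\theta_{k-1}^2\) together with the growth estimate \(\theta_k\ge (k+2)/2\). If a residual Gram term is not a perfect square, we would fall back to a two-square decomposition and check its \(2\times2\) determinant.
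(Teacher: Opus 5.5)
Your proposal is a plan rather than a proof: it gives no closed form for $a_k$ or $Q_k$, checks no identity, and proves no sign. The more serious problem is that the template you commit to cannot be completed for $N\ge2$. A diagonal $Q_{N-1}$ in the basis $u=z_N-x_\star$, $v=y_N-z_N$ is incompatible with a tight terminal step.

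Take $V_{N-1}$ of the primary form and $V_N=f(x_N)-f(x_\star)-\tau_NL\|x_0-x_\star\|^2$. The coefficient of $f(x_N)$ forces $b_N=1-a_{N-1}$. The relations $y_N=x_{N-1}-\nabla f(x_{N-1})/L$ and $x_N-x_\star=u+(1-\theta_N^{-1})v$ then give
\[
a_{N-1}\mathcal I_f(x_{N-1},x_N)+b_N\mathcal I_f(x_\star,x_N)-(V_N-V_{N-1})
=\frac1{2L}\bigl\|\nabla f(x_N)+Le\bigr\|^2+L\Bigl(w^\top Q_{N-1}w-\tfrac12\|e\|^2\Bigr),
\]
where $w=(u,v)$ and $e=-b_Nu+(\theta_N^{-1}-b_N)v$.

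Since $\nabla f(x_N)$ is a free direction, this remainder is a nonnegative form only if $Q_{N-1}-\frac12pp^\top\succeq0$, with $p=(-b_N,\,\theta_N^{-1}-b_N)^\top$.

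The bound is attained by the Huber function with $\tau=R/(2C_{N+1}-1)$. There $V_N=V_0=0$ and every interpolation residual vanishes, so both pieces above must vanish too. Use the paper's notation: $C_k=\theta_{k-1}^{-2}\sum_{j<k}\theta_j^3$, $D_k=2C_{N+1}-1-C_k$, $\delta_N=C_{N+1}-C_N$, $\rho_N=\theta_N^{-2}\sum_{i<N}\theta_i^2$. Tightness then forces:
\begin{itemize}
\item $a_{N-1}=(C_{N+1}-1)/D_N$, i.e. $b_N=\delta_N/D_N$;
\item $(Q_{N-1}-\frac12pp^\top)\bar w=0$, where $\bar w=(D_0-\theta_{N-1}^2,\ \theta_{N-1}^2-C_N)$ is proportional to $w$ on that instance and has two positive entries.
\end{itemize}
Hence $Q_{N-1}=\frac12pp^\top+\gamma qq^\top$ with $q=(\bar w_2,-\bar w_1)$ and $\gamma\ge0$. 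Its off-diagonal entry is $\frac12p_1p_2-\gamma\bar w_1\bar w_2$. But $D_N-\theta_N\delta_N=\rho_N>0$ gives $b_N<\theta_N^{-1}$, so $p_1p_2<0$ and that entry is strictly negative. At $y_N$ one gets $p=(-b_N,-b_N)$ instead, which is exactly why the diagonal ansatz works for the primary output and fails at $x_N$.

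Your grouping itself is viable once $Q_k$ is allowed to be a full symmetric matrix. The paper instead charges $\mathcal I_f(x_\star,x_k)$ to the step $k\to k+1$. Its $V_k$ has:
\begin{itemize}
\item weight $a_{k-1}$ on $f(x_k)-f(x_\star)$;
\item an extra term $b_k\langle\nabla f(x_k),y_{k+1}-x_\star\rangle$;
\item a full $Q_k$ in the basis $(y_k-x_\star,\,y_{k+1}-x_\star)$.
\end{itemize}
Its $V_N$ contains $b_N\bigl(\langle\nabla f(x_N),x_N-x_\star\rangle-\frac1{2L}\|\nabla f(x_N)\|^2\bigr)$, and $b_N\mathcal I_f(x_\star,x_N)$ appears only in the endpoint identity. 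Adding $b_k\mathcal I_f(x_\star,x_k)$ to the paper's $V_k$ gives exactly your form and your decrements, with the same squares. However, through $y_k-x_\star=(z_{k+1}-x_\star)+\frac{\theta_k}{\theta_k-1}(y_{k+1}-z_{k+1})$, the resulting $Q_k$ has a nonzero cross coefficient in your coordinates.

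Finally, the sign step needs more than $\theta_k\ge(k+2)/2$ and $\sum_{j<k}\theta_j=\theta_{k-1}^2$. The paper's positivity lemma relies on two-sided estimates such as $\frac12<\theta_k-\theta_{k-1}<\frac23$ and $\frac35(\theta_k-1)<\rho_k<\frac23(\theta_k-1)$, plus a dedicated regrouping of the interior square coefficient. Your plan provides none of these.
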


\begin{proof}
See Appendix~\ref{app:case-ofgm-secondary-proof} for the proof.
\end{proof}

{The bounds in Theorems~\ref{thm:case-ofgm-primary}
and~\ref{thm:case-ofgm-secondary} are tight, matching the lower bounds
established in Lemma~\ref{lem:case-smooth-lower} using the examples of
\citet[Section~4.2, Conjectures~4--5]{taylor2017smooth}.
Lemma~\ref{lem:case-fgm-conjecture-relation} shows that the bounds proved by Theorems~\ref{thm:case-ofgm-primary} and \ref{thm:case-ofgm-secondary} indeed correspond to their conjectured rates.
}

\subsubsection{OGM at \texorpdfstring{\(y_N\)}{y\_N}}
\label{sec:ogm-yN}

For the bound at \(y_N\), OGM uses the same recursive
parameters as FGM through \(\theta_{N-1}\) and doubles the coefficient
in the \(z\)-update:
\begin{align}
 \theta_0&=1,\qquad
 \theta_{k+1}=\frac{1+\sqrt{1+4\theta_k^2}}2
 \quad(0\leq k<N-1),\qquad \eta=2.
 \tag{OGM}
 \label{eq:case-ogm-parameters}
\end{align}

\begin{theorem}[{OGM at \(y_N\); Conjecture~4 in \citet{taylor2017smooth}}]
\label{thm:case-ogm}
Let \(f\in\mathcal F_L\).
Fix an integer \(N\geq1\) and \(R\geq0\), and assume
\(\norm{x_0-x_\star}\leq R\).
Then \ref{eq:case-ogm-parameters} satisfies
\begin{equation}
 f(y_N)-f(x_\star)\leq\frac{LR^2}{2(2\theta_{N-1}^2+1)}.
 \label{eq:case-ogm-rate}
\end{equation}
\end{theorem}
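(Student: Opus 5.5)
We prove Theorem~\ref{thm:case-ogm} with a Lyapunov argument of the same shape as the proof outline of Theorem~\ref{thm:case-ofgm-primary}, specialized to $\eta=2$. First we rewrite OGM in the standard Kim--Fessler form. From \eqref{eq:case-ofgm} one has $z_{k+1}-x_\star = x_0-x_\star-\frac{2}{L}\sum_{i\le k}\theta_i\nabla f(x_i)$. The recursion gives $\theta_{k+1}^2-\theta_{k+1}=\theta_k^2$, hence $\sum_{i\le k}\theta_i=\theta_k^2$. We work with $\tau_N=1/(2(2\theta_{N-1}^2+1))$ and with the target identity
\[
V_0=0,\qquad V_N=f(y_N)-f(x_\star)-\tau_N L\|x_0-x_\star\|^2 .
\]

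For the interior iterates $1\le k\le N-1$ we take the ansatz
\[
V_k=a_k\bigl(f(x_k)-f(x_\star)\bigr)-\tau_NL\|x_0-x_\star\|^2-\frac{a_k}{2L}\|\nabla f(x_k)\|^2+\frac{L}{c}\,\|z_{k+1}-x_\star\|^2 .
\]
We choose $a_k=2\theta_k^2\,\tau_N\cdot 2$, i.e.\ proportional to $\theta_k^2$, and a constant $c$ so that the $\|x_0-x_\star\|^2$ contributions cancel. This mirrors the classical OGM potential $2\theta_k^2\bigl(f(y_{k+1})-f_\star-\frac{1}{2L}\|\nabla f\|^2\bigr)+\frac{L}{2}\|z_{k+1}-x_\star\|^2$ of Kim--Fessler, rescaled by $2\tau_N$. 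A single quadratic term in $z_{k+1}-x_\star$ should suffice, unlike the two-vector $\boldsymbol w_k$ needed for FGM. The reason is that for OGM the primal potential $f(x_k)-\frac{1}{2L}\|\nabla f(x_k)\|^2$ upper-bounds $f(y_{k+1})$ via $I(x_k,\cdot)$, which makes the $y$-variables disappear.

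The decrease step $V_{k+1}\le V_k$ is an exact identity. We write
\[
V_{k+1}-V_k=\lambda_{k,k+1}I(x_k,x_{k+1})+\lambda_{\star,k+1}I(x_\star,x_{k+1})-(\text{nonnegative square}),
\]
with $\lambda_{k,k+1}=a_k$ and $\lambda_{\star,k+1}=a_{k+1}-a_k$. The latter is nonnegative because it is proportional to $\theta_{k+1}^2-\theta_k^2=\theta_{k+1}$. To verify the identity, we expand both sides in the basis $\{z_{k+1}-x_\star,\ x_k-x_{k+1},\ \nabla f(x_k),\ \nabla f(x_{k+1})\}$. We then use the relation $x_{k+1}=(1-1/\theta_{k+1})y_{k+1}+z_{k+1}/\theta_{k+1}$ to express $x_{k+1}-z_{k+2}$, and the identity $\theta_{k+1}^2-\theta_{k+1}=\theta_k^2$ to collapse coefficients. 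The initial step $V_1\le V_0=0$ uses only $I(x_\star,x_0)$ and $I(x_\star,x_1)$ together with $x_0=z_0$.

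The main obstacle is the last step, $V_N\le V_{N-1}$. Here the performance point changes from $x_N$ to $y_N$, and the quadratic term in $z_N$ must be absorbed. We use $I(x_{N-1},y_N)$ and $I(x_\star,y_N)$ with weights $a_{N-1}$ and $1-a_{N-1}$. Nonnegativity of the second weight, and the exact completion of the square in $z_N-x_\star$ and $\nabla f(y_N)$, is precisely what forces $\tau_N=1/(2(2\theta_{N-1}^2+1))$: the extra $+1$ reflects that the last step does not use the update $\theta_N$. If a single square does not close the identity, we would add the residual $\|\nabla f(y_N)-c'\nabla f(x_{N-1})\|^2$ and solve for $c'$ symbolically.

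Telescoping then gives $V_N\le 0$, which is \eqref{eq:case-ogm-rate}.
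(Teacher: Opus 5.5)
There is a genuine gap, and it sits exactly at the step you flagged as the main obstacle. With $a_k=4\tau_N\theta_k^2$ and coefficient $\tau_NL$ on $\|z_{k+1}-x_\star\|^2$, your potential is $V_k=2\tau_N\bigl(\Phi_k-\frac L2\|x_0-x_\star\|^2\bigr)$. Here $\Phi_k=2\theta_k^2\bigl(f(x_k)-f_\star-\frac1{2L}\|\nabla f(x_k)\|^2\bigr)+\frac L2\|z_{k+1}-x_\star\|^2$ is the classical Kim--Fessler potential. The base and interior steps are therefore exact identities with zero remainder; note that the base step also needs $\mathcal I_f(x_0,x_1)$.

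For $N\ge2$, however, the terminal step cannot close. Put $a=a_{N-1}=2\theta_{N-1}^2/(2\theta_{N-1}^2+1)<1$. Direct expansion gives
\[
V_N-V_{N-1}-a\,\mathcal I_f(x_{N-1},y_N)-(1-a)\,\mathcal I_f(x_\star,y_N)
=-\frac1{2L}\|\nabla f(y_N)\|^2+(1-a)\langle\nabla f(y_N),y_N-x_\star\rangle-\tau_NL\|z_N-x_\star\|^2 .
\]
The vectors $y_N-x_\star$ and $z_N-x_\star$ are independent combinations of $x_0-x_\star,\nabla f(x_0),\dots,\nabla f(x_{N-1})$: their $\nabla f(x_{N-1})$-coefficients are $-1/L$ and $-2\theta_{N-1}/L$. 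So you may take $z_N=x_\star$, $y_N-x_\star=u\neq0$ and $\nabla f(y_N)=\epsilon u$. The right-hand side is then positive for small $\epsilon>0$, so it is not minus a sum of squares.

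Subtracting further squares such as $\|\nabla f(y_N)-c'\nabla f(x_{N-1})\|^2$ cannot repair this. The obstruction is a cross term with $y_N-x_\star$, while $V_{N-1}$ contains no $\|y_N-x_\star\|^2$. Even if you allow all six interpolation inequalities among $x_{N-1},y_N,x_\star$, the zero coefficient of $\|y_N-x_\star\|^2$ forces zero weight on $\mathcal I_f(x_\star,y_N)$ and $\mathcal I_f(x_\star,x_{N-1})$. Matching function values then forces $a_{N-1}\ge1$, i.e., $\tau_N\ge1/(4\theta_{N-1}^2)$.

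Rescaling to $a_k=\lambda\theta_k^2$ does not help. The interior steps then force the coefficient $\lambda L/4$ on $\|z_{k+1}-x_\star\|^2$, and the base step tested with $f\equiv0$ forces $\tau_N\ge\lambda/4$. Your scheme can therefore only deliver the Kim--Fessler bound $LR^2/(4\theta_{N-1}^2)$, which is strictly weaker than the claimed $LR^2/(2(2\theta_{N-1}^2+1))$. Your assertion that completing the square produces the $+1$ in the denominator is not correct. The heuristic that routing $f(y_{k+1})$ through $f(x_k)-\frac1{2L}\|\nabla f(x_k)\|^2$ makes the $y$-variables disappear is precisely where the finite-horizon slack is lost.

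The missing idea is that $V_{N-1}$ must carry a $\|y_N-x_\star\|^2$ term to pair with $\langle\nabla f(y_N),y_N-x_\star\rangle$. In the paper the terminal weights are $\theta_{N-1}^2/(\theta_{N-1}^2+1)$ on $\mathcal I_f(x_{N-1},y_N)$ and $1/(\theta_{N-1}^2+1)$ on $\mathcal I_f(x_\star,y_N)$, so the first weight is again below $1$. The remainder is then
\[
\frac1{2L}\Bigl\|\nabla f(y_N)-\frac{L}{\theta_{N-1}^2+1}(y_N-x_\star)\Bigr\|^2+\frac{L}{4(\theta_{N-1}^2+1)}\Bigl\|z_N-x_\star-\frac{y_N-x_\star}{\theta_{N-1}^2+1}\Bigr\|^2 .
\]
Propagating the needed $y$-term backward is what forces the rest of the paper's construction:
\begin{enumerate}
\item a two-vector state $\boldsymbol w_k=(z_{k+1}-x_\star,\,y_{k+1}-x_\star)$ with a full $2\times2$ matrix $\mathbf Q_k$;
\item weights on $f(x_k)$ equal to $\theta_k^2(2\theta_{N-1}^2-\theta_k^2)/[\theta_{N-1}^2(2\theta_{N-1}^2-\theta_k^2+1)]$ rather than multiples of $\theta_k^2$;
\item genuine remainders $L\alpha_k\|s_k\|^2+L\beta_k\|t_k\|^2$ in every interior step, whose coefficients must be shown positive;
\item a nonzero base square $S_0$.
\end{enumerate}
None of this is available in the one-vector ansatz, so the proposal as written does not prove the theorem.
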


\begin{proof}
See Appendix~\ref{app:case-ogm-proof} for the proof.
\end{proof}

The bound in Theorem~\ref{thm:case-ogm} matches the lower bound in
\citet[Proposition~4.1]{KimFessler2017_convergence} and is therefore tight.

\subsection{FGM and \texorpdfstring{FISTA}{FISTA} with rational coefficients}
\label{sec:rational-coefficients-case-study}

We now consider the conjectures in
\citet[Table~1]{TaylorHendrickxGlineur2017_exacta}
for FGM and FISTA with rational coefficients.
For FGM, the common update \eqref{eq:case-ofgm} uses the parameters
\begin{align}
 \theta_k&=\frac{k+2}{2}\quad(k\geq0),\qquad \eta=1.
 \tag{FGM-rational}
 \label{eq:case-theta-linear}
\end{align}
It is worth noting that
\citet[Section~4.2]{ThomsenUpadhyayaGoujaudDieuleveutTaylor2026}
investigate sparsity patterns in numerical PEP proof certificates for this FGM bound.
Theorem~\ref{thm:case-sfgm} establishes this conjectured rate.

\begin{theorem}[{FGM with rational coefficients; conjecture in Table~1 of \citet{TaylorHendrickxGlineur2017_exacta}}]
\label{thm:case-sfgm}
Let \(f\in\mathcal F_L\).
Fix an integer \(N\geq1\) and \(R\geq0\), and assume
\(\norm{x_0-x_\star}\leq R\).
Then \ref{eq:case-theta-linear} satisfies
\begin{equation}
 f(y_N)-f(x_\star)\leq\frac{2LR^2}{(N+2)(N+3)}.
 \label{eq:case-sfgm-rate}
\end{equation}
\end{theorem}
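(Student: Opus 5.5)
The plan is to mirror the Lyapunov template used for Theorem~\ref{thm:case-ofgm-primary}, now specialized to the linear parameters \(\theta_k=(k+2)/2\), \(\eta=1\).

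\textbf{Reformulation.} With \(x_0=y_0=z_0\), the relation \(x_{k}=(1-1/\theta_k)y_k+(1/\theta_k)z_k\) shows that \(z_k-x_k=\theta_k\,(\ldots)\) is a momentum direction. The target constant is \(\tau_N=2/((N+2)(N+3))\). Since \(\sum_{j=0}^{N-1}\theta_j\) grows like \(N^2/4\), I expect \(\tau_N\) to take the form \(1/(2D_0)\) with \(D_0\) a quadratic in \(N\). Concretely \(D_0=(N+2)(N+3)/4\), which suggests weights \(a_k\) quadratic in \(k\), for instance \(a_k\) proportional to \(\theta_k^2\) or \(\theta_k(\theta_k+\tfrac12)\). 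These weights would be fixed by solving the PEP numerically for several \(N\) and fitting rational functions of \(k\) and \(N\).

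\textbf{Lyapunov function.} Define \(V_0=0\) and \(V_N=f(y_N)-f(x_\star)-\tau_N L\norm{x_0-x_\star}^2\). For \(1\le k\le N-1\), set
\begin{equation*}
 V_k=a_k\bigl(f(x_k)-f(x_\star)\bigr)-\tau_N L\norm{x_0-x_\star}^2-\frac{a_k}{2L}\norm{\nabla f(x_k)}^2+L\boldsymbol w_k^\top \vQ_k\boldsymbol w_k,
\end{equation*}
with \(\boldsymbol w_k=[z_{k+1}-x_\star;\,y_{k+1}-z_{k+1}]\) as in the proof outline of Theorem~\ref{thm:case-ofgm-primary}, and \(\vQ_k\) diagonal.

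\textbf{Decrease step.} For each \(k\), I will write
\begin{equation*}
 V_{k+1}-V_k=\lambda_{k,k+1}I(x_k,x_{k+1})+\lambda_{\star,k+1}I(x_\star,x_{k+1})-(\text{one or two squares}).
\end{equation*}
Expanding both sides in the basis \(\{x_k-x_\star,\,z_k-x_\star,\,\nabla f(x_k),\,\nabla f(x_{k+1})\}\) and the function values turns this identity into coefficient matching. The function-value coefficients force \(\lambda_{\star,k+1}=a_{k+1}-\lambda_{k,k+1}\) and \(\lambda_{k,k+1}=a_k\). The quadratic coefficients then determine \(\vQ_{k+1}\) from \(\vQ_k\) and the square terms. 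With rational \(\theta_k\), every coefficient is a rational function of \((k,N)\), so each identity can be checked in SymPy. Nonnegativity of the \(\lambda\)'s, of the square weights, and of the diagonal of \(\vQ_k\) for \(0\le k\le N-1\) then reduces to polynomial sign checks on integer ranges.

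\textbf{Boundary steps.} The first step \(V_1\le V_0=0\) uses \(x_0=z_0\) and absorbs the \(\tau_N\norm{x_0-x_\star}^2\) term. The last step \(V_N\le V_{N-1}\) uses \(y_N=x_{N-1}-\nabla f(x_{N-1})/L\) together with \(f(y_N)\le f(x_{N-1})-\frac1{2L}\norm{\nabla f(x_{N-1})}^2\), which follows from \(I(y_N,x_{N-1})\le 0\) plus a square. The value of \(\tau_N\) is exactly what makes the final residual quadratic form in \(z_N-x_\star\) and \(y_N-z_N\) nonpositive. Telescoping gives \(V_N\le V_0=0\), which is \eqref{eq:case-sfgm-rate}.

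\textbf{Main obstacle.} The hard part is the \(N\)-dependence of \(\vQ_k\) and of the square weights. Unlike in Theorem~\ref{thm:case-ofgm-primary}, the FGM-rational rate is not of the simple sum-of-\(\theta\) form, so the certificate is likely not "stationary" in \(k\). I would first guess \(\vQ_k\) from numerical certificates at \(N=5,\dots,10\), fitting denominators of the form \((N-k+c)\) and \((N+2)(N+3)\). I would then verify by symbolic identity. If a rank-one square does not suffice, I would allow a two-term sum of squares involving \(\nabla f(x_k)-\nabla f(x_{k+1})\) and \(\nabla f(x_{k+1})\), and check that the resulting \(2\times2\) residual is positive semidefinite via its diagonal and determinant as polynomials in \(k\) and \(N\).
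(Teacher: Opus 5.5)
\textbf{The ansatz fails for $N\ge5$.} Your plan imports the FGM ansatz of Theorem~\ref{thm:case-ofgm-primary}. That ansatz has $-\frac{a_k}{2L}\norm{\nabla f(x_k)}^2$, a \emph{diagonal} $\vQ_k$ in the basis $\boldsymbol w_k=[z_{k+1}-x_\star;\,y_{k+1}-z_{k+1}]$, and decrements built from $a_k\mathcal I_f(x_k,x_{k+1})+b_{k+1}\mathcal I_f(x_\star,x_{k+1})$. For FGM-rational this ansatz cannot produce a proof once $N\ge5$, and no choice of squares repairs it.

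The obstruction is tightness. The Huber instance of Lemma~\ref{lem:case-smooth-lower} attains the bound, so in any valid certificate every residual and every square must vanish along its trajectory. Hence each slack matrix must annihilate the Huber vector $(z_{k+1}-x_\star,\;y_{k+1}-z_{k+1},\;\nabla f(x_{k+1})/L)\propto(\pi_k,\kappa_k,1)$, where $\pi_k=[(N+2)(N+3)-(k+1)(k+4)]/4$ and $\kappa_k=k(k+3)/8$.

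In your template these conditions are a linear recursion, run backward from the terminal step, and they determine everything uniquely:
\begin{itemize}
\item $b_N=8/(N^2+5N+10)$, and the $a_k$ coincide with the paper's;
\item $(\vQ_k)_{11}=(1-a_k)/(2\pi_k)$ and $(\vQ_k)_{22}=\bigl(\tfrac{(k+1)(k+2)}{N(N+1)}-a_k\bigr)/(2\kappa_k)$;
\item the interior slack is forced to be $LA_k\norm{z_{k+1}-x_\star-\pi_k\nabla f(x_{k+1})/L}^2+LB_k\norm{y_{k+1}-z_{k+1}-\kappa_k\nabla f(x_{k+1})/L}^2$, with $B_k=(\vQ_k)_{22}-\bigl(\tfrac{k+1}{k+3}\bigr)^2(\vQ_{k+1})_{22}$.
\end{itemize}
At $N=5$ one has $a_1=1/5$ and $a_2=16/43$. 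This gives $(\vQ_1)_{22}=0$, $(\vQ_2)_{22}=12/1075$, and hence $B_1=-3/1075<0$. In fact $B_1<0$ for every $N\ge5$, although the template does close for $N=3,4$. So $V_2\le V_1$ cannot be certified in your template.

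\textbf{What is missing, and what the paper does.} The missing freedom is the off-diagonal entry of $\vQ_k$. After moving one square, the paper's certificate has exactly your shape but with a full symmetric matrix. For $2\le k\le N-2$,
\[
V_k-S_k=a_k\bigl(f(x_k)-f(x_\star)\bigr)-\tau_NL\norm{x_0-x_\star}^2-\frac{a_k}{2L}\norm{\nabla f(x_k)}^2+L\boldsymbol w_{k+1}^\top\vQ_{k+1}\boldsymbol w_{k+1},
\]
where the paper's state is $\boldsymbol w_{k+1}=[z_{k+1}-x_\star;\,x_{k+1}-x_\star]$. This form is generally non-diagonal in your basis.

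The paper uses the extra degree of freedom per step as follows:
\begin{itemize}
\item it imposes $\vQ_k\boldsymbol\xi_k=\boldsymbol\zeta_k$, which is the tightness condition;
\item it adds a rank-one correction that kills one cross term, yielding a two-square slack;
\item it then does substantial work for positivity: the polynomial $P_{m,\ell}$ for $\beta_k$, and $p_4,p_9$ at the terminal index.
\end{itemize}
Your proposal supplies none of this. It gives no closed forms and no sign proofs, only a promise to fit and check.

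\textbf{Two further problems.} First, the last step is misdescribed. The descent inequality is $\mathcal I_f(x_{N-1},y_N)\le0$; your $I(y_N,x_{N-1})\le0$ instead bounds $f(y_N)$ from below. Moreover, the descent inequality alone would force $a_{N-1}=1$ and leave $\frac1{2L}\norm{\nabla f(y_N)}^2$ in the terminal slack with no cross term to cancel it. That is incompatible with tightness on the Huber instance, where $\nabla f(y_N)\neq0$. The last decrement must instead be $a_{N-1}\mathcal I_f(x_{N-1},y_N)+b_N\mathcal I_f(x_\star,y_N)-S_{N-1}$.

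Second, the sign conditions you list are the wrong ones. What must be nonnegative are the square weights ($A_k$, $B_k$ and the boundary ones), not the diagonal of $\vQ_k$; for instance $(\vQ_1)_{22}\le0$ for $N\ge5$ is harmless in itself. Also, $a_k$ is not quadratic in $k$. It is the rational function
\[
a_k=\frac{(k+1)\{2N(N+1)(k+4)-k(k+2)(k+3)\}}{N(N+1)(2N^2+10N+4-k^2-7k)}.
\]
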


\begin{proof}
See Appendix~\ref{app:case-sfgm-proof} for the proof.
\end{proof}

{Combining this theorem with Lemma~\ref{lem:case-smooth-lower}
shows that \eqref{eq:case-sfgm-rate} is attained and therefore sharp.}

We next consider FISTA with rational coefficients for a composite objective
\(F=f+g\). Starting from \(x_0=y_0\), the method uses, for \(0\leq k<N\),
\begin{equation}
\begin{aligned}
 y_{k+1}&=\operatorname{prox}_{g/L}
   \left(x_k-\frac1L\nabla f(x_k)\right),\\
 x_{k+1}&=y_{k+1}+\frac{k}{k+3}(y_{k+1}-y_k),
\end{aligned}
\tag{FISTA-rational}
\label{eq:case-fista}
\end{equation}
We report \(y_N\). This is the FPGM1 variant in
\citet[Table~1]{TaylorHendrickxGlineur2017_exacta}; the following theorem
resolves its nonsmooth composite conjecture.

\begin{theorem}[{FISTA with rational coefficients; nonsmooth FPGM1 conjecture
in Table~1 of \citet{TaylorHendrickxGlineur2017_exacta}}]
\label{thm:case-fista}
Let \(f\in\mathcal F_L\), and let
\(g:\mathcal H\to\mathbb R\cup\{+\infty\}\) be proper, closed, and convex.
Set \(F=f+g\). Fix an integer \(N\geq1\) and \(R\geq0\), and suppose
\(0\in\nabla f(x_\star)+\partial g(x_\star)\) and
\(\norm{x_0-x_\star}\leq R\).
Then \ref{eq:case-fista} satisfies
\begin{equation*}
 F(y_N)-F(x_\star)\leq\frac{2LR^2}{N^2+5N+2}.
\end{equation*}
\end{theorem}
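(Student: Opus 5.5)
We prove the bound with the same Lyapunov template used for the other theorems in this section. First we fix the interpolation inequalities available in the composite setting. For $f\in\mathcal F_L$ we use $I(x,y)\le 0$ from \eqref{eq:convex_smooth_ineq}, evaluated at the points $x_\star, x_0,\dots,x_{N-1}$.

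For $g$ we cannot use gradients directly, so we introduce subgradients through the prox step. Set $y_{k+1}=\operatorname{prox}_{g/L}(x_k-\nabla f(x_k)/L)$. Then
\[
s_{k+1}:=L(x_k-y_{k+1})-\nabla f(x_k)\in\partial g(y_{k+1}),
\]
and $s_\star:=-\nabla f(x_\star)\in\partial g(x_\star)$. The convexity inequalities $g(u)\ge g(v)+\langle s_v,u-v\rangle$ among $\{x_\star,y_1,\dots,y_N\}$ serve as the nonsmooth interpolation residuals.

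To pass from $f$ at the points $x_k$ to $F$ at the points $y_{k+1}$, we use the standard descent-lemma combination. Adding the $L$-smooth upper bound of $f(y_{k+1})$ around $x_k$ to the subgradient inequality for $g$ yields the usual FISTA inequality
\[
F(y_{k+1})\le F(u)+L\langle x_k-y_{k+1},x_k-u\rangle-\tfrac L2\|x_k-y_{k+1}\|^2 .
\]
This is valid for any $u$ and is itself a nonnegative combination of the residuals above. Running \workflowname's stages 1–4 on \eqref{eq:case-fista} with the FPGM1 rate will tell us which pairs $(u,v)$ carry nonzero multipliers. We expect the pairs to be consecutive $(y_k,y_{k+1})$ together with the star pairs $(x_\star,y_{k+1})$, as for FISTA.

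Next we postulate the Lyapunov function. Set $V_0=0$, and for $1\le k\le N-1$ let
\[
V_k=a_k\bigl(F(y_k)-F(x_\star)\bigr)-\tau_N L\|x_0-x_\star\|^2+L\,\boldsymbol w_k^\top\vQ_k\boldsymbol w_k+(\text{a gradient-square correction}),
\]
where $\tau_N=(N^2+5N+2)^{-1}/2$. Let $V_N=F(y_N)-F(x_\star)-\tau_N L\|x_0-x_\star\|^2$. For $\boldsymbol w_k$ we use the natural momentum variables, namely $z_k-x_\star$ with $z_k:=y_{k-1}+\tfrac{k+2}{2}(y_k-y_{k-1})$, together with a difference such as $y_k-z_k$ or $x_k-y_{k+1}$.

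The weights $a_k$ will be read off from the numerical certificates, and we expect them to be quadratic in $k$, roughly $a_k\propto (k+1)(k+4)/(\text{something linear in } N-k)$. The $N$-dependence should be analogous to the factor $2N-k$ in the gradient descent example. This $N$-dependence is needed because FPGM1's rate $2/(N^2+5N+2)$ is not the "anytime" rate $2/(N+1)^2$. The entries of $\vQ_k$ will be fitted symbolically in stage 5 as rational functions of $k$ and $N$.

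The core step is to verify, for each $k$, an identity of the form
\[
V_{k+1}-V_k=\sum(\text{multipliers})\times(\text{residuals})-\sum d_i\|\text{linear forms}\|^2 .
\]
Since the $f$- and $g$-residuals are nonpositive and the multipliers and $d_i$ are nonnegative, this identity gives $V_{k+1}\le V_k$. Both sides are quadratic in finitely many vectors ($x_\star$, $z_k$, $y_k$, $y_{k+1}$, $\nabla f(x_k)$, $s_{k+1}$) plus linear terms in the function values. We therefore match coefficients in SymPy, treating $k$ and $N$ as symbols. The boundary steps $V_0\to V_1$ and $V_{N-1}\to V_N$ are handled separately, since at these steps $x_0=y_0$ and the final quadratic must vanish. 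Telescoping then gives $V_N\le V_0=0$, which is exactly the claimed bound.

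We expect the main obstacle to be the positivity of the multipliers and of the $d_i$ for all $1\le k\le N-1$. These quantities will be rational functions of $(k,N)$ whose numerators are polynomials of degree two or three. Our plan is to substitute $N=k+1+m$ with $m\ge0$ and then show that every numerator coefficient is nonnegative in $(k,m)$.

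If a square term turns out to be indefinite, we will first try to enlarge $\boldsymbol w_k$ by one vector, for example $s_{k+1}$. Separately tracking the $f$ and $g$ parts of $F$ (as in the PGM Lyapunov function, with different weights on $f(x_k)$ and $g(x_k)$) also gives the extra freedom needed to obtain an SOS decomposition.
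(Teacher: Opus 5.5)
What you have written is a procedure for searching for a certificate, not a proof. The weights, the matrices $\mathbf{Q}_k$, the decrement identities, the square remainders and every sign condition are deferred to what the workflow ``will tell us'' or what will be ``fitted symbolically''. For this theorem the explicit certificate is the whole mathematical content, so as written nothing is established. The one constant you do fix is also wrong, presumably a slip. $V_N\le 0$ must deliver $\tau_N=2/(N^2+5N+2)$, whereas your $\tau_N=(N^2+5N+2)^{-1}/2$ is four times smaller. With that normalization the target inequality $V_N\le V_0=0$ is false: the constrained linear instance of Lemma~\ref{lem:case-fista-lower} attains $2LR^2/(N^2+5N+2)$. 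So no identity of the form you describe can exist, whatever the other coefficients are.

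Your template and inequality family also differ from what actually works. You place $f$ at $y_k$ with the same weight as $g$, and you obtain $f(y_{k+1})$ from the descent-lemma ``FISTA inequality''. That inequality is not a nonnegative combination of the residuals you listed. You sample $f$ only at $x_\star,x_0,\dots,x_{N-1}$, so $f(y_{k+1})$, including $f(y_N)$ in the performance measure, never appears. It also uses only $\nabla f(x_k)$, so it discards the gradient-difference term of smooth interpolation. The paper's certificate instead keeps $f$ at the oracle points and weights $f$ and $g$ differently:
\[
V_k=a_k\bigl[f(x_k)-f(x_\star)\bigr]+\widetilde{a}_k\bigl[g(y_k)-g(x_\star)\bigr]-\tau_NL\|x_0-x_\star\|^2-2\langle\nabla f(x_k),\mathbf{d}_k^\top\mathbf{w}_k\rangle+L\,\mathbf{w}_k^\top\mathbf{Q}_k\mathbf{w}_k .
\]
Here $\mathbf{w}_k=\bigl((y_k-x_\star)/D_k,\;x_k-x_\star\bigr)$ and $\widetilde{a}_k=(D_k+1)a_{k-1}/D_k\neq a_k$. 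Note the gradient--position cross term, where you proposed a gradient-square correction.

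Its decrement uses $a_k\mathcal{I}_f(x_k,x_{k+1})+b_{k+1}\mathcal{I}_f(x_\star,x_{k+1})$. It adds $g$-residuals on exactly the pairs you guessed, $(y_k,y_{k+1})$ and $(x_\star,y_{k+1})$. $f$ is evaluated at a $y$-point only in the terminal step, at $y_N$. The three-square remainder contains $\tfrac{La_k}{2}\|s_k\|^2$, where $s_k$ involves $(\nabla f(x_k)-\nabla f(x_{k+1}))/L$. This square absorbs an otherwise indefinite cross term, and it does so using exactly the $\tfrac{1}{2L}\|\nabla f(x_k)-\nabla f(x_{k+1})\|^2$ part of $\mathcal{I}_f(x_k,x_{k+1})$ that your FISTA inequality throws away. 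You give no reason your restricted family contains a certificate attaining the tight constant. Your fallback of splitting $f$ and $g$ points in the right direction but is not carried out.

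Your positivity plan, substituting $N=k+\ell$ and checking coefficient signs, is what the paper does. However, the numerator certifying $\alpha_{k-1}>0$ is a degree-seven polynomial in $(k,\ell)$, not a quadratic or cubic.
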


\begin{proof}
See Appendix~\ref{app:case-fista-proof} for the proof.
\end{proof}

{The bound in Theorem~\ref{thm:case-fista} is tight, matching the
lower bound established in Lemma~\ref{lem:case-fista-lower} using the
constrained linear example of
\citet[Section~4.2.2, Table~1 and the following paragraph]{TaylorHendrickxGlineur2017_exacta}.}


\section{Conclusion}
\label{sec:conclusion}

{
We present \workflowname, an AI-assisted workflow for first-order optimization, built on the domain-specific machinery of PEP and the systematic framework of \citet{YoonSuhNguyenYingMa2026_systematic} for converting numerical PEP certificates into analytic Lyapunov analyses.
\workflowname\ equips AI agents with the \pepflow library, which abstracts the domain-specific context and language into an accessible interface.
Our particular Lyapunov proof structures support simple and rapid verification of the proposed proofs. %
This enables users to efficiently search for verified proofs with \workflowname\ using only a high-level understanding of the workflow, as demonstrated by our resolution of conjectures on Nesterov's FGM and other methods.
Future work could extend \workflowname\ to support a broader range of proof structures and thereby cover more algorithms and forms of convergence analysis.
We believe that \workflowname\ is an important step toward exploiting AI for making systematic and human-understandable advances in optimization.
}

\IfFileExists{iclr2027/peppy_refs.bib}{%
  \bibliography{iclr2027/peppy_refs,iclr2027/peppy}%
}{%
  \bibliography{peppy_refs,peppy}%
}
\IfFileExists{iclr2027/peppy.bst}{%
  \bibliographystyle{iclr2027/peppy}%
}{%
  \bibliographystyle{peppy}%
}

\clearpage
\appendix

\section{Notation}
\label{app:notation}

We write \(\mathcal H\) for a nonzero real Hilbert space, equipped with
inner product \(\langle\cdot,\cdot\rangle\) and its induced norm
\(\norm{\cdot}\).
For \(L>0\), \(\mathcal F_L\) denotes the class of differentiable convex
functions \(f:\mathcal H\to\mathbb R\) that are \(L\)-smooth, i.e.,
\(\norm{\nabla f(x)-\nabla f(y)}\leq L\norm{x-y}\) for all
\(x,y\in\mathcal H\).
For a proper, closed, convex function
\(g:\mathcal H\to\mathbb R\cup\{+\infty\}\) and \(\gamma>0\), we write
\[
\operatorname{prox}_{\gamma g}(v)
:=\operatorname*{arg\,min}_{w\in\mathcal H}
\left\{\gamma g(w)+\frac12\norm{w-v}^2\right\}.
\]

{For positive integers \(m,n\), \(\mathbb R^{m\times n}\) denotes the space of real \(m\times n\) matrices. We write \(\mathbb S^n\) and \(\mathbb S_+^n\) for the spaces of real symmetric and real symmetric positive semidefinite \(n\times n\) matrices, respectively. For a matrix \(A\), \(A^\transpose\), \(\operatorname{Tr}(A)\), \(\operatorname{rank}(A)\), and \(\operatorname{col}(A)\) denote its transpose, trace, rank, and column space. The \(i\)-th standard basis vector in a Euclidean space of dimension at least \(i\) is denoted by \(\ve_i\).}

For minimization problems, we assume that the objective attains its
minimum and denote a minimizer by \(x_\star\).
{We use \(\star\) as an index for an optimal point and reserve \(N\) for the terminal iteration, or horizon, at which the performance measure is evaluated.}
For endpoint coefficient formulas, we adopt the convention \(0/0=0\).


\section{Prior Work}
\label{app:prior-work}

\subsection{Performance estimation problems}
\label{app:prior-work-pep}

The performance estimation problem (PEP) framework formulates the worst-case
performance of a prescribed optimization algorithm as an optimization problem
over admissible functions and iterates. \citet{DT14} introduced this approach,
and \citet{taylor2017smooth} developed exact smooth strongly convex interpolation conditions
that yield finite-dimensional semidefinite formulations.
Extensions cover composite convex optimization
\citep{TaylorHendrickxGlineur2017_exacta}, Bregman geometry
\citep{dragomir2022optimal}, operator splitting
\citep{RyuTaylorBergelingGiselsson2020_operator}, and methods involving linear
operators \citep{BousselmiHendrickxGlineur2024_interpolation}.

Dual feasible solutions supply weights on interpolation inequalities and
positive semidefinite matrix certificates that together certify a performance
bound. Numerical solutions thus provide concrete guidance for analytic proof
construction. Turning these certificates into formulas valid for arbitrary
iteration horizons remains a separate mathematical task. This is the step
that \workflowname\ addresses through a sequence of numerical, structural,
and symbolic computations.

\subsection{Discoveries enabled by PEP}
\label{app:prior-work-pep-discoveries}

PEP has supported both sharper analyses of existing methods and the design of
new methods. Early examples include tight bounds for constant-step GD
\citep{DT14} and the optimized gradient method (OGM)
\citep{KimFessler2016_optimized},
{whose worst-case objective bound for the horizon-dependent
secondary output \(x_N\) was later matched by an oracle lower bound
\citep{Drori2017_exact}.}
Subsequent developments include OGM-G for reducing the gradient norm
\citep{KimFessler2021_optimizing}, the information-theoretic exact method
(ITEM) for smooth strongly convex minimization \citep{taylor2023optimal},
and OptISTA for composite optimization
\citep{JangGuptaRyu2025_computerassisted}.
The branch-and-bound PEP framework \citep{DVR24} extends this design process
to nonconvex quadratically constrained quadratic formulations in which the
algorithm coefficients themselves are optimization variables.

Beyond minimization, this line of work includes the accelerated proximal point
method \citep{Kim2021_accelerated}, tight Halpern iteration bounds
\citep{Lieder2021_convergence,ParkRyu2022_exact}, and accelerated methods for
minimax problems such as EAG and FEG
\citep{YoonRyu2021_accelerated,LeeKim2021_fast}.
Numerical relations between algorithm coefficients have also motivated H-duality,
connecting function-value and gradient-norm guarantees
\citep{KimOzdaglarParkRyu2023_timereversed}, with extensions to minimax and
fixed-point problems \citep{YoonKimSuhRyu2024_optimal}.
These results illustrate the mathematical insight required to interpret PEP
outputs, and motivate the structured proof-search tasks used in \workflowname.
\par

\subsubsection{Tight analysis of fixed algorithms}
Performance estimation has also shown that obtaining a tight rate for a
prescribed algorithm can be difficult even when that algorithm is not minimax
optimal.  For gradient descent (GD), \citet{DT14} derived a tight analytical
bound in the smooth convex setting, and \citet{taylor2017smooth} subsequently used exact
smooth strongly convex interpolation to formulate finite-dimensional PEPs and
numerically conjecture worst-case rates under several performance criteria.
It was only recently that \citet{kim2025proofexactconvergencerate} proved the exact
function-value convergence rate of GD for smooth strongly convex optimization,
settling conjectures posed in the aforementioned works.  In parallel,
\citet{rotaru2024exactworstcaseconvergencerates} obtained a complete exact
analysis of the minimum gradient norm for all constant stepsizes across convex,
strongly convex, and weakly convex classes. Their proof requires inequalities
coupling nonconsecutive iterates.  This progression from numerical evidence to
analytic proofs for GD itself illustrates that the tight analysis of a fixed,
suboptimal method is a nontrivial problem. Conventional convergence rate analysis does not in
general give the sharp finite-horizon constant or a matching worst-case
instance.
\par

{%
\subsubsection{Recent evidence and exact analyses for accelerated methods}
In their study of implicit primal--dual guarantees,
\citet[Section~6.2]{GrimmerWang2026_implicit} note that an exactly optimal PEP
certificate for the Nesterov FGM variant they analyze had not been determined
analytically, and they instead use a feasible, nearly optimal certificate.
Using chained, iteration-dependent quadratic Lyapunov functions over prescribed
state and oracle-evaluation vectors,
\citet[Section~3.2 and Figure~3]{UpadhyayaTaylorBanertGiselsson2025_autolyap}
obtain numerical finite-horizon constants for Nesterov's FGM that are smaller
than the classical rates in their comparison.
Identifying Grimmer and Wang's \(t_n\) with our \(\theta_n\), their initialization of
\(z_1\) is the first \(z\)-update in \eqref{eq:case-ofgm}, and their subsequent
recurrences agree with \eqref{eq:case-ofgm} and
\eqref{eq:case-theta-recursive}. Their criterion
\(f(x_N)-f(x_\star)\), under \(\norm{x_0-x_\star}\leq D\), therefore
corresponds to the FGM output \(x_N\) in Conjecture~5 of
\citet{taylor2017smooth}, rather than the output \(y_N\) in Conjecture~4.
Theorem~\ref{thm:case-ofgm-secondary} gives an analytic Lyapunov proof of the
corresponding tight finite-horizon bound for every \(N\).
\citet{ThomsenUpadhyayaGoujaudDieuleveutTaylor2026} investigate the
sparsification of PEP multipliers to obtain simpler proof certificates.
\par
}

{%
For the primary OGM output \(y_N\),
\citet[Theorem~4.1]{KimFessler2017_convergence} prove the upper bound
\(LR^2/(4\theta_{N-1}^2)\), while their Proposition~4.1 constructs an
instance attaining \(LR^2/[2(2\theta_{N-1}^2+1)]\). These expressions agree
asymptotically but leave a finite-horizon gap. Theorem~\ref{thm:case-ogm}
proves the latter expression as an upper bound for the same output \(y_N\),
closing the finite-horizon gap.
This result is separate from the already exact guarantee
\(LR^2/(2\theta_N^2)\) for OGM's secondary last iterate \(x_N\), which uses
the horizon-dependent terminal parameter
\(\theta_N=(1+\sqrt{1+8\theta_{N-1}^2})/2\) and is attained in
\citet[Theorem~5.1]{KimFessler2017_convergence}.
\par
}

\subsection{Computer-assisted Lyapunov analysis}
\label{app:prior-work-lyapunov}

A Lyapunov proof expresses progress through a scalar quantity \(V_k\) whose
decrease implies the desired convergence guarantee. Identifying an appropriate
form of \(V_k\) is often the central difficulty.
Integral quadratic constraints \citep{LessardRechtPackard2016_analysis} and
semidefinite searches over potential functions
\citep{TaylorVanScoyLessard2018_lyapunov,TaylorBach2019_stochastic}
provide systematic approaches to this task.
\citet{UpadhyayaBanertTaylorGiselsson2024_automated} give necessary and
sufficient conditions for a Lyapunov inequality to exist within a prescribed
family. These approaches allow automated searches once the candidate state
vectors and functional form have been chosen.
\citet{ThomsenUpadhyayaGoujaudDieuleveutTaylor2026} use a candidate-lemma SDP to identify Lyapunov potentials and their one-step decrements for proximal point and accelerated proximal point methods. %

A complementary approach starts from a PEP certificate and seeks a concise
Lyapunov representation of it.
The history of accelerated methods illustrates
this need: matrix-based certificates for methods such as OGM and OGM-G were
subsequently developed into potential-function or geometric arguments
\citep{dAspremontScieurTaylor2021_acceleration,LPR21,ParkParkRyu2023_factorsqrt2}.
\citet[Section~6]{GoujaudDieuleveutTaylor2023_fundamental} discuss forming
potentials from partial sums of the terms in a PEP proof.
Building on this perspective,
\citet{YoonSuhNguyenYingMa2026_systematic} develop a systematic characterization
and procedure for converting PEP proofs into Lyapunov analyses while preserving
their certificates, including the search for a concise representation in
suitable basis vectors.

That conversion framework provides the theoretical basis for \workflowname.
The present paper organizes its application into tasks that an AI agent can
execute: implementing the PEP, inferring analytic certificate formulas,
constructing and simplifying \(V_k\), and checking its difference and boundary
conditions. The resulting workflow complements searches over predefined
Lyapunov families by using the structure of the PEP certificate to guide
the choice of representation.
\par

\subsection{Software for performance estimation}
\label{app:prior-work-software}

Software packages make these computational approaches accessible.
\texttt{PESTO} \citep{TaylorHendrickxGlineur2017_performance} provides a MATLAB
toolbox for PEP, while \texttt{PEPit} \citep{pepit2024} offers an open-source
Python interface for modeling and solving such problems.
\texttt{AutoLyap} \citep{UpadhyayaTaylorBanertGiselsson2025_autolyap} supports
computer-assisted searches over Lyapunov templates.
Its fixed-horizon numerical certificates do not give explicit formulas for the
certificate coefficients or a single analytic proof valid for every horizon. Numerical
agreement with a tight FGM rate sequence does not by itself identify the same
algorithm, reported iterate, or Lyapunov function as in the conjectures of
\citet{taylor2017smooth}, and therefore does not settle them. The approaches
are complementary: \texttt{AutoLyap} searches over a prescribed Lyapunov
family in a general-purpose state and evaluation basis, while \workflowname\
starts from a tight PEP certificate and seeks a compressed basis adapted to
the algorithm, summation-free formulas, and proofs of the required identities
and coefficient signs valid for every horizon.

\workflowname\ uses \pepflow\ \citep{SuhYingJiangNguyen2025_pepflow} as its
computational backend. Its tagging of iterates, function values, and
gradient or operator evaluations makes the resulting vectors and matrices
accessible in mathematical notation.
This is useful when manipulating numerical certificates and recovering
analytic expressions. The certificate-processing routines in
\texttt{lyapunov\_utils.py}, discussed in
Section~\ref{appendix:lyapunov_utils}, support the conversion steps.
\workflowname\ adds reusable agent instructions, intermediate state records,
and numerical and symbolic checks around these operations.
\par

\subsection{AI for mathematics}
\label{app:prior-work-ai-math}

AI-assisted mathematical research includes discovering conjectures, searching
for explicit constructions, and developing proofs.
\citet{Davies2021_advancing} combine machine learning with attribution methods
to identify relationships among mathematical objects and guide mathematicians
toward conjectures and theorems in topology and representation theory.
FunSearch \citep{RomeraParedes2024_funsearch} combines a language model with
program evaluation and evolutionary search, producing constructions for the
cap-set problem and heuristics for online bin packing.
\citet{feng2026autonomousmathematicsresearch} introduce Aletheia, a mathematical research agent that iteratively generates, verifies, and revises solutions and is capable of solving research-level problems in mathematics.
\citet{tsoukalas2026advancingmathematicsresearchaidriven} use agents to leverage LLMs to generate formal proofs in languages like Lean to successfully solve Erd{\char174}s problems.

A related direction couples learned search with symbolic reasoning.
AlphaGeometry \citep{Trinh2024_alphageometry} uses a language model to propose
auxiliary geometric constructions and a symbolic engine to derive their
consequences.
LeanDojo \citep{Yang2023_leandojo} provides tools for interacting with Lean
and develops a retrieval-augmented prover that selects relevant premises
from a mathematical library.
The feedback in these systems takes different forms, including numerical
evaluation, symbolic deduction, and proof-assistant checking, depending on
the task and its representation.

\workflowname\ applies this general approach to convergence analysis.
Its search is organized around numerical PEP certificates and their
conversion into analytic Lyapunov proofs.
The agent seeks formulas valid beyond the finite horizons used in numerical
experiments and checks the resulting identities symbolically, together with
the inequalities and boundary arguments required for the claimed rate.
The central proof objects are a Lyapunov function, its one-step difference,
and the endpoint estimates that together yield the convergence analysis.
\par

\citet{kim2026domainspecificharnessendtoendautomation} integrate the PEP
framework with LLMs. Their approach differs from \workflowname\ in its level
of granularity. Given a user-specified problem class, their 
domain-specific harness \texttt{AutoOpt} uses the branch-and-bound performance estimation
programming (BnB-PEP) framework to automatically construct a nonconvex
quadratically constrained quadratic program (QCQP). Solving this QCQP provides
numerical guidance about an optimal algorithm for the problem class and its
convergence-rate proof. The harness uses high-level prompts to an LLM to carry
out the typically human step of converting these numerical outputs into an
analytic result. The resulting convergence analysis is then formally verified
in Lean and left for humans to interpret and verify.

\subsection{AI for Optimization Theory}
\label{app:prior-work-ai-optimization}

There have been various works that have leveraged LLMs to produce research-level results in optimization, specifically first-order methods. \citet{jang2026pointconvergencenesterovsaccelerated} leverage GPT-5 Pro to assist in proving the point convergence of Nesterov's accelerated gradient method. {\citet{Ma2026MatrixScaling} uses a proof generated by ChatGPT 5.5 and verified by the author to establish convergence of BDRS under a verifiable condition when it is viewed as a matrix scaling algorithm.} \citet{ma2026lowerboundstepsizebasedacceleration} improve the lower bound for the last-iterate convergence rate of gradient descent with predetermined nonnegative stepsize schedules to show that stepsize schedules alone cannot accelerate GD to the convergence rate achieved by Nesterov's accelerated method through using GPT-5.6 Sol Pro. \citet{ye2026improvedgradientdescentlower} further improve the lower bound established by \citet{ma2026lowerboundstepsizebasedacceleration} for GD with assistance from GPT-5.6 Sol Pro. The aforementioned works are a non-exhaustive representation of works in convergence analysis of first-order methods that employ LLMs to achieve state-of-the-art results.


\section{Omitted details of Section~\ref{sec:workflow_block4}} \label{appendix:workflow_detail}

{
Here, we briefly explain Step~4, which is implemented under the hood in \workflowname, using GD as a representative example. For a more detailed explanation, we refer the readers to \cite{YoonSuhNguyenYingMa2026_systematic}.
}

\subsection{{Mathematical construction: the GD example}}
\label{appendix:workflow_detail_gd}

The PEP proof for gradient descent (GD) $x_{k+1} = x_k - \frac{1}{L} \nabla f(x_k)$ for convex and $L$-smooth $f$ is equivalent to establishing the \textit{Lyapunov analysis} $V_{k+1} \le V_k$ for each
$k=0,\dots,N-1$,
where we define
\begin{equation}
\label{eq:gd_lyap_definition}
    V_k = \sum_{i=1}^{k} \lambda_{i-1,i} \convexsmoothineq{x_{i-1}}{x_{i}} + \sum_{i=0}^{k} \lambda_{\star,i} \convexsmoothineq{x_\star}{x_i}
     - \sum_{i=0}^{k} d_i \norm{s_i}^2
\end{equation}
where
\begin{align*}
\lambda_{i-1,i} &= \frac{i}{2N + 1 - i}, \quad i = 1, \dots, N,
\qquad
\lambda_{\star,i} =
\begin{cases}
\lambda_{0,1} & i = 0 \\
\lambda_{i,i+1} - \lambda_{i-1,i} & i = 1, \dots, N-1 \\
1 - \lambda_{i-1,i} & i = N,
\end{cases} \\
d_i &= \frac{L}{2} \frac{4Ni+2N-2i^2+1}{(2N-i)^2}, \quad i = 0, \dots, N-1 \\
d_N &= \frac{L}{2}, \\
s_i &= \frac{1}{2N+1-i} (x_i-x_\star) - \frac{1}{L} \nabla f(x_i) \\
\convexsmoothineq{x_{i-1}}{x_{i}} &= \pr{ f(x_{i}) - f(x_{i-1}) +  \inner{\nabla f(x_{i}) }{x_{i-1} - x_{i}} + \frac{1}{2L} \norm{ \nabla f(x_{i-1}) - \nabla f(x_{i}) }^2 } \le 0 \\
\convexsmoothineq{x_\star}{x_i} &= \left( f(x_{i})-f(x_{\star}) + \langle \nabla f(x_{i}) , x_{\star} - x_{i} \rangle + \frac{1}{2 L} \| \nabla f(x_{i}) \|^2 \right) \le 0 .
\end{align*}
Chaining the inequalities, we obtain $V_N \le \cdots \le V_1 \le 0$, and this would imply $f(x_N) - f(x_\star) \le \frac{LR^2}{4N+2}$.
Stage~4 checks whether each $V_k$ can be simplified into a summation-free formula written in terms of a small number of vectors, not increasing with $k$.
Direct computation, either numerical or symbolic, quickly reveals that the function value terms in \eqref{eq:gd_lyap_definition} simplify to
\[
    \lambda_{k,k+1} (f(x_k) - f(x_\star)) = \frac{k+1}{2N-k} (f(x_k) - f(x_\star)) .
\]
However, systematically checking that the sum of all inner product and squared norm terms can be simplified is a more involved process.
Below, we outline how this is performed in our code \texttt{gd\_recover\_example\_lyap.ipynb}.

Define the sum of only the inner product terms, excluding the function values, within \eqref{eq:gd_lyap_definition} as
\begin{align}
\label{eq:gd_tilde_V_k_definition}
    \begin{aligned}
            \widetilde{V}_k
             :&= \sum_{i=1}^{k} \lambda_{i-1,i}
             \pr{ \inner{\nabla f(x_{i}) }{x_{i-1} - x_{i}} + \frac{1}{2L} \norm{ \nabla f(x_{i-1}) - \nabla f(x_{i}) }^2 }  \\
             &\phantom{=}
        +\sum_{i=0}^{k} \lambda_{\star,i} \left( \langle \nabla f(x_{i}) , x_{\star} - x_{i} \rangle + \frac{1}{2 L} \| \nabla f(x_{i}) \|^2 \right) \\ &\phantom{=}
             - \sum_{i=0}^{k}
             d_i
             \left\| \frac{1}{2N+1-i} (x_i-x_\star) - \frac{1}{L} \nabla f(x_i) \right\|^2 .
        \end{aligned}
\end{align}
The Gram formulation used in PEP allows us to rewrite this quantity in terms of tangible matrices that we can handle with linear algebraic tricks.
Without loss of generality, assume $x_\star = \mathbf{0}$.
We let
\begin{align*}
    \vP = \begin{bmatrix} x_0 & \nabla f(x_0) & \cdots & \nabla f(x_N) \end{bmatrix} \in \reals^{d \times (N+2)} ,
\end{align*}
and define $\vx_0 = \ve_1$ and $\vg_i = \ve_{i+2}$ for $i=0,\dots,N$ so that $\vP\vx_0 = x_0$ and $\vP\vg_i = \nabla f(x_i)$.
Define $\vx_1, \dots, \vx_N$ by
\begin{align*}
    \vx_{i+1} = \vx_i - \frac{1}{L} \vg_i = \vx_0 - \frac{1}{L}\sum_{j=0}^i \vg_j
\end{align*}
for $i=0,\dots,N-1$ and let $\vx_\star = \mathbf{0}$, so that $\vP \vx_i = x_i$ for all $i\in \{0,\dots,N,\star\}$.
Now consider the Gram matrix $\vG = \vP^\transpose \vP \in \mathbb{S}_+^{N+2}$.
Then we can express the inner product terms as, for example, for $i=1,\ldots,k$,
\begin{align*}
\inprod{\nabla f(x_i)}{x_{i-1} - x_i}
&= \inprod{\vP\vg_i}{\vP(\vx_{i-1} - \vx_i)} \\&
 = \vg_i^\transpose \vP^\transpose \vP (\vx_{i-1} - \vx_i) \\
&= \vg_i^\transpose \vG (\vx_{i-1} - \vx_i)
 = \operatorname{Tr}\left( \vG \vg_i (\vx_{i-1} - \vx_i)^\transpose \right) .
\end{align*}
The last expression can be symmetrized: $\operatorname{Tr}\left( \vG \vg_i (\vx_{i-1} - \vx_i)^\transpose \right) = \operatorname{Tr}\left( \vG \cdot \frac{1}{2} \left( \vg_i (\vx_{i-1} - \vx_i)^\transpose + (\vx_{i-1} - \vx_i) \vg_i^\transpose \right) \right)$.
Applying the similar trick to all terms in \eqref{eq:gd_tilde_V_k_definition}, we can express $\widetilde{V}_k = \operatorname{Tr}(\vG\vV_k)$ with
\[
\begin{aligned}
\vV_k
:&=
\sum_{i=1}^{k}
\lambda_{i-1,i}
\left(
    \vB_{i-1,i}
    + \frac{1}{2L} \vC_{i-1,i}
\right) +\sum_{i=0}^{k}
\lambda_{\star,i}
\left(
    \vB_{\star,i}
    + \frac{1}{2L}\vC_{\star,i}
\right) \\
& -\sum_{i=0}^{k}
d_i
\left(
    \frac{1}{(2N+1-i)^2} \vA_{\star,i}
    + \frac{2}{L(2N+1-i)}\vB_{\star,i}
    + \frac{1}{L^2} \vC_{\star,i}
\right)
\end{aligned}
\]
where $\vA_{i,j}, \vB_{i,j}, \vC_{i,j} \in \mathbb{S}^{N+2}$ are the symmetric matrices
\begin{align*}
    \vA_{i,j} &= (\vx_i - \vx_j)(\vx_i - \vx_j)^{\transpose} \\
    \vB_{i,j} &= \frac{1}{2} \left( \vg_j(\vx_i - \vx_j)^{\transpose} + (\vx_i - \vx_j) \vg_j^{\transpose} \right) \\
    \vC_{i,j} &= (\vg_i - \vg_j)(\vg_i - \vg_j)^{\transpose} .
\end{align*}
Now, if the matrices $\vV_k$ have constant low rank $r$ across $k$, then this indicates that each $\widetilde{V}_k$ can be expressed in terms of $r$ vectors that are linear combinations of $x_0, \nabla f(x_0), \dots, \nabla f(x_N)$.
The process of identifying the low-rank structure of $\vV_k$ is based on the following linear algebraic argument.
If $\text{rank}(\vV_k) = r$ and $\mathcal{B}_k = \{\vv_{k,1},\ldots,\vv_{k,r}\}$ is a basis of the column space $\operatorname{col}(\vV_k)$, then there exists $\vC_k \in \reals^{r\times r}$ such that
\begin{align}
\label{eq:V_k_matrix_decomposition}
    \vV_k = \begin{bmatrix} \vv_{k,1} & \cdots \vv_{k,r} \end{bmatrix} \vC_k \begin{bmatrix} \vv_{k,1} & \cdots & \vv_{k,r} \end{bmatrix}^\transpose .
\end{align}
Then $\widetilde{V}_k$ can be expressed as inner products of $\vP\vv_{k,1}, \dots, \vP\vv_{k,r}$, and the associated coefficients are the entries of $\vC_k$.
The process of finding $\cB_k$ and $\vC_k$ is as follows.

\begin{enumerate}
\item Compute $r = \text{rank}(\vV_k)$. For GD, $r=3$.

\item Build the set of candidate vectors as
\[
    \cT = \{\vx_0, \dots, \vx_N, \vx_\star, \vg_0, \dots, \vg_N \} , \quad \cS = \cT \cup \{\vu - \vv: \vu, \vv \in \cT \} .
\]
Then, use the \texttt{vectors\_in\_column\_space} function to determine the candidate vectors that belong to $\operatorname{col}(\vV_k)$.
Denote the %
candidates as $\widetilde{\cB}_k=\{\vv\in\cS:\vv\in\operatorname{col}(\vV_k)\}$.

\item Use the
{\texttt{find\_basis\_with\_sparsest\_coefficients}}
function to search over linearly independent subsets of $\widetilde{\cB}_k$ with $r$ elements and select the one that yields the sparsest $\vC_k$ in the decomposition~\eqref{eq:V_k_matrix_decomposition}.
This will be our $\cB_k$.
In case of GD, we find $\cB_k = \{ \vx_0 - \vx_\star, \vx_{k+1} - \vx_\star, \vg_k \}$.

\item Identify the closed form of the Lyapunov coefficients, i.e., the entries of $\vC_k$.
This yields the analytic expression of $\widetilde{V}_k$, and thus completes the simplification of $V_k$.
For GD, we find
$$
\begin{aligned}
V_k &= \frac{k+1}{2N-k}\bigl(f(x_k)-f(x_{\star})\bigr)
-\frac{L}{4N+2}\|x_0-x_{\star}\|^2 \\
&
-\frac{k+1}{2L(2N-k)}\|\nabla f(x_k)\|^2
+\frac{L(2N-2k-1)}{2(2N-k)^2}\|x_{k+1}-x_{\star}\|^2
\end{aligned}
$$

\end{enumerate}


\subsection{{Implementation in \texorpdfstring{\texttt{lyapunov\_utils.py}}{lyapunov\_utils.py}}}
\label{appendix:lyapunov_utils}

The role of \texttt{lyapunov\_utils.py} is to expose the
linear-algebraic structure of a numerical candidate Lyapunov function in terms of
mathematical vectors that an agent and a user can inspect. The
inner-product part of a PEP scalar is stored as a matrix in a common Gram
coordinate system. The module accesses this matrix and the coordinates of
candidate vectors through \texttt{ExpressionManager} and then returns a short
basis and a coefficient matrix for the same quadratic form. In other words, the module
connects the quadratic part of \(V_k\) to expressions involving recognizable
iterates, gradients, and auxiliary vectors.

\begin{figure}[H]
\centering
\resizebox{\linewidth}{!}{\begingroup
\definecolor{basisfigpurple}{HTML}{6D3FA8}
\definecolor{basisfigslate}{HTML}{334155}
\begin{tikzpicture}[
  x=1cm,y=1cm,
  font=\sffamily\fontsize{9.5}{11}\selectfont,
  text=black,
  every node/.append style={execute at begin node={\hyphenpenalty=10000\exhyphenpenalty=10000}},
  card/.style={
    rounded corners=8pt,draw=basisfigpurple,fill=basisfigpurple!7,
    line width=0.9pt,minimum width=3.1cm,minimum height=2.85cm,
    text width=2.86cm,align=center,inner xsep=3pt,inner ysep=3pt
  },
  flow/.style={-{Latex[length=2.0mm,width=1.6mm]},
    line width=0.85pt,draw=basisfigslate!75},
  band/.style={
    rounded corners=8pt,draw=basisfigpurple,fill=basisfigpurple!7,
    line width=0.75pt,text width=13.15cm,align=center,
    inner xsep=5pt,inner ysep=4pt
  }
]
  \node[card] (matrix) at (0,0) {%
    {\bfseries Gram coefficient\\matrix}\\[2pt]
    $\mathbf V_k\in\mathbb S^m$\quad
    $\left[\begin{smallmatrix}
      \cdot&\cdot\\
      \cdot&\cdot
    \end{smallmatrix}\right]$\\[2pt]
    Quadratic part\\
    $\widetilde V_k=\operatorname{Tr}(\mathbf G\mathbf V_k)$
  };
  \node[card] (filter) at (3.48,0) {%
    {\bfseries Vectors in the\\column space}\\[2pt]
    Iterates, gradients\\and their differences\\[2pt]
    $b\in\operatorname{col}(\mathbf V_k)$
  };
  \node[card] (basis) at (6.96,0) {%
    {\bfseries Select an\\independent subset}\\[2pt]
    $\mathbf B_k=[\,b_1\ \cdots\ b_r\,]$\\[2pt]
    $r=\operatorname{rank}(\mathbf V_k)$\\[2pt]
    Spans $\operatorname{col}(\mathbf V_k)$
  };
  \node[card] (coefficients) at (10.44,0) {%
    {\bfseries Find a symmetric\\coefficient matrix}\\[2pt]
    $\mathbf C_k\in\mathbb S^r$\\[2pt]
    Verify\\reconstruction\\[2pt]
    $\mathbf V_k=\mathbf B_k\mathbf C_k\mathbf B_k^{\!\top}$
  };

  \draw[flow] (matrix.east) -- (filter.west);
  \draw[flow] (filter.east) -- (basis.west);
  \draw[flow] (basis.east) -- (coefficients.west);

  \node[fit=(matrix)(filter)(basis)(coefficients),inner sep=0pt] (cards) {};
  \node[band,draw=black!45,fill=black!5,anchor=north,minimum height=1.12cm] (gd)
    at ($(cards.south)+(0,-0.20)$) {%
    {\bfseries GD example: tagged vectors}\\[3pt]
    $\mathbf P\mathbf B_k=
      [\,x_0-x_\star\quad x_{k+1}-x_\star\quad\nabla f(x_k)\,]$
  };
  \draw[draw=basisfigslate!60,densely dotted,line width=0.6pt]
    (basis.south) -- (gd.north -| basis.south);

  \node[anchor=north,font=\sffamily\fontsize{9}{10.5}\selectfont,
    align=center,text width=13.15cm,inner sep=0pt]
    at ($(gd.south)+(0,-0.20)$)
    {Numerical reconstruction guides the later symbolic identity checks.};
\end{tikzpicture}
\endgroup}
\caption{{Recovering an interpretable representation of the quadratic part.
Given candidates spanning the target column space, the routines select a basis
and recover a small symmetric coefficient matrix whose reconstruction is
checked numerically.
Function value and constant terms are retained in the complete Lyapunov function.
The GD example uses the notation of Section~\ref{appendix:workflow_detail_gd}.}}
\label{fig:basis_recovery}
\end{figure}

\subsubsection{Preserving mathematical names}
The vector-tagging and display mechanisms belong to the underlying
domain-specific library. The utilities preserve this interface by accepting
and returning the original \texttt{Vector} objects, rather than replacing
them with anonymous coordinate columns. Thus, when a basis is returned in a
notebook, its entries retain their user-facing \texttt{repr} strings, and
the rows and columns of the associated coefficient matrix can be read in
that same order. This link between Gram coordinates and tagged vectors is
what makes the numerical basis search useful for proof development.

\subsubsection{Finding an interpretable basis}
The main basis-search path begins with
\texttt{vectors\_in\_column\_space}. Given a quadratic scalar and a list of
mathematically motivated candidates, it uses an SVD projection test to keep
the vectors that lie in the column space of the scalar's inner-product
matrix. The user can then call
\texttt{find\_basis\_with\_sparsest\_coefficients}, which enumerates
independent subsets whose size is the numerical rank of the candidate
coordinates and chooses a basis whose
coefficient matrix has the largest number of numerically zero entries.
Specified \texttt{fixed\_vectors} can be required to appear in every tested
basis. The simpler \texttt{select\_independent\_subset} routine greedily
removes dependent vectors in input order. It is useful when the desired
candidate ordering is already known, but it does not perform the sparsity
search.

\subsubsection{Reconstructing the quadratic form}
For a proposed independent basis
\(\boldsymbol v=(v_1,\ldots,v_r)\),
\texttt{find\_symmetric\_coefficient\_matrix} checks that the column space
of the target matrix is contained in \(\operatorname{span}(\boldsymbol v)\)
and solves for the symmetric matrix \(C\) satisfying
\begin{equation*}
    V^{\mathrm{quad}}=\boldsymbol v^{\top}C\boldsymbol v.
\end{equation*}
This reconstruction is the basic numerical check used both inside the
sparse-basis search and after an agent proposes a basis directly. A second,
more specialized routine,
\texttt{complete\_basis\_with\_sparsifying\_last\_vector}, applies when all
but one basis direction have been fixed. It finds the missing numerical
direction in the column space and shifts it by the fixed directions, when
possible, so that the last off-diagonal block of \(C\) vanishes. This option
is useful when the candidate list does not already contain a convenient
final vector. The resulting coordinate vector still must be interpreted as
a mathematical expression.

\subsubsection{From fixed numerical instances to an analytic formula}
These routines analyze a specified numerical instance. They do not
automatically align bases across iteration indices or infer a closed-form,
\(k\)-dependent pattern. In the \workflowname\ workflow, the agent repeats
the basis and reconstruction calculations for several values of \(k\) and
\(N\), compares the tagged vectors and the entries of \(C\), and conjectures
their analytic formulas. The final stage then verifies the resulting
Lyapunov identity, boundary relations, and coefficient signs symbolically.
Keeping this division explicit is important: the module supplies
inspectable numerical evidence, while the proof establishes the identity
for arbitrary indices and horizons.

\subsubsection{Auxiliary LDL decomposition}
\texttt{ldl\_decompose\_with\_reversed\_basis} is a specialized helper for a
named slack matrix whose basis is ordered chronologically. It reverses the
basis, performs an LDL decomposition, propagates any factorization
permutation to the vector names, and can print the labeled factors. This is
useful for inspecting certain slack-matrix decompositions, but it is an
auxiliary diagnostic rather than the main basis-recovery mechanism used by
the current Lyapunov workflow.

\subsubsection{Role in the \texorpdfstring{\workflowname}{Peppy} architecture}
The module's
column-space, basis-selection, and coefficient-reconstruction routines alongside \pepflow's Gram representation and vector tags are crucial components of Stage 4 of \workflowname.
Without these components, the matrix of the quadratic part of \(V_k\) is opaque, and identifying a compact mathematical representation
requires case-specific coordinate calculations. Together they let the
agent test proposed vectors against this matrix, retain a basis
that a human can read, and recover the small matrix that must later be
expressed and verified symbolically.

The computations remain subject to numerical tolerances and to the
candidate vectors proposed by the agent. In particular, the sparse-basis
routine performs an exhaustive search within the supplied candidate family
and warns when the number of subsets becomes large. It neither guarantees a
globally preferred mathematical basis nor proves the inferred formulas.
Its purpose is to make the transition from the matrix of the quadratic
part to a readable candidate Lyapunov function systematic and understandable.
\par


\section{Additional details for Section~\ref{section:workflow-implementation}}
\label{appendix:implementation_detail}

This sections documents the details of the commands used in \workflowname\ to start with an
algorithm description and elicit analytic Lyapunov analysis.
{Each block specifies a mathematical object to construct, checks on
that object, and the evidence passed to the next block.}

\subsection{{Modular execution and state}}

The five commands are executed sequentially{, with recovery loops when needed}:
\[
\begin{gathered}
\texttt{/pep-implement}
\rightarrow
\texttt{/pep-full-proof}
\rightarrow
\texttt{/lyap-define}
\\
\rightarrow
\texttt{/lyap-vectors}
\rightarrow
\texttt{/lyap-closed-form}.
\end{gathered}
\]
{The Codex plugin provides skill entry points and accompanying workflow
instructions under \texttt{plugins/peppy-workflow/skills/}. For Claude,}
each command is written in a Markdown file in the directory \texttt{.claude/commands}.

The first command, \texttt{/pep-implement}, starts by parsing the information
provided by the user{ into the fields listed below. It creates the executable
setup and initializes the Lyapunov notebook.}
Each subsequent command typically reads the state generated by the previous
block, performs a specific discovery or verification task, writes a new state
file, and updates the same Lyapunov notebook.

\begin{tcolorbox}[
  enhanced,
  breakable,
  colback=gray!5,
  coltext=.,
  colframe=gray!30,
  boxrule=0.5pt,
  arc=2pt,
  left=10pt,
  right=10pt,
  top=10pt,
  bottom=10pt,
  fontupper=\sffamily
]

\renewcommand{\arraystretch}{1.45}
\setlength{\tabcolsep}{5pt}
\setlength{\parskip}{0pt}
\newcommand{\peppyparserow}[1]{%
  \par\penalty0\vskip-\arrayrulewidth\nointerlineskip
  \noindent
  \begin{tabularx}{\linewidth}{|>{\raggedright\arraybackslash}p{0.34\linewidth}|>{\raggedright\arraybackslash}X|}
  \hline
  #1
  \hline
  \end{tabularx}\par
}

\vbox{\hsize=\linewidth
{\large\bfseries Step 1 --- Parse}

\vspace{0.3em}
\hrule
\vspace{1.2em}
From \code{\$ARGUMENTS}, identify:

\vspace{1em}

\noindent
\begin{tabularx}{\linewidth}{|>{\raggedright\arraybackslash}p{0.34\linewidth}|>{\raggedright\arraybackslash}X|}
\hline
\rowcolor{gray!5}
\textbf{Field} & \textbf{Description} \\
\hline
\code{ALGO\_NAME}
&
\code{snake\_case} identifier, e.g. \code{heavy\_ball}
\\
\hline
\end{tabularx}
\par
}
\peppyparserow{
\rowcolor{gray!4}
\code{PROBLEM\_TYPE}
&
\code{smooth\_convex} / \code{smooth\_strongly\_convex} /
\code{monotone\_operator} / \code{composite}
\\
}
\peppyparserow{
\code{OBJ\_TAG}
&
\code{"f"} (functions) or \code{"A"} (operators)
\\
}
\peppyparserow{
\rowcolor{gray!4}
\code{PARAMS}
&
All symbolic parameters (e.g. \code{L}, \code{alpha}, \code{beta}, \code{R})
\\
}
\peppyparserow{
\code{PERFORMANCE\_METRIC}
&
e.g. \code{f(x\_N) - f(x\_star)} or \code{\textbar\textbar A(x\_N)\textbar\textbar\texttwosuperior}
\\
}
\peppyparserow{
\rowcolor{gray!4}
\code{INITIAL\_CONDITION}
&
e.g. \code{\textbar\textbar x\_0 - x\_star\textbar\textbar\texttwosuperior <= R\texttwosuperior}
\\
}
\peppyparserow{
\code{CONJECTURED\_RATE}
&
Known bound or \code{"unknown"}
\\
}
\end{tcolorbox}

The following table briefly summarizes the tasks performed by each command
and their main outputs, {together with the checks before completion.
Numerical checks apply to sampled instances. Block~5 establishes the
analytic Lyapunov proof.}

\begingroup
\small
\renewcommand{\arraystretch}{1.12}
\setlength{\tabcolsep}{3pt}
\setlength{\LTleft}{0pt}
\setlength{\LTright}{0pt}
\begin{longtable}{@{}>{\raggedright\arraybackslash}p{\dimexpr.22\linewidth-\tabcolsep\relax}
>{\raggedright\arraybackslash}p{\dimexpr.28\linewidth-2\tabcolsep\relax}
>{\raggedright\arraybackslash}p{\dimexpr.23\linewidth-2\tabcolsep\relax}
>{\raggedright\arraybackslash}p{\dimexpr.27\linewidth-\tabcolsep\relax}@{}}
\toprule
\textbf{{Block / }command} & \textbf{Tasks} & \textbf{Main output} & \textbf{{Checks before completion}} \\
\midrule
\endfirsthead
\toprule
\textbf{{Block / }command} & \textbf{Tasks} & \textbf{Main output} & \textbf{{Checks before completion}} \\
\midrule
\endhead
\midrule
\multicolumn{4}{r}{Continued on the next page}\\
\endfoot
\bottomrule
\endlastfoot

\leavevmode{\textbf{B1}} \newline \texttt{/pep-implement}
& Parse the algorithm information. Implement the algorithm.
Solve PEP problem for various $N=1,\dots,7$ and identify {a conjectured} convergence rate.
Initialize the Lyapunov notebook.
& Setup module. Numerical sweep data.
Block 1 state \texttt{\{ALGO\_NAME\}\allowbreak\_b1.json}.
& \leavevmode{Check problem encoding and successful solves. Label the candidate rate as numerical evidence.} \\
\addlinespace[7pt]

\leavevmode{\textbf{B2}} \newline \texttt{/pep-full-proof}
& Find sparsity patterns of the dual variables.
Find a decomposition of the PSD matrix corresponding to the sum-of-squares term in \eqref{eq:gd_full_pep}.
Try to find closed-form expressions for the dual variables{ and the PSD matrix}.
& Closed-form \(\lambda\){ and \(S\), when available}.
{Verified numerical PEP certificate}.
{Separate numerical and analytic verification statuses.}
Block 2 state \texttt{\{ALGO\_NAME\}\allowbreak\_b2.json}.
& \leavevmode{Check the full numerical identity, dual signs, PSD slack, decomposition, and dense/relaxed bound consistency. Record the initial formula search.} \\
\addlinespace[7pt]

\leavevmode{\textbf{B3}} \newline \texttt{/lyap-define}
& Build Lyapunov partial sums \(V_k\) from the full-PEP certificate.
{Compute the rank profile of the matrices representing the quadratic parts of \(V_0, \dots, V_N\).}
& Partial sums \(V_k\), stored in the list \texttt{lyap}.
Rank profile \texttt{[r\_0, r\_1, ..., r\_N]}.
{Rank evidence across horizons.}
Block 3 state \texttt{\{ALGO\_NAME\}\allowbreak\_b3.json}.
& \leavevmode{Check decomposition, coverage, numerical step identities, and required signs. Confirm consistent interior rank at two or more horizons with fixed boundary treatment.} \\
\addlinespace[7pt]

\leavevmode{\textbf{B4}} \newline \texttt{/lyap-vectors}
& Find special vectors that yield a summation-free expression of \(V_k\).
Find basis patterns: special vector patterns indexed by \(k\).
Extract {and verify} coefficient matrices{. Optionally} infer closed-form coefficient patterns.
& Basis patterns.
Coefficient matrices and {available formulas or explicit unknowns}.
Block 4 state \texttt{\{ALGO\_NAME\}\allowbreak\_b4.json}.
& \leavevmode{Check independence of basis vectors and reconstruction of quadratic vector expressions and function value terms at every index, including exceptions at boundaries.} \\
\addlinespace[7pt]

\leavevmode{\textbf{B5}} \newline \texttt{/lyap-\allowbreak closed-form}
& \leavevmode{Derive unresolved coefficients and} assemble the final closed-form Lyapunov proof.
Verify the result symbolically.
& Final notebook.
Block 5 state \texttt{\{ALGO\_NAME\}\allowbreak\_b5.json}.
LaTeX theorem/proof.
& \leavevmode{Establish exact step, initial, and terminal identities and sign/domain conditions. Verify that the proof yields the stated convergence bound. Complete notebook delivery checks.} \\

\end{longtable}
\endgroup

\subsection{{Discovery and recovery using structure}}

\subsubsection{{From numerical certificates to analytic coefficients}}
{After Block~2's initial formula search, verified numerical certificates can
support Blocks~3 and~4 even if analytic formulas for dual variables are not identified yet.
Block~4 passes a structured, summation-free expression for the Lyapunov function $V_k$ with possibly unknown coefficients to Block~5. 
Block~5 derives missing analytic information and performs necessary symbolic checks of identities. 
}

\subsubsection{{Structural search using rank}}
{Block~3 uses rank profiles to propose groupings of inequalities and square terms.
Confirmation requires the same rank patterns at two or more horizons from the same type of grouping. 
}
{If a candidate grouping is unhelpful, the agent tries alternative groupings or square decompositions.
If the failure persists
the agent may return to Block~2 and start over using another set of proof certificates. 
Block~4 identifies a set of linear independent directions pertaining to the algorithm that is sufficient for expressing $V_k$, and checks if the Lyapunov identity can be reconstructed. 
If these checks fail, the search may return to Block~3.}

\begin{figure}[H]
\centering
\resizebox{\linewidth}{!}{\begingroup
\definecolor{rankpurple}{HTML}{6D3FA8}
\definecolor{rankslate}{HTML}{334155}
\definecolor{rankamber}{HTML}{F59E0B}
\begin{tikzpicture}[
  x=1cm,y=1cm,font=\sffamily\fontsize{9.5}{11.3}\selectfont,
  text=black,
  card/.style={rounded corners=8pt,draw=rankpurple,line width=.85pt,
    fill=rankpurple!7,align=center,inner sep=5pt},
  neutral/.style={card,draw=rankslate!45,fill=white},
  badge/.style={circle,fill=rankpurple,text=white,minimum size=6mm,
    inner sep=0pt,font=\sffamily\bfseries\fontsize{10}{11}\selectfont},
  flow/.style={-{Latex[length=2mm,width=1.5mm]},draw=rankslate!75,line width=.8pt},
  recovery/.style={flow,draw=rankslate!60,line width=.65pt,rounded corners=3pt},
  label/.style={fill=white,inner sep=2pt,font=\sffamily\fontsize{8.5}{10}\selectfont},
  cell/.style={draw=rankpurple!55,fill=rankpurple!9,rounded corners=1.4pt,
    minimum width=.36cm,minimum height=.36cm,inner sep=0pt,
    font=\sffamily\fontsize{9}{10}\selectfont},
  dimcell/.style={cell,draw=rankslate!25,fill=rankslate!5,text=rankslate!60},
  note/.style={font=\sffamily\fontsize{8.5}{10}\selectfont,align=center},
  title/.style={font=\sffamily\bfseries\fontsize{10}{11.5}\selectfont,anchor=west}
]
\node[title] at (0,3.15) {(a) Search over partial sums};
\node[neutral,minimum width=2.9cm,minimum height=3.65cm] (b2) at (1.45,0) {};
\node[badge,fill=black] at (1.45,1.35) {2};
\node[align=center,font=\sffamily\bfseries\fontsize{9}{10.5}\selectfont] at (1.45,.60)
  {Verified numerical\\certificates};
\node at (1.45,-.06) {$\lambda,\ S$};
\node[note] at (1.45,-.94) {Analytic formulas\\may remain unknown};

\node[card,minimum width=6.5cm,minimum height=4.1cm] (b3) at (7,0) {};
\node[badge] at (7,1.62) {3};
\node[font=\sffamily\bfseries\fontsize{9.5}{11.3}\selectfont] at (7,1.08) {Regroup inequalities and squares};
\node[note,text=rankslate] at (7,.63) {Illustrative ranks of the quadratic part};

\node[align=center,font=\sffamily\bfseries\fontsize{9}{10.5}\selectfont]
  at (4.55,-.035) {Grouping\\A};
\node[align=center,font=\sffamily\bfseries\fontsize{9}{10.5}\selectfont,text=rankpurple]
  at (4.55,-1.255) {Grouping\\B};
\foreach \h/\yy in {6/.20,8/-.27,6/-1.02,8/-1.49}{
  \node[anchor=east,note] at (6.30,\yy) {$N=\h$};
}
\foreach \k/\r in {0/*,1/3,2/4,3/5,4/4,5/3,6/*}{
  \node[dimcell] at ({6.70+.38*\k},.20) {$\r$};
}
\foreach \k/\r in {0/*,1/3,2/4,3/5,4/6,5/5,6/4,7/3,8/*}{
  \node[dimcell] at ({6.70+.38*\k},-.27) {$\r$};
}
\foreach \k in {0,...,6}{
  \ifnum\k=0 \node[dimcell] at ({6.70+.38*\k},-1.02) {$*$};
  \else\ifnum\k=6 \node[dimcell] at ({6.70+.38*\k},-1.02) {$*$};
  \else \node[cell] at ({6.70+.38*\k},-1.02) {$3$};\fi\fi
}
\foreach \k in {0,...,8}{
  \ifnum\k=0 \node[dimcell] at ({6.70+.38*\k},-1.49) {$*$};
  \else\ifnum\k=8 \node[dimcell] at ({6.70+.38*\k},-1.49) {$*$};
  \else \node[cell] at ({6.70+.38*\k},-1.49) {$3$};\fi\fi
}

\node[card,minimum width=3cm,minimum height=3.65cm] (b4) at (12.95,0) {};
\node[badge] at (12.95,1.35) {4};
\node[align=center,font=\sffamily\bfseries\fontsize{9}{10.5}\selectfont] at (12.95,.60)
  {Search basis\\vectors and verify};
\node[note] at (12.95,-0.7) {Numerical coefficients\\are known, but\\analytic formulas\\may remain unknown};
\draw[flow] (b2.east) -- (b3.west);
\draw[flow] (b3.east) -- (b4.west);
\draw[recovery,dashed] ([xshift=-.60cm]b3.north) -- (6.4,2.53)
  -- node[label,above] {New certificate if needed} (1.45,2.53) -- (b2.north);
\draw[recovery] (b4.north) -- (12.95,2.53)
  -- node[label,above] {Basis search or reconstruction fails} (7.6,2.53)
  -- ([xshift=.60cm]b3.north);
\node[note,anchor=north] at (7.2,-2.27)
  {Illustration of different Lyapunov constructions.\\
   Grouping~B has constant rank except for boundary entries $*$,\\representing an admissible grouping.};

\begin{scope}[yshift=-.6cm]
\node[title] at (0,-3.21) {(b) From the recovered structure to an analytic proof};
\node[card,minimum width=3.08cm,minimum height=2.60cm] (template) at (1.54,-5.10) {};
\node[note,text=rankpurple,font=\sffamily\bfseries] at (1.54,-4.14) {From Block 4};
\node[align=center,font=\sffamily\bfseries] at (1.54,-4.91) {Verified structure\\(ansatz) of $V_k$};
\node[note] at (1.54,-5.77) {Unknown coefficient\\functions};

\node[card,minimum width=3.08cm,minimum height=2.60cm] (derive) at (5.34,-5.10) {};
\node[font=\sffamily\bfseries\fontsize{9}{10.5}\selectfont] at (5.34,-4.27) {Derive coefficients};
\node[note] at (5.34,-5.37) {Coefficient identities\\Recurrences\\Endpoint conditions\\etc.};

\node[card,minimum width=3.08cm,minimum height=2.60cm] (verify) at (9.14,-5.10) {};
\node[font=\sffamily\bfseries] at (9.14,-4.27) {Verify the proof};
\node[note] at (9.14,-5.37) {Descent identity at\\interior iteration\\and endpoints,\\Certificate signs};

\node[card,minimum width=3.08cm,minimum height=2.60cm] (result) at (12.94,-5.10) {};
\node[font=\sffamily\bfseries] at (12.94,-4.27) {Analytic theorem};
\node at (12.94,-4.84) {$V_{k+1}\le V_k$};
\node[note] at (12.94,-5.58) {Closed form of $V_k$\\and convergence\\bound};
\draw[flow] (template.east) -- (derive.west);
\draw[flow] (derive.east) -- (verify.west);
\draw[flow] (verify.east) -- (result.west);
\draw[draw=rankpurple!45,dashed,line width=.6pt]
  (3.80,-3.69) -- (14.48,-3.69);
\node[label,text=rankpurple,font=\sffamily\bfseries] at (9.14,-3.69) {Block 5};
\end{scope}
\end{tikzpicture}
\endgroup}
\caption{{Discovery of Lyapunov proof structure and analytic completion.
(a)~Stratifying the proof structure given by certificates via different groupings of inequalities and squares.
The displayed ranks are illustrative.
(b) Numerically verified Lyapunov structure can be passed to Block~5, where the unresolved analytic coefficients can be derived and verified to produce the final analytic theorem.}}
\label{fig:rank_structural_search}
\end{figure}

\subsection{{Verification and completion criteria}}

\subsubsection{{Numerical evidence and analytic proof}}
{While the final goal of the workflow is to derive an analytic Lyapunov proof, prior to Block~5 the proof may have only numerical evidence and not be completed in the symbolic form.
This completion occurs within Block~5.
At that point, $V_k$ is expressed in a simple form with possibly undetermined coefficients, and $V_k - V_{k+1}$ is written as a linear combination of specific inequalities and square terms, also with coefficients that are potentially not identified analytically.
Using this information, the AI agent determines the analytic formulas for the coefficients based on symbolic linear system solve, and when needed, checks for their well-definedness and positivity.
Together, these arguments establish $V_{k+1}\le V_k$, which then directly implies the final convergence bound.
}

\subsubsection{{Inspectable proof artifacts and delivery}}
The final result is a Jupyter notebook file that contains a LaTeX theorem
statement and proof outline, together with the detailed results of the whole
workflow.
Definitions/notations needed for the proof and executed symbolic checks remain visible in the notebook cells.
Details are as follows.

\begin{center}
\small
\renewcommand{\arraystretch}{1.12}
{%
\begin{tabularx}{\linewidth}{@{}>{\raggedright\arraybackslash}p{0.24\linewidth}>{\raggedright\arraybackslash}X@{}}
\toprule
\textbf{Notebook section} & \textbf{Contents} \\
\midrule
Problem and result & Algorithm and assumptions, followed by the theorem and proof outline. \\
Executable setup & Imports, function or operator definitions, and PEP construction. \\
Numerical evidence & Rate comparison, dense and relaxed solves, and certificate checks. \\
Lyapunov construction & Partial sums, rank profiles, and basis search. \\
Analytic verification & Coefficient reconstruction, closed-form $V_k$, and symbolic step, initial, and terminal identities. \\
\bottomrule
\end{tabularx}}
\end{center}

\subsection{{Artifacts and human interaction}}
{Block~1 creates} the output file
\[
\texttt{examples\_peppy/\{ALGO\_NAME\}/state/\{ALGO\_NAME\}\_b1.json},
\]
and initializes a Jupyter notebook
\[
\texttt{examples\_peppy/\{ALGO\_NAME\}/\{ALGO\_NAME\}\_example\_lyap.ipynb}.
\]
The \texttt{state/} directory contains information of block states and certificates, while 
additional helpers, exploratory evidence, and logs
are stored in \texttt{supplementary/},
with paths following the notebook destination.

{As additional guidance, \texttt{supplementary/} may accumulate
files that are unnecessary for distribution, so we used the auxiliary skill
\texttt{clean-peppy-artifacts} to clean up the released code.
Simple proofs can be read directly from the Jupyter notebook, while more
involved proofs can be easier to read as a LaTeX document and PDF generated
from the notebook using the optional \texttt{pdf-from-ipynb} skill.}
While the agent first attempts to handle the task autonomously, if it is unsuccessful,
the user may interact with the agent to supply it with alternative groupings or square decomposition.

\codeavailability
Examples of the theorem statements and proofs are provided in
\Cref{appendix:experiments}.

      \colorlet{experimentboxtext}{black}
\lstdefinestyle{experimentprompt}{
  basicstyle=\ttfamily\small\color{experimentboxtext},
  columns=fullflexible, breaklines=true, keepspaces=true,
  breakindent=0pt, breakautoindent=false, breakatwhitespace=true,
  showstringspaces=false
}
{%
\section{Omitted details of Section~\ref{sec:experiments}}\label{appendix:experiments}

We provide experimental details omitted from the main text, including the input prompts and generated proofs. 
For each tested algorithm, we report the complete theorem and proof outline produced by repetition~1 (the first run out of five), \emph{without editorial correction other than minimal typesetting and line breaks}.
The code, notebooks, and evaluation records for these experiments are available at:
\begin{center}
  {\color{blue}\url{https://github.com/pepflow-lib/PEPFlow/tree/e/sjw/peppy-supplementary-v1/examples_peppy/_evaluations}}
\end{center}

\subsection{Experimental setup and evaluation}\label{appendix:five-run-protocol}
\subsubsection{Execution settings}
We ran five repetitions for each of six conditions: PGM with and without
supplied SOS, OGM, BPPM, FEG, and Dual-FEG. Generation and independent evaluation
both used GPT-6 Astra with medium reasoning. Up to three AI agents (workers)
generated proofs concurrently, with each repetition completed before the next began.
Each generation attempt and evaluation had a limit of two hours and no token cap.
The experiment was launched with the following request:
}

\begin{commandbox}[unbreakable, coltext=experimentboxtext, listing options={style=experimentprompt}]
Use $evaluate-peppy with GPT-6 Astra and medium reasoning for both generation and evaluation.
Run all six conditions with 5 repetitions per condition, in parallel.
\end{commandbox}

{%
The recorded environment used Windows x86-64 and Python~3.11.13, with
\texttt{NumPy}~2.3.1, \texttt{SymPy}~1.14.0, and \texttt{CVXPY}~1.7.1. Numerical solves used \texttt{Clarabel}~0.11.1
with one numerical thread per worker.
Workers were assigned separate Python/Jupyter environments.

\subsubsection{Clean-room setting}
Here, a clean-room run starts in a fresh context with the selected problem
packet, approved generic framework code and workflow instructions, and
artifacts created within that run. Workers are instructed not to consult
existing target proofs, other repetitions, or other workers' results. BPPM receives its conjectured
rate, and PGM with supplied SOS receives a candidate SOS decomposition to verify.
The full inputs below specify this assistance. The following excerpt from
the worker instructions states the source restriction:
}

\begin{commandbox}[coltext=experimentboxtext, listing options={style=experimentprompt}]
Do not inspect, search for, open, quote, copy, or derive hints from any target-specific pre-existing result in this repository, another checkout, external sources, or any form of history.
\end{commandbox}

{%
The clean-room policy is based on instructions, rather than directly enforced filesystem isolation or control over prior model knowledge. Evaluators check each proof in a
separate context without other attempts' results.

\subsubsection{Success criteria}
A successful proof establishes the prescribed bound under the input assumptions for every integer $N\ge1$.
It need not use a particular Lyapunov function or SOS decomposition.
The evaluation checks the following evidence from each block:

\begin{center}
\small
\renewcommand{\arraystretch}{1.15}
\begin{tabularx}{\linewidth}{@{}l>{\raggedright\arraybackslash}X@{}}
\toprule
Block & Required checks \\
\midrule
B1 & Correct algorithm, assumptions, metric, and normalization. Valid numerical sweep and rate comparison. \\
B2 & Coherent full-PEP certificate, agreement of dense and relaxed bounds, identities for all scalar components, required dual signs and PSD conditions, and reconstruction of the slack matrix. \\
B3 & Decomposition into partial sums, coverage and increment identities, and consistent interior rank at two or more horizons with fixed boundary treatment. \\
B4 & Independent spanning bases and reconstruction of the full Lyapunov function, including function value terms, constants, and exceptional indices. \\
B5 & General step, initial and terminal identities, coefficient signs and domains, exceptional cases, and implication of the exact target bound. \\
\bottomrule
\end{tabularx}
\end{center}

Numerical verification on any fixed value of $N$ does not count as success.
Usage of an invalid identity or an unjustified inequality also prevents mathematical success. 
Missing verification and runtime errors are recorded
separately. 
Workflow completion additionally requires the notebook evidence, static checks, and execution from a clean kernel.

\subsubsection{Completion of interrupted verification}
Three runs were flagged during final notebook execution in Block~5 for
Python/Jupyter environment issues explained in the main text: BPPM repetition~2, FEG repetition~3,
and PGM without SOS repetition~2. Each run had already produced its proof before interruption.
After the experiment, the experimenter requested completion of
the omitted verification, treating these generic runtime issues separately
from mathematical correctness. The relevant excerpt from that request is:
}

\begin{commandbox}[coltext=experimentboxtext, listing options={style=experimentprompt}]
Do not rerun proof generation, alter submitted proofs, or supply missing mathematical arguments. Verification code may check the submitted expressions and reasoning.

Treat generic dependency-path and notebook-display issues as operational issues.
\end{commandbox}

{%
This post hoc evaluation retained 28 completed verifications, including
BPPM repetition~2, and completed the two remaining evaluations. The evaluators
replayed selected submitted proof cells and saved certificates in validation copies,
with recorded adjustments confined to execution.
No proofs were regenerated or repaired. The mathematical criteria and
tolerances were unchanged. All 30 submitted proofs passed, and 27 runs
completed the original workflow. No prohibited
mathematical exposure was identified in the available records, although
the access logs were not exhaustive.

\subsection{Computation and observed outputs}\label{appendix:five-run-resources}
\subsubsection{Time and token measurements}
Table~\ref{tab:experiment-costs} reports means and sample standard deviations
over all five attempts per condition, including time consumed by the three
incomplete deliveries. Generation time runs from dispatch to stopping and
includes service waits and recovery. Evaluation time is not included here. Token counts
are input plus output, including reused context, with cached input and
reasoning counted within those totals.
Across all attempts, generation used 24.51 summed hours and 378.47 million
tokens. The original evaluations used 2.87 summed hours and 43.67 million tokens.
Summed worker times are not elapsed experiment time because workers ran in
parallel.

The two additional evaluations used 8.23 minutes and 1.75 million tokens for
PGM without SOS repetition~2, and 6.68 minutes and 1.40 million tokens for
FEG repetition~3. These are evaluation costs, accounted for separately from
generation and excluding orchestration.

\subsubsection{Variation in the PGM outputs}
Overall generation times were similar: $57.9\pm4.2$ minutes with supplied
SOS and $60.4\pm9.7$ minutes without it (mean $\pm$ sample standard deviation).
The five supplied-SOS runs produced the same Lyapunov function after notation and
normalization were aligned. Without supplied SOS, the five runs produced
three distinct forms: one in repetition~1, a shared form in repetitions~2--4,
and a third in repetition~5. These differences persist beyond notation or
overall scaling, while every proof establishes the same target bound.
The outputs from repetitions~1 and~5 below illustrate this variation.
}

{%
\subsection{PGM}\label{appendix:pgm_output}
The two input conditions share the algorithm and assumptions. The second
additionally supplies a candidate SOS decomposition. The archived packets below
make this difference explicit.
}
\subsubsection{Input prompt: PGM without supplied SOS}
\begin{commandbox}[coltext=experimentboxtext, listing options={style=experimentprompt}]
# Common assumptions

All cases use real finite-dimensional Euclidean spaces, positive `L` where present, nonnegative `R`, and integer horizons `N >= 1`. Proximal updates and stated evaluations are assumed well-defined. Supply these common assumptions and only the selected case subsection to its worker.


### `pgm`


- Problem: composite convex minimization. `f` is convex and `L`-smooth; `g` is closed, proper, and convex. Write `h = f + g`.
- Initial condition: `x_star` minimizes `h`, and `||x_0 - x_star||^2 <= R^2`.
- Metric: `h(x_N) - h(x_star)`.
- For `k = 0, ..., N-1`, set `x_{k+1} = prox_{g/L}(x_k - grad f(x_k)/L)`.
- Conjectured rate: unknown.

### `pgm_without_sos`


- No fixed SOS aid is supplied. Derive the certificate and Lyapunov analysis within this attempt. You may discover and use an SOS decomposition yourself.
- Do not read the supplied fixed aid, its source, either condition's pre-existing answers, sibling outputs, or any other attempt's artifacts, including the paired repetition.
\end{commandbox}
\subsubsection{Input prompt: PGM with supplied SOS}
\begin{commandbox}[coltext=experimentboxtext, listing options={style=experimentprompt}]
# Common assumptions

All cases use real finite-dimensional Euclidean spaces, positive `L` where present, nonnegative `R`, and integer horizons `N >= 1`. Proximal updates and stated evaluations are assumed well-defined. Supply these common assumptions and only the selected case subsection to its worker.


### `pgm`


- Problem: composite convex minimization. `f` is convex and `L`-smooth; `g` is closed, proper, and convex. Write `h = f + g`.
- Initial condition: `x_star` minimizes `h`, and `||x_0 - x_star||^2 <= R^2`.
- Metric: `h(x_N) - h(x_star)`.
- For `k = 0, ..., N-1`, set `x_{k+1} = prox_{g/L}(x_k - grad f(x_k)/L)`.
- Conjectured rate: unknown.

### `pgm_with_sos`


- Authorized assistance: the fixed SOS aid below, supplied from the start. Record it as supplied assistance. Derive the remaining certificate and Lyapunov analysis within this attempt.
- The aid is a candidate, not a verified certificate or proof. Verify it against this attempt's certificate and all applicable block gates.

## Fixed PGM SOS Aid


Only a `pgm_with_sos` worker receives this section. This is a candidate decomposition of the nonnegative full-PEP slack. It does not provide the convergence bound, dual multipliers, partial sums, or final Lyapunov function.

For `i = 1, ..., N`, write `a_i = grad f(x_i)`, `a_star = grad f(x_star)`, and choose the proximal subgradient `s_i in partial g(x_i)` given by `s_i = L*(x_{i-1}-x_i) - a_{i-1}`. Define:

```text
c1_i = N / ((2*N+1-i)*(2*N-i)*L)
u1_i = (i/(2*N))*a_i + ((2*N-i)/(2*N))*a_{i-1} - a_star

c2_i = (((i-1)/(2*N-i))*((2*N+2)/(2*N+1))
        + (2*N)/((2*N-i)^2*(2*N+1))) / (2*L)
u2_i = a_{i-1} + s_i - (L/(2*N+1-i))*(x_{i-1}-x_star)

c3_i = (i/(2*N+1-i))*((2*N)/(2*N+1))/(2*L)
u3_i = ((2*N+1)/(2*N))*(a_i+s_i) - (a_{i-1}+s_i)/(2*N)
        - (L/(2*N-i))*(x_i-x_star)

S_candidate = sum(c1_i*||u1_i||^2 + c2_i*||u2_i||^2 + c3_i*||u3_i||^2
                  for i = 1, ..., N)
```

These are mathematical formulas, not directly executable Python. Define `a_0 = grad f(x_0)`. Use exact arithmetic when implementing them. Verify their role against the certificate obtained in this attempt, accounting explicitly for any overall certificate normalization.
\end{commandbox}

\subsubsection{{Generated output: PGM without supplied SOS (repetition 1)}}
\begingroup
\tcbset{coltext=experimentboxtext,breakable,ignore nobreak=true,lines before break=4}
\begin{wideexamplebox}
{%
\textbf{Theorem}

For the stated PGM recurrence with convex \(L\)-smooth \(f\), closed proper convex \(g\), \(L>0\), \(R\ge0\), integer \(N\ge1\), a minimizer \(x_{\star}\) and \(\|x_{0}-x_{\star}\|^{2}\le R^{2}\), the following potential is nonincreasing. All stated evaluations and proximal steps are well-defined.

Put \(d_{k}=x_{k}-x_{\star}\) and \(p_{k}=(\nabla f(x_{k})-\nabla f(x_{\star}))/L\).
For \(1\le k<N\), define
\[\begin{aligned}
&V_{k}=-\frac{L}{4N}\|d_{0}\|^{2}
+\frac{L(N-k)}{(2N-k)^{2}}\|d_{k}\|^{2}
-\frac{\langle d_{k},\nabla f(x_{k})\rangle}{2N-k}
\\
&\quad -\frac{Lk}{2(2N-k)^{2}}\|p_{k}\|^{2}
+\frac{k+1}{2N-k}(f(x_{k})-f(x_{\star}))
+\frac{k}{2N-k}(g(x_{k})-g(x_{\star})).
\end{aligned}\]
The endpoints are \(V_{0}=0\) and
\[V_{N}=h(x_{N})-h(x_{\star})-\frac{L}{4N}\|d_{0}\|^{2}.\]
Consequently, \[h(x_{N})-h(x_{\star})\le\frac{LR^{2}}{4N}.\]

\textbf{Proof outline}

Put \(d_{k}=x_{k}-x_{\star}\) and \(p_{k}=(\nabla f(x_{k})-\nabla f(x_{\star}))/L\).
For \(1\le k<N\), define
\[\begin{aligned}
&V_{k}=-\frac{L}{4N}\|d_{0}\|^{2}
+\frac{L(N-k)}{(2N-k)^{2}}\|d_{k}\|^{2}
-\frac{\langle d_{k},\nabla f(x_{k})\rangle}{2N-k}
\\
&\quad -\frac{Lk}{2(2N-k)^{2}}\|p_{k}\|^{2}
+\frac{k+1}{2N-k}(f(x_{k})-f(x_{\star}))
+\frac{k}{2N-k}(g(x_{k})-g(x_{\star})).
\end{aligned}\]
The endpoints are \(V_{0}=0\) and
\[V_{N}=h(x_{N})-h(x_{\star})-\frac{L}{4N}\|d_{0}\|^{2}.\]

Define the nonpositive interpolation residuals
\[\begin{aligned}
&I_{f}(u,v)=f(v)-f(u)+\langle\nabla f(v),u-v\rangle+\frac{\|\nabla f(u)-\nabla f(v)\|^{2}}{2L},\quad
\\
&\quad I_{g}(u,v)=g(v)-g(u)+\langle s_{v},u-v\rangle,
\end{aligned}\]
where \(s_{v}\in\partial g(v)\) is the selected oracle subgradient. At the minimizer choose \(s_{\star}=-\nabla f(x_{\star})\); at a new iterate use the proximal subgradient.
Write \(a_{k}=(k+1)/(2N-k)\) and \(b_{k}=k/(2N-k)\).
For a step, put \(m=2N-k\), \(p=p_{k}\), \(q=p_{k+1}\),
\(s=(s_{x_{k+1}}+\nabla f(x_{\star}))/L\), \(z=s-d_{k}/m\).
The recurrence gives \(d_{k+1}=d_{k}-p-s\).
Define
\[A_{k}=\frac{k+1}{2(m-1)^{2}}+\frac{k+2}{2(m-1)},\quad
B_{k}=\frac{k+1}{2(m-1)},\quad C_{k}=\frac{km+N-k}{(m-1)^{2}},\]
\[D_{k}=\frac{k+1}{2m(m-1)},\quad E_{k}=\frac{km+1}{2(m-1)^{2}},\quad
F_{k}=\frac{a_{k}}2-\frac{N-k-1}{(m-1)^{2}}-\frac{k}{2m^{2}}.\]
Set \(\alpha_{k}=A_{k}\), \(\beta_{k}=C_{k}-B_{k}^{2}/A_{k}\),
\(\eta_{k}=E_{k}-B_{k}D_{k}/A_{k}\),
\(\gamma_{k}=F_{k}-D_{k}^{2}/A_{k}-\eta_{k}^{2}/\beta_{k}\) and
\[\begin{aligned}
&Q_{k}=L\alpha_{k}\|q+(B_{k}/A_{k})z+(D_{k}/A_{k})p\|^{2}
\\
&\quad +L\beta_{k}\|z+(\eta_{k}/\beta_{k})p\|^{2}+L\gamma_{k}\|p\|^{2}.
\end{aligned}\]
The sign lemma below proves \(\alpha_{k},\beta_{k}>0\) and \(\gamma_{k}\ge0\) for \(0\le k\le N-2\).
For \(N\ge2\), the exact base identity is
\[\begin{aligned}
&V_{1}=a_{0}I_{f}(x_{\star},x_{0})+(a_{1}-a_{0})I_{f}(x_{\star},x_{1})
\\
&\quad +b_{1}I_{g}(x_{\star},x_{1})+a_{0}I_{f}(x_{0},x_{1})
-\frac{L}{4N}\|p_{0}\|^{2}-Q_{0}.
\end{aligned}\]
For \(1\le k\le N-2\), the exact recursion is
\[\begin{aligned}
&V_{k+1}-V_{k}=(a_{k+1}-a_{k})I_{f}(x_{\star},x_{k+1})
+(b_{k+1}-b_{k})I_{g}(x_{\star},x_{k+1})
\\
&\quad +a_{k}I_{f}(x_{k},x_{k+1})+b_{k}I_{g}(x_{k},x_{k+1})-Q_{k}.
\end{aligned}\]
At the final step put \(p=p_{N-1}\), \(q=p_{N}\), \(s=(s_{x_{N}}+\nabla f(x_{\star}))/L\), \(d=d_{N-1}\) and
\[T_{N}=\frac L2\left\|q+s+\frac{p-d}{N+1}\right\|^{2}
+\frac L2\left\|s+\frac{Np-d}{N+1}\right\|^{2}.\]
Then, for \(N\ge2\),
\[\begin{aligned}
&V_{N}-V_{N-1}=(1-a_{N-1})I_{f}(x_{\star},x_{N})+(1-b_{N-1})I_{g}(x_{\star},x_{N})
\\
&\quad +a_{N-1}I_{f}(x_{N-1},x_{N})+b_{N-1}I_{g}(x_{N-1},x_{N})-T_{N}.
\end{aligned}\]
For \(N=1\), use the separate identity
\[\begin{aligned}
&V_{1}=\tfrac12 I_{f}(x_{\star},x_{0})+\tfrac12 I_{f}(x_{\star},x_{1})+I_{g}(x_{\star},x_{1})
\\
&\quad +\tfrac12 I_{f}(x_{0},x_{1})-\tfrac L4\|p_{0}\|^{2}-T_{1}.
\end{aligned}\]
All residual multipliers are nonnegative: \(a_{k+1}-a_{k}=(2N+1)/[(2N-k-1)(2N-k)]\),
\(b_{k+1}-b_{k}=2N/[(2N-k-1)(2N-k)]\),
\(1-a_{N-1}=1/(N+1)\) and \(1-b_{N-1}=2/(N+1)\).
Thus \(V_{N}\le\cdots\le V_{1}\le V_{0}=0\), and
\[h(x_{N})-h(x_{\star})\le\frac{L}{4N}\|d_{0}\|^{2}\le\frac{LR^{2}}{4N}.\]

}
\end{wideexamplebox}
\endgroup

\subsubsection{{Generated output: PGM with supplied SOS (repetition 1)}}
\begingroup
\tcbset{coltext=experimentboxtext,breakable,ignore nobreak=true,lines before break=4}
\begin{wideexamplebox}
{%
\textbf{Theorem}

Let \(h=f+g\), with \(f\) convex and differentiable with \(L\)-Lipschitz gradient, \(L>0\), and \(g\) closed, proper and convex. Suppose \(x_{\star}\) minimizes \(h\) and \(\|x_{0}-x_{\star}\|^{2}\le R^{2}\), \(R\ge0\). For any integer \(N\ge1\), proximal gradient with step \(1/L\) satisfies
\[h(x_{N})-h(x_{\star})\le\frac{L\|x_{0}-x_{\star}\|^{2}}{4N}\le\frac{LR^{2}}{4N}.\]
Here is a nonincreasing Lyapunov certificate. Write \(a_{k}=\nabla f(x_{k})\), \(a_{\star}=\nabla f(x_{\star})\), \(d_{k}=x_{k}-x_{\star}\) and \(D_{k}=2N-k\). Define \(V_{0}=0\) and
\[\begin{aligned}
&V_{k}=\frac{k+1}{D_{k}}[f(x_{k})-f(x_{\star})]+\frac{k}{D_{k}}[g(x_{k})-g(x_{\star})]-\frac{\langle a_{k},d_{k}\rangle}{D_{k}}\\
&\quad +\frac{\|a_{k}-a_{\star}\|^{2}}{2LD_{k}}+\frac{L(N-k)}{D_{k}^{2}}\|d_{k}\|^{2}-\frac{L}{4N}\|d_{0}\|^{2},\qquad 1\le k<N.
\end{aligned}\]
The terminal case is defined separately:
\[V_{N}=h(x_{N})-h(x_{\star})-\frac{L}{4N}\|d_{0}\|^{2}.\]
Then \(V_{N}\le V_{N-1}\le\cdots\le V_{0}=0\). The statement also covers \(N=1\) and \(R=0\); it requires no finite value of \(g(x_{0})\).

\textbf{Proof outline}

Use \(a_{k}=\nabla f(x_{k})\), \(s_{i}=L(x_{i-1}-x_{i})-a_{i-1}\in\partial g(x_{i})\), \(s_{\star}=-a_{\star}\), and \(d_{k}=x_{k}-x_{\star}\). The interpolation residuals are
\[I_{f}(u,v)=f(v)-f(u)+\langle\nabla f(v),u-v\rangle+\frac{\|\nabla f(u)-\nabla f(v)\|^{2}}{2L}\le0,\]
\[I_{g}(u,v)=g(v)-g(u)+\langle s_{v},u-v\rangle\le0.\]
Define \(D_{k}=2N-k\), \(V_{0}=0\), \(V_{N}=h(x_{N})-h(x_{\star})-L\|d_{0}\|^{2}/(4N)\), and, for \(1\le k<N\),
\[\begin{aligned}
&V_{k}=\frac{(k+1)[f(x_{k})-f(x_{\star})]+k[g(x_{k})-g(x_{\star})]-\langle a_{k},d_{k}\rangle}{D_{k}}\\
&\quad +\frac{\|a_{k}-a_{\star}\|^{2}}{2LD_{k}}+\frac{L(N-k)\|d_{k}\|^{2}}{D_{k}^{2}}-\frac{L\|d_{0}\|^{2}}{4N}.
\end{aligned}\]
For \(1\le i\le N\), set \(Q_{i}=\sum_{r=1}^{3}c_{i,r}\|u_{i,r}\|^{2}\), where
\[c_{i,1}=\frac{N}{(2N+1-i)(2N-i)L},\qquad u_{i,1}=\frac{i}{2N}a_{i}+\frac{2N-i}{2N}a_{i-1}-a_{\star},\]
\[\begin{aligned}
&c_{i,2}=\frac{1}{2L}\left[\frac{i-1}{2N-i}\frac{2N+2}{2N+1}+\frac{2N}{(2N-i)^{2}(2N+1)}\right],\qquad \\
&\quad u_{i,2}=a_{i-1}+s_{i}-\frac{L}{2N+1-i}d_{i-1},
\end{aligned}\]
\[\begin{aligned}
&c_{i,3}=\frac{i}{2N+1-i}\frac{2N}{2N+1}\frac{1}{2L},\qquad \\
&\quad u_{i,3}=\frac{2N+1}{2N}(a_{i}+s_{i})-\frac{a_{i-1}+s_{i}}{2N}-\frac{L}{2N-i}d_{i}.
\end{aligned}\]
Let
\[\alpha_{i}=\begin{cases}\dfrac{2N+1}{(2N-i)(2N+1-i)}&i<N,\\ \dfrac{1}{N+1}&i=N,\end{cases}\quad
\beta_{i}=\begin{cases}\dfrac{2N}{(2N-i)(2N+1-i)}&i<N,\\ \dfrac{2}{N+1}&i=N.\end{cases}\]
The exact identity for every step is
\[\begin{aligned}
&V_{i}-V_{i-1}=\mathbf{1}_{\{i=1\}}\frac{I_{f}(x_{\star},x_{0})}{2N}+\alpha_{i}I_{f}(x_{\star},x_{i})+\frac{i}{2N+1-i}I_{f}(x_{i-1},x_{i})\\
&\quad +\beta_{i}I_{g}(x_{\star},x_{i})+\frac{i-1}{2N+1-i}I_{g}(x_{i-1},x_{i})-Q_{i}.
\end{aligned}\]
The last \(I_{g}\) term is \textbf{omitted} for \(i=1\), avoiding an evaluation of \(g(x_{0})\). For \(N=1\), use the terminal multipliers and the initial residual together. The notebook verifies the interior, base, terminal, and \(N=1\) identities separately, in all scalar components with symbolic parameters.

All multipliers and square coefficients are nonnegative: \(2N-i\ge N>0\), \(2N+1-i\ge N+1>0\), \(L>0\), and \(i-1\ge0\). Thus each displayed right-hand side is nonpositive. Telescoping yields \(V_{N}\le V_{0}=0\), so \(h(x_{N})-h(x_{\star})\le L\|d_{0}\|^{2}/(4N)\le LR^{2}/(4N)\). The use of a supplied SOS candidate is justified by these exact identities, not by numerical agreement.

}
\end{wideexamplebox}
\endgroup

\subsubsection{{Additional generated output: PGM without supplied SOS (repetition 5)}}
\label{appendix:pgm_r05_output}
{This additional example is selected to illustrate the distinct
Lyapunov function using vectors from the next step.
The outputs from repetition~1 above retain the uniform selection rule.}
\begingroup
\tcbset{coltext=experimentboxtext,breakable,ignore nobreak=true,lines before break=4}
\begin{wideexamplebox}
{%
\textbf{Theorem}

Let \(f\) be convex and \(L\)-smooth, \(g\) closed proper convex, \(L>0\), \(R\ge0\), and \(x_{\star}\in\arg\min(f+g)\). Assume the stated proximal updates are well-defined and \(\|x_{0}-x_{\star}\|^2\le R^2\). For every integer \(N\ge1\), PGM satisfies
\[h(x_{N})-h(x_{\star})\le \frac{LR^2}{4N}.\]
Here is an explicit finite-product Lyapunov certificate. Let \(s_i\in\partial g(x_i)\) be the proximal subgradient for \(i\ge1\), and set
\[d_i=x_i-x_{\star},\quad p_i=\frac{\nabla f(x_i)-\nabla f(x_{\star})}{L},\quad q_i=\frac{s_i+\nabla f(x_{\star})}{L},\]
\[\begin{aligned}
&F_i=\frac{f(x_i)-f(x_{\star})-\langle\nabla f(x_{\star}),d_i\rangle}{L},\qquad \\
&\quad G_i=\frac{g(x_i)-g(x_{\star})+\langle\nabla f(x_{\star}),d_i\rangle}{L}.
\end{aligned}\]
Only \(F_{0}\), not \(G_{0}\) or \(s_{0}\), is used. We have \(d_{i+1}=d_i-p_i-q_{i+1}\).

For \(0\le j\le N-2\), write \(m_j=2N-j\) and define the explicit matrices
\[T_j=\begin{pmatrix}
2(j+1)(m_j-1)[(j+2)m_j(m_j-1)-(j+1)] &(j+1)^2(m_j-2)(2N+1)\\
4m_j^2(m_j-1)[(2j+3)m_j-(3j+4)]&2m_j^2(m_j-2)(j+1)(j+2)
\end{pmatrix}.\]
Define \(z_k\) by the finite ordered product
\[\binom{P_k}{Q_k}=T_kT_{k+1}\cdots T_{N-2}\binom{N}{2(N+1)^2},\qquad z_k=\frac{P_k}{Q_k},\qquad 0\le k\le N-1.\]
The empty product is the identity, so \(z_{N-1}=N/[2(N+1)^2]\). This formula has no recursively unspecified coefficients.

For \(1\le k\le N-1\), let \(m=2N-k\) and
\[\begin{aligned}
&a_k=\frac{k+1}{m},\quad b_k=\frac{k}{m},\quad A_k=\frac{N-k-1+(k+1)/(2m)-z_k}{(m-1)^2},\quad \\
&\quad B_k=\frac1{2m}+\frac{z_k}{m-1},\quad C_k=-\frac{k-1}{2m}-z_k.
\end{aligned}\]
Then the nonincreasing sequence has the full formula
\[\begin{aligned}
&V_k=L\bigl[-\frac{\|d_0\|^2}{4N}+a_kF_k+b_kG_k-\frac{a_k}{2}\|p_k\|^2-b_k\langle p_k,q_{k+1}\rangle\\
&\quad +A_k\|d_{k+1}\|^2+2B_k\langle d_{k+1},q_{k+1}\rangle+C_k\|q_{k+1}\|^2\bigr],
\end{aligned}\]
\[V_0=0,\qquad V_N=h(x_N)-h(x_{\star})-\frac{L}{4N}\|d_0\|^2.\]
There are no interior indices when \(N=1\). The certificate uses one-step-ahead vectors, as in the numerically discovered Block 4 basis.

\textbf{Proof outline}

Use the coefficient definitions and finite product in the theorem. The potential being proved is, explicitly,
\[\begin{aligned}
&\frac{V_k}{L}=-\frac{\|d_0\|^2}{4N}+a_kF_k+b_kG_k-\frac{a_k}{2}\|p_k\|^2-b_k\langle p_k,q_{k+1}\rangle\\
&\quad +A_k\|d_{k+1}\|^2+2B_k\langle d_{k+1},q_{k+1}\rangle+C_k\|q_{k+1}\|^2\quad(1\le k<N),
\end{aligned}\]
with \(V_0=0\) and \(V_N=h(x_N)-h(x_{\star})-L\|d_0\|^2/(4N)\).

The nonpositive interpolation residuals, normalized by \(L\), are
\[I_f(i,j)=\frac{f(x_j)-f(x_i)+\langle\nabla f(x_j),x_i-x_j\rangle+\|\nabla f(x_j)-\nabla f(x_i)\|^2/(2L)}{L},\]
\[I_g(i,j)=\frac{g(x_j)-g(x_i)+\langle s_j,x_i-x_j\rangle}{L}.\]
The star index means \(x_{\star}\) and \(s_{\star}=-\nabla f(x_{\star})\). Smooth convex interpolation gives \(I_f\le0\), and convexity gives \(I_g\le0\).

For \(0\le k\le N-2\), put \(m=2N-k\), \(w=z_{k+1}\), and
\[\begin{aligned}
&H_k=2(m-1)^2w+(k+1)(m-2),\quad \\
&\quad J_k=2(m-1)[(2k+3)m-(3k+4)]w+(m-2)(k+1)(k+2),
\end{aligned}\]
\[\begin{aligned}
&\alpha_k=\frac{H_k}{2(m-2)^2},\quad \beta_k=\frac{J_k}{2(m-1)H_k},\quad \\
&\quad \gamma_k=\frac{2(m-1)w+(k+1)(m-2)}{H_k},\quad \delta_k=\frac{k+1}{2m\beta_k}.
\end{aligned}\]
Define the complete square contribution
\[\begin{aligned}
&\mathcal Q_k=\alpha_k\left\|q_{k+2}-\frac{d_{k+1}}{m-1}+\gamma_k p_{k+1}\right\|^2\\
&\quad +\beta_k\left\|p_{k+1}+\delta_k\left(q_{k+1}-\frac{d_{k+1}}{m-1}\right)\right\|^2.
\end{aligned}\]
The matrix product gives exactly \(z_k=(k+1)/(2m)-(k+1)^2/(4m^2\beta_k)\). Substitution of the PGM updates yields, for every \(1\le k\le N-2\),
\[\begin{aligned}
&\frac{V_{k+1}-V_k}{L}=a_k I_f(k,k+1)+(a_{k+1}-a_k)I_f(\star,k+1)\\
&\quad +b_k I_g(k,k+1)+(b_{k+1}-b_k)I_g(\star,k+1)-\mathcal Q_k.
\end{aligned}\]

For \(N\ge2\), define
\[\mathcal Q_{\mathrm{init}}=\frac{\|p_0\|^2}{4N}+\frac{z_0}{(2N-1)^2}\|d_0-p_0-2Nq_1\|^2.\]
The exact base identity is
\[\begin{aligned}
&\frac{V_1}{L}=a_0I_f(\star,0)+a_0I_f(0,1)+(a_1-a_0)I_f(\star,1)\\
&\quad +b_1I_g(\star,1)-\mathcal Q_0-\mathcal Q_{\mathrm{init}}.
\end{aligned}\]
The terminal identity is
\[\begin{aligned}
&\frac{V_N-V_{N-1}}{L}=\frac{N}{N+1}I_f(N-1,N)+\frac1{N+1}I_f(\star,N)\\
&\quad +\frac{N-1}{N+1}I_g(N-1,N)+\frac2{N+1}I_g(\star,N)-\mathcal Q_{\mathrm{term}},
\end{aligned}\]
\[\mathcal Q_{\mathrm{term}}=\frac12\left\|p_N+\frac{N}{N+1}q_N-\frac{d_N}{N+1}\right\|^2.\]
For \(N=1\), the separate identity is
\[\begin{aligned}
&\frac{V_1}{L}=\tfrac12 I_f(\star,0)+\tfrac12 I_f(0,1)+\tfrac12 I_f(\star,1)+I_g(\star,1)\\
&\quad -\tfrac14\|p_0\|^2-\tfrac18\|d_0-p_0-2q_1\|^2-\tfrac12\|p_1+q_1/2-d_1/2\|^2.
\end{aligned}\]

All four entries of \(T_j\) are strictly positive on \(0\le j\le N-2\): here \(m_j\ge j+4\), so both bracketed factors are positive. The terminal vector has positive entries, hence every \(z_k>0\) and every product denominator is nonzero. Also \(H_k,J_k,\alpha_k,\beta_k>0\) and all displayed denominators are positive. The weights \(a_k,b_k\) are nonnegative, with
\[a_{k+1}-a_k=\frac{2N+1}{(2N-k)(2N-k-1)}>0,\qquad b_{k+1}-b_k=\frac{2N}{(2N-k)(2N-k-1)}>0.\]
Every right side is nonpositive. Thus \(V_N\le V_{N-1}\le\cdots\le V_1\le V_0=0\), including the separate \(N=1\) case. Rearranging the explicit terminal potential and using \(L>0\) and \(\|d_0\|^2\le R^2\) proves the theorem. The notebook below verifies every Gram coefficient, function-value coefficient, and offset in these identities exactly with symbolic \(N,k,L\).

}
\end{wideexamplebox}
\endgroup

{%
\subsection{OGM}\label{appendix:ogm_output}
}
\subsubsection{Input prompt}
\begin{commandbox}[coltext=experimentboxtext, listing options={style=experimentprompt}]
# Common assumptions

All cases use real finite-dimensional Euclidean spaces, positive `L` where present, nonnegative `R`, and integer horizons `N >= 1`. Proximal updates and stated evaluations are assumed well-defined. Supply these common assumptions and only the selected case subsection to its worker.


### `ogm`


- Problem: smooth convex minimization. `f` is convex and `L`-smooth.
- Initial condition: `grad f(x_star) = 0`, `||x_0 - x_star||^2 <= R^2`, and `z_0 = x_0`.
- Metric: `f(x_N) - f(x_star)`.
- Define `theta_{-1} = 0`; for `0 <= k < N`, `theta_k = (1 + sqrt(1 + 4*theta_{k-1}^2))/2`; set `theta_N = (1 + sqrt(1 + 8*theta_{N-1}^2))/2`.
- For `k = 0, ..., N-1`:

```text
y_{k+1} = x_k - grad f(x_k)/L
z_{k+1} = z_k - (2*theta_k/L)*grad f(x_k)
x_{k+1} = (1 - 1/theta_{k+1})*y_{k+1} + z_{k+1}/theta_{k+1}
```

- Conjectured rate: unknown. Preserve the special terminal `theta_N` recurrence.
\end{commandbox}
\subsubsection{{Generated output (repetition 1)}}
\begingroup
\tcbset{coltext=experimentboxtext,breakable,ignore nobreak=true,lines before break=4}
\begin{wideexamplebox}
{%
\textbf{Theorem}

Let \(f\) be convex and \(L\)-smooth on a finite-dimensional Euclidean space, with \(L>0\), \(\nabla f(x_{\star})=0\), and \(\|x_{0}-x_{\star}\|^{2}\leq R^{2}\), where \(R\geq0\). For every integer \(N\geq1\), run the algorithm above with its special terminal recurrence. Write \(g_{k}=\nabla f(x_{k})\), \(f_{\star}=f(x_{\star})\), \(T=\theta_{N}\), and \(a=x_{0}-x_{\star}\).

A nonincreasing certificate is \(V_{0}=0\), with
\[V_{k}=\frac{2\theta_{k}^{2}}{T^{2}}(f(x_{k})-f_{\star})-\frac{\theta_{k}^{2}}{LT^{2}}\|g_{k}\|^{2}+\frac{L}{2T^{2}}\left(\|z_{k+1}-x_{\star}\|^{2}-\|a\|^{2}\right),\quad 1\leq k<N,\]
and
\[V_{N}=f(x_{N})-f_{\star}+\frac{L}{2T^{2}}\left(\left\|z_{N}-x_{\star}-\frac{T}{L}g_{N}\right\|^{2}-\|a\|^{2}\right).\]
The interior range is empty when \(N=1\). These expressions satisfy \(V_{N}\leq\cdots\leq V_{0}=0\), and therefore
\[\boxed{f(x_{N})-f(x_{\star})\leq\frac{LR^{2}}{2\theta_{N}^{2}}.}\]

\textbf{Proof outline}

For evaluated points \(u,v\), write \(g(u)=\nabla f(u)\) and define the nonpositive smooth convex interpolation residual
\[\mathcal I(u,v)=f(v)-f(u)+\langle g(v),u-v\rangle+\frac{1}{2L}\|g(u)-g(v)\|^{2}\leq0.\]
Use \(\mathcal I_{i,j}=\mathcal I(x_{i},x_{j})\) and \(\mathcal I_{\star,j}=\mathcal I(x_{\star},x_{j})\).

To verify the displayed \(V_{k}\), define auxiliary expressions
\[\Phi_{k}=2\theta_{k}^{2}(f(x_{k})-f_{\star})-\frac{\theta_{k}^{2}}{L}\|g_{k}\|^{2}+\frac{L}{2}\|z_{k+1}-x_{\star}\|^{2},\quad 0\leq k<N,\]
\[\Phi_{N}=T^{2}(f(x_{N})-f_{\star})+\frac{L}{2}\left\|z_{N}-x_{\star}-\frac{T}{L}g_{N}\right\|^{2},\qquad A=\frac{L}{2}\|a\|^{2}.\]
Thus \(V_{k}=(\Phi_{k}-A)/T^{2}\) for \(1\leq k\leq N\), while \(V_{0}=0\).

The exact base, ordinary-step, and terminal-step identities are
\[\Phi_{0}-A=2\mathcal I_{\star,0},\]
\[\Phi_{j+1}-\Phi_{j}=2\theta_{j+1}\mathcal I_{\star,j+1}+2\theta_{j}^{2}\mathcal I_{j,j+1},\quad 0\leq j\leq N-2,\]
\[\Phi_{N}-\Phi_{N-1}=T\mathcal I_{\star,N}+2\theta_{N-1}^{2}\mathcal I_{N-1,N}.\]
The ordinary identity uses \(\theta_{j+1}^{2}-\theta_{j+1}=\theta_{j}^{2}\); the terminal identity uses \(T^{2}-T=2\theta_{N-1}^{2}\). These are the two different recurrences in the input.

In particular, for \(N\geq2\),
\[V_{1}-V_{0}=\frac{2\mathcal I_{\star,0}+2\theta_{1}\mathcal I_{\star,1}+2\mathcal I_{0,1}}{T^{2}};\]
for \(1\leq k\leq N-2\),
\[V_{k+1}-V_{k}=\frac{2\theta_{k+1}\mathcal I_{\star,k+1}+2\theta_{k}^{2}\mathcal I_{k,k+1}}{T^{2}};\]
and, when \(N\geq2\),
\[V_{N}-V_{N-1}=\frac{T\mathcal I_{\star,N}+2\theta_{N-1}^{2}\mathcal I_{N-1,N}}{T^{2}}.\]
For \(N=1\), \(T=2\) and the direct identity is
\[V_{1}=\frac{2\mathcal I_{\star,0}+2\mathcal I_{\star,1}+2\mathcal I_{0,1}}{4}.\]
Every multiplier is nonnegative: \(\theta_{0}=1\), the ordinary recurrence preserves \(\theta_{j}\geq1\), and the terminal recurrence gives \(T\geq2\). All divisions are valid since \(L>0\) and \(T>0\). Thus all displayed increments are nonpositive.

Finally, the terminal formula gives
\[f(x_{N})-f_{\star}=V_{N}+\frac{L}{2T^{2}}\|a\|^{2}-\frac{L}{2T^{2}}\left\|z_{N}-x_{\star}-\frac{T}{L}g_{N}\right\|^{2}\leq\frac{LR^{2}}{2T^{2}}.\]
This uses the nonnegative terminal square and the initial radius bound, and includes \(R=0\). The notebook below verifies each identity in all symbolic scalar components. The proof is local and valid for arbitrary horizon; a separate global analytic multiplier-table verification is not claimed.

}
\end{wideexamplebox}
\endgroup

{%
\subsection{BPPM}\label{appendix:bppm_output}
}
\subsubsection{Input prompt}
\begin{commandbox}[coltext=experimentboxtext, listing options={style=experimentprompt}]
# Common assumptions

All cases use real finite-dimensional Euclidean spaces, positive `L` where present, nonnegative `R`, and integer horizons `N >= 1`. Proximal updates and stated evaluations are assumed well-defined. Supply these common assumptions and only the selected case subsection to its worker.


### `bppm`


- Problem: Bregman proximal point minimization. `f` is closed, proper, convex, and differentiable relative to the Bregman kernel `h`; `h` is closed, proper, convex, and differentiable.
- Define `D_h(u,v) = h(u) - h(v) - <grad h(v), u-v>`.
- Parameters: fixed `alpha > 0` and `R >= 0`.
- Initial condition: `x_star` minimizes `f`, and `D_h(x_star, x_0) <= R`. Here the radius parameter is `R`, not `R^2`.
- Metric: `f(x_N) - f(x_star)`.
- For `k = 0, ..., N-1`, `x_{k+1} = prox^h_{alpha*f}(x_k)`, meaning an exact minimizer of `alpha*f(x) + D_h(x, x_k)`.
- PEPFlow interface: `f.bregman_prox(x_k, alpha, h)`.
- Conjectured rate provided in the input: `R/(alpha*N)`.
\end{commandbox}
\subsubsection{{Generated output (repetition 1)}}
\begingroup
\tcbset{coltext=experimentboxtext,breakable,ignore nobreak=true,lines before break=4}
\begin{wideexamplebox}
{%
\textbf{Theorem}

Under the stated Bregman proximal point assumptions, let \(\alpha>0\), \(R\ge0\), \(N\ge1\) be an integer, and \(D_{h}(x_{\star},x_{0})\le R\). No strong convexity of \(h\) or Lipschitz gradient assumption is needed. Define
\[V_{0}=0,\qquad V_{k}=\frac{\alpha k\bigl(f(x_{k})-f(x_{\star})\bigr)+D_{h}(x_{\star},x_{k})-D_{h}(x_{\star},x_{0})}{\alpha N},\quad 1\le k\le N.\]
Then \(V_{k+1}\le V_{k}\) and
\[0\le f(x_{N})-f(x_{\star})\le\frac{R}{\alpha N}.\]
The normalization is the supplied Bregman radius \(R\), not \(R^2\). The separate definition of \(V_{0}\) does not require evaluating \(f(x_{0})\).

\textbf{Proof outline}

Write \(I_{q}(u,v)=q(v)-q(u)+\langle\nabla q(v),u-v\rangle\le0\) for convex \(q\in\{f,h\}\). Proximal optimality gives \(\nabla h(x_{k+1})=\nabla h(x_{k})-\alpha\nabla f(x_{k+1})\).

Use \(V_{0}=0\) and \(V_{k}=k(f(x_{k})-f(x_{\star}))/N+[D_{h}(x_{\star},x_{k})-D_{h}(x_{\star},x_{0})]/(\alpha N)\) for \(k\ge1\). The initial step is
\[V_{1}-V_{0}=\frac{1}{N}I_{f}(x_{\star},x_{1})+\frac{1}{\alpha N}I_{h}(x_{1},x_{0})\le0.\]
For \(1\le k<N\), direct expansion and the proximal relation give the complete identity
\[\begin{aligned}
&V_{k+1}-V_{k}=\frac{1}{N}I_{f}(x_{\star},x_{k+1})+\frac{k}{N}I_{f}(x_{k},x_{k+1})\\
&\quad +\frac{k+1}{\alpha N}I_{h}(x_{k+1},x_{k})+\frac{k}{\alpha N}I_{h}(x_{k},x_{k+1})\le0.
\end{aligned}\]
All multipliers are nonnegative, so \(V_{N}\le V_{0}=0\). There are no additional squares or residual families in this exact local certificate. Finally,
\[f(x_{N})-f(x_{\star})-\frac{R}{\alpha N}=V_{N}+\frac{D_{h}(x_{\star},x_{0})-R}{\alpha N}+\frac{I_{h}(x_{\star},x_{N})}{\alpha N}\le0.\]
Here the last two terms are nonpositive by the initial condition and convexity, respectively. The lower bound follows because \(x_{\star}\) minimizes \(f\). When \(N=1\), the initial step and boundary identity suffice; the proof also covers \(R=0\).

}
\end{wideexamplebox}
\endgroup

{%
\subsection{FEG}\label{appendix:feg_output}
}
\subsubsection{Input prompt}
\begin{commandbox}[coltext=experimentboxtext, listing options={style=experimentprompt}]
# Common assumptions

All cases use real finite-dimensional Euclidean spaces, positive `L` where present, nonnegative `R`, and integer horizons `N >= 1`. Proximal updates and stated evaluations are assumed well-defined. Supply these common assumptions and only the selected case subsection to its worker.


### `feg`


- Problem: monotone operators. `A` is monotone and `L`-Lipschitz.
- Initial condition: `A(x_star) = 0` and `||x_0 - x_star||^2 <= R^2`.
- Metric: `||A(x_N)||^2`.
- For `k = 0, ..., N-1`:

```text
x_{k+1/2} = x_k + (x_0-x_k)/(k+1) - k*A(x_k)/((k+1)*L)
x_{k+1} = x_k + (x_0-x_k)/(k+1) - A(x_{k+1/2})/L
```

- At `k = 0`, tag `x_0` also as `x_{1/2}`; they are the same point. Do not create an independent intermediate iterate.
- Conjectured rate: unknown.
\end{commandbox}
\subsubsection{{Generated output (repetition 1)}}
\begingroup
\tcbset{coltext=experimentboxtext,breakable,ignore nobreak=true,lines before break=4}
\begin{wideexamplebox}
{%
\textbf{Theorem}

Let \(A\) be monotone and \(L\)-Lipschitz in a real finite-dimensional Euclidean space, with \(L>0\), \(A(x_{\star})=0\), and \(\|x_{0}-x_{\star}\|\le R\) for \(R\ge0\). For integer \(N\ge1\), use
\[\begin{aligned}
&x_{k+1/2}=x_{k}+\frac{x_{0}-x_{k}}{k+1}-\frac{k}{(k+1)L}A(x_{k}),\qquad
\\
&\quad x_{k+1}=x_{k}+\frac{x_{0}-x_{k}}{k+1}-\frac1L A(x_{k+1/2}),
\end{aligned}\]
for \(0\le k<N\), with \(x_{1/2}=x_{0}\).
Define the normalized potential
\[V_{0}=0,\qquad V_{k}=\frac{2k^{2}}{N^{2}}\|A(x_{k})\|^{2}
+\frac{4kL}{N^{2}}\langle A(x_{k}),x_{k}-x_{0}\rangle,\quad 1\le k\le N.\]
Then \(V_{k+1}\le V_{k}\), and
\[\boxed{\|A(x_{N})\|^{2}\le\frac{4L^{2}R^{2}}{N^{2}}.}\]
The potential need not be nonnegative. Its terminal identity, rather than positivity, supplies the residual bound. The result covers \(N=1\) and \(R=0\).

\textbf{Proof outline}

Set \(g_{k}=A(x_{k})\), \(d=x_{0}-x_{\star}\), and define nonpositive residuals
\[\begin{aligned}
&\operatorname{Mon}(u,v)=-\langle A(u)-A(v),u-v\rangle,\qquad
\\
&\quad \operatorname{Lip}(u,v)=\|A(u)-A(v)\|^{2}-L^{2}\|u-v\|^{2}.
\end{aligned}\]
For \(V_{k}=2k^{2}\|g_{k}\|^{2}/N^{2}+4kL\langle g_{k},x_{k}-x_{0}\rangle/N^{2}\) and \(V_{0}=0\), the shared initial half-step gives
\[V_{1}-V_{0}=\frac{2}{N^{2}}\operatorname{Lip}(x_{1},x_{0})\le0.\]
For every \(1\le k<N\), substitution of both updates gives the exact identity
\[V_{k+1}-V_{k}=\frac{4Lk(k+1)}{N^{2}}\operatorname{Mon}(x_{k},x_{k+1})
+\frac{2(k+1)^{2}}{N^{2}}\operatorname{Lip}(x_{k+1},x_{k+1/2})\le0.\]
There are no other local residual or square terms. All multipliers are positive on the stated domain, so \(V_{N}\le V_{0}=0\).
The terminal completion of the square is
\[\begin{aligned}
&\|g_{N}\|^{2}-\frac{4L^{2}R^{2}}{N^{2}}
=V_{N}+\frac{4L^{2}}{N^{2}}(\|d\|^{2}-R^{2})
+\frac{4L}{N}\operatorname{Mon}(x_{N},x_{\star})
\\
&\quad -\left\|g_{N}-\frac{2L}{N}d\right\|^{2}.
\end{aligned}\]
Every term on the right is nonpositive: use the initial radius bound, monotonicity with \(A(x_{\star})=0\), and nonnegativity of a squared norm. This proves the claimed rate. For \(N=1\) the interior telescoping sum is empty; the base and terminal identities still apply. No division by \(R\) or \(\|g_{N}\|\) is needed.

}
\end{wideexamplebox}
\endgroup

{%
\subsection{Dual-FEG}\label{appendix:dual_feg_output}
}
\subsubsection{Input prompt}
\begin{commandbox}[coltext=experimentboxtext, listing options={style=experimentprompt}]
# Common assumptions

All cases use real finite-dimensional Euclidean spaces, positive `L` where present, nonnegative `R`, and integer horizons `N >= 1`. Proximal updates and stated evaluations are assumed well-defined. Supply these common assumptions and only the selected case subsection to its worker.


### `dual_feg`


- Problem: monotone operators. `A` is monotone and `L`-Lipschitz.
- Initial condition: `A(x_star) = 0`, `||x_0 - x_star||^2 <= R^2`, and `z_0 = 0`.
- Parameters: fixed horizon `N >= 1` and `alpha = 1/L`.
- Performance metric: unknown.
- For `k = 0, ..., N-1`:

```text
x_{k+1/2} = x_k - alpha*z_k - alpha*A(x_k)
x_{k+1} = x_{k+1/2} - ((N-k-1)/(N-k))*alpha*(A(x_{k+1/2})-A(x_k))
z_{k+1} = ((N-k-1)/(N-k))*z_k - A(x_{k+1/2})/(N-k)
```

- Conjectured rate: unknown.
\end{commandbox}
\subsubsection{{Generated output (repetition 1)}}
\begingroup
\tcbset{coltext=experimentboxtext,breakable,ignore nobreak=true,lines before break=4}
\begin{wideexamplebox}
{%
\textbf{Theorem}

For every integer \(N\ge1\), \(L>0\) and \(R\ge0\), the stated Dual FEG iteration for a monotone \(L\)-Lipschitz operator with \(A(x_{\star})=0\), \(\|x_{0}-x_{\star}\|^2\le R^2\) and \(z_{0}=0\) satisfies
\[\boxed{\|A(x_{N})\|^2\le\frac{4L^2R^2}{N^2}.}\]
Set \(G=A(x_{N})\), \(a=x_{0}-x_{N}\), and \(b_{k}=x_{k}-x_{N}\). A certificate is \(V_{0}=0\) and, for \(1\le k\le N-1\),
\[\begin{aligned}
&V_{k}=-\frac{4L}{N}\langle a,G\rangle-2\|z_{k}\|^2+\frac{4L}{N-k}\langle z_{k},b_{k}\rangle\\
&\quad -4\langle z_{k},G\rangle+\frac{4L}{N-k}\langle b_{k},G\rangle.
\end{aligned}\]
At the terminal index use the separate expression
\[V_{N}=2\|G\|^2-\frac{4L}{N}\langle a,G\rangle.\]
The sequence is nonincreasing. The interior expression also gives \(V_{0}=0\) when evaluated at \(k=0\), \(z_{0}=0\); it is never evaluated at \(k=N\).

\textbf{Proof outline}

Let \(y_{k}=x_{k+1/2}\) and define the nonpositive residuals
\[\begin{aligned}
&\operatorname{Lip}(u,v)=\|A(u)-A(v)\|^2-L^2\|u-v\|^2,\qquad
\\
&\quad \operatorname{Mon}(u,v)=-\langle A(u)-A(v),u-v\rangle.
\end{aligned}\]
With \(G=A(x_{N})\) and \(a=x_{0}-x_{N}\), write the interior potential as
\[\begin{aligned}
&V_{k}=-\frac{4L}{N}\langle a,G\rangle-2\|z_{k}\|^2-4\langle z_{k},G\rangle
\\
&\quad +\frac{4L}{N-k}\langle z_{k}+G,x_{k}-x_{N}\rangle.
\end{aligned}\]
Its initial value is zero. Substitution of the algorithm gives, for \(0\le k\le N-2\),
\[V_{k+1}-V_{k}=\frac{2}{(N-k)^2}\operatorname{Lip}(x_{k},y_{k})
+\frac{4L}{(N-k)(N-k-1)}\operatorname{Mon}(x_{N},y_{k})\le0.\]
The actual final update is \(x_{N}=x_{N-1}-(z_{N-1}+A(x_{N-1}))/L\). With the separate terminal potential \(V_{N}=2\|G\|^2-(4L/N)\langle a,G\rangle\), it gives
\[V_{N}-V_{N-1}=2\operatorname{Lip}(x_{N-1},x_{N})\le0.\]
For \(N=1\) there are no ordinary steps; the same last-step identity with \(z_{0}=0\) starts directly from \(V_{0}=0\). All multipliers are positive on their stated domains, so \(V_{N}\le0\) for every \(N\ge1\).

Finally, set \(d=x_{0}-x_{\star}\). The exact normalized identity is
\[\begin{aligned}
&\|G\|^2-\frac{4L^2R^2}{N^2}
=V_{N}+\frac{4L}{N}\operatorname{Mon}(x_{N},x_{\star})
\\
&\quad -\left\|\frac{2L}{N}d-G\right\|^2
+\frac{4L^2}{N^2}\bigl(\|d\|^2-R^2\bigr).
\end{aligned}\]
Every term on the right is nonpositive: use \(V_{N}\le0\), monotonicity and \(A(x_{\star})=0\), the square\textquotesingle s nonnegativity, and the given initial-distance bound. This proves the theorem, including \(R=0\). The notebook below derives the coefficients and checks every identity exactly in Gram, function-value and constant coordinates.

}
\end{wideexamplebox}
\endgroup

\IfFileExists{appendices/case_study.tex}{%

\section{Proofs for the case studies}
\label{app:case-study-proofs}

This appendix records the complete Lyapunov certificates behind
Theorems~\ref{thm:case-ofgm-primary}--\ref{thm:case-sfgm}
and~\ref{thm:case-fista}.
We used \texttt{GPT-6 Astra} with \texttt{Ultra} reasoning effort
throughout these case studies. The corresponding input prompt is recorded
at the start of each proof subsection.

{Throughout this appendix, cited code cells refer to exact algebraic
verifications in the companion notebook specified for the corresponding proof.
Cell numbers are the 1-based positions among code cells, excluding Markdown
and raw cells. After restarting the kernel and executing each code cell once
from top to bottom, these numbers agree with Jupyter's \texttt{In[\(n\)]} labels.}

\begin{table}[htbp]
\centering
{%
\caption{Theorem correspondences and companion notebooks.}
\label{tab:case-study-notebooks}
\begin{tabular*}{\linewidth}{@{\extracolsep{\fill}}lll@{}}
\toprule
Appendix~\ref{app:case-study-proofs} & Main text & Companion notebook \\
\midrule
Theorem~\ref{cs:sfgm:thm:main} & Theorem~\ref{thm:case-sfgm} & \texttt{fgm\_rational\_tight\_example\_lyap.ipynb} \\
Theorem~\ref{cs:ofgm-primary:thm:main} & Theorem~\ref{thm:case-ofgm-primary} & \texttt{fgm\_conjecture4\_example\_lyap.ipynb} \\
Theorem~\ref{cs:ofgm-secondary:thm:main} & Theorem~\ref{thm:case-ofgm-secondary} & \texttt{fgm\_conjecture5\_example\_lyap.ipynb} \\
Theorem~\ref{cs:ogm:thm:main} & Theorem~\ref{thm:case-ogm} & \texttt{ogm\_conjecture4\_example\_lyap.ipynb} \\
Theorem~\ref{cs:fista:thm:main} & Theorem~\ref{thm:case-fista} & \texttt{fista\_rational\_tight\_example\_lyap.ipynb} \\
\bottomrule
\end{tabular*}
}
\end{table}

We use the Hilbert space \(\mathcal H\) and the function class
\(\mathcal F_L\) defined in Appendix~\ref{app:notation}.
{Thus \(f:\mathcal H\to\mathbb R\), and the iterates
\(x_k,y_k,z_k\), the minimizer \(x_\star\), and the gradients
\(\nabla f(x_k)\) belong to \(\mathcal H\).
The parameters satisfy \(L,R,\theta_k\in\mathbb R\).
For each trajectory, the potentials \(V_k\) and square remainders
\(S_k\) defined below take values in \(\mathbb R\).}
For
\(p,q\in\mathcal H\), we use the smooth convex
{interpolation residual \(\mathcal I_f:\mathcal H\times\mathcal H\to\mathbb R\)}
\begin{equation}
 \mathcal I_f(p,q)
 =f(q)-f(p)+\csinner{\nabla f(q)}{p-q}
  +\frac{1}{2L}\norm{\nabla f(p)-\nabla f(q)}^2\leq0.
 \label{eq:case-interpolation-residual}
\end{equation}
For FGM and OGM, set \(\theta_{-1}=0\).  Their parameter
recurrence gives {the following identity for \(k\geq0\) in FGM
and \(0\leq k<N\) in OGM}:\footnote{{In standard OGM,
\(\theta_N=(1+\sqrt{1+8\theta_{N-1}^2})/2\) instead. The \(y_N\)
conjecture proved here uses only \(\theta_0,\ldots,\theta_{N-1}\).}}
\begin{equation}
 \theta_k^2-\theta_k=\theta_{k-1}^2.
 \label{eq:case-theta-identity}
\end{equation}

{In the proofs for FGM, define the scalars \(C_k\in\mathbb R\) by}
{%
\begin{equation}
 C_0=0,\qquad
 C_k=\frac{1}{\theta_{k-1}^2}\sum_{j=0}^{k-1}\theta_j^3,
 \qquad k\geq1.
 \label{eq:case-C-definition}
\end{equation}
}
{The definition of \(C_k\) gives}
\[
 \theta_k^2C_{k+1}
 =\theta_{k-1}^2C_k+\theta_k^3,\qquad k\geq0.
\]
Combining this with \eqref{eq:case-theta-identity} yields the recurrence
{used in the proofs below, with \(\delta_k\in\mathbb R\):}
\begin{equation}
 C_{k+1}=\theta_k+\left(1-\frac1{\theta_k}\right)C_k,
 \qquad {\delta_k}=C_{k+1}-C_k.
 \label{eq:case-C-recurrence}
\end{equation}

{The correspondence between these FGM coefficients and the original
conjecture notation is verified in Appendix~\ref{app:fgm-conjecture-relation}.}

The expanded certificate theorem in each subsection proves an upper bound.
Tightness follows by combining these bounds with the matching lower bounds
established by \citet[Section~4.2]{taylor2017smooth} for FGM,
\citet[Proposition~4.1]{KimFessler2017_convergence} for OGM, and
\citet[Table~1]{TaylorHendrickxGlineur2017_exacta} for FGM with rational
coefficients.

The proofs of Theorems~\ref{thm:case-ofgm-primary}--\ref{thm:case-ogm}
follow the same structure as that of Theorem~\ref{thm:case-sfgm}.
We therefore present Theorem~\ref{thm:case-sfgm} first: its rational
coefficient sequence makes the calculations comparatively simple and gives
the clearest introduction to the common argument.

\subsection{Proof of Theorem~\ref{thm:case-sfgm} (Table~1 conjecture in \texorpdfstring{\citet{TaylorHendrickxGlineur2017_exacta}}{Taylor et al. (2017)})}
\label{app:case-sfgm-proof}

\subsubsection{Input prompt}

\begin{commandbox}
/pep-implement

Function: f is convex and L-smooth
Parameters: L, R
Initial condition: ||x_0 - x_star|| <= R, where grad f(x_star) = 0
Performance metric: f(y_N) - f(x_star)
Algorithm: Fast gradient method with rational coefficients and fixed step size 1/L

Parameter sequence: theta_k = (k+2)/2 for k >= 0
Initialization: x_0 = y_0 = z_0
For k = 0,...,N-1:
y_{k+1} = x_k - (1/L) * grad f(x_k)
z_{k+1} = z_k - (theta_k/L) * grad f(x_k)
x_{k+1} = (1 - 1/theta_{k+1}) * y_{k+1} + (1/theta_{k+1}) * z_{k+1}

Conjectured rate: f(y_N) - f(x_star) <= 2 * L * R^2 / ((N+2) * (N+3))
\end{commandbox}
Then, we repeatedly entered:
\begin{commandbox}
proceed with the next step
\end{commandbox}

\begin{proof}[Proof outline for Theorem~\ref{thm:case-sfgm}]
Set \(\tau_N=2/[(N+2)(N+3)]{\in\mathbb R}\).
{For \(1\leq k<N\), define \(\boldsymbol w_k\in\mathcal H^2\) by}
\[
 \boldsymbol w_k=
 \begin{bmatrix}z_k-x_\star\\x_k-x_\star\end{bmatrix}.
\]
{Set \(V_0=0\in\mathbb R\). For \(N\geq2\), define \(V_1\) by}
\begin{equation}
\begin{aligned}
 V_1={}&a_1\bigl(f(x_1)-f_\star\bigr)
 -\tau_NL\norm{x_0-x_\star}^2
 +\frac{a_1}{2L}\norm{\nabla f(x_1)}^2
 \\
 &-b_1\csinner{\nabla f(x_1)}{x_1-x_\star}
 +\frac{L\tau_N(1-4\tau_N)}{(1-2\tau_N)^2}
 \norm{x_1-x_\star}^2
\end{aligned}
\label{cs:sfgm:eq:V-first}
\end{equation}
{For \(2\leq k<N\), with \(\vd_k\in\mathbb R^2\) specified below,
define the interior potential \(V_k\) by}
\begin{align}
\begin{aligned}
 V_k={}&a_k\bigl(f(x_k)-f(x_\star)\bigr)
 -\tau_NL\norm{x_0-x_\star}^2
 +\frac{a_k}{2L}\norm{\nabla f(x_k)}^2\\
 &-2\csinner{\nabla f(x_k)}{{\vd_k}^\top\boldsymbol w_k}
 +L\boldsymbol w_k^\top {\vQ_k}\boldsymbol w_k.
\end{aligned}
\label{cs:sfgm:eq:V-interior}
\end{align}
{At the reported point, set}
\begin{align}
    V_N=f(y_N)-f(x_\star)-\tau_NL\norm{x_0-x_\star}^2
    +\frac1{2L}\norm{\nabla f(y_N)-Lb_N(y_N-x_\star)}^2.
\label{cs:sfgm:eq:V-end}
\end{align}
We will show that each consecutive difference, or decrement, of $V_k$ is a positive-weighted sum of interpolation residuals minus at most two nonnegative squares.
{For \(N\ge2\), we show}
{%
\begin{equation}\label{cs:sfgm:eq:decrements}
 V_{k+1}-V_k=
 \begin{cases}
  \displaystyle b_0\mathcal I_f(x_\star,x_0)+a_0\mathcal I_f(x_0,x_1)+b_1\mathcal I_f(x_\star,x_1)-S_0, & k=0,\\
  \displaystyle a_k\mathcal I_f(x_k,x_{k+1})+b_{k+1}\mathcal I_f(x_\star,x_{k+1})-S_k, & 1\le k<N-1,\\
  \displaystyle a_{N-1}\mathcal I_f(x_{N-1},y_N)+b_N\mathcal I_f(x_\star,y_N)-S_{N-1}, & k=N-1,
 \end{cases}
\end{equation}
}
for some positive numbers $a_k, b_k$ and sum-of-squares $S_k{\in\mathbb R}$ with $S_k\ge0$.\footnote{{For \(N=1\), the first line holds with \(x_1\) replaced by \(y_1\) and that is the terminal identity.}}
{Together with the coefficient signs in Lemma~\ref{cs:sfgm:lem:signs}, these identities imply} \(V_N\leq V_0=0\), and dropping the nonnegative square term in $V_N$ yields \eqref{eq:case-sfgm-rate}.
Below, we state a separate theorem which fully characterizes the Lyapunov analysis and prove it in detail.
\end{proof}

\begin{theorem}[{Theorem~\ref{thm:case-sfgm} with full details of the Lyapunov function}]
\label{cs:sfgm:thm:main}
\normalfont
Let \(f\in\mathcal F_L\), let \(x_\star{\in\mathcal H}\) minimize \(f\), and write
\(f_\star=f(x_\star){\in\mathbb R}\). Suppose that \(\norm{x_0-x_\star}\leq R\).
{%
For a fixed integer horizon \(N\geq1\), set
\[
 \tau_N=\frac{2}{(N+2)(N+3)}\in\mathbb R.
\]
}
{For \(0\leq k<N\), define the real scalars \(D_k,a_k,b_k\in\mathbb R\) by}
\begin{align}
 D_k&=2N^2+10N+4-k^2-7k, \notag\\
 a_k&=\frac{(k+1)\{2N(N+1)(k+4)-k(k+2)(k+3)\}}
 {N(N+1)D_k},
 \qquad a_{-1}=0,\qquad a_N=1.\label{cs:sfgm:eq:a}\\
 b_k&=a_k-a_{k-1},\qquad 0\leq k\leq N.\label{cs:sfgm:eq:b}
\end{align}
{For \(1\leq k<N\), define the state \(\boldsymbol w_k\in\mathcal H^2\),
the matrix \(M_k\in\mathbb R^{2\times2}\), and the coefficient vectors
\(\vc_k,\vd_k\in\mathbb R^2\) by}
\begin{align*}
 \boldsymbol w_k&=
 \begin{bmatrix}z_k-x_\star\\x_k-x_\star\end{bmatrix},
 &{M_k}&=\begin{pmatrix}1&0\\2/(k+3)&(k+1)/(k+3)\end{pmatrix},\notag\\
 {\vc_k}&=
 \begin{bmatrix}(k+2)/2\\(2k+3)/(k+3)\end{bmatrix},
 &{\vd_k}&=\frac12
 \begin{bmatrix}2a_{k-1}/k\\a_k-(k+2)a_{k-1}/k\end{bmatrix}.
\end{align*}
{For \(2\leq k<N\), define \(\boldsymbol\xi_k,\boldsymbol\zeta_k\in\mathbb R^2\) by}
\begin{align*}
 \boldsymbol\xi_k&=
 \begin{bmatrix}
  1-\tau_Nk(k+3)/2\\
  1-\tau_Nk(k+7)/4
 \end{bmatrix},\notag\\
 \boldsymbol\zeta_k&=\tau_N
 \begin{bmatrix}
  2a_{k-1}/k+1-(k+1)(k+2)/[N(N+1)]\\
  (k+1)(k+2)/[N(N+1)]-(k+2)a_{k-1}/k
 \end{bmatrix}.
\end{align*}
The same expressions are used at adjacent indices when they occur below.
{For \(2\leq k<N\), define an auxiliary matrix
\(\overline{\vQ}_k\in\mathbb R^{2\times2}\) satisfying
\(\overline{\vQ}_k\boldsymbol\xi_k=\boldsymbol\zeta_k\) by}
{%
\begin{equation*}
 \overline{\vQ}_k=\frac{(\boldsymbol\zeta_k)_1}{(\boldsymbol\xi_k)_2}
 \begin{pmatrix}
  0&1\\[4pt]
  1&\dfrac{(\boldsymbol\zeta_k)_2}{(\boldsymbol\zeta_k)_1}
  -\dfrac{(\boldsymbol\xi_k)_1}{(\boldsymbol\xi_k)_2}
 \end{pmatrix}.
\end{equation*}
}
{With \(\boldsymbol e_1=[1,0]^\top\in\mathbb R^2\), define
\(\boldsymbol n_k\in\mathbb R^2\) and \(\vQ_k\in\mathbb R^{2\times2}\) by}
{%
\begin{align}
 \boldsymbol n_k&=
 \begin{bmatrix}1\\-(\boldsymbol\xi_k)_1/(\boldsymbol\xi_k)_2\end{bmatrix},\notag\\
 \vQ_k&=\overline{\vQ}_k+
 \frac{(\vd_{k-1})_1
 -\boldsymbol e_1^\top M_{k-1}^\top\overline{\vQ}_k\vc_{k-1}}
 {(\boldsymbol e_1^\top M_{k-1}^\top\boldsymbol n_k)
  (\boldsymbol n_k^\top\vc_{k-1})}
 \boldsymbol n_k\boldsymbol n_k^\top.
 \label{cs:sfgm:eq:Q}
\end{align}
}
{%
Define the Lyapunov values \(V_k\in\mathbb R\) as in
\eqref{cs:sfgm:eq:V-first}, \eqref{cs:sfgm:eq:V-interior}, and
\eqref{cs:sfgm:eq:V-end}, \ie, set \(V_0=0\).
For \(N\ge2\), define \(V_1\) by
\begin{equation*}
\begin{aligned}
 V_1={}&a_1\bigl(f(x_1)-f(x_\star)\bigr)
 -\tau_NL\norm{x_0-x_\star}^2
 +\frac{a_1}{2L}\norm{\nabla f(x_1)}^2
 \\
 &-b_1\csinner{\nabla f(x_1)}{x_1-x_\star}
 +\frac{L\tau_N(1-4\tau_N)}{(1-2\tau_N)^2}
 \norm{x_1-x_\star}^2
\end{aligned}
\end{equation*}
For \(2\le k<N\), define \(V_k\) by
\begin{equation*}
\begin{aligned}
 V_k={}&a_k\bigl(f(x_k)-f(x_\star)\bigr)
 -\tau_NL\norm{x_0-x_\star}^2
 +\frac{a_k}{2L}\norm{\nabla f(x_k)}^2\\
 &-2\csinner{\nabla f(x_k)}{{\vd_k}^\top\boldsymbol w_k}
 +L\boldsymbol w_k^\top {\vQ_k}\boldsymbol w_k,
\end{aligned}
\end{equation*}
At the reported point, set
\begin{equation*}
    V_N=f(y_N)-f(x_\star)-\tau_NL\norm{x_0-x_\star}^2
    +\frac1{2L}\norm{\nabla f(y_N)-Lb_N(y_N-x_\star)}^2.
\end{equation*}
}

{For \(2\leq k<N\), define the vectors \(s_k,t_k\in\mathcal H\) by}
{%
\begin{equation*}
 s_k=\frac{\nabla f(x_k)}L
 -\frac{2\tau_N}{(\boldsymbol\xi_k)_2}(x_k-x_\star),
 \qquad
 t_k=\boldsymbol n_k^\top\boldsymbol w_k.
\end{equation*}
}
{The boundary square terms are defined below, with \(\alpha_1\in\mathbb R\):}
\begin{align}
 S_0&=\frac{L\tau_N}{(1-2\tau_N)^2}
 \norm{\frac{\nabla f(x_0)}L-2\tau_N(x_0-x_\star)}^2,\notag\\
 S_1&=L\alpha_1
 \norm{\frac{\nabla f(x_1)}L
 -\frac{2\tau_N}{1-2\tau_N}(x_1-x_\star)}^2,
 \qquad
 \alpha_1=
 \frac{(N^2+5N+2)(5N^2+5N-6)}
 {N(N+1)(N^2+5N-2)^2},
 \label{cs:sfgm:eq:first-slacks}
\end{align}
where the second line is used when \(N\geq3\).
{For \(2\leq k\leq N-2\), define \(\alpha_k,\beta_k\in\mathbb R\) and \(S_k\) by}
\begin{align}
 \alpha_k&=a_k-{\vc_k}^\top {\vQ_{k+1}}{\vc_k}
 =\frac{a_k(D_k+8)}{2D_k},\notag\\
 \beta_k&=({\vQ_k}-{M_k}^\top {\vQ_{k+1}}{M_k})_{11},
 \qquad
 S_k=L\alpha_k\norm{s_k}^2+L\beta_k\norm{t_k}^2.
 \label{cs:sfgm:eq:interior-slack}
\end{align}
{For \(N\geq3\), define the terminal coefficients
\(\alpha_{N-1},\beta_{N-1}\in\mathbb R\) and square \(S_{N-1}\) by}
{%
\begin{align}
 \alpha_{N-1}&=1-b_N-\frac{b_N^2}{2},\qquad
 \beta_{N-1}=(\vQ_{N-1})_{11}
 -\frac{a_{N-2}^2}{(N-1)^2\alpha_{N-1}},\notag\\
 S_{N-1}&=L\alpha_{N-1}
 \norm{s_{N-1}-\frac{a_{N-2}}{(N-1)\alpha_{N-1}}t_{N-1}}^2
 +L\beta_{N-1}\norm{t_{N-1}}^2,
 \label{cs:sfgm:eq:last-slack}
\end{align}
}
while for \(N=2\), we instead set
\begin{equation}
 S_1=\frac{11L}{18}
 \norm{\frac{\nabla f(x_1)}L-\frac{x_1-x_\star}{4}}^2.
 \label{cs:sfgm:eq:N-two-slack}
\end{equation}
{%
The square sums satisfy \(S_k\ge0\). For \(N\ge2\), the following decrement identities hold:\footnote{For \(N=1\), use the first case with \(x_1\) replaced by \(y_1\); this is the single base--terminal step.}
\begin{equation*}
 V_{k+1}-V_k=
 \begin{cases}
  \displaystyle b_0\mathcal I_f(x_\star,x_0)+a_0\mathcal I_f(x_0,x_1)+b_1\mathcal I_f(x_\star,x_1)-S_0, & k=0,\\
  \displaystyle a_k\mathcal I_f(x_k,x_{k+1})+b_{k+1}\mathcal I_f(x_\star,x_{k+1})-S_k, & 1\le k<N-1,\\
  \displaystyle a_{N-1}\mathcal I_f(x_{N-1},y_N)+b_N\mathcal I_f(x_\star,y_N)-S_{N-1}, & k=N-1.
 \end{cases}
\end{equation*}
The coefficient signs in Lemma~\ref{cs:sfgm:lem:signs} and the nonpositive interpolation residuals imply \(V_N\le\cdots\le V_0=0\).
Dropping the nonnegative square in \(V_N\) and applying the radius assumption gives
}
\[
 f(y_N)-f_\star
 \leq\tau_NL\norm{x_0-x_\star}^2
 \leq\frac{2LR^2}{(N+2)(N+3)}.
\]
\end{theorem}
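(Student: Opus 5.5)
The plan is to verify the three families of decrement identities as exact polynomial identities in the Gram coordinates, then check coefficient signs (Lemma~\ref{cs:sfgm:lem:signs}), and finally telescope. Fix \(x_\star=0\) without loss of generality. Expressed in the basis \(\{x_0,\nabla f(x_0),\dots,\nabla f(x_{N-1}),\nabla f(y_N)\}\), each side of every claimed identity is a linear form in the function values \(f(x_k)-f_\star\) plus a quadratic form in these vectors, as in Appendix~\ref{appendix:workflow_detail_gd}. Rather than work in the full \((N+2)\)-dimensional coordinates, I would use the local structure of the recursion. The FGM-rational update gives
\[
z_{k+1}=z_k-\tfrac{k+2}{2L}\nabla f(x_k),\qquad x_{k+1}=\tfrac{k+1}{k+3}\bigl(x_k-\tfrac1L\nabla f(x_k)\bigr)+\tfrac{2}{k+3}z_{k+1}.
\]
Hence \(\boldsymbol w_{k+1}=M_k\boldsymbol w_k-\vc_k\nabla f(x_k)/L\), where \(M_k\) and \(\vc_k\) are the matrix and vector defined in the theorem. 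Consequently every interior identity \(V_{k+1}-V_k=a_k\mathcal I_f(x_k,x_{k+1})+b_{k+1}\mathcal I_f(x_\star,x_{k+1})-S_k\) involves only the four vectors \(z_k,x_k,\nabla f(x_k),\nabla f(x_{k+1})\). It therefore reduces to matching a \(4\times4\) symmetric coefficient matrix together with the function-value coefficients.

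\textbf{Function values and constants.} In \(V_{k+1}-V_k\) the coefficient of \(f(x_{k+1})-f_\star\) is \(a_{k+1}\). On the right-hand side it is \(a_k+b_{k+1}\), which equals \(a_{k+1}\) by \eqref{cs:sfgm:eq:b}. The coefficient of \(f(x_k)\) is \(-a_k\) on both sides. The \(\tau_N L\norm{x_0-x_\star}^2\) terms cancel.

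\textbf{Quadratic blocks.} I would match them block by block. The \(\nabla f(x_{k+1})\)–\(\nabla f(x_{k+1})\) entry gives \(a_{k+1}/(2L)\) on the left and \((a_k+b_{k+1})/(2L)\) on the right. The cross terms between \(\nabla f(x_{k+1})\) and \(\boldsymbol w_{k+1}\) are consistent precisely because \(\vd_{k+1}\) is built from \(a_k,a_{k+1}\) via the interpolation inner products \(\langle\nabla f(x_{k+1}),x_k-x_{k+1}\rangle\) and \(\langle\nabla f(x_{k+1}),x_{k+1}\rangle\); this has to be checked after substituting \(x_k\) through the inverse of the update. The remaining \(3\times3\) block in \((\boldsymbol w_k,\nabla f(x_k))\) must equal the Gram matrix of \(S_k\). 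By construction, the \(\nabla f(x_k)\) diagonal gives \(\alpha_k=a_k-\vc_k^\top\vQ_{k+1}\vc_k\). The \(\boldsymbol w_k\) block gives \(\vQ_k-M_k^\top\vQ_{k+1}M_k\), which must be rank one in the direction \(\boldsymbol n_k\) together with the \(s_k\) contribution, with the cross term matching \(\overline{\vQ}_k\boldsymbol\xi_k=\boldsymbol\zeta_k\). The rank-one correction in \eqref{cs:sfgm:eq:Q} is designed to fix the \((\vd_{k-1})_1\) entry.

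I would certify all of this symbolically in \(N,k\) with rational-function simplification, as in the companion notebook. The boundary cases \(k=0\), \(k=1\) (\(V_1\) has its own form), \(k=N-1\) (with \(y_N\) replacing \(x_N\) and the terminal square in \(V_N\)), and \(N=2\) with \eqref{cs:sfgm:eq:N-two-slack} are checked separately the same way.

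The main obstacle is the interior \(\boldsymbol w\)-block identity \(\vQ_k-M_k^\top\vQ_{k+1}M_k=\beta_k\boldsymbol n_k\boldsymbol n_k^\top+\alpha_k\cdot(\text{rank-one from }s_k)\). This is hard because \(\vQ_{k+1}\) is defined only implicitly through \(\boldsymbol\xi_{k+1},\boldsymbol\zeta_{k+1}\) and a rank-one correction. I would first clear denominators and reduce the identity to polynomial identities in \(k,N\), using the closed forms of \(a_k\) and \(D_k\), and verify these by expansion.

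\textbf{Signs.} The remaining step is to confirm \(a_k,b_k>0\) and \(\alpha_k,\beta_k\geq0\) on \(0\le k\le N\). This is Lemma~\ref{cs:sfgm:lem:signs}, which I assume. For \(\alpha_k=a_k(D_k+8)/(2D_k)\), positivity follows from \(D_k\ge N^2+3N+4>0\) for \(k<N\).

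\textbf{Conclusion.} Each decrement is then a nonnegative combination of nonpositive residuals minus nonnegative squares, so \(V_N\le V_0=0\). Dropping the square in \(V_N\) and using \(\norm{x_0-x_\star}\le R\) gives the bound \(f(y_N)-f_\star\le\tau_NLR^2=2LR^2/[(N+2)(N+3)]\).
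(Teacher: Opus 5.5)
Your plan is correct and has the same overall structure as the paper's proof. The gradient terms at $x_{k+1}$ cancel exactly against the weighted residuals, and what remains is the $3\times3$ slack matrix $\mathbf S_k$ in the coordinates $(\boldsymbol w_k,\nabla f(x_k)/L)$. Its blocks are $\mathbf Q_k-M_k^\top\mathbf Q_{k+1}M_k$, $-\mathbf d_k+M_k^\top\mathbf Q_{k+1}\mathbf c_k$ and $a_k-\mathbf c_k^\top\mathbf Q_{k+1}\mathbf c_k$. As in the paper, the boundary steps are checked separately, the signs come from the lemma, and the argument telescopes.

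\textbf{Difference at your ``main obstacle.''} You propose to clear denominators in $\mathbf Q_k-M_k^\top\mathbf Q_{k+1}M_k$ and expand. That is valid: $\mathbf Q_{k+1}$ is an explicit rational function of $(k,N)$, not an implicit object. It is, however, heavy, and the paper never expands $\mathbf Q_{k+1}$ for the factorization. Instead it proceeds as follows.
\begin{enumerate}
\item Two properties hold by construction: $\mathbf Q_j\boldsymbol\xi_j=\boldsymbol\zeta_j$, because $\overline{\mathbf Q}_j\boldsymbol\xi_j=\boldsymbol\zeta_j$ and $\boldsymbol n_j^\top\boldsymbol\xi_j=0$; and $(-\mathbf d_k+M_k^\top\mathbf Q_{k+1}\mathbf c_k)_1=0$, by the choice of the rank-one correction.
\item Three transport identities involve only $M_k,\mathbf c_k,\mathbf d_k,\boldsymbol\xi,\boldsymbol\zeta$: $M_k\boldsymbol\xi_k-2\tau_N\mathbf c_k=\boldsymbol\xi_{k+1}$, $\boldsymbol\zeta_k-M_k^\top\boldsymbol\zeta_{k+1}=2\tau_N\mathbf d_k$, and $\mathbf c_k^\top\boldsymbol\zeta_{k+1}-\mathbf d_k^\top\boldsymbol\xi_k+2\tau_Na_k=0$.
\item Together, these force $\mathbf S_k$ to annihilate $[(\boldsymbol\xi_k)_1,(\boldsymbol\xi_k)_2,2\tau_N]^\top$.
\item A symmetric $3\times3$ matrix with this kernel vector, and with $(\boldsymbol\xi_k)_2\neq0$, is determined by its $(1,1)$, $(1,3)$ and $(3,3)$ entries. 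These are $\beta_k$, $0$ and $\alpha_k$ by definition, so the rank-two factorization $\alpha_k\|s_k\|^2+\beta_k\|t_k\|^2$ follows with no further algebra.
\end{enumerate}
Explicit expansion of $\mathbf Q_k$ is needed only for the closed forms of $\alpha_k$ and $\beta_k$, which feed the sign lemma.

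\textbf{What each route buys.} Your brute-force expansion is mechanical and certainly works. The kernel argument explains why $\boldsymbol\xi_k$, $\boldsymbol\zeta_k$ and the rank-one correction appear, and it reduces the interior check to identities that are easy to verify by hand.

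\textbf{Small omission.} Add the $N=1$ base--terminal step from the statement's footnote to your list of separately checked cases.
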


\begin{proof}
{%
The companion notebook is \texttt{fgm\_rational\_tight\_example\_lyap.ipynb}, in
\texttt{examples\_peppy/fgm\_rational\_tight/}.
{The notebook verifies the same certificate using \(\vQ_k\) and
the dimensionless constant \(\tau_N\), with different auxiliary notation.} The exact verification section
starts at Cell~31 and can be run independently of the numerical discovery
blocks. Its symbolic identities hold for arbitrary parameters on the
stated domains. Cell~36 provides an additional check of the complete
potential decrease identities at \(N=1,2,3,4,6,8\).
}

{%
For \(2\le k\le N-2\), subtracting two consecutive instances of \eqref{cs:sfgm:eq:V-interior} cancels the common initial-radius term and gives
\begin{equation}\label{cs:sfgm:eq:potential-difference}
\begin{aligned}
V_{k+1}-V_k
 &=a_{k+1}\bigl(f(x_{k+1})-f(x_\star)\bigr)-a_k\bigl(f(x_k)-f(x_\star)\bigr)\\
 &\quad+\frac{a_{k+1}}{2L}\norm{\nabla f(x_{k+1})}^2
 -2\csinner{\nabla f(x_{k+1})}{\vd_{k+1}^\top\boldsymbol w_{k+1}}
 +L\boldsymbol w_{k+1}^\top\vQ_{k+1}\boldsymbol w_{k+1}\\
 &\quad-\frac{a_k}{2L}\norm{\nabla f(x_k)}^2
 +2\csinner{\nabla f(x_k)}{\vd_k^\top\boldsymbol w_k}
 -L\boldsymbol w_k^\top\vQ_k\boldsymbol w_k.
\end{aligned}
\end{equation}
Our goal is to identify this expression with the interior case of \eqref{cs:sfgm:eq:decrements}. We first factor the quadratic remainder, then match the weighted interpolation residuals; the boundary cases are treated separately.
}

\medskip
\noindent\textbf{{Step 1: Expressing the remainder as a sum of squares.}}
For \(2\leq k\leq N-2\), Steps 1 and 2 prove the second line of
\eqref{cs:sfgm:eq:decrements}, namely,
\[
 V_{k+1}-V_k
 =a_k\mathcal I_f(x_k,x_{k+1})
 +b_{k+1}\mathcal I_f(x_\star,x_{k+1})-S_k,
\]
with \(S_k\) as defined in \eqref{cs:sfgm:eq:interior-slack}.
We first factor the quadratic remainder into the two squares defining
\(S_k\); Step 2 identifies this remainder in the Lyapunov difference.
The FGM-rational updates give ({notebook Cell~32})
\begin{equation}
 \boldsymbol w_{k+1}={M_k}\boldsymbol w_k
 -{\vc_k}\frac{\nabla f(x_k)}L.
 \label{cs:sfgm:eq:state-transition}
\end{equation}
{Define the symmetric matrix \(\vS_k\in\mathbb R^{3\times3}\) in the coordinates \((\boldsymbol w_k,\nabla f(x_k)/L)\) by}
\begin{equation}
 {\vS_k}=
 \begin{pmatrix}
 {\vQ_k}-{M_k}^\top {\vQ_{k+1}}{M_k}&
 -{\vd_k}+{M_k}^\top {\vQ_{k+1}}{\vc_k}\\
 (-{\vd_k}+{M_k}^\top {\vQ_{k+1}}{\vc_k})^\top&
 a_k-{\vc_k}^\top {\vQ_{k+1}}{\vc_k}
 \end{pmatrix}.
 \label{cs:sfgm:eq:slack-matrix}
\end{equation}
{Quadratic forms in these coordinates use inner products of the component vectors.}
The definition of \({\vQ_k}\) gives the following two identities, verified in
{notebook Cell~33}:
\[
 {\vQ_k}\boldsymbol\xi_k=\boldsymbol\zeta_k,
 \qquad
 \bigl(-{\vd_{k-1}}
 +{M_{k-1}}^\top {\vQ_k}{\vc_{k-1}}\bigr)_1=0.
\]
The same cell verifies all three transport identities by direct substitution:
\begin{align}
 {M_k}\boldsymbol\xi_k-2\tau_N{\vc_k}
 &=\boldsymbol\xi_{k+1},\notag\\
 \boldsymbol\zeta_k-{M_k}^\top\boldsymbol\zeta_{k+1}
 -2\tau_N{\vd_k}&=0,\notag\\
 -{\vd_k}^\top\boldsymbol\xi_k
 +{\vc_k}^\top\boldsymbol\zeta_{k+1}
 +2\tau_Na_k&=0.
 \label{cs:sfgm:eq:transport}
\end{align}
Using \({\vQ_j}\boldsymbol\xi_j=\boldsymbol\zeta_j\) at \(j=k,k+1\)
together with \eqref{cs:sfgm:eq:transport}, we find that \({\vS_k}\)
annihilates \([(\boldsymbol\xi_k)_1,(\boldsymbol\xi_k)_2,2\tau_N]^\top\).
{The cross-term identity at index \(k+1\) and the diagonal entries
identify the following coefficients of the quadratic form of \(\vS_k\) in \((\boldsymbol w_k,\nabla f(x_k)/L)\):
\[
\begin{aligned}
 \norm{(\boldsymbol w_k)_1}^2&:\quad (\vS_k)_{11}=\beta_k,\\
 \frac2L\csinner{\nabla f(x_k)}{(\boldsymbol w_k)_1}&:\quad
 (\vS_k)_{13}=0,\\
 \frac1{L^2}\norm{\nabla f(x_k)}^2&:\quad
 (\vS_k)_{33}=\alpha_k.
\end{aligned}
\]
Together with the kernel relation, these coefficients determine the matrix as}
{%
\[
 \vS_k=\alpha_k
 \begin{bmatrix}0\\-2\tau_N/(\boldsymbol\xi_k)_2\\1\end{bmatrix}
 \begin{bmatrix}0\\-2\tau_N/(\boldsymbol\xi_k)_2\\1\end{bmatrix}^{\!\top}
 +\beta_k
 \begin{bmatrix}1\\-(\boldsymbol\xi_k)_1/(\boldsymbol\xi_k)_2\\0\end{bmatrix}
 \begin{bmatrix}1\\-(\boldsymbol\xi_k)_1/(\boldsymbol\xi_k)_2\\0\end{bmatrix}^{\!\top}.
\]
}
{Evaluating the associated quadratic form and using the definitions of \(s_k,t_k\) gives}
{%
\[
 L\begin{bmatrix}\boldsymbol w_k\\\nabla f(x_k)/L\end{bmatrix}^{\!\top}\vS_k\begin{bmatrix}\boldsymbol w_k\\\nabla f(x_k)/L\end{bmatrix}
 =L\alpha_k\norm{s_k}^2+L\beta_k\norm{t_k}^2
 =S_k,
\]
}
The \((3,3)\) entry also yields the displayed formula for
\(\alpha_k\) in \eqref{cs:sfgm:eq:interior-slack}.
{Notebook Cell~33} verifies the kernel,
the zero \((1,3)\) entry, every entry of this matrix factorization, and
the displayed rational formula for \(\alpha_k\).

\medskip
\noindent\textbf{{Step 2: Matching the remainder to the Lyapunov difference.}}
{It remains to show that the quadratic form evaluated in Step 1 is the difference between the weighted interpolation residuals and \(V_{k+1}-V_k\).} {%
To match \eqref{cs:sfgm:eq:potential-difference}, expand the weighted residuals. Since \(b_{k+1}=a_{k+1}-a_k\) and \(\nabla f(x_\star)=0\), their function-value terms agree with the potential difference:
\begin{equation}\label{cs:sfgm:eq:residual-expansion}
\begin{aligned}
&a_k\mathcal I_f(x_k,x_{k+1})+b_{k+1}\mathcal I_f(x_\star,x_{k+1})\\
 &\quad=a_{k+1}\bigl(f(x_{k+1})-f(x_\star)\bigr)-a_k\bigl(f(x_k)-f(x_\star)\bigr)\\
 &\qquad+\frac{a_k}{2L}\norm{\nabla f(x_k)}^2
 +\frac{a_{k+1}}{2L}\norm{\nabla f(x_{k+1})}^2\\
 &\qquad+\csinner{\nabla f(x_{k+1})}
 {a_k\left(x_k-x_{k+1}-\frac{\nabla f(x_k)}L\right)+b_{k+1}(x_\star-x_{k+1})}.
\end{aligned}
\end{equation}
The remaining inner-product coefficient follows from \(y_{k+1}=x_k-\nabla f(x_k)/L\) and the mixing update:
\begin{equation}\label{cs:sfgm:eq:residual-vector}
\begin{aligned}
&a_k\left(x_k-x_{k+1}-\frac{\nabla f(x_k)}L\right)+b_{k+1}(x_\star-x_{k+1})\\
 &\quad=a_k(y_{k+1}-x_\star)-a_{k+1}(x_{k+1}-x_\star)\\
 &\quad=-\frac{2a_k}{k+1}(z_{k+1}-x_\star)
 +\left(\frac{(k+3)a_k}{k+1}-a_{k+1}\right)(x_{k+1}-x_\star)
 =-2\vd_{k+1}^\top\boldsymbol w_{k+1}.
\end{aligned}
\end{equation}
Subtracting \eqref{cs:sfgm:eq:potential-difference} now cancels all function-value terms and all terms involving \(\nabla f(x_{k+1})\). The remaining expression is
\[
 L\bigl(\boldsymbol w_k^\top\vQ_k\boldsymbol w_k
 -\boldsymbol w_{k+1}^\top\vQ_{k+1}\boldsymbol w_{k+1}\bigr)
 +\frac{a_k}{L}\norm{\nabla f(x_k)}^2
 -2\csinner{\nabla f(x_k)}{\vd_k^\top\boldsymbol w_k}.
\]
}
Substituting \eqref{cs:sfgm:eq:state-transition} into the remaining
quadratic terms gives
\[
\begin{aligned}
 &a_k\mathcal I_f(x_k,x_{k+1})
 +b_{k+1}\mathcal I_f(x_\star,x_{k+1})-(V_{k+1}-V_k)\\
 &\quad{=L\begin{bmatrix}\boldsymbol w_k\\\nabla f(x_k)/L\end{bmatrix}^{\!\top}\vS_k\begin{bmatrix}\boldsymbol w_k\\\nabla f(x_k)/L\end{bmatrix}=S_k,}
\end{aligned}
\]
where the last equality is the factorization proved in Step 1.
Rearranging proves the second line of \eqref{cs:sfgm:eq:decrements}
for \(2\leq k\leq N-2\). The case \(k=1\) uses the separate
potential \eqref{cs:sfgm:eq:V-first} and is treated below. The signs
needed to deduce \(V_{k+1}\leq V_k\) are established in {Lemma~\ref{cs:sfgm:lem:signs}.}
{Notebook Cell~37} verifies this cancellation in every quadratic,
function-value, and constant coefficient, including the specialization
to the first interior step.

\medskip
\noindent\textbf{Boundary indices.}
We now verify the remaining cases of \eqref{cs:sfgm:eq:decrements}:
the base step, the first interior step \(k=1\) for \(N\geq3\), and the terminal step.
Using the boundary potentials \eqref{cs:sfgm:eq:V-first} and
\eqref{cs:sfgm:eq:V-end}, we identify their quadratic remainders with
{the stated \(S_0,S_1,S_{N-1}\).}
At \(k=1\), for \(N\geq3\),
\(\boldsymbol w_1=[x_1-x_\star,x_1-x_\star]^\top\).
Substitution of \({\vQ_2}\) leaves the rank-one quadratic
\[
 L\alpha_1
 \norm{\frac{\nabla f(x_1)}L
 -\frac{2\tau_N}{1-2\tau_N}(x_1-x_\star)}^2,
\]
which is \(S_1\) in \eqref{cs:sfgm:eq:first-slacks}.
{Notebook Cell~35} verifies this rank-one factorization and the stated
formula for \(\alpha_1\).

At \(k=N-1\), for \(N\geq3\), use
\(y_N-x_\star=[0,1]\boldsymbol w_{N-1}
-\nabla f(x_{N-1})/L\) and the scalar \(b_N^2/2\) from the terminal
square. The resulting block has the same kernel as
\eqref{cs:sfgm:eq:slack-matrix}, with \((1,3)\) entry \({-\frac{a_{N-2}}{N-1}}\),
\((3,3)\) entry \(\alpha_{N-1}\), and \((1,1)\) entry
\(({\vQ_{N-1}})_{11}\). Completing the square gives
\eqref{cs:sfgm:eq:last-slack}.
{Notebook Cell~35} verifies the terminal block, its kernel, and the
completed-square factorization; {Cell~39} verifies the full terminal
decrement identity, including the function values.

For the base step, substituting
\(a_0=2\tau_N/(1-2\tau_N)\) and
\(x_1=x_0-\nabla f(x_0)/L\) gives \(S_0\) in
\eqref{cs:sfgm:eq:first-slacks} ({notebook Cells~35 and~38}).\footnote{{When \(N=1\),
\(a_0=b_0=b_1=1/2\), and the same calculation uses the terminal
definition of \(V_1\) ({Cell~40}). When \(N=2\),
\(\tau_N=1/10\), \(a_0=1/4\), \(a_1=2/3\), and \(b_2=1/3\);
the remaining quadratic is \eqref{cs:sfgm:eq:N-two-slack} ({Cell~41}).}}

\medskip
\noindent\textbf{Positivity of the coefficients.}
{%
Lemma~\ref{cs:sfgm:lem:signs} proves the coefficient signs and the
bounds needed for the defining denominators. Hence every \(S_k\geq0\),
and the nonpositive interpolation residuals in
\eqref{cs:sfgm:eq:decrements} give \(V_{k+1}\leq V_k\).
}

{%
\medskip
\noindent\textbf{Conclusion.}
Thus \(V_N\leq\cdots\leq V_0=0\). Dropping the nonnegative square in
\eqref{cs:sfgm:eq:V-end} and using \(\norm{x_0-x_\star}\leq R\)
proves the claimed bound.
}

\end{proof}

\begin{lemma}%
\label{cs:sfgm:lem:signs}
\normalfont
{%
The coefficients are positive on their respective index ranges:\footnote{{The unused value \(a_{-1}\) is zero.}}
\[
 a_k>0,\qquad b_k>0,\qquad \alpha_k>0,\qquad \beta_k>0.
\]
The remaining boundary-square coefficients in \eqref{cs:sfgm:eq:first-slacks} and \eqref{cs:sfgm:eq:N-two-slack} are also positive.
}
\end{lemma}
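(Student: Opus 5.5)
I would prove the lemma by direct algebra on the explicit rational formulas, treating each family of coefficients separately and reducing every sign claim to the positivity of a polynomial in \(k\) and \(N\) on the range \(0\le k\le N\) (or \(1\le k<N\), etc.). The first step is the denominator \(D_k=2N^2+10N+4-k^2-7k\). For \(0\le k\le N-1\) we have \(k^2+7k\le (N-1)^2+7(N-1)=N^2+5N-6\), so \(D_k\ge N^2+5N+10>0\). With this, \(a_k>0\) follows from \(2N(N+1)(k+4)>k(k+2)(k+3)\), which holds because \(k<N\) gives \(k(k+2)(k+3)<N(N+2)(N+3)\le 2N(N+1)(N+4)\). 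The boundary values \(a_{-1}=0\) and \(a_N=1\) are given.

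For \(b_k=a_k-a_{k-1}\), I would compute the difference as a single rational function. Its denominator is \(N(N+1)D_kD_{k-1}\). I expect the numerator to factor, or at least to be a polynomial in \(k\) with coefficients in \(N\) that is visibly positive after substituting \(N=k+1+m\) with \(m\ge0\) and expanding, so that all coefficients are nonnegative. The endpoint cases need separate checks: \(b_0=a_0=2\tau_N/(1-2\tau_N)>0\), and \(b_N=1-a_{N-1}\), which reduces to \(a_{N-1}<1\), a direct polynomial check.

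The quantity \(\alpha_k\) is already in closed form, \(a_k(D_k+8)/(2D_k)\), so it is positive by the first step. The boundary coefficients are also explicit:
\begin{itemize}
\item \(\alpha_1\) has positive factors, using \(N^2+5N-2>0\) and \(5N^2+5N-6>0\) for \(N\ge1\);
\item the coefficients of \(S_0\) and \eqref{cs:sfgm:eq:N-two-slack} are positive, using \(\tau_N<1/2\);
\item \(\alpha_{N-1}=1-b_N-b_N^2/2\) is positive once \(b_N<\sqrt3-1\). This follows since \(b_N=1-a_{N-1}\) is an explicit small rational function; I would bound it by something like \(4/(N+3)\) and check \(N=3\) directly.
\end{itemize}

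The main obstacle is \(\beta_k=(\vQ_k-M_k^\top\vQ_{k+1}M_k)_{11}\) for \(2\le k\le N-2\), together with \(\beta_{N-1}\). Here \(\vQ_k\) is defined through \(\overline{\vQ}_k\) and a rank-one correction, with the nested quantities \(\boldsymbol\xi_k,\boldsymbol\zeta_k,\vd_{k-1}\). The plan is to simplify \(\beta_k\) symbolically, as the companion notebook does, to a single rational function \(P(k,N)/Q(k,N)\). First I would check that each denominator factor is sign-definite; these factors are \((\boldsymbol\xi_k)_2\), \(D_k\), \(D_{k\pm1}\), and \((\boldsymbol e_1^\top M_{k-1}^\top\boldsymbol n_k)(\boldsymbol n_k^\top\vc_{k-1})\). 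For \((\boldsymbol\xi_k)_2=1-\tau_Nk(k+7)/4\), positivity uses \(k\le N\), giving \(k(k+7)\le N(N+7)<2(N+2)(N+3)\). Then I would show \(P>0\) by substituting \(N=k+2+m\) with \(m\ge0\) and \(k=2+j\) with \(j\ge0\), and checking that all coefficients of the resulting polynomial in \((j,m)\) are nonnegative, and not all zero.

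If some coefficients come out negative, I would fall back on grouping terms, for example using \(m\ge0\) to dominate the negative monomials, or on a separate check of small \(k\). The same substitution handles \(\beta_{N-1}\) with \(k=N-1\), where it is a single-variable polynomial in \(N\ge3\) and so should be straightforward.
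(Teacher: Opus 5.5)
Your plan is essentially the paper's proof. Every sign is reduced to an explicit rational function, and the hard interior case is settled exactly as you propose: with \(m=k-2\) and \(\ell=N-k-2\), the numerator of \(\beta_k\) is a polynomial \(P_{m,\ell}\) with only positive coefficients, so no fallback is needed.

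One step is invalid as written. For \(a_k>0\) you need \(k(k+2)(k+3)<2N(N+1)(k+4)\). Your chain only gives \(k(k+2)(k+3)<2N(N+1)(N+4)\), which is weaker because \(N+4>k+4\), so nothing follows. Use instead \(k(k+2)\le(N-1)(N+1)<N(N+1)\) together with \(k+3<2(k+4)\).

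The paper handles \(a_k\) and \(b_k\) together. It uses \(D_0a_0=8\), \(D_ka_k-D_{k-1}a_{k-1}=4(k+2)(N-k)(N+k+1)/[N(N+1)]>0\), and \(D_{k-1}-D_k=2(k+3)>0\). Your common-denominator computation for \(b_k\) also works, because it amounts to \(b_k=\bigl[4(k+2)(N-k)(N+k+1)/(N(N+1))+2(k+3)a_{k-1}\bigr]/D_k\).

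There are two smaller points. First, add \((\boldsymbol\zeta_k)_1\) to your list of denominators, since \(\overline{\vQ}_k\) as defined divides by it. The paper shows \((\boldsymbol\zeta_k)_1\ge 2\tau_Na_{k-1}/k>0\), using \((k+1)(k+2)\le N(N+1)\).

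Second, \(\beta_{N-1}\) is not the interior formula evaluated at \(k=N-1\). There is no \(\vQ_N\), and your substitution \(N=k+2+m\) would give \(m=-1\). It is defined separately as \((\vQ_{N-1})_{11}-a_{N-2}^2/((N-1)^2\alpha_{N-1})\). This simplifies to a rational function, not a polynomial:
\begin{itemize}
\item its numerator is \((N^2+5N+18)^2\) times a degree-9 polynomial, which has positive coefficients after the shift \(N=3+\ell\);
\item its denominator contains \(N-2\) and \(N^4+10N^3+37N^2+60N-12\), which is the numerator of \(\alpha_{N-1}\).
\end{itemize}
So you must establish \(\alpha_{N-1}>0\) and use \(N\ge3\) before the sign of \(\beta_{N-1}\) can be read off. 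Your argument via \(b_N<\sqrt3-1\) does supply \(\alpha_{N-1}>0\).
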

\begin{proof}
{%
For \(0\leq k<N\),
\(D_k\geq D_{N-1}=N^2+5N+10>0\).
{For \(N\geq3\), use the polynomials \(p_4,p_9:\mathbb R\to\mathbb R\) defined by}
\begin{align*}
 p_4(N)&=N^4+10N^3+37N^2+60N-12,\\
 p_9(N)&=N^9+26N^8+280N^7+1696N^6+6325N^5+13538N^4\\
 &\qquad+11402N^3-4844N^2-2904N+5456.
\end{align*}
{For the following expansions, define \(\ell:=N-3\in\mathbb R_{\geq0}\):}
\begin{align*}
 p_4(3+\ell)&=\ell^4+22\ell^3+181\ell^2+660\ell+852,\\
 p_9(3+\ell)&=\ell^9+53\ell^8+1228\ell^7+16396\ell^6+139291\ell^5
 +780011\ell^4\\
 &\qquad+2867792\ell^3+6635500\ell^2+8715168\ell+4933568.
\end{align*}
Both polynomials are positive for \(\ell\geq0\).

\begin{itemize}
\item \textbf{Positivity of \(a_k\).}
The coefficient formula \eqref{cs:sfgm:eq:a} gives the initial value and the
successive differences (for \(1\leq k<N\)):
\[
 D_0a_0=8,\qquad
 D_ka_k-D_{k-1}a_{k-1}
 =\frac{4(k+2)(N-k)(N+k+1)}{N(N+1)}>0.
\]
Since \(D_k>0\) and \(D_{k-1}-D_k=2(k+3)>0\), these identities imply
that \(a_k>0\) for \(0\leq k<N\) and that \(a_k\) is strictly increasing
through \(k=N-1\). The remaining value is \(a_N=1\).
{%
For \(2\le k<N\), \((k+1)(k+2)\le N(N+1)\), so
\[
 (\boldsymbol\zeta_k)_1
 =\tau_N\left(\frac{2a_{k-1}}{k}+1-\frac{(k+1)(k+2)}{N(N+1)}\right)
 \ge\frac{2\tau_Na_{k-1}}{k}>0.
\]
}
{The remaining denominators in \eqref{cs:sfgm:eq:Q} are nonzero because of the
following factorizations, also verified in {notebook Cell~34}:}
\begin{align*}
 (\boldsymbol\xi_k)_2
 &=\frac{D_k+8}{2(N+2)(N+3)}>0,\notag\\
 (\boldsymbol e_1^\top {M_{k-1}}^\top\boldsymbol n_k)
 (\boldsymbol n_k^\top\vc_{k-1})
 &=\frac{k^2(k-1)D_{k-2}D_{k-1}}
 {2(k+2)^2(D_k+8)^2}>0.
\end{align*}

\item \textbf{Positivity of \(b_k\).}
For \(1\leq k<N\), strict increase gives \(b_k=a_k-a_{k-1}>0\).
At the endpoints,
\[
 b_0=a_0>0,\qquad b_N=1-a_{N-1}=\frac8{N^2+5N+10}>0.
\]
Notebook Cell~34 verifies the identities for \(D_k,a_k,b_k\)
used in these two items.

\item \textbf{Positivity of \(\alpha_k\).}
For \(N\geq3\), the numerator factors in \(\alpha_1\) in
\eqref{cs:sfgm:eq:first-slacks} are positive. For \(2\leq k\leq N-2\),
\eqref{cs:sfgm:eq:interior-slack} gives
\[
 \alpha_k=\frac{a_k(D_k+8)}{2D_k}>0.
\]
At the terminal index,
\[
 \alpha_{N-1}=\frac{p_4(N)}{(N^2+5N+10)^2}>0.
\]
Notebook Cells~33 and~35 verify the interior and boundary formulas.

\item \textbf{Positivity of \(\beta_k\).}
{%
For \(2\leq k\leq N-2\), {set \(m=k-2\in\mathbb R\) and \(\ell=N-k-2\in\mathbb R\).}
Substitution into \eqref{cs:sfgm:eq:Q} gives
\begin{equation*}
 \beta_k=\frac{2(D_k+8)P_{m,\ell}}
 {Nk^2(N+1)(k-1)D_{k-2}^2D_{k-1}^2},
\end{equation*}
{where \(P_{m,\ell}\in\mathbb R\) is given by}
\begin{align*}
P_{m,\ell}={}&15m^7
+m^6(140\ell+753)
+m^5(446\ell^2+5214\ell+14835)\\
&+m^4(624\ell^3+12008\ell^2+74836\ell+150455)\\
&+m^3(532\ell^4+13176\ell^3+126946\ell^2+545318\ell+861038)\\
&+m^2(240\ell^5+7988\ell^4+101880\ell^3+658392\ell^2
+2157668\ell+2814280)\\
&+m\bigl(40\ell^6+2072\ell^5+37696\ell^4+336856\ell^3\\
&\qquad+1663624\ell^2+4416768\ell+4903840\bigr)\\
&+112\ell^6+3920\ell^5+54576\ell^4+395056\ell^3\\
&+1620512\ell^2+3649344\ell+3529344.
\end{align*}
Every coefficient is positive and \(m,\ell\ge0\), so \(\beta_k>0\) on the interior range.
{Notebook Cell~33} verifies the rational expression for \(\beta_k\)
against \eqref{cs:sfgm:eq:Q}, checks the displayed polynomial \(P_{m,\ell}\)
coefficient by coefficient, and checks its positive coefficients and constant term.
}

For \(N\geq3\), the terminal coefficient satisfies
\[
 \beta_{N-1}=
 \frac{(N^2+5N+18)^2p_9(N)}
 {(N-2)(N-1)^2(N^2+7N+14)^2(N^2+9N+16)^2p_4(N)}>0.
\]
Notebook Cell~35 verifies both terminal rational expressions and
both shifted-polynomial expansions above, including their coefficient signs.
\end{itemize}
Since \(0<\tau_N\leq1/6\), the coefficient of \(S_0\) in
\eqref{cs:sfgm:eq:first-slacks} is positive.\footnote{{When \(N=2\), the
coefficient \(11L/18\) in \eqref{cs:sfgm:eq:N-two-slack} is positive as well.}}
}
\end{proof}

\subsection{Proof of Theorem~\ref{thm:case-ofgm-primary} (Conjecture~4 in \texorpdfstring{\citet{taylor2017smooth}}{Taylor et al. (2017)})}
\label{app:case-ofgm-primary-proof}

For this case, we used the following initial prompt.

\begin{commandbox}
/pep-implement

Function: f is convex and L-smooth
Parameters: L, R
Initial condition: ||x_0 - x_star|| <= R, where grad f(x_star) = 0
Performance metric: f(y_N) - f(x_star)
Algorithm: Fast gradient method with fixed step size 1/L

Parameter sequence: theta_0 = 1 and theta_{k+1} = (1 + sqrt(1 + 4 * theta_k^2)) / 2 for k >= 0
Initialization: x_0 = y_0 = z_0
For k = 0,...,N-1:
y_{k+1} = x_k - (1/L) * grad f(x_k)
z_{k+1} = z_k - (theta_k/L) * grad f(x_k)
x_{k+1} = (1 - 1/theta_{k+1}) * y_{k+1} + (1/theta_{k+1}) * z_{k+1}

Conjectured rate: f(y_N) - f(x_star) <= L * R^2 / (2 * (2 * C_N + 1)), where C_N = 1 + sum_{j=0}^{N-2} h_{N-1,j} and x_i = x_0 - (1/L) * sum_{j=0}^{i-1} h_{i,j} * grad f(x_j)
\end{commandbox}
Then, we repeatedly entered:
\begin{commandbox}
proceed with the next step
\end{commandbox}

The initial proof of Conjecture~4 required a lengthy and intricate argument to establish positivity of the coefficients.
We therefore asked the agent to revise it using the two-square decomposition
in the FGM-rational proof (Appendix~\ref{app:case-sfgm-proof}), with the
additional prompt below. This led to formulas for the square coefficients
and a simpler positivity argument, as presented in the proof below.

\colorlet{csadditionalpromptcolor}{.}
\begin{commandbox}[coltext=csadditionalpromptcolor,
 listing options={basicstyle=\ttfamily\small\color{csadditionalpromptcolor},
 columns=fullflexible,breaklines=true,breakatwhitespace=true,breakindent=0pt,breakautoindent=false,keepspaces=true,showstringspaces=false,
 escapeinside={(*@}{@*)}}]
Could you explore whether allowing two squared-norm terms per iteration, as in (*@\texttt{fgm\_rational\_tight\_example\_lyap.ipynb}@*), can give explicit formulas for the square coefficients and simplify their positivity proof for FGM Conjecture 4?
\end{commandbox}

\begin{proof}[Proof outline for Theorem~\ref{thm:case-ofgm-primary}]
Set $D_k=2C_N+1-C_k\in\mathbb R$ and
$\tau_N=1/(2D_0)\in\mathbb R$. Define the Lyapunov function by
\(V_{0}=0\) and
\begin{equation}\label{cs:ofgm-primary:potential}
\begin{aligned}
 V_k&=a_k\bigl(f(x_k)-f(x_\star)\bigr)-\tau_NL\norm{x_0-x_\star}^2
      -\frac{a_k}{2L}\norm{\nabla f(x_k)}^2+L\boldsymbol w_k^\top \vQ_k\boldsymbol w_k
      &&(1\le k<N).
\end{aligned}
\end{equation}

At the last iterate, set
\begin{equation}\label{eq:case-ofgm-primary-lyapunov-end}
 V_N=f(y_N)-f(x_\star)-\tau_NL\norm{x_0-x_\star}^2.
\end{equation}
For $N\ge2$, the exact decrement identities are\footnote{For $N=1$, only the first line is used, with $x_1$ replaced by $y_1$,
$a_0=b_0=b_1=1/2$, and $S_0$ as specified in the footnote to
\eqref{cs:ofgm-primary:basesq}, and $V_1 = V_N$ is given by \eqref{eq:case-ofgm-primary-lyapunov-end}.
For $N=2$, only the first and last lines are used.}
\begin{equation}\label{cs:ofgm-primary:decrements}
 V_{k+1}-V_k=
 \begin{cases}
  \displaystyle b_0\mathcal I_f(x_\star,x_0)+a_0\mathcal I_f(x_0,x_1)+b_1\mathcal I_f(x_\star,x_1)-S_0, & k=0,\\
  \displaystyle a_k\mathcal I_f(x_k,x_{k+1})+b_{k+1}\mathcal I_f(x_\star,x_{k+1})-S_k, & 1\le k<N-1,\\
  \displaystyle a_{N-1}\mathcal I_f(x_{N-1},y_N)+b_N\mathcal I_f(x_\star,y_N)-S_{N-1}, & k=N-1.
 \end{cases}
\end{equation}

Each $S_k$ is the sum of two squares with positive coefficients.
Theorem~\ref{cs:ofgm-primary:thm:main} below specifies the Lyapunov function coefficients and $S_k$.
Its proof verifies the identities \eqref{cs:ofgm-primary:decrements}, and Lemma~\ref{cs:ofgm-primary:lem:signs} establishes the coefficient signs and hence $S_k\ge0$.
Together with the fact that the interpolation inequalities $\mathcal I_f$ are nonpositive,
\eqref{cs:ofgm-primary:decrements} therefore gives $V_N\le\cdots\le V_0=0$ and consequently shows \eqref{eq:case-ofgm-primary-rate}.
\end{proof}

\begin{theorem}[{Theorem~\ref{thm:case-ofgm-primary} with full details of the Lyapunov function}]
\label{cs:ofgm-primary:thm:main}
\normalfont
Let \(f\in\mathcal F_L\), let \(x_\star\in\mathcal H\) minimize \(f\), and suppose \(\norm{x_0-x_\star}\le R\),
where $R\ge0$. Fix an integer $N\ge1$. Run~\ref{eq:case-theta-recursive} from $x_0=y_0=z_0$.
Define
\begin{equation*}
 D_k=2C_N+1-C_k\in\mathbb R\quad(0\le k\le N),\qquad
 \tau_N=\frac1{2(2C_N+1)}=\frac1{2D_0}\in\mathbb R.
\end{equation*}
Set $a_{-1}=0$ and $a_N=1$. For $0\leq k<N$, define \(a_k\in\mathbb R\), and for \(0\leq k\leq N\), define \(b_k\in\mathbb R\), by
\begin{equation}\label{cs:ofgm-primary:weights}
\begin{aligned}
 a_k&=\frac{\sum_{i=0}^{k}\theta_i(1-\theta_{i-1}^2/\theta_{N-1}^2)}{D_{k+1}}=\frac{\theta_k^2(\theta_{N-1}^2-\theta_k^2+C_{k+1})}
 {\theta_{N-1}^2D_{k+1}},\\
 b_k&=a_k-a_{k-1}.
\end{aligned}
\end{equation}

For $0\le k<N$, define the auxiliary scalar $\sigma_k\in\mathbb R$ by
\begin{equation}\label{cs:ofgm-primary:sigma}
 \sigma_k=\frac{a_k}{\theta_k^2}-\frac{a_{k-1}}{\theta_{k-1}^2}.
\end{equation}

The second term in $\sigma_0$ is defined to be zero. In particular,
$a_0=b_0=1/(D_0-1)$ and $b_N=1/(C_N+1)$.
For \(1\le k<N\), define the state \(\boldsymbol w_k\in\mathcal H^2\)
and the matrix \(\vQ_k\in\mathbb R^{2\times2}\) by
\begin{equation}\label{cs:ofgm-primary:Q}
 \boldsymbol w_k=\begin{bmatrix}z_{k+1}-x_\star\\y_{k+1}-z_{k+1}\end{bmatrix},\qquad
 \vQ_k=\frac12\begin{pmatrix}
 \dfrac{1-a_k}{D_0-\theta_k^2}&0\\[6pt]
 0&\dfrac{\theta_k^2/\theta_{N-1}^2-a_k}{\theta_k^2-C_{k+1}}
 \end{pmatrix}.
\end{equation}
Define the Lyapunov values \(V_k\in\mathbb R\) as in \eqref{cs:ofgm-primary:potential} and \eqref{eq:case-ofgm-primary-lyapunov-end}, \ie, set \(V_{0}=0\) and
\begin{equation*}
\begin{aligned}
 V_k&=a_k\bigl(f(x_k)-f(x_\star)\bigr)-\tau_NL\norm{x_0-x_\star}^2
      -\frac{a_k}{2L}\norm{\nabla f(x_k)}^2+L\boldsymbol w_k^\top \vQ_k\boldsymbol w_k
      &&(1\le k<N).
\end{aligned}
\end{equation*}
At the reported point, set
\begin{equation*}
 V_N=f(y_N)-f(x_\star)-\tau_NL\norm{x_0-x_\star}^2.
\end{equation*}

For \(1\le k<N-1\), define the vectors \(s_k,t_k\in\mathcal H\) by
\begin{equation}\label{cs:ofgm-primary:directions}
\begin{aligned}
 s_k&=z_{k+1}-x_\star-(D_0-\theta_k^2)\frac{\nabla f(x_{k+1})}{L},\\
 t_k&=\frac{\nabla f(x_{k+1})}{L}-\frac{y_{k+1}-z_{k+1}}{\theta_k^2-C_{k+1}}.
\end{aligned}
\end{equation}
For \(N\ge2\), define \(s_{N-1},t_{N-1}\in\mathcal H\) by the same
formulas with \(k=N-1\) and \(\nabla f(x_{k+1})\) replaced by
\(\nabla f(y_N)\).

For \(0\le k<N-1\), define \(\alpha_k\in\mathbb R\) by
\begin{equation}\label{cs:ofgm-primary:alpha}
 \alpha_k=
 \frac{(D_0b_{k+1}-\theta_{k+1})(D_0-\theta_{k+1}^2)
       +(D_0a_{k+1}-\theta_{k+1}^2)\theta_{k+1}}
 {2D_0(D_0-\theta_{k+1}^2)(D_0-\theta_k^2)}.
\end{equation}
For \(1\le k<N-1\), define \(\beta_k\in\mathbb R\) by
\begin{equation}\label{cs:ofgm-primary:beta}
 \beta_k=\frac{\theta_k^2(\theta_k^2-C_{k+1})}{2}
 \left[\sigma_{k+1}+\frac{\theta_{k+1}-1}{\theta_{k+1}^2-C_{k+2}}
 \left(\frac1{\theta_{N-1}^2}-\frac{a_{k+1}}{\theta_{k+1}^2}\right)\right].
\end{equation}
At the terminal index define \(\alpha_{N-1},\beta_{N-1}\in\mathbb R\) by
\begin{equation}\label{cs:ofgm-primary:terminalcoeff}
 \alpha_{N-1}=\frac{b_N}{2(D_0-\theta_{N-1}^2)},\qquad
 \beta_{N-1}=\frac{b_N(\theta_{N-1}^2-C_N)}2.
\end{equation}
For \(1\le k<N\), set
\begin{equation}\label{cs:ofgm-primary:interiorsq}
 S_k=L\alpha_k\norm{s_k}^2+L\beta_k\norm{t_k}^2.
\end{equation}

The initial term is, for $N\ge2$,\footnote{When $N=1$, only \eqref{cs:ofgm-primary:basesq} is used: replace $x_1$ by $y_1$ and set
$D_0=3$ and $\alpha_0=1/8$. No expression containing
$(\theta_0^2-C_1)^{-1}$ is evaluated.}
\begin{equation}\label{cs:ofgm-primary:basesq}
\begin{aligned}
 S_0={}&\frac{LD_0}{2(D_0-1)^2}
   \left\|\frac{\nabla f(x_0)}L-\frac{x_0-x_\star}{D_0}\right\|^2
   +L\alpha_0\left\|z_1-x_\star-(D_0-1)\frac{\nabla f(x_1)}L\right\|^2.
\end{aligned}
\end{equation}

The square sums satisfy \(S_k\ge0\). For \(N\ge2\), the following decrement identities hold:\footnote{For \(N=1\), use the first case with \(x_1\) replaced by \(y_1\), \(a_0=b_0=b_1=1/2\), and the convention for \(S_0\) above. For \(N=2\), the middle range is empty.}
\begin{equation*}
 V_{k+1}-V_k=
 \begin{cases}
  \displaystyle b_0\mathcal I_f(x_\star,x_0)+a_0\mathcal I_f(x_0,x_1)+b_1\mathcal I_f(x_\star,x_1)-S_0, & k=0,\\
  \displaystyle a_k\mathcal I_f(x_k,x_{k+1})+b_{k+1}\mathcal I_f(x_\star,x_{k+1})-S_k, & 1\le k<N-1,\\
  \displaystyle a_{N-1}\mathcal I_f(x_{N-1},y_N)+b_N\mathcal I_f(x_\star,y_N)-S_{N-1}, & k=N-1.
 \end{cases}
\end{equation*}
The coefficient signs in Lemma~\ref{cs:ofgm-primary:lem:signs} and the nonpositive interpolation residuals imply \(V_N\le\cdots\le V_0=0\).
The terminal definition and the radius assumption therefore give

\begin{equation}\label{cs:ofgm-primary:bound}
 f(y_N)-f(x_\star)\le \frac{LR^2}{2(2C_N+1)}
 =\frac{LR^2\theta_{N-1}^2}
 {2\bigl(2\sum_{k=0}^{N-1}\theta_k^3+\theta_{N-1}^2\bigr)}.
\end{equation}

\end{theorem}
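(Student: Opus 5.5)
The plan follows the template of the proof of Theorem~\ref{cs:sfgm:thm:main}. First we verify the three decrement identities in the statement as exact algebraic identities. Then we prove the coefficient signs, which is the content of Lemma~\ref{cs:ofgm-primary:lem:signs}, and telescope.

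For the interior range $1\le k<N-1$, subtract consecutive potentials. The $-\tau_NL\norm{x_0-x_\star}^2$ terms cancel. Next expand $a_k\mathcal I_f(x_k,x_{k+1})+b_{k+1}\mathcal I_f(x_\star,x_{k+1})$. Since $b_{k+1}=a_{k+1}-a_k$ and $\nabla f(x_\star)=0$, the function-value terms match $a_{k+1}(f(x_{k+1})-f_\star)-a_k(f(x_k)-f_\star)$ exactly. The remaining terms are quadratic in the Gram coordinates.

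The natural coordinates are $(z_{k+1}-x_\star,\ y_{k+1}-z_{k+1},\ \nabla f(x_{k+1})/L)$. In these coordinates the FGM updates give
\[
 z_{k+2}-x_\star=(z_{k+1}-x_\star)-\theta_{k+1}\frac{\nabla f(x_{k+1})}{L},
\]
together with $x_{k+1}-x_\star=(z_{k+1}-x_\star)+(1-1/\theta_{k+1})(y_{k+1}-z_{k+1})$ and $y_{k+2}-z_{k+2}=(1-1/\theta_{k+1})(y_{k+1}-z_{k+1})-(1-\theta_{k+1})\nabla f(x_{k+1})/L$.

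The residual expression $a_{k+1}\mathcal I_f(\cdot)+\dots-(V_{k+1}-V_k)$ is then a quadratic form in these three vectors with a $3\times3$ coefficient matrix. We must show this matrix equals $\alpha_k s s^\top+\beta_k t t^\top$, where $s$ and $t$ are the coefficient vectors of $s_k,t_k$ in \eqref{cs:ofgm-primary:directions}. This is an entrywise check. It uses only $\theta_{k+1}^2-\theta_{k+1}=\theta_k^2$ from \eqref{eq:case-theta-identity}, the recurrence \eqref{eq:case-C-recurrence} for $C_{k+1}$, and the closed form of $a_k$ in \eqref{cs:ofgm-primary:weights}, whose equivalence with the summed form is itself a telescoping check using $\theta_k^2C_{k+1}=\theta_{k-1}^2C_k+\theta_k^3$.

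Because $\vQ_k$ is diagonal and $s_k$ and $t_k$ each involve only one state coordinate plus the new gradient, the check reduces to five scalar relations. These are the two diagonal entries, the two cross terms with $\nabla f(x_{k+1})$, and the $\norm{\nabla f(x_{k+1})}^2$ coefficient; the off-diagonal state entry vanishes automatically. I would present these as symbolic identities in $\theta_k$, $C_{k+1}$, $D_0$ and $\theta_{N-1}$, verified with \texttt{SymPy} as in the companion notebook.

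The boundary steps are handled the same way. For $k=0$, the coordinates are $(x_0-x_\star,\nabla f(x_0)/L,\nabla f(x_1)/L)$ since $z_1-x_\star=x_0-x_\star-\nabla f(x_0)/L$; there we match the two squares of \eqref{cs:ofgm-primary:basesq} using $a_0=b_0=1/(D_0-1)$. For $k=N-1$, replace $x_N$ by $y_N=x_{N-1}-\nabla f(x_{N-1})/L$ and use $a_N=1$, $b_N=1/(C_N+1)$ and the terminal coefficients \eqref{cs:ofgm-primary:terminalcoeff}. The cases $N=1,2$ are then checked directly.

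Telescoping then gives $V_N\le V_0=0$, which yields \eqref{cs:ofgm-primary:bound}. The identity $2C_N+1=(2\sum_{j=0}^{N-1}\theta_j^3+\theta_{N-1}^2)/\theta_{N-1}^2$ follows directly from \eqref{eq:case-C-definition}.

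The main obstacle is positivity: we need $a_k>0$, $b_k>0$, $\alpha_k\ge0$ and $\beta_k\ge0$, together with nonvanishing denominators $D_0-\theta_k^2$ and $\theta_k^2-C_{k+1}$. I would first establish elementary facts about the sequence: $\theta_k\ge (k+2)/2$, $C_{k+1}-C_k=\delta_k>0$, $C_{k+1}<\theta_k^2$ for $k\ge1$ (by induction via $C_{k+1}=\theta_k+(1-1/\theta_k)C_k$), and $\theta_k^2\le\theta_{N-1}^2<D_0$ (since $D_0>C_N+\theta_{N-1}^2$ after a short check).

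From these, $b_k>0$ follows from the summed form of $a_k$, because each summand $\theta_i(1-\theta_{i-1}^2/\theta_{N-1}^2)$ is positive and $D_{k+1}$ decreases in $k$. The sign of $\alpha_k$ reduces to that of its numerator in \eqref{cs:ofgm-primary:alpha}. For $\beta_k$, I would show $\sigma_{k+1}>0$ and $a_{k+1}\le\theta_{k+1}^2/\theta_{N-1}^2$. The latter inequality is also what makes the second diagonal entry of $\vQ_k$ well signed, and it should reduce to $C_{k+2}\le \theta_{N-1}^2+C_{k+1}-\theta_{k+1}^2+\dots$, i.e.\ to monotonicity of $C$.

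If these bare monotonicity facts do not close the inequality for $\alpha_k$, I would fall back on rewriting its numerator using $D_0b_{k+1}$ and $D_0a_{k+1}$ expressed through partial sums. I would then bound the result termwise by $\theta_{k+1}\le\theta_{N-1}$.
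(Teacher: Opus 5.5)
Your treatment of the decrement identities is the same as the paper's. You use the same coordinates $(z_{k+1}-x_\star,\ y_{k+1}-z_{k+1},\ \nabla f(x_{k+1})/L)$, the same transition, the same reduction to five scalar coefficient identities plus a position cross term that vanishes automatically, and the same boundary computations. Your argument for $b_k>0$ (a strictly increasing positive numerator over the decreasing positive $D_{k+1}$) is valid and simpler than the paper's.

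The gap is the positivity of $\alpha_k$ and $\beta_k$. You correctly call this the main obstacle, but you do not resolve it. For $\beta_k$ you need $\sigma_{k+1}>0$, and you give no argument. Write $T=\theta_{N-1}^2$ and use $C_{k+1}-C_k=\theta_k-C_k/\theta_k$. Then $\theta_kTD_kD_{k+1}\sigma_k=(\theta_k^2-C_k)(T-\theta_{k-1}^2+C_k)-D_kC_k$. Since $C_k\approx\theta_k^2/2$ and $D_k\approx T-\theta_k^2/2$, both products equal $(T-\theta_k^2/2)\theta_k^2/2$ to leading order. The sign is therefore decided by the second-order behaviour of $C_k$. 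The facts you list ($\theta_k\ge(k+2)/2$, $C$ increasing, $C_{k+1}<\theta_k^2$) do not control that.

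The same problem arises for $\alpha_k$. Its numerator in \eqref{cs:ofgm-primary:alpha} is positive because $D_0a_{k+1}>\theta_{k+1}^2$ and $D_0b_{k+1}>\theta_{k+1}$. In each bracket the two sides again agree to leading order, since $a_k\approx\theta_k^2/T$, $b_k\approx\theta_k/T$ and $D_0\approx T$. A termwise bound via $\theta_{k+1}\le\theta_{N-1}$ throws away exactly the information that decides the sign.

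What is missing is the second-order control of $C_k$ supplied in Lemma~\ref{cs:ofgm-primary:lem:signs}. Set $\rho_k=\sum_{i<k}\theta_i^2/\theta_k^2$. Then $C_{k+1}=(\theta_k^2+\rho_k+1)/2$ exactly, and $\rho_k+1=1+(1-1/\theta_k)(\rho_{k-1}+1)$. Induction gives the sharp bounds $\rho_k+1\le(2\theta_k+1)/3$ and $\rho_{k-1}+1<2\theta_k/3$. Inserting these into an explicit regrouping of $\theta_kTD_kD_{k+1}\sigma_k$ as a sum of nonnegative terms gives $\sigma_k>0$.

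From $\sigma_k>0$, $a_k/\theta_k^2$ increases from $a_0=1/(D_0-1)>1/D_0$, which yields $D_0a_k>\theta_k^2$. The identity $b_k=a_k/\theta_k+\theta_{k-1}^2\sigma_k$ then yields $D_0b_k>\theta_k$, which settles $\alpha_k$.

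For the second term of $\beta_k$, the relevant identity is $1/T-a_k/\theta_k^2=(\rho_{N-1}+1-\rho_k)/(TD_{k+1})$. So the monotonicity you need is that of $\rho_k$, not of $C_k$. Monotonicity of $C$ only gives $C_N-C_{k+1}>0$, whereas you need $2(C_N-C_{k+1})\ge\theta_{N-1}^2-\theta_k^2-1$.

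Finally, your justification of $\theta_{N-1}^2<D_0$ is wrong. The inequality $D_0>C_N+\theta_{N-1}^2$ already fails at $N=3$: $D_0\approx7.56$ while $C_3+\theta_2^2\approx8.09$. The correct reason is $D_0-\theta_{N-1}^2=\rho_{N-1}+2>0$, which again comes from the moment identity.
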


\begin{proof}
The companion notebook is
\path{examples_peppy/fgm_conjecture4/fgm_conjecture4_example_lyap.ipynb}.
The notebook also documents
the smaller prerequisite set for running just the proof checks.

For \(1\le k<N-1\), subtracting two consecutive instances of \eqref{cs:ofgm-primary:potential} cancels the common initial-radius term and gives
\begin{equation}\label{cs:ofgm-primary:potential-difference}
\begin{aligned}
V_{k+1}-V_k
 &=a_{k+1}\bigl(f(x_{k+1})-f(x_\star)\bigr)-a_k\bigl(f(x_k)-f(x_\star)\bigr)\\
 &\quad-\frac{a_{k+1}}{2L}\norm{\nabla f(x_{k+1})}^2
 +\frac{a_k}{2L}\norm{\nabla f(x_k)}^2\\
 &\quad+L\boldsymbol w_{k+1}^\top\vQ_{k+1}\boldsymbol w_{k+1}
 -L\boldsymbol w_k^\top\vQ_k\boldsymbol w_k.
\end{aligned}
\end{equation}
Our goal is to identify this expression with the interior case of \eqref{cs:ofgm-primary:decrements}. We first factor the quadratic remainder, then match the weighted interpolation residuals; the boundary cases are treated separately.

\textbf{Step 1: factorization of the interior slack.}
For $1\le k<N-1$, Steps 1 and 2 prove the second line of
\eqref{cs:ofgm-primary:decrements}, namely,
\[
 V_{k+1}-V_k
 =a_k\mathcal I_f(x_k,x_{k+1})
 +b_{k+1}\mathcal I_f(x_\star,x_{k+1})-S_k,
\]
with $S_k$ as defined in \eqref{cs:ofgm-primary:interiorsq}.
We first factor a quadratic remainder into the two squares defining $S_k$;
Step 2 identifies this remainder in the Lyapunov difference.
For this index, the algorithm gives
\begin{equation}\label{cs:ofgm-primary:state}
\begin{aligned}
 \boldsymbol w_{k+1}
 &=\begin{pmatrix}1&0\\0&\theta_k^2/\theta_{k+1}^2\end{pmatrix}\boldsymbol w_k
   -\begin{bmatrix}\theta_{k+1}\\1-\theta_{k+1}\end{bmatrix}\frac{\nabla f(x_{k+1})}{L},\\
 x_{k+1}-x_\star
 &=z_{k+1}-x_\star+\frac{\theta_k^2}{\theta_{k+1}^2}(y_{k+1}-z_{k+1}).
\end{aligned}
\end{equation}

Define the symmetric matrix \(\vS_k\in\mathbb R^{3\times3}\) by
\begin{equation}\label{cs:ofgm-primary:slack-matrix}
\begin{aligned}
 \vS_k={}&\begin{pmatrix}
 \vQ_k&-\frac12\begin{bmatrix}b_{k+1}\\\theta_k^2\sigma_{k+1}\end{bmatrix}\\[3pt]
 -\frac12\begin{bmatrix}b_{k+1}\\\theta_k^2\sigma_{k+1}\end{bmatrix}^{\!\top}&a_{k+1}
 \end{pmatrix}\\
 &-\begin{pmatrix}1&0&-\theta_{k+1}\\0&\theta_k^2/\theta_{k+1}^2&\theta_{k+1}-1\end{pmatrix}^{\!\top}
 \vQ_{k+1}\begin{pmatrix}1&0&-\theta_{k+1}\\0&\theta_k^2/\theta_{k+1}^2&\theta_{k+1}-1\end{pmatrix}.
\end{aligned}
\end{equation}
By \eqref{cs:ofgm-primary:state}, the last matrix is the transition matrix.
Substituting \eqref{cs:ofgm-primary:Q}, \eqref{cs:ofgm-primary:alpha}, and
\eqref{cs:ofgm-primary:beta} gives
\[
 \vS_k=
 \begin{bmatrix}
  1&0\\
  0&-1/(\theta_k^2-C_{k+1})\\
  -(D_0-\theta_k^2)&1
 \end{bmatrix}
 \begin{pmatrix}\alpha_k&0\\0&\beta_k\end{pmatrix}
 \begin{bmatrix}
  1&0\\
  0&-1/(\theta_k^2-C_{k+1})\\
  -(D_0-\theta_k^2)&1
 \end{bmatrix}^{\!\top}.
\]
Evaluating the associated quadratic form and using \eqref{cs:ofgm-primary:directions} gives
\begin{equation}\label{cs:ofgm-primary:quadratic}
 L\begin{bmatrix}\boldsymbol w_k\\\nabla f(x_{k+1})/L\end{bmatrix}^{\!\top}\vS_k\begin{bmatrix}\boldsymbol w_k\\\nabla f(x_{k+1})/L\end{bmatrix}
 =L\alpha_k\norm{s_k}^2+L\beta_k\norm{t_k}^2=S_k.
\end{equation}

This proves the claim. Code Cell~12 checks this factorization coefficient by
coefficient.
The two pure position terms give the coefficient identities
\[
\begin{aligned}
 \norm{z_{k+1}-x_\star}^2&:\quad
 (\vQ_k)_{11}-(\vQ_{k+1})_{11}=\alpha_k,\\
 \norm{y_{k+1}-z_{k+1}}^2&:\quad
 (\vQ_k)_{22}-\frac{\theta_k^4}{\theta_{k+1}^4}(\vQ_{k+1})_{22}
       =\frac{\beta_k}{(\theta_k^2-C_{k+1})^2}.
\end{aligned}
\]

The interior coefficient calculation after Step 2 checks the two gradient cross terms and the squared gradient norm. The position cross term
\(\csinner{z_{k+1}-x_\star}{y_{k+1}-z_{k+1}}\) is absent from both sides of \eqref{cs:ofgm-primary:quadratic}.

\textbf{Step 2: verification of the interior decrement identity.}
It remains to show that the left-hand side of \eqref{cs:ofgm-primary:quadratic} is the difference between the
weighted interpolation residuals and $V_{k+1}-V_k$.
To match \eqref{cs:ofgm-primary:potential-difference}, expand the weighted residuals. Since \(b_{k+1}=a_{k+1}-a_k\) and \(\nabla f(x_\star)=0\), their function-value terms agree with the potential difference:
\begin{equation}\label{cs:ofgm-primary:residual-expansion}
\begin{aligned}
&a_k\mathcal I_f(x_k,x_{k+1})+b_{k+1}\mathcal I_f(x_\star,x_{k+1})\\
 &\quad=a_{k+1}\bigl(f(x_{k+1})-f(x_\star)\bigr)-a_k\bigl(f(x_k)-f(x_\star)\bigr)\\
 &\qquad+\frac{a_k}{2L}\norm{\nabla f(x_k)}^2
 +\frac{a_{k+1}}{2L}\norm{\nabla f(x_{k+1})}^2\\
 &\qquad+\csinner{\nabla f(x_{k+1})}
 {a_k\left(x_k-x_{k+1}-\frac{\nabla f(x_k)}L\right)+b_{k+1}(x_\star-x_{k+1})}.
\end{aligned}
\end{equation}
The remaining inner-product coefficient follows from \(y_{k+1}=x_k-\nabla f(x_k)/L\) and the mixing update:
\begin{equation}\label{cs:ofgm-primary:residual-vector}
 a_k(y_{k+1}-x_{k+1})+b_{k+1}(x_\star-x_{k+1})
 =-b_{k+1}(z_{k+1}-x_\star)
 -\theta_k^2\sigma_{k+1}(y_{k+1}-z_{k+1}).
\end{equation}
Here \(x_{k+1}-x_\star=z_{k+1}-x_\star+\theta_k^2(y_{k+1}-z_{k+1})/\theta_{k+1}^2\), and the definition of \(\sigma_{k+1}\) gives the stated equality.

Subtract the difference of the potentials in
\eqref{cs:ofgm-primary:potential}. The function-value terms and the terms
involving $\norm{\nabla f(x_k)}^2$ cancel, while the common initial-radius
term cancels between the two potentials. The two contributions involving
$\norm{\nabla f(x_{k+1})}^2$ add to \(\frac{a_{k+1}}L\norm{\nabla f(x_{k+1})}^2\).
The remaining position and cross terms are exactly those in
\eqref{cs:ofgm-primary:slack-matrix}, so
\begin{equation}\label{cs:ofgm-primary:interpolation-bridge}
\begin{aligned}
 &a_k\mathcal I_f(x_k,x_{k+1})
 +b_{k+1}\mathcal I_f(x_\star,x_{k+1})-(V_{k+1}-V_k)
\\
 &\quad=L\begin{bmatrix}\boldsymbol w_k\\\nabla f(x_{k+1})/L\end{bmatrix}^{\!\top}\vS_k\begin{bmatrix}\boldsymbol w_k\\\nabla f(x_{k+1})/L\end{bmatrix}=S_k,
\end{aligned}
\end{equation}

where the last equality is the factorization proved in Step 1.
Rearranging proves the second line of \eqref{cs:ofgm-primary:decrements}
for $1\le k<N-1$. The base and terminal cases are treated below, followed
by the coefficient signs needed to deduce $V_{k+1}\le V_k$.
Code Cell~12 checks this interpolation cancellation and the complete decrement,
including every Gram, function-value, and constant coefficient.

\textbf{Terminal index.}
For $N\ge2$, this calculation proves the last line of
\eqref{cs:ofgm-primary:decrements} using the endpoint
\eqref{eq:case-ofgm-primary-lyapunov-end} (Code Cell~14).
The final gradient step gives
\(x_{N-1}-x_\star=y_N-x_\star+\nabla f(x_{N-1})/L\).
Because $a_{N-1}=1-b_N$, the remaining normalized quadratic is
\[
 \boldsymbol w_{N-1}^\top \vQ_{N-1}\boldsymbol w_{N-1}
 +\frac{1}{2L^2}\norm{\nabla f(y_N)}^2
 -\frac{b_N}{L}\csinner{\nabla f(y_N)}{y_N-x_\star}.
\]

At this index, the position coefficients are
\[
\begin{aligned}
 \norm{z_N-x_\star}^2&:\quad
 (\vQ_{N-1})_{11}=\frac{b_N}{2(D_0-\theta_{N-1}^2)},\\
 \norm{y_N-z_N}^2&:\quad
 (\vQ_{N-1})_{22}=\frac{b_N}{2(\theta_{N-1}^2-C_N)}.
\end{aligned}
\]
The gradient coefficient in the two squares satisfies
\[
 \frac1{L^2}\norm{\nabla f(y_N)}^2:\quad
 (D_0-\theta_{N-1}^2)^2\alpha_{N-1}+\beta_{N-1}
 =\frac{b_N(C_N+1)}2=\frac12.
\]
Completing the two squares therefore gives \(\alpha_{N-1}\norm{s_{N-1}}^2+\beta_{N-1}\norm{t_{N-1}}^2=S_{N-1}/L\).
This proves the terminal line of
\eqref{cs:ofgm-primary:decrements}, without any additional terminal square in $V_N$.

\textbf{Base index.}
This calculation settles the first line of \eqref{cs:ofgm-primary:decrements}
for $N\ge2$ (Code Cell~13).\footnote{Code Cell~15 verifies the single base--terminal case for $N=1$.}
For $N\ge2$, $x_1=y_1=z_1=x_0-\nabla f(x_0)/L$ and
$\theta_1^2-\theta_1=1$. Substitution of these relations and
$a_0=b_0=1/(D_0-1)$ into \eqref{cs:ofgm-primary:potential} yields
\[
 b_0\mathcal I_f(x_\star,x_0)+a_0\mathcal I_f(x_0,x_1)+b_1\mathcal I_f(x_\star,x_1)-V_1=S_0.
\]
Both sides are quadratic in $x_0-x_\star$, $\nabla f(x_0)/L$, and $\nabla f(x_1)/L$
after their function terms cancel. For $N=1$, the same expansion uses
$a_0=b_0=b_1=1/2$ and gives directly
\begin{align*}
 &\tfrac12\mathcal I_f(x_\star,x_0)+\tfrac12\mathcal I_f(x_0,y_1)+\tfrac12\mathcal I_f(x_\star,y_1)
 -\bigl(f(y_1)-f(x_\star)-L\norm{x_0-x_\star}^2/6\bigr)\\
 &\quad=\frac{3L}{8}\left\|\frac{\nabla f(x_0)}L-\frac{x_0-x_\star}{3}\right\|^2
 +\frac L8\left\|y_1-x_\star-2\frac{\nabla f(y_1)}L\right\|^2.
\end{align*}
This verifies the decrement identity directly for $N=1$.\footnote{When $N=2$, the base and terminal calculations already cover every transition.}

\textbf{Positivity of the coefficients.}
Lemma~\ref{cs:ofgm-primary:lem:signs} proves every required denominator and coefficient sign by
scalar identities, checked in Code Cell~10. The denominator bounds make the
formulas well-defined; the positive square coefficients give $S_k\ge0$,
and the positive interpolation weights make
\eqref{cs:ofgm-primary:decrements} nonpositive.

\textbf{Conclusion.}
Every interpolation residual in \eqref{cs:ofgm-primary:decrements} is nonpositive and every
$S_k$ is nonnegative. Thus $V_N\le\cdots\le V_0=0$.
The terminal definition in \eqref{eq:case-ofgm-primary-lyapunov-end} and
$\norm{x_0-x_\star}\le R$ give \eqref{cs:ofgm-primary:bound}.
\end{proof}

\textbf{Interior coefficient calculation.}
This verifies the vector cancellation in Step 2 and all coefficients of
\eqref{cs:ofgm-primary:quadratic} (Code Cell~12).
Use $x_k-\nabla f(x_k)/L=y_{k+1}$ and the mixing step to obtain
\begin{align*}
 &a_k\left(x_k-x_{k+1}-\frac{\nabla f(x_k)}L\right)+b_{k+1}(x_\star-x_{k+1})\\
 &\quad=-b_{k+1}(z_{k+1}-x_\star)
 +\left(a_k-\frac{\theta_k^2}{\theta_{k+1}^2}a_{k+1}\right)(y_{k+1}-z_{k+1})\\
 &\quad=-b_{k+1}(z_{k+1}-x_\star)-\theta_k^2\sigma_{k+1}(y_{k+1}-z_{k+1}).
\end{align*}
Together with the two position coefficients displayed after \eqref{cs:ofgm-primary:quadratic},
that quadratic identity is determined by the following three coefficient comparisons:
\begin{align*}
 \frac1L\csinner{\nabla f(x_{k+1})}{z_{k+1}-x_\star}&:\quad
 2\theta_{k+1}(\vQ_{k+1})_{11}-b_{k+1}=-2(D_0-\theta_k^2)\alpha_k,\\
 \frac1L\csinner{\nabla f(x_{k+1})}{y_{k+1}-z_{k+1}}&:\quad
 -\frac{2\theta_k^2(\theta_{k+1}-1)}{\theta_{k+1}^2}(\vQ_{k+1})_{22}-\theta_k^2\sigma_{k+1}
 =-\frac{2\beta_k}{\theta_k^2-C_{k+1}},\\
 \frac1{L^2}\norm{\nabla f(x_{k+1})}^2&:\quad
 \begin{aligned}[t]
 &a_{k+1}-\theta_{k+1}^2(\vQ_{k+1})_{11}-(\theta_{k+1}-1)^2(\vQ_{k+1})_{22}\\
 &\qquad=(D_0-\theta_k^2)^2\alpha_k+\beta_k.
 \end{aligned}
\end{align*}

They follow by substituting the displayed rational coefficients and using
\(\theta_{k+1}^2-\theta_{k+1}=\theta_k^2\) and \eqref{eq:case-C-recurrence}. Together with the zero
\(\csinner{z_{k+1}-x_\star}{y_{k+1}-z_{k+1}}\) coefficient,
these five coefficients cover all quadratic terms in
\(z_{k+1}-x_\star\), \(y_{k+1}-z_{k+1}\), and \(\nabla f(x_{k+1})/L\), so the verification is independent of $N$ and dimension.

\begin{lemma}%
\label{cs:ofgm-primary:lem:signs}
\normalfont
The interpolation and square coefficients are positive on their respective index ranges:\footnote{The unused values $a_{-1}$ and, when $N=1$, $\beta_0$ are zero.}
\[
 a_k>0,\qquad b_k>0,\qquad \alpha_k>0,\qquad \beta_k>0.
\]

\end{lemma}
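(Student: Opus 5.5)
The approach is to reduce every sign claim in Lemma~\ref{cs:ofgm-primary:lem:signs} to a handful of scalar facts about the FGM sequence. The first fact is the recurrence $\theta_k^2-\theta_k=\theta_{k-1}^2$ from \eqref{eq:case-theta-identity}, which gives $\theta_k\ge1$ and strict increase of $\theta_k$. The second is the recurrence $C_{k+1}=\theta_k+(1-1/\theta_k)C_k$ from \eqref{eq:case-C-recurrence}.

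\textbf{Step 1: comparison inequalities.} I would first establish the basic comparisons by induction on $k$, namely
\[
C_k<\theta_{k-1}^2\ (k\ge2),\qquad \theta_k^2>C_{k+1}\ (k\ge1),\qquad \delta_k=C_{k+1}-C_k>0 .
\]
For the middle one, note that $\theta_k^2-C_{k+1}=(1-1/\theta_k)(\theta_k^2-\ldots)$. Using the recurrence one gets
\[
\theta_k^2-C_{k+1}=\theta_k^2-\theta_k-(1-1/\theta_k)C_k=(1-1/\theta_k)\bigl(\theta_k\theta_{k-1}^2/(\theta_k-1)\cdot\ldots\bigr),
\]
and more cleanly $\theta_k^2-C_{k+1}=\theta_{k-1}^2-(1-1/\theta_k)C_k$. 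This is positive once $C_k\le\theta_{k-1}^2\cdot\theta_k/(\theta_k-1)$, which follows by induction. Together these give $D_0-\theta_k^2\ge D_0-\theta_{N-1}^2>0$ (since $C_N\ge\theta_{N-1}^2/\ldots$ and $2C_N+1>\theta_{N-1}^2$) and $D_{k}>0$. So every denominator in \eqref{cs:ofgm-primary:weights}--\eqref{cs:ofgm-primary:terminalcoeff} is nonzero.

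\textbf{Step 2: $a_k$ and $b_k$.} From the first form of $a_k$ in \eqref{cs:ofgm-primary:weights}, the numerator is a sum of terms $\theta_i(1-\theta_{i-1}^2/\theta_{N-1}^2)>0$ and $D_{k+1}>0$, so $a_k>0$. For $b_k$, write
\[
D_{k+1}a_k-D_ka_{k-1}=\theta_k(1-\theta_{k-1}^2/\theta_{N-1}^2)>0 .
\]
Because $D_{k+1}<D_k$, this gives $D_{k+1}(a_k-a_{k-1})=\theta_k(1-\ldots)+(D_k-D_{k+1})a_{k-1}>0$, hence $b_k>0$ for $1\le k<N$. The endpoints are handled directly: $b_0=a_0=1/(D_0-1)$ and $b_N=1/(C_N+1)$. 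This also yields $a_k<1$ and $a_k<\theta_k^2/\theta_{N-1}^2$ via the second form of $a_k$, which is exactly what makes the diagonal of $\vQ_k$ in \eqref{cs:ofgm-primary:Q} positive.

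\textbf{Step 3: $\alpha_k$ and $\beta_k$.} The terminal coefficients in \eqref{cs:ofgm-primary:terminalcoeff} are immediate from Step 1. For $\alpha_k$ in \eqref{cs:ofgm-primary:alpha}, the denominator is positive, so it suffices to show the numerator is positive. I would rewrite the numerator using $D_0a_{k+1}$ and $D_0b_{k+1}$ and the $D$-recurrence, aiming to express it as $\theta_{k+1}\times$ a quantity of the form $(1-a_{k+1})+\ldots$, which is positive by Step 2. For $\beta_k$ in \eqref{cs:ofgm-primary:beta}, the prefactor $\theta_k^2(\theta_k^2-C_{k+1})$ is positive. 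The second summand in the bracket is also positive, since $\theta_{k+1}>1$, $\theta_{k+1}^2>C_{k+2}$, and $a_{k+1}<\theta_{k+1}^2/\theta_{N-1}^2$. The first summand $\sigma_{k+1}$ is the delicate one. I would show $\sigma_{k+1}>0$, i.e.\ that $a_k/\theta_k^2$ is increasing, by plugging in the second form
\[
a_k/\theta_k^2=(\theta_{N-1}^2-\theta_k^2+C_{k+1})/(\theta_{N-1}^2D_{k+1}).
\]
The claim then becomes that $(\theta_{N-1}^2-\theta_k^2+C_{k+1})/D_{k+1}$ increases in $k$. Cross-multiplying and using the recurrences reduces this to a scalar inequality in $\theta_k$, $C_{k+1}$, $D_0$. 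If this fails in general, the fallback is to bound $\sigma_{k+1}$ below by the negative of the second summand using the same recurrences.

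\textbf{Main obstacle.} The sign of $\beta_k$, and to a lesser extent the numerator of $\alpha_k$, is where I expect the difficulty. These require genuine inequalities between $\theta_k$, $C_{k+1}$, and $D_0$ rather than termwise positivity. The key tool will be the bounds $C_{k+1}<\theta_k^2$ and $\theta_k^2\le\theta_{N-1}^2<D_0$ from Step 1. Each reduced inequality would be checked symbolically in the companion notebook after the substitutions $\theta_{k+1}^2=\theta_{k+1}+\theta_k^2$ and \eqref{eq:case-C-recurrence}.
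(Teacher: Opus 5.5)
Your handling of the denominators, of $a_k>0$, and of $b_k>0$ is fine. The identity $D_{k+1}b_k=\theta_k(1-\theta_{k-1}^2/\theta_{N-1}^2)+\delta_ka_{k-1}$ is actually a more direct route to $b_k>0$ than the paper's, which obtains $b_k=a_k/\theta_k+\theta_{k-1}^2\sigma_k$ only after proving $\sigma_k>0$. For $D_0>\theta_{N-1}^2$ you still owe a lower bound on $C_N$; the induction $C_{k+1}\ge(\theta_k^2+1)/2$ from \eqref{eq:case-C-recurrence} supplies it.

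The genuine gap is the interior $\alpha_k$ and $\beta_k$, which is the real content of the lemma. $\beta_k$ contains $\sigma_{k+1}$ explicitly. The natural route to the numerator of \eqref{cs:ofgm-primary:alpha} goes through $D_0a_{k+1}>\theta_{k+1}^2$ and $D_0b_{k+1}>\theta_{k+1}$, and these again come from monotonicity of $a_k/\theta_k^2$. You give no proof of $\sigma_k>0$. Moreover, the tools you name, $C_{k+1}<\theta_k^2$ and $\theta_k^2\le\theta_{N-1}^2<D_0$, cannot decide its sign. Cross-multiplying and using the recurrences gives, with $T=\theta_{N-1}^2$,
\[
\theta_kTD_kD_{k+1}\sigma_k=T(\theta_k^2-C_k)-D_0C_k+\theta_k(2\theta_k-1)C_k-\theta_k^3(\theta_k-1).
\]
Since $D_0=2C_N+1$ and $C_j\approx\theta_{j-1}^2/2$, the terms of order $T\theta_k^2$ cancel. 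The sign is decided by the first-order deviation of $2C_j$ from $\theta_{j-1}^2$. Already at $N=2$, $k=1$, the right side equals $\sqrt5-2$, while its four terms are $2\theta_1+1$, $5$, $\theta_1+2$, and $\theta_1+1$. Likewise, $a_{k+1}<\theta_{k+1}^2/\theta_{N-1}^2$ does not follow ``via the second form'' of $a_k$. Your $\alpha_k$ sketch is only an aim, and the fallback of offsetting a possibly negative $\sigma_{k+1}$ by the second summand is not an argument.

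The missing idea is the paper's exact second-order description of $C_k$ through $\rho_k=\theta_k^{-2}\sum_{i<k}\theta_i^2$. Telescoping $\theta_i=\theta_i^2-\theta_{i-1}^2$ and $2\theta_i^3=\theta_i^4-\theta_{i-1}^4+\theta_i^2$ gives
\[
2C_{k+1}=\theta_k^2+\rho_k+1,\qquad \rho_k+1=1+(1-1/\theta_k)(\rho_{k-1}+1).
\]
Induction with $\theta_k>\theta_{k-1}+1/2$ then yields $\rho_k+1\le(2\theta_k+1)/3$, $\rho_{k-1}+1<2\theta_k/3$, and $\rho_k-\rho_{k-1}>1/3$. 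With these bounds, $\theta_k\theta_{N-1}^2D_kD_{k+1}\sigma_k$ becomes a sum of nonnegative terms, strictly positive since $\theta_k>4/3$. One also gets the tail identity $1/\theta_{N-1}^2-a_k/\theta_k^2=(\rho_{N-1}+1-\rho_k)/(\theta_{N-1}^2D_{k+1})>0$, which is what your bound on $a_{k+1}$ really needs.

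Then $a_k/\theta_k^2$ increases from $a_0=1/(D_0-1)>1/D_0$. Hence $D_0a_k>\theta_k^2$ and $D_0b_k-\theta_k=\theta_k(D_0a_k/\theta_k^2-1)+D_0\theta_{k-1}^2\sigma_k>0$. Every factor in \eqref{cs:ofgm-primary:alpha} is therefore positive, and \eqref{cs:ofgm-primary:beta} is a positive multiple of a sum of positive terms.
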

\begin{proof}
Define \(\rho_k\in\mathbb R\) for this scalar argument by
\begin{equation*}
 \rho_k=\frac{\sum_{i=0}^{k-1}\theta_i^2}{\theta_k^2},\qquad\rho_0=0.
\end{equation*}
The theta relation gives $\theta_k=\theta_k^2-\theta_{k-1}^2$ and, by factoring,
$\theta_k^4-\theta_{k-1}^4
=\theta_k(\theta_k^2+\theta_{k-1}^2)=2\theta_k^3-\theta_k^2$.
Summing these identities gives
\begin{equation}\label{cs:ofgm-primary:moment}
 C_{k+1}=\frac{\theta_k^2+\rho_k+1}{2},\qquad
 \rho_k+1=1+\left(1-\frac1{\theta_k}\right)(\rho_{k-1}+1).
\end{equation}
The recurrence is understood for $k\geq1$. Starting from $\rho_0+1=1$ and using
$\theta_k>\theta_{k-1}+1/2$, induction yields
\[
 \rho_k+1\le\frac{2\theta_k+1}{3},\qquad
 \rho_{k-1}+1<\frac{2\theta_k}{3},\qquad
 \rho_k-\rho_{k-1}>\frac13\quad(k\ge1).
\]
Indeed the second inequality follows from the preceding-index upper bound,
and inserting it into \eqref{cs:ofgm-primary:moment} proves the next upper bound and the
increment estimate. Hence $C_k$ increases,
$D_k\ge C_N+1>0$, and
$D_0-\theta_k^2\ge\rho_{N-1}+2>0$ for $k<N$.
Moreover $\theta_k^2-C_{k+1}>0$ for $k\ge1$, by \eqref{cs:ofgm-primary:moment}
and $\rho_k\le2(\theta_k-1)/3$.

\begin{itemize}
\item \textbf{Positivity of $a_k$.}
The same telescoping identities give
\[
 \sum_{i=0}^k\theta_i\theta_{i-1}^2
   =\theta_k^2(\theta_k^2-C_{k+1}),\qquad
 \sum_{i=0}^k\theta_i=\theta_k^2.
\]
They prove the two formulas for $a_k$ in \eqref{cs:ofgm-primary:weights}.
For $0\le k<N$, every summand in the first formula is positive because
$\theta_i>0$ and $\theta_{i-1}<\theta_{N-1}$ for $0\le i\le k$.
Together with $D_{k+1}>0$, this gives $a_k>0$.
At the boundary indices, $a_N=1$ and $a_{-1}=0$ by definition.
Code Cell~10 checks the telescoping increments and coefficient identities.

For the remaining coefficient signs, we first show that the auxiliary difference $\sigma_k$ is positive.
At index zero, $\sigma_0=a_0>0$. For $1\le k<N$, using
$\theta_{N-1}\ge\theta_k$, substitution of
\eqref{cs:ofgm-primary:weights} into \eqref{cs:ofgm-primary:sigma} gives the positive expansion
\begin{equation}\label{cs:ofgm-primary:signexp}
\begin{aligned}
 &\theta_k\theta_{N-1}^2D_kD_{k+1}\sigma_k\\
 &\quad=\frac{\theta_k}{18}\bigl[(3\theta_k-4)(\theta_k-1)
 +2(3\theta_k+1)(\theta_{N-1}-\theta_k)
 +6(\theta_{N-1}-\theta_k)^2\bigr]\\
 &\qquad+\left(\frac{2\theta_k}{3}-(\rho_{k-1}+1)\right)
 \left[\theta_{N-1}^2-\theta_k^2+\frac{\theta_k}{2}
 +\frac{\theta_{N-1}}{3}+\frac23\right]\\
 &\qquad+C_k\left(\frac{2\theta_{N-1}+1}{3}-(\rho_{N-1}+1)\right).
\end{aligned}
\end{equation}
By the bounds on $\rho_{k-1}$ and $\rho_{N-1}$ above, every summand is
nonnegative, and the first is strictly positive because
$\theta_k\ge\theta_1>4/3$. The factor
$\theta_k\theta_{N-1}^2D_kD_{k+1}$ is positive, so $\sigma_k>0$.
Code Cell~10 checks \eqref{cs:ofgm-primary:signexp} on the stated scalar relations.

\item \textbf{Positivity of $b_k$.}
For $1\le k<N$, the theta relation and \eqref{cs:ofgm-primary:sigma} give
\[
 b_k=\frac{a_k}{\theta_k}+\theta_{k-1}^2\sigma_k>0.
\]
At the boundary indices, $b_0=a_0>0$ and $b_N=1/(C_N+1)>0$.
Since $a_N=1$ and $b_k=a_k-a_{k-1}>0$, we also have $1-a_k>0$ for $0\le k<N$.

\item \textbf{Positivity of $\alpha_k$.}
The inequalities $\sigma_k>0$ imply that $a_k/\theta_k^2$ increases from
$a_0=1/(D_0-1)>1/D_0$. Together with the formula for $b_k$ above, this gives
\[
 D_0b_k-\theta_k>0,\qquad D_0a_k-\theta_k^2>0
 \quad(0\le k<N),
\]
where the signs at $k=0$ follow from $b_0=a_0$ and $\theta_0=1$.
All numerator factors in \eqref{cs:ofgm-primary:alpha} are therefore positive,
as are its denominator factors, so $\alpha_k>0$ for $0\le k<N-1$.
The terminal formula \eqref{cs:ofgm-primary:terminalcoeff} gives
$\alpha_{N-1}>0$ for $N\ge2$.\footnote{For $N=1$, $\alpha_0=1/8$, and the other coefficient in $S_0$ is $3/8$.}

\item \textbf{Positivity of $\beta_k$.}
For $0\le k<N$, the exact tail identity is
\begin{equation}\label{cs:ofgm-primary:tail}
 \frac1{\theta_{N-1}^2}-\frac{a_k}{\theta_k^2}
 =\frac{\rho_{N-1}+1-\rho_k}{\theta_{N-1}^2D_{k+1}}>0.
\end{equation}
Its numerator is at least one because $\rho_k\le\rho_{N-1}$.
Code Cell~10 checks \eqref{cs:ofgm-primary:tail} on the same relations.
Thus \eqref{cs:ofgm-primary:beta} is a sum of positive terms multiplied
by a positive factor, proving $\beta_k>0$ for $1\le k<N-1$.
For $N\ge2$, the terminal formula \eqref{cs:ofgm-primary:terminalcoeff}
is positive because $b_N>0$ and $\theta_{N-1}^2-C_N>0$.\footnote{For $N=1$, the unused value $\beta_0$ is zero.}
\end{itemize}

\end{proof}

\subsection{Proof of Theorem~\ref{thm:case-ofgm-secondary} (Conjecture~5 in \texorpdfstring{\citet{taylor2017smooth}}{Taylor et al. (2017)})}
\label{app:case-ofgm-secondary-proof}

\subsubsection{Input prompt}

\begin{commandbox}
/pep-implement

Function: f is convex and L-smooth
Parameters: L, R
Initial condition: ||x_0 - x_star|| <= R, where grad f(x_star) = 0
Performance metric: f(x_N) - f(x_star)
Algorithm: Fast gradient method with fixed step size 1/L

Parameter sequence: theta_0 = 1 and theta_{k+1} = (1 + sqrt(1 + 4 * theta_k^2)) / 2 for k >= 0
Initialization: x_0 = y_0 = z_0
For k = 0,...,N-1:
y_{k+1} = x_k - (1/L) * grad f(x_k)
z_{k+1} = z_k - (theta_k/L) * grad f(x_k)
x_{k+1} = (1 - 1/theta_{k+1}) * y_{k+1} + (1/theta_{k+1}) * z_{k+1}

Conjectured rate: f(x_N) - f(x_star) <= L * R^2 / (2 * (2 * D_N + 1)), where D_N = sum_{j=0}^{N-1} h_{N,j} and x_i = x_0 - (1/L) * sum_{j=0}^{i-1} h_{i,j} * grad f(x_j)
\end{commandbox}
Then, we repeatedly entered:
\begin{commandbox}
proceed with the next step
\end{commandbox}

{The initial proof of Conjecture~5 was algebraically involved.
We therefore asked the agent to revise it using the two-square decomposition
in the FGM-rational proof (Appendix~\ref{app:case-sfgm-proof}), with the
additional prompt below. This led to the simpler two-square proof
presented below.

\colorlet{csadditionalpromptcolor}{.}
\begin{commandbox}[coltext=csadditionalpromptcolor,
 listing options={basicstyle=\ttfamily\small\color{csadditionalpromptcolor},
 columns=fullflexible,breaklines=true,breakatwhitespace=true,breakindent=0pt,breakautoindent=false,keepspaces=true,showstringspaces=false,
 escapeinside={(*@}{@*)}}]
Could you explore whether allowing two squared-norm terms per iteration in the SOS decomposition, as in (*@\texttt{fgm\_rational\_tight\_example\_lyap.ipynb}@*), can simplify the proof of FGM Conjecture 5?
\end{commandbox}
}

\begin{proof}[Proof outline for Theorem~\ref{thm:case-ofgm-secondary}]
Write $f_\star=f(x_\star){\in\mathbb R}$, and set $D_j=2C_{N+1}-1-C_j{\in\mathbb R}$ and
$\tau_N=1/(2D_0){\in\mathbb R}$. Define the initial and interior potentials by
{\(V_{0}=0\) and}
{%
\begin{equation}\label{cs:ofgm-secondary:potential}
\begin{aligned}
 V_k={}&a_{k-1}(f(x_k)-f_\star)-\tau_NL\norm{x_0-x_\star}^2
       -\frac{a_{k-1}}{2L}\norm{\nabla f(x_k)}^2\\
      &+b_k\csinner{\nabla f(x_k)}{y_{k+1}-x_\star}
       +L\boldsymbol w_k^\top {\vQ_k}\boldsymbol w_k,
       \qquad 1\le k<N.
\end{aligned}
\end{equation}
}
At the output point, use the terminal potential
\begin{equation}\label{cs:ofgm-secondary:potential-end}
\begin{aligned}
 V_N={}&a_{N-1}(f(x_N)-f_\star)-\tau_NL\norm{x_0-x_\star}^2
 +b_N\left(\csinner{\nabla f(x_N)}{x_N-x_\star}
 -\frac1{2L}\norm{\nabla f(x_N)}^2\right).
\end{aligned}
\end{equation}
For $N\ge2$, the exact decrement identities are\footnote{{For $N=2$, the middle range is empty. For $N=1$, the first line alone holds
with $a_0=1/2$, $V_1$ given by \eqref{cs:ofgm-secondary:potential-end},
and $S_0$ given by \eqref{cs:ofgm-secondary:S0-one}; no interior matrix is used.}}
{%
\begin{equation}\label{cs:ofgm-secondary:decrement}
 V_{k+1}-V_k=
 \begin{cases}
  \displaystyle a_0\bigl[\mathcal I_f(x_\star,x_0)+\mathcal I_f(x_0,x_1)\bigr]-S_0, & k=0,\\
  \displaystyle b_k\mathcal I_f(x_\star,x_k)+a_k\mathcal I_f(x_k,x_{k+1})-S_k, & 1\le k\le N-2,\\
  \displaystyle b_{N-1}\mathcal I_f(x_\star,x_{N-1})+a_{N-1}\mathcal I_f(x_{N-1},x_N)-S_{N-1}, & k=N-1.
 \end{cases}
\end{equation}
}
{The full theorem and proof below verify the identities, and Lemma~\ref{cs:ofgm-secondary:lem:signs} proves positivity of every used interpolation and square coefficient.} Thus $S_k\ge0$ and $\mathcal I_f\le0$
imply $V_N\le\cdots\le V_0=0$.
The endpoint identity is
\begin{equation}\label{cs:ofgm-secondary:normalization}
 f(x_N)-f_\star-\tau_NLR^2
 =V_N+b_N\mathcal I_f(x_\star,x_N)
      +\tau_NL\bigl(\norm{x_0-x_\star}^2-R^2\bigr).
\end{equation}
Since $b_N,\tau_N>0$, the stationary-point residual and the radius assumption
make the last two terms nonpositive. This gives \eqref{eq:case-ofgm-secondary-rate}.
\end{proof}

\begin{theorem}[{Theorem~\ref{thm:case-ofgm-secondary} with full details of the Lyapunov function}]
\label{cs:ofgm-secondary:thm:main}
\normalfont
Let $f\in\mathcal F_L$, let $x_\star{\in\mathcal H}$ minimize $f$, write $f_\star=f(x_\star){\in\mathbb R}$, and suppose $\norm{x_0-x_\star}\le R$,
where $R\ge0$. For a fixed integer $N\ge1$, run~\ref{eq:case-theta-recursive} from $x_0=y_0=z_0$.
{For $0\le k\le N$, define}
{%
\begin{equation*}
 D_k=2C_{N+1}-1-C_k\in\mathbb R,\qquad
 \tau_N=\frac1{2(2C_{N+1}-1)}=\frac1{2D_0}\in\mathbb R.
\end{equation*}
Here $D_k$ denotes a gap variable, not a fixed-step row sum.
For $0\le k<N$, define $h_{N,k},m_k\in\mathbb R$ by
\begin{equation}\label{cs:ofgm-secondary:h}
 h_{N,k}=\theta_k\left(1-\frac{\theta_{k-1}^2}{\theta_N^2}\right),
 \qquad m_k=\frac{\theta_k-1}{\theta_{k+1}}.
\end{equation}
}
Set $a_{-1}=0$ and $a_N=1$. {Define the chain weights \(a_k\in\mathbb R\) for \(0\le k<N\)
and the star weights \(b_k\in\mathbb R\) for \(0\le k\le N\) by}
\begin{equation}\label{cs:ofgm-secondary:ab}
 a_k=\frac{\sum_{j=0}^k h_{N,j}}{D_{k+1}},
 \qquad b_k=a_k-a_{k-1}.
\end{equation}
In particular, $a_0=b_0=1/D_1$.

{For \(1\le k<N\), define the state \(\boldsymbol w_k\in\mathcal H^2\)
and the symmetric matrix \(\vQ_k\in\mathbb R^{2\times2}\) by}
{%
\begin{equation}\label{cs:ofgm-secondary:Q}
 \boldsymbol w_k=\begin{bmatrix}y_k-x_\star\\y_{k+1}-x_\star\end{bmatrix},\qquad
 \vQ_k=\frac12\begin{pmatrix}
 \dfrac{1-h_{N,k}+D_{k+1}(a_{k-1}+a_k/D_k)}{D_k}
 &-a_{k-1}-\dfrac{a_k}{D_k}\\[6pt]
 -a_{k-1}-\dfrac{a_k}{D_k}&a_k
 \end{pmatrix}.
\end{equation}
}
{%
Define the Lyapunov values \(V_k\in\mathbb R\) as in \eqref{cs:ofgm-secondary:potential} and \eqref{cs:ofgm-secondary:potential-end}, \ie, set \(V_{0}=0\) and
\begin{equation*}
\begin{aligned}
 V_k={}&a_{k-1}(f(x_k)-f(x_\star))-\tau_NL\norm{x_0-x_\star}^2
       -\frac{a_{k-1}}{2L}\norm{\nabla f(x_k)}^2\\
      &+b_k\csinner{\nabla f(x_k)}{y_{k+1}-x_\star}
       +L\boldsymbol w_k^\top {\vQ_k}\boldsymbol w_k,
       \qquad 1\le k<N.
\end{aligned}
\end{equation*}
At the output point, set
\begin{equation*}
\begin{aligned}
 V_N={}&a_{N-1}(f(x_N)-f(x_\star))-\tau_NL\norm{x_0-x_\star}^2\\
 &+b_N\left(\csinner{\nabla f(x_N)}{x_N-x_\star}
 -\frac1{2L}\norm{\nabla f(x_N)}^2\right).
\end{aligned}
\end{equation*}
}

Here $\boldsymbol w^\top {\vQ}\boldsymbol w$ denotes the corresponding scalar sum of inner products.
The gradient at $x_N$ is a proof witness and does not change the algorithm.\footnote{{For $N=1$, only $V_0$ and the terminal definition are used.}}

{For $1\le k\le N-2$, define the square coefficients
\(\alpha_k,\beta_k\in\mathbb R\) by}
{%
\begin{equation}\label{cs:ofgm-secondary:margin}
\begin{aligned}
 \alpha_k={}&\frac{a_{k+1}}2,\\
 \beta_k={}&\frac12\bigl[a_kD_{k+1}^2+(a_k-1)D_{k+1}
 -(h_{N,k}-1)(D_k+D_{k+1})-a_{k+1}m_k^2D_k^2\bigr].
\end{aligned}
\end{equation}
}
{With the vectors \(s_k,t_k\in\mathcal H\) defined by}
\begin{equation}\label{cs:ofgm-secondary:directions}
 s_k=\frac{\nabla f(x_{k+1})}L-\frac{y_{k+1}-x_\star}{D_{k+1}},\qquad
 t_k=\frac{y_k-x_\star}{D_k}-\frac{y_{k+1}-x_\star}{D_{k+1}},
\end{equation}
set
{%
\[
 S_k=L\alpha_k\norm{s_k}^2+L\beta_k\norm{t_k}^2.
\]
}
For $N\geq2$, {define the terminal coefficients \(\alpha_{N-1},\beta_{N-1}\in\mathbb R\), directions \(s_{N-1},t_{N-1}\in\mathcal H\), and square sum \(S_{N-1}\) by}
\begin{equation}\label{cs:ofgm-secondary:terminalS}
\begin{aligned}
 \alpha_{N-1}&=\frac12,\qquad
 \beta_{N-1}=\frac{a_{N-1}-(m_{N-1}+b_N)^2}{2},\\
 s_{N-1}&=\frac{\nabla f(x_N)}L-(x_N-x_\star)
              +a_{N-1}(y_N-x_\star),\\
 t_{N-1}&=y_N-x_\star-\frac{D_N}{D_{N-1}}(y_{N-1}-x_\star),\\
 S_{N-1}&=L\alpha_{N-1}\norm{s_{N-1}}^2
              +L\beta_{N-1}\norm{t_{N-1}}^2.
\end{aligned}
\end{equation}
{For $N\ge2$, the initial square sum is}
{%
\begin{equation}\label{cs:ofgm-secondary:S0}
S_0=\frac{L}{2D_0}\left\|x_0-x_\star-\frac{D_0}{D_1}(x_1-x_\star)\right\|^2
        +\frac{La_1}{2}\left\|\frac{\nabla f(x_1)}L-\frac{x_1-x_\star}{D_1}\right\|^2.
\end{equation}
}
For $N=1$, define the initial square sum separately by
\begin{equation}\label{cs:ofgm-secondary:S0-one}
\begin{aligned}
 S_0={}&\frac{3L}{8}\left\|\frac{\nabla f(x_0)}L-\frac{x_0-x_\star}{3}\right\|^2
 +\frac L2\left\|\frac{\nabla f(x_1)}L-\frac{x_1-x_\star}{2}\right\|^2.
\end{aligned}
\end{equation}
{%
The square sums satisfy \(S_k\ge0\). For \(N\ge2\), the following decrement identities hold:\footnote{For \(N=1\), the first case holds with \(a_0=1/2\), the terminal definition of \(V_1\), and \eqref{cs:ofgm-secondary:S0-one}. For \(N=2\), the middle range is empty.}
\begin{equation*}
 V_{k+1}-V_k=
 \begin{cases}
  \displaystyle a_0\bigl[\mathcal I_f(x_\star,x_0)+\mathcal I_f(x_0,x_1)\bigr]-S_0, & k=0,\\
  \displaystyle b_k\mathcal I_f(x_\star,x_k)+a_k\mathcal I_f(x_k,x_{k+1})-S_k, & 1\le k\le N-2,\\
  \displaystyle b_{N-1}\mathcal I_f(x_\star,x_{N-1})+a_{N-1}\mathcal I_f(x_{N-1},x_N)-S_{N-1}, & k=N-1.
 \end{cases}
\end{equation*}
The coefficient signs in Lemma~\ref{cs:ofgm-secondary:lem:signs} and \(\mathcal I_f\le0\) imply \(V_N\le\cdots\le V_0=0\).
Since \(a_{N-1}+b_N=1\), the terminal definition gives
\[
 f(x_N)-f(x_\star)=V_N+b_N\mathcal I_f(x_\star,x_N)+\tau_NL\norm{x_0-x_\star}^2
 \le\tau_NLR^2.
\]
}
Consequently,
\begin{equation}\label{cs:ofgm-secondary:bound}
 f(x_N)-f_\star\le \frac{LR^2}{2(2C_{N+1}-1)}
 =\frac{LR^2\theta_N^2}{2\bigl(2\sum_{j=0}^N\theta_j^3-\theta_N^2\bigr)}.
\end{equation}
\end{theorem}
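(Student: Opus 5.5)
The plan follows the template already used for Theorems~\ref{cs:sfgm:thm:main} and~\ref{cs:ofgm-primary:thm:main}: prove each line of the decrement identity~\eqref{cs:ofgm-secondary:decrement} as an exact algebraic identity in the Gram coordinates, show every coefficient is positive, and telescope.

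For the interior range $1\le k\le N-2$, subtract consecutive instances of~\eqref{cs:ofgm-secondary:potential}; the $\tau_N$ terms cancel. Expand $b_k\mathcal I_f(x_\star,x_k)+a_k\mathcal I_f(x_k,x_{k+1})$ using $\nabla f(x_\star)=0$. Note that the function values pair as $a_{k-1}(f(x_k)-f_\star)$ versus $a_k(f(x_{k+1})-f_\star)$, which is consistent with the index shift in~\eqref{cs:ofgm-secondary:potential}, since $b_k+a_{k-1}=a_k$.

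Next, express every vector in the coordinates $(y_k-x_\star,\,y_{k+1}-x_\star,\,\nabla f(x_{k+1})/L)$, after first eliminating $\nabla f(x_k)$ and $x_k$:
\begin{itemize}
\item from $y_{k+1}=x_k-\nabla f(x_k)/L$ and the FGM mixing $x_k=y_k+m_{k-1}(y_k-y_{k-1})$, which is the standard momentum form implied by~\eqref{eq:case-ofgm} and $\theta_k^2-\theta_k=\theta_{k-1}^2$;
\item and likewise $x_{k+1}=y_{k+1}+m_k(y_{k+1}-y_k)$.
\end{itemize}
The term $b_k\langle\nabla f(x_k),y_{k+1}-x_\star\rangle$ in $V_k$ should absorb the $\nabla f(x_k)$ dependence, so that only the state $\boldsymbol w_k$, $\boldsymbol w_{k+1}$ and $\nabla f(x_{k+1})$ remain. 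The residual quadratic form is then a $3\times 3$ symmetric matrix $\vS_k$, built from $\vQ_k$, $\vQ_{k+1}$ and the linear coefficients.

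I would then show $\vS_k=\alpha_k\,uu^\top+\beta_k\,vv^\top$, where $u,v$ are the coordinate vectors of $s_k,t_k$ in~\eqref{cs:ofgm-secondary:directions}. This amounts to matching six entries using~\eqref{cs:ofgm-secondary:Q},~\eqref{cs:ofgm-secondary:h},~\eqref{cs:ofgm-secondary:ab}, the recurrence~\eqref{eq:case-C-recurrence}, and $D_k-D_{k+1}=\delta_k$. The natural order is:
\begin{itemize}
\item the $\norm{\nabla f(x_{k+1})}^2$ entry fixes $\alpha_k=a_{k+1}/2$;
\item the $\nabla f$ cross terms must be proportional to $-1/D_{k+1}$ times that;
\item the remaining $2\times2$ position block must then equal $\beta_k tt^\top$ plus the $\alpha_k$ contribution, which is the identity defining $\beta_k$ in~\eqref{cs:ofgm-secondary:margin}.
\end{itemize}

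The base step $k=0$ and the terminal step $k=N-1$ are done the same way, and each reduces to a rank-two quadratic:
\begin{itemize}
\item For $k=0$, use $x_1=y_1=z_1=x_0-\nabla f(x_0)/L$ and $a_0=b_0=1/D_1$ to obtain~\eqref{cs:ofgm-secondary:S0}. Here $\nabla f(x_0)$ is rewritten as $L(x_0-x_1)$, which explains the $x_0-x_\star-(D_0/D_1)(x_1-x_\star)$ direction.
\item For $k=N-1$, use $V_N$ from~\eqref{cs:ofgm-secondary:potential-end} together with $a_{N-1}+b_N=1$ to obtain~\eqref{cs:ofgm-secondary:terminalS}.
\item The case $N=1$ is a direct check against~\eqref{cs:ofgm-secondary:S0-one}.
\end{itemize}
Telescoping then gives $V_N\le V_0=0$. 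The endpoint identity~\eqref{cs:ofgm-secondary:normalization} follows by expanding $b_N\mathcal I_f(x_\star,x_N)$, which yields the bound. Finally, rewrite $2C_{N+1}-1$ through~\eqref{eq:case-C-definition} as $(2\sum_{j\le N}\theta_j^3-\theta_N^2)/\theta_N^2$.

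The main obstacle is Lemma~\ref{cs:ofgm-secondary:lem:signs}: the positivity of $a_k$, $b_k$, $\beta_k$ and $\beta_{N-1}$, and of the denominators $D_k$.
\begin{itemize}
\item $a_k>0$ is immediate, since $h_{N,j}>0$ because $\theta_{j-1}<\theta_N$, and $D_k>0$ because $C$ increases.
\item For $b_k>0$, I would imitate the $\sigma_k$ argument of Lemma~\ref{cs:ofgm-primary:lem:signs} using the $\rho_k$ bounds from~\eqref{cs:ofgm-primary:moment}, and in particular $\rho_k+1\le(2\theta_k+1)/3$.
\item For $\beta_k$, I would first try to write $2\beta_k$ as a sum of products of the nonnegative gaps $\theta_{N}-\theta_k$, $(2\theta_k+1)/3-(\rho_k+1)$ and $\theta_k-1$, expanding symbolically as in~\eqref{cs:ofgm-primary:signexp}.
\item For $\beta_{N-1}$, I need $a_{N-1}>(m_{N-1}+b_N)^2$. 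This should follow from $a_{N-1}=1-b_N$, $b_N\approx 1/(C_N+1)$, and $m_{N-1}<1$.
\end{itemize}
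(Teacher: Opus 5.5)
Your plan for the exact identities is the paper's route: the slack matrix $\mathbf S_k$ in the coordinates $(y_k-x_\star,\,y_{k+1}-x_\star,\,\nabla f(x_{k+1})/L)$, its rank-two factorization with $\alpha_k=a_{k+1}/2$, and the base, terminal and endpoint expansions. (The $\nabla f(x_k)$ terms cancel identically once you use $x_k-x_\star=y_{k+1}-x_\star+\nabla f(x_k)/L$. The substitution $x_k=y_k+m_{k-1}(y_k-y_{k-1})$ is unnecessary; it only creates $y_{k-1}$ terms that cancel again.) The genuine gap is in the sign lemma, at the two $\beta$ coefficients.

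\textbf{Terminal coefficient $\beta_{N-1}$.} Your heuristic uses the wrong $b_N$. The value $1/(C_N+1)$ comes from the $y_N$ theorem. Here $\sum_{j<N}h_{N,j}=C_{N+1}-1$ gives $a_{N-1}=(C_{N+1}-1)/D_N$ and hence $b_N=\delta_N/D_N$, which is of order $1/\theta_N$, not $1/\theta_N^2$. Since $\theta_N-\theta_{N-1}\to\tfrac12$, you have $m_{N-1}=1-\tfrac{3}{2\theta_N}+O(\theta_N^{-2})$. So $a_{N-1}=1-b_N$ and $(m_{N-1}+b_N)^2$ both equal $1-\theta_N^{-1}+O(\theta_N^{-2})$. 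The first-order terms cancel, and the margin is only of order $\theta_N^{-2}$ (about $0.06$ at $N=10$). Facts like $m_{N-1}<1$ therefore cannot decide the sign; you need an exact identity. Using $D_N=\rho_N+\theta_N\delta_N$ and $\theta_{N-1}^2=\theta_N^2-\theta_N$, one gets
\[
 a_{N-1}-(m_{N-1}+b_N)^2=\frac{\rho_N\bigl((2\theta_{N-1}+\theta_N-1)D_N+\theta_N\delta_N\bigr)}{\theta_N^2D_N^2}>0 .
\]

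\textbf{Interior coefficients $\beta_k$.} Here you offer only a hope, and the three gaps you list cannot carry it. The quantity $2\beta_k$ depends on the horizon through $\rho_N$ as well as $\theta_N$, since $D_j=\theta_N^2+\rho_N-C_j$. It also depends on $\theta_{k+1}$ and $\rho_{k+1}$ through $m_k$, $a_{k+1}$ and $D_{k+2}$. None of these is controlled by $\theta_N-\theta_k$, $\tfrac{2\theta_k+1}{3}-(\rho_k+1)$ or $\theta_k-1$. The paper's argument needs sharper two-sided bounds: $\tfrac12<\theta_j-\theta_{j-1}<\tfrac23$, $\tfrac35(\theta_j-1)<\rho_j<\tfrac23(\theta_j-1)$, and $\rho_j-\rho_{j-1}>\tfrac13$. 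It combines these with the regrouping $2\beta_kD_{k+2}=D_{k+1}\varphi_k+\psi_k$, where $\psi_k=D_{k+2}(C_{k+1}-m_k^2a_{k+1}\delta_k^2)$.
\begin{itemize}
\item For $\psi_k$: the bounds $m_k^2a_{k+1}D_{k+2}<(\theta_k-1)^2$ and $D_{k+2}>C_{k+1}$ give $\psi_k>C_{k+1}^2-(\theta_k-1)^2\delta_k^2=\theta_k^2(\rho_k+1)$.
\item For $\varphi_k$: it contains the factor $1+\rho_{k+1}-\rho_N$, which has no fixed sign. It must be bounded above by $\tfrac23$ via $\rho_N-\rho_{k+1}>\tfrac13$, which is where $k\le N-2$ enters.
\end{itemize}
A regrouping of this kind, or an explicit polynomial certificate in all the relevant variables, is the missing idea.

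\textbf{Positivity of $b_k$.} This needs no $\sigma_k$ argument. The relations $a_kD_{k+1}-a_{k-1}D_k=h_{N,k}$ and $D_k-D_{k+1}=\delta_k$ give $b_k=(h_{N,k}+a_{k-1}\delta_k)/D_{k+1}>0$ directly.
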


\begin{proof}
{%
The companion notebook is
\path{examples_peppy/fgm_conjecture5/fgm_conjecture5_example_lyap.ipynb}.
The notebook also documents
the prerequisites for running its exact proof checks separately.
}

{%
For \(1\le k\le N-2\), subtracting two consecutive instances of \eqref{cs:ofgm-secondary:potential} cancels the common initial-radius term and gives
\begin{equation}\label{cs:ofgm-secondary:potential-difference}
\begin{aligned}
V_{k+1}-V_k
 &=a_k\bigl(f(x_{k+1})-f(x_\star)\bigr)-a_{k-1}\bigl(f(x_k)-f(x_\star)\bigr)\\
 &\quad-\frac{a_k}{2L}\norm{\nabla f(x_{k+1})}^2
 +\frac{a_{k-1}}{2L}\norm{\nabla f(x_k)}^2\\
 &\quad+b_{k+1}\csinner{\nabla f(x_{k+1})}{y_{k+2}-x_\star}
 -b_k\csinner{\nabla f(x_k)}{y_{k+1}-x_\star}\\
 &\quad+L\boldsymbol w_{k+1}^\top\vQ_{k+1}\boldsymbol w_{k+1}
 -L\boldsymbol w_k^\top\vQ_k\boldsymbol w_k.
\end{aligned}
\end{equation}
Our goal is to identify this expression with the interior case of \eqref{cs:ofgm-secondary:decrement}. We first factor the quadratic remainder, then match the weighted interpolation residuals; the boundary cases are treated separately.
}

\textbf{Step 1: factorization of the interior slack.}
For $1\le k\le N-2$, Steps 1 and 2 prove the second line of
\eqref{cs:ofgm-secondary:decrement}, namely,
\[
 V_{k+1}-V_k=b_k\mathcal I_f(x_\star,x_k)
              +a_k\mathcal I_f(x_k,x_{k+1})-S_k,
\]
with $S_k$ defined by \eqref{cs:ofgm-secondary:margin}--\eqref{cs:ofgm-secondary:directions}.
We first factor a quadratic remainder into its two squares; Step 2 identifies
that remainder by subtracting the potential difference from the interpolation residuals.
{%
Fix $1\le k\le N-2$.
}
{%
Define the symmetric matrix \(\vS_k\in\mathbb R^{3\times3}\) by
}
{%
\begin{equation}\label{cs:ofgm-secondary:slack-matrix}
\begin{aligned}
 \vS_k={}&\begin{pmatrix}
 \vQ_k&\frac12\begin{bmatrix}m_ka_{k+1}\\-a_km_k-b_{k+1}(1+m_k)\end{bmatrix}\\[3pt]
 \frac12\begin{bmatrix}m_ka_{k+1}\\-a_km_k-b_{k+1}(1+m_k)\end{bmatrix}^{\!\top}&a_{k+1}
 \end{pmatrix}\\
 &-\begin{pmatrix}0&1&0\\-m_k&1+m_k&-1\end{pmatrix}^{\!\top}
 \vQ_{k+1}\begin{pmatrix}0&1&0\\-m_k&1+m_k&-1\end{pmatrix}.
\end{aligned}
\end{equation}
}
{%
Eliminating $z_k=\theta_kx_k-(\theta_k-1)y_k$ from the updates gives
\begin{equation}\label{cs:ofgm-secondary:momentum}
 x_{k+1}=(1+m_k)y_{k+1}-m_ky_k,\qquad 0\le k<N.
\end{equation}
Together with \(y_{k+2}=x_{k+1}-\nabla f(x_{k+1})/L\), this gives
\[
 \boldsymbol w_{k+1}=\begin{pmatrix}0&1&0\\-m_k&1+m_k&-1\end{pmatrix}\begin{bmatrix}\boldsymbol w_k\\\nabla f(x_{k+1})/L\end{bmatrix}.
\]
Unrolling this momentum form to $x_N$ gives the final-row relations
\begin{equation}\label{cs:ofgm-secondary:final-row}
 h_{N,k}=1+m_kh_{N,k+1}\quad(0\le k<N-1),\qquad
 h_{N,N-1}=1+m_{N-1}.
\end{equation}
The formula in \eqref{cs:ofgm-secondary:h} satisfies these relations by
\eqref{eq:case-theta-identity}, so it gives the final row of the fixed-step
representation in the common FGM lemma.
Substituting \eqref{eq:case-C-recurrence} and
\eqref{eq:case-theta-identity} gives $\delta_{k+1}=1+m_k\delta_k$ for $0\le k<N$.
Thus the definitions of $D_k$ and $a_k$ give
\begin{equation}\label{cs:ofgm-secondary:local-relations}
\begin{gathered}
 D_k-D_{k+1}=\delta_k,\qquad D_{k+2}=D_{k+1}-1-m_k\delta_k,\\
 a_kD_{k+1}-a_{k-1}D_k=h_{N,k}.
\end{gathered}
\end{equation}
The second identity is used for $0\le k<N-1$; the others hold for
$0\le k<N$. Code Cell~29 checks these scalar identities.
These relations suffice for the following coefficient comparison;
the denominator signs are proved independently in
Lemma~\ref{cs:ofgm-secondary:lem:signs}.
}
The {position matrices} satisfy
{%
\begin{equation}\label{cs:ofgm-secondary:position}
\begin{aligned}
 &\boldsymbol w_k^\top \vQ_k\boldsymbol w_k
 -\begin{bmatrix}y_{k+1}-x_\star\\x_{k+1}-x_\star\end{bmatrix}^{\!\top}
 \vQ_{k+1}\begin{bmatrix}y_{k+1}-x_\star\\x_{k+1}-x_\star\end{bmatrix}\\
 &\qquad=\frac{a_{k+1}}{2D_{k+1}^2}\norm{y_{k+1}-x_\star}^2
       +\beta_k\left\|\frac{y_k-x_\star}{D_k}-\frac{y_{k+1}-x_\star}{D_{k+1}}\right\|^2.
\end{aligned}
\end{equation}
}
{%
Using \eqref{cs:ofgm-secondary:momentum} to expand
\(x_{k+1}-x_\star=(1+m_k)(y_{k+1}-x_\star)-m_k(y_k-x_\star)\),
the three position coefficients satisfy
\[
\begin{aligned}
 \norm{y_k-x_\star}^2&:\quad
 (\vQ_k)_{11}-m_k^2(\vQ_{k+1})_{22}=\frac{\beta_k}{D_k^2},\\
 2\csinner{y_k-x_\star}{y_{k+1}-x_\star}&:\quad
 (\vQ_k)_{12}+m_k(\vQ_{k+1})_{12}+m_k(1+m_k)(\vQ_{k+1})_{22}
 =-\frac{\beta_k}{D_kD_{k+1}},\\
 \norm{y_{k+1}-x_\star}^2&:\quad
 \begin{aligned}[t]
 &(\vQ_k)_{22}-(\vQ_{k+1})_{11}-2(1+m_k)(\vQ_{k+1})_{12}\\
 &\qquad-(1+m_k)^2(\vQ_{k+1})_{22}
 =\frac{a_{k+1}+2\beta_k}{2D_{k+1}^2}.
 \end{aligned}
\end{aligned}
\]
}
{Substitution of \eqref{cs:ofgm-secondary:final-row} and
\eqref{cs:ofgm-secondary:local-relations} verifies all three identities;
Code Cell~30 checks the coefficients of \eqref{cs:ofgm-secondary:position}.}

{%
Using \(a_{k+1}=a_k+b_{k+1}\) and
\(2(\vQ_{k+1})_{22}=a_{k+1}\), the gradient coefficients are
\[
\begin{aligned}
 \frac1{L^2}\norm{\nabla f(x_{k+1})}^2&:\quad
 a_{k+1}-(\vQ_{k+1})_{22}=\frac{a_{k+1}}2,\\
 \frac1L\csinner{\nabla f(x_{k+1})}{y_k-x_\star}&:\quad
 m_k\bigl(a_{k+1}-2(\vQ_{k+1})_{22}\bigr)=0,\\
 \frac1L\csinner{\nabla f(x_{k+1})}{y_{k+1}-x_\star}&:\quad
 a_k+2(\vQ_{k+1})_{12}=-\frac{a_{k+1}}{D_{k+1}}.
\end{aligned}
\]
}
{%
Together with \eqref{cs:ofgm-secondary:position}, these coefficient identities give
\[
 \vS_k=\alpha_k\begin{bmatrix}0\\-1/D_{k+1}\\1\end{bmatrix}
 \begin{bmatrix}0\\-1/D_{k+1}\\1\end{bmatrix}^{\!\top}
 +\beta_k\begin{bmatrix}1/D_k\\-1/D_{k+1}\\0\end{bmatrix}
 \begin{bmatrix}1/D_k\\-1/D_{k+1}\\0\end{bmatrix}^{\!\top}.
\]
Evaluating the associated quadratic form and using \eqref{cs:ofgm-secondary:directions} gives
\begin{equation}\label{cs:ofgm-secondary:factorization}
 L\begin{bmatrix}\boldsymbol w_k\\\nabla f(x_{k+1})/L\end{bmatrix}^{\!\top}\vS_k\begin{bmatrix}\boldsymbol w_k\\\nabla f(x_{k+1})/L\end{bmatrix}
 =L\alpha_k\norm{s_k}^2+L\beta_k\norm{t_k}^2=S_k,
\end{equation}
}
{%
as claimed. {Code Cell~30} checks all six quadratic coefficients in $y_k-x_\star$, $y_{k+1}-x_\star$, and $\nabla f(x_{k+1})$.
}

\textbf{Step 2: verification of the interior decrement identity.}
It remains to show that {the left-hand side of \eqref{cs:ofgm-secondary:factorization}} is the difference between
$b_k\mathcal I_f(x_\star,x_k)+a_k\mathcal I_f(x_k,x_{k+1})$ and $V_{k+1}-V_k$.
{%
To match \eqref{cs:ofgm-secondary:potential-difference}, first compare the function values.
The identity \(b_k=a_k-a_{k-1}\) gives the coefficients \(a_k\) and \(-a_{k-1}\) of \(f(x_{k+1})-f(x_\star)\) and \(f(x_k)-f(x_\star)\).
For the gradient terms, the update and \eqref{cs:ofgm-secondary:momentum} give
\[
 x_k-x_\star=y_{k+1}-x_\star+\frac{\nabla f(x_k)}L,\qquad
 x_k-x_{k+1}-\frac{\nabla f(x_k)}L=m_k(y_k-y_{k+1}).
\]
Expanding the residuals using these relations yields
\begin{equation}\label{cs:ofgm-secondary:residual-expansion}
\begin{aligned}
&b_k\mathcal I_f(x_\star,x_k)+a_k\mathcal I_f(x_k,x_{k+1}) =a_k\bigl(f(x_{k+1})-f(x_\star)\bigr)-a_{k-1}\bigl(f(x_k)-f(x_\star)\bigr)\\
 &\qquad+\frac{a_{k-1}}{2L}\norm{\nabla f(x_k)}^2
 +\frac{a_k}{2L}\norm{\nabla f(x_{k+1})}^2\\
 &\qquad-b_k\csinner{\nabla f(x_k)}{y_{k+1}-x_\star}
 +a_km_k\csinner{\nabla f(x_{k+1})}{y_k-y_{k+1}}.
\end{aligned}
\end{equation}
Subtracting \eqref{cs:ofgm-secondary:potential-difference} cancels the function values and the terms involving \(\nabla f(x_k)\).
The two next-gradient norm terms add to \((a_k/L)\norm{\nabla f(x_{k+1})}^2\); the remaining position and inner-product terms give the left-hand side of \eqref{cs:ofgm-secondary:factorization}. Thus
}
{%
\begin{equation}\label{cs:ofgm-secondary:interpolation-bridge}
\begin{aligned}
 &b_k\mathcal I_f(x_\star,x_k)+a_k\mathcal I_f(x_k,x_{k+1})-(V_{k+1}-V_k)
\\
 &\quad=L\begin{bmatrix}\boldsymbol w_k\\\nabla f(x_{k+1})/L\end{bmatrix}^{\!\top}\vS_k\begin{bmatrix}\boldsymbol w_k\\\nabla f(x_{k+1})/L\end{bmatrix}=S_k,
\end{aligned}
\end{equation}
}
where the last equality is the factorization proved in Step 1. Rearranging
proves the second line of \eqref{cs:ofgm-secondary:decrement} for
$1\le k\le N-2$. {Code Cell~36} checks this bridge and the complete decrement
in every quadratic, function-value, and constant coefficient.
The base and terminal cases are treated next; the coefficient signs needed
for monotonicity are established after them.

\textbf{Terminal index.} Assume $N\geq2$.
This verifies the last line of \eqref{cs:ofgm-secondary:decrement} using
\eqref{cs:ofgm-secondary:potential-end}; {Code Cell~37} checks both the position
factorization and the full decrement.
{%
Let \(m=m_{N-1}\in\mathbb R\).
}
The {matrix} gives
{%
\begin{equation}\label{cs:ofgm-secondary:terminal-position}
\begin{aligned}
 \boldsymbol w_{N-1}^\top \vQ_{N-1}\boldsymbol w_{N-1}
 ={}&\frac12\norm{m(y_{N-1}-x_\star)-(m+b_N)(y_N-x_\star)}^2\\
 &+\beta_{N-1}\norm{t_{N-1}}^2.
\end{aligned}
\end{equation}
}
{The common FGM lemma gives
$\sum_{k=0}^{N-1}h_{N,k}=C_{N+1}-1$, so \eqref{cs:ofgm-secondary:ab}
yields $b_N=\delta_N/D_N$. Together with
$h_{N,N-1}=1+m_{N-1}$ and $\delta_N=1+m_{N-1}\delta_{N-1}$, this gives the identities}
\[
 {\vQ_{N-1}}\begin{bmatrix}D_{N-1}\\D_N\end{bmatrix}
 =\frac12\begin{bmatrix}1-h_{N,N-1}\\h_{N,N-1}-a_{N-1}\end{bmatrix},
 \qquad mD_{N-1}-(m+b_N)D_N=-1.
\]
Both sides of \eqref{cs:ofgm-secondary:terminal-position} have this action on the indicated
{%
vector and the same $\norm{y_N-x_\star}^2$ coefficient, $a_{N-1}/2$.
}
Since $D_{N-1}>0$, these facts determine the three matrix entries uniquely.
{%
Using $x_N-x_\star=(1+m)(y_N-x_\star)-m(y_{N-1}-x_\star)$ and $b_N=1-a_{N-1}$, expanding the two
}
final interpolation residuals converts the first square in
\eqref{cs:ofgm-secondary:terminal-position} into $\norm{s_{N-1}}^2$ and gives
\eqref{cs:ofgm-secondary:terminalS}. This proves the final decrement.

\textbf{Base index.}
This verifies the first line of \eqref{cs:ofgm-secondary:decrement} for
$N\ge2$ ({Code Cell~38}).\footnote{{Together with the terminal calculation, this also settles every transition when $N=2$.}}
{%
For $N\ge2$, we have $x_1=y_1=z_1$, $D_0=D_1+1$, $D_2=D_1-1$,
$a_0=1/D_1$, and $h_{N,1}=a_1D_2-1$. Direct expansion gives
\[
\begin{aligned}
 &\mathcal I_f(x_\star,x_0)+\mathcal I_f(x_0,x_1)\\
 &\qquad=f(x_1)-f_\star+L\norm{x_1-x_\star}^2
 -L\csinner{x_0-x_\star}{x_1-x_\star}+\frac1{2L}\norm{\nabla f(x_1)}^2.
\end{aligned}
\]
}
Substituting \eqref{cs:ofgm-secondary:Q} in $V_1$ and subtracting gives precisely $S_0$
in \eqref{cs:ofgm-secondary:S0}, proving the base identity.

\textbf{Positivity of the coefficients.}
{%
Lemma~\ref{cs:ofgm-secondary:lem:signs} proves all required coefficient
signs and denominator bounds directly from the scalar definitions. The positive square coefficients give
$S_k\ge0$, and the positive interpolation weights make
\eqref{cs:ofgm-secondary:decrement} nonpositive.
}

\textbf{Conclusion.}
For $N\geq2$, the decrements in \eqref{cs:ofgm-secondary:decrement} are nonpositive, so $V_N\le V_0=0$.
Using \eqref{cs:ofgm-secondary:normalization}, checked in {Code Cell~39},
proves \eqref{cs:ofgm-secondary:bound}.

\textbf{Small horizons.}
For $N=1$, $D_0=3$, $a_0=b_0=b_1=1/2$, and $x_1=x_0-\nabla f(x_0)/L$.\footnote{{Code Cells~40--41 verify the single base--terminal decrement and endpoint normalization using \eqref{cs:ofgm-secondary:potential-end} and \eqref{cs:ofgm-secondary:S0-one} directly.}}
The terminal definition directly satisfies
\begin{align*}
 V_1={}&\tfrac12\bigl[\mathcal I_f(x_\star,x_0)+\mathcal I_f(x_0,x_1)\bigr]\\
 &-\frac{3L}{8}\left\|\frac{\nabla f(x_0)}L-\frac{x_0-x_\star}{3}\right\|^2
  -\frac L2\left\|\frac{\nabla f(x_1)}L-\frac{x_1-x_\star}{2}\right\|^2.
\end{align*}
Together with \eqref{cs:ofgm-secondary:normalization}, this proves $LR^2/6$ without
an interior matrix. No step divides by $R$.

\end{proof}

\begin{lemma}%
\label{cs:ofgm-secondary:lem:signs}
\normalfont
{%
The interpolation and square coefficients are positive on their respective
index ranges:\footnote{{The unused value $a_{-1}$ is zero;
no $\alpha_k,\beta_k$ is needed at index zero.}}
\[
 a_k>0,\qquad b_k>0,\qquad \alpha_k>0,\qquad \beta_k>0.
\]
The initial square coefficients in \eqref{cs:ofgm-secondary:S0} and
\eqref{cs:ofgm-secondary:S0-one} are also positive.
}
\end{lemma}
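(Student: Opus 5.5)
The plan is to follow the scheme of Lemma~\ref{cs:ofgm-primary:lem:signs}: first collect scalar facts about \(C_k\), \(D_k\), and \(h_{N,k}\), then read off the easy signs from the local relations \eqref{cs:ofgm-secondary:local-relations}, and spend the real effort on \(\beta_k\).

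\textbf{Scalar facts.} I will reuse the auxiliary quantity \(\rho_k=\sum_{i<k}\theta_i^2/\theta_k^2\) and the moment identity \eqref{cs:ofgm-primary:moment}, namely \(C_{k+1}=(\theta_k^2+\rho_k+1)/2\). I will also reuse the bounds proved there:
\[
 \rho_k+1\le\frac{2\theta_k+1}{3},\qquad
 \rho_{k-1}+1<\frac{2\theta_k}{3},\qquad
 \rho_k-\rho_{k-1}>\frac13 .
\]
These hold with the same proof, since they concern only the \(\theta\)-sequence. From them:
\begin{itemize}
\item \(\delta_k=C_{k+1}-C_k>0\), so \(C_k\) is strictly increasing.
\item Hence \(D_k\ge D_N=C_{N+1}-1+\delta_N>0\) for \(0\le k\le N\), because \(C_{N+1}\ge C_1=1\).
\item For \(0\le k<N\) we have \(\theta_{k-1}<\theta_N\), so \(h_{N,k}>0\) by \eqref{cs:ofgm-secondary:h}.
\item \(0\le m_k<1\), because \(\theta_k-1<\theta_{k+1}\).
\end{itemize}

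\textbf{Easy signs.} From the \(h_{N,k}>0\) and \(D_{k+1}>0\), definition \eqref{cs:ofgm-secondary:ab} gives \(a_k>0\) for \(0\le k<N\). For \(b_k\), I will combine the third relation in \eqref{cs:ofgm-secondary:local-relations}, \(a_kD_{k+1}-a_{k-1}D_k=h_{N,k}\), with \(D_k-D_{k+1}=\delta_k\). This gives
\[
 b_kD_{k+1}=h_{N,k}+a_{k-1}\delta_k>0 ,\qquad 0\le k<N .
\]
At the endpoint, the terminal computation already gives \(b_N=\delta_N/D_N>0\). With these in hand:
\begin{itemize}
\item \(\alpha_k=a_{k+1}/2>0\) for \(1\le k\le N-2\), and \(\alpha_{N-1}=1/2\).
\item The coefficients \(L/(2D_0)\) and \(La_1/2\) in \eqref{cs:ofgm-secondary:S0} are positive.
\item The constants in \eqref{cs:ofgm-secondary:S0-one} are explicit positive numbers.
\end{itemize}
Finally, \(a_{N-1}=1-b_N<1\), and the \(b_k>0\) make \(a_k\) increasing.

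\textbf{Main obstacle: \(\beta_k\).} The hard part is the positivity of \(\beta_k\) in \eqref{cs:ofgm-secondary:margin}, together with the terminal inequality \(a_{N-1}>(m_{N-1}+b_N)^2\). For the interior case I will proceed as follows.
\begin{enumerate}
\item Eliminate \(D_{k+2}\) and \(a_{k+1}\) using \eqref{cs:ofgm-secondary:local-relations}, \(a_{k+1}D_{k+2}=a_kD_{k+1}+h_{N,k+1}\), and \(h_{N,k}=1+m_kh_{N,k+1}\). This writes \(2\beta_k\) in terms of \(a_k,D_k,D_{k+1},\delta_k,m_k,h_{N,k+1}\) only.
\item Substitute the closed forms in \(\theta_k,\theta_{k+1},\theta_N,\rho_k,\rho_{k-1}\), using \(\theta_{k+1}^2-\theta_{k+1}=\theta_k^2\).
\item Multiply by the positive factor \(\theta_N^2D_kD_{k+1}D_{k+2}\).
\item Group the result, as in \eqref{cs:ofgm-primary:signexp}, into a sum of products. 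Each factor should be one of: a nonnegative gap such as \(\theta_N-\theta_k\); a slack from the \(\rho\)-bounds, such as \(\frac{2\theta_k}{3}-(\rho_{k-1}+1)\) or \(\frac{2\theta_N+1}{3}-(\rho_N+1)\); or a polynomial in \(\theta_k\ge\theta_1>3/2\) that is positive after a shift.
\end{enumerate}
I expect to need the agent/SymPy to search for this grouping. If no clean grouping appears, my fallback is to bound the negative term \(a_{k+1}m_k^2D_k^2\) from above using \(m_k<1-1/\theta_{k+1}\) and \(a_{k+1}<1\), and show that the remaining positive terms dominate.

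\textbf{Terminal case.} Write \(b_N=\delta_N/D_N\) and \(m=m_{N-1}\). The inequality is equivalent to \(1-b_N>(m+b_N)^2\). Using \(\delta_N=1+m\delta_{N-1}\) and the \(\rho\)-bounds, this reduces to a polynomial inequality in \(\theta_{N-1}\). I would establish it by a shifted-variable expansion with positive coefficients, as was done for \(p_4\) and \(p_9\) in Lemma~\ref{cs:sfgm:lem:signs}, and check the smallest horizons \(N=2,3\) by direct evaluation.
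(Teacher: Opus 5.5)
There is a genuine gap. Your treatment of $a_k$, $b_k$, $\alpha_k$ and the initial square coefficients is correct and is essentially the paper's argument: monotone $C_k$, $D_k\ge D_N>0$, $h_{N,k}>0$, $b_kD_{k+1}=h_{N,k}+a_{k-1}\delta_k$, and $b_N=\delta_N/D_N$.

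\textbf{The interior $\beta_k$.} Positivity of $\beta_k$ is the real content of the lemma, and you do not prove it. Your steps 1--4 describe a search for a grouping rather than exhibit one, and your fallback cannot succeed. To see why, use
$a_kD_{k+1}=\theta_k^2\bigl(1-(\theta_k-1)\delta_k/\theta_N^2\bigr)$,
$h_{N,k}=\theta_k\bigl(1-\theta_{k-1}^2/\theta_N^2\bigr)$,
$m_k^2a_{k+1}D_{k+2}=(\theta_k-1)^2\bigl(1-(\theta_{k+1}-1)\delta_{k+1}/\theta_N^2\bigr)$, and
$D_k^2/D_{k+2}=D_{k+1}+2\delta_k+\delta_{k+1}+(\delta_k+\delta_{k+1})^2/D_{k+2}$.
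Ignoring the $\theta_N^{-2}$ corrections, the coefficient of $D_{k+1}$ in $2\beta_k$ is $(\theta_k^2-1)-2(\theta_k-1)-(\theta_k-1)^2=0$. So the four terms of \eqref{cs:ofgm-secondary:margin} cancel down to a quantity of order $\theta_k^2$, although each is of size up to about $\theta_k^2D_{k+1}$, with $D_{k+1}\approx\theta_N^2$ for $k\ll N$. The sign is therefore decided by lower-order terms, including those $\theta_N^{-2}$ corrections. Replacing $a_{k+1}$ by $1$ inflates the subtracted term by a factor of roughly $D_{k+2}/\theta_{k+1}^2$. The resulting lower bound is then negative as soon as $\theta_N\gg\theta_k$.

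What is missing is an exact identity that isolates these corrections. The paper proves $2\beta_kD_{k+2}=D_{k+1}\varphi_k+\psi_k$, with the following pieces:
\begin{itemize}
\item $\psi_k=D_{k+2}\bigl(C_{k+1}-m_k^2a_{k+1}\delta_k^2\bigr)>C_{k+1}^2-(\theta_k-1)^2\delta_k^2=\theta_k^2(\rho_k+1)$. This uses $D_{k+2}>C_{k+1}$ and $m_k^2a_{k+1}D_{k+2}<(\theta_k-1)^2$, i.e.\ the sharp bound $a_{k+1}<\theta_{k+1}^2/D_{k+2}$ rather than $a_{k+1}<1$.
\item $\varphi_k=\frac{\theta_k-1}{\theta_N^2}\bigl[\chi_kD_{k+2}-(\theta_k-1)(1+\rho_{k+1}-\rho_N)(2\delta_k+\delta_{k+1})\bigr]$, where $\chi_k=(m_k+2)(\rho_k+1)-\delta_k-(\theta_k-1)$.
\end{itemize}
Showing $\varphi_k>0$ needs a lower bound on $\rho_k$, since $\chi_k$ balances $(m_k+2)(\rho_k+1)$ against roughly $\frac32\theta_k$. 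It also needs a tight upper bound on $\delta_k$. The paper therefore first sharpens the sequence bounds to $\frac35(\theta_j-1)<\rho_j<\frac23(\theta_j-1)$ and $\frac12<\theta_j-\theta_{j-1}<\frac23$. The bounds you import from Lemma~\ref{cs:ofgm-primary:lem:signs} control $\rho_k$ only from above in terms of $\theta_k$, so they do not suffice.

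\textbf{The terminal $\beta_{N-1}$.} The same difficulty appears in miniature. Both $a_{N-1}$ and $(m_{N-1}+b_N)^2$ equal $1-\theta_N^{-1}+O(\theta_N^{-2})$, so the margin is only $O(\theta_N^{-2})$. Moreover, $\rho_N$ carries the whole history $\sum_{i<N}\theta_i^2$, so this is not a one-variable polynomial inequality in $\theta_{N-1}$. The paper settles it with the exact factorization $a_{N-1}-(m_{N-1}+b_N)^2=\rho_N\bigl((2\theta_{N-1}+\theta_N)D_N-\rho_N\bigr)/(\theta_N^2D_N^2)$. Combined with $D_N=\rho_N+\theta_N\delta_N$, this makes positivity immediate.
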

\begin{proof}
{%
For $0\le k\le N$, define the scalar $\rho_k\in\mathbb R$ by
\[
 \rho_k=\frac1{\theta_k^2}\sum_{i=0}^{k-1}\theta_i^2,\qquad \rho_0=0.
\]
Its recurrence follows directly from this definition. Telescoping
$\theta_k=\theta_k^2-\theta_{k-1}^2$ and
$2\theta_k^3=\theta_k^4-\theta_{k-1}^4+\theta_k^2$ gives
\begin{equation}\label{cs:ofgm-secondary:moments}
\begin{aligned}
 \rho_k&=\left(1-\frac1{\theta_k}\right)(\rho_{k-1}+1)\quad(k\ge1),\\
 C_{k+1}&=\frac{\theta_k^2+\rho_k+1}{2},\\
 \sum_{i=0}^k h_{N,i}
 &=\frac{\theta_k^2(2\theta_N^2-\theta_k^2+\rho_k+1)}{2\theta_N^2}
 \quad(k<N).
\end{aligned}
\end{equation}
The last identity uses the final-row formula \eqref{cs:ofgm-secondary:h}.
The theta equation gives $1/2<\theta_k-\theta_{k-1}<1$ for $k\ge1$.
Starting from equality at $k=0$, induction in the $\rho$ recurrence yields
\begin{equation}\label{cs:ofgm-secondary:rho-bounds}
 \frac{\theta_k-1}{2}\le\rho_k\le\frac{2(\theta_k-1)}3,
 \qquad 0\le k\le N.
\end{equation}
Indeed, the lower and upper induction steps reduce, respectively,
to the positive quantities
\[
 \frac{(\theta_k-1)(\theta_{k-1}+1-\theta_k)}{2\theta_k}>0,
 \qquad
 \frac{(\theta_k-1)(2\theta_k-2\theta_{k-1}-1)}{3\theta_k}>0.
\]
Moreover, $\rho_{k-1}+1\le(2\theta_{k-1}+1)/3\le\theta_{k-1}<\theta_k$, so
\[
 \rho_k-\rho_{k-1}=1-\frac{\rho_{k-1}+1}{\theta_k}>0\qquad(k\ge1).
\]
In particular $\rho_k>0$ for $k\ge1$.
Subtracting adjacent moment identities gives
\begin{equation}\label{cs:ofgm-secondary:gap-signs}
\begin{gathered}
 \delta_0=1,\qquad
 \delta_k=\frac12\left(\theta_k+1-\frac{\rho_k}{\theta_k-1}\right)>0
 \quad(1\le k\le N),\\
 D_k=D_{k+1}+\delta_k\quad(0\le k<N).
\end{gathered}
\end{equation}
Here positivity follows from \eqref{cs:ofgm-secondary:rho-bounds}.
The moment formula gives $C_{N+1}>1$, so $D_N=C_{N+1}-1+\delta_N>0$ and hence
$D_k\ge D_N>0$ for $0\le k\le N$.
Finally, \eqref{cs:ofgm-secondary:h} gives $h_{N,k}>0$ for $0\le k<N$,
because $\theta_{k-1}^2<\theta_N^2$.
Code Cell~29 checks the moment identities; the preceding induction proves
the bounds used below.

\begin{itemize}
\item \textbf{Positivity of $a_k$.}
For $0\le k<N$, all $h_{N,j}$ in the numerator of \eqref{cs:ofgm-secondary:ab}
are positive, and $D_{k+1}>0$.
Thus $a_k>0$; also $a_N=1$.

\item \textbf{Positivity of $b_k$.}
Direct subtraction in \eqref{cs:ofgm-secondary:ab} gives
\[
 b_k=\frac{h_{N,k}+a_{k-1}\delta_k}{D_{k+1}}>0\quad(1\le k<N),
 \qquad b_0=a_0>0.
\]
The row-sum identity in Lemma~\ref{lem:case-fgm-conjecture-relation} gives
$a_{N-1}=(C_{N+1}-1)/D_N$, so $b_N=1-a_{N-1}=\delta_N/D_N>0$.
Code Cell~29 checks these coefficient identities.

\item \textbf{Positivity of $\alpha_k$.}
For $1\le k\le N-2$, \eqref{cs:ofgm-secondary:margin} gives
$\alpha_k=a_{k+1}/2>0$. For $N\ge2$, the terminal value is
$\alpha_{N-1}=1/2$.
The initial square coefficients $L/(2D_0)$ and $La_1/2$ in
\eqref{cs:ofgm-secondary:S0} are also positive.\footnote{{For $N=1$,
the coefficients in \eqref{cs:ofgm-secondary:S0-one} are $3L/8$ and $L/2$,
both positive.}}

\item \textbf{Positivity of $\beta_k$.}

\textbf{Terminal coefficient.}
For $N\ge2$, the preceding formulas give
\[
 b_N=\frac{\delta_N}{D_N},\qquad a_{N-1}=\frac{C_{N+1}-1}{D_N}.
\]
The moment identities and \eqref{cs:ofgm-secondary:gap-signs} give
$D_N=\rho_N+\theta_N\delta_N$.
Using $m_{N-1}=(\theta_{N-1}-1)/\theta_N$ and
$\theta_{N-1}^2=\theta_N^2-\theta_N$, expansion yields
\begin{equation}\label{cs:ofgm-secondary:terminal-margin}
 a_{N-1}-(m_{N-1}+b_N)^2
 =\frac{\rho_N\bigl((2\theta_{N-1}+\theta_N)D_N-\rho_N\bigr)}{\theta_N^2D_N^2}>0.
\end{equation}
Indeed, $\rho_N,D_N,\delta_N>0$, and
\[
 (2\theta_{N-1}+\theta_N)D_N-\rho_N
 =(2\theta_{N-1}+\theta_N-1)D_N+\theta_N\delta_N>0.
\]
Thus \eqref{cs:ofgm-secondary:terminalS} gives $\beta_{N-1}>0$.
Code Cell~32 verifies \eqref{cs:ofgm-secondary:terminal-margin} and its positive-factor identities.\footnote{{The factorization also holds for $N=1$, where the scalar difference is $1/4$,
although its square coefficient is not used in the one-step formula.}}

\textbf{Interior coefficient.}
The original proof of interior $\beta_k>0$ involved lengthy polynomial-table
calculations. We therefore gave the agent the following additional prompt
to obtain the simpler analytic regrouping proof below.

\colorlet{csadditionalpromptcolor}{.}
\begin{commandbox}[coltext=csadditionalpromptcolor,
 listing options={basicstyle=\ttfamily\small\color{csadditionalpromptcolor},
 columns=fullflexible,breaklines=true,breakatwhitespace=true,breakindent=0pt,breakautoindent=false,keepspaces=true,showstringspaces=false,
 escapeinside={(*@}{@*)}}]
In (*@\texttt{fgm\_conjecture5\_example\_lyap.ipynb}@*), find a simpler analytic proof that (*@\(\beta_k>0\)@*) for all (*@\(N\ge3\)@*) and (*@\(1\le k\le N-2\)@*), starting from its current direct definition and regrouping terms using (*@\(D_k,D_{k+1}\)@*), and (*@\(\delta_k:=D_k-D_{k+1}\)@*). Preserve the existing coefficient definitions and normalization, avoid the large polynomial tables, justify any sequence bounds and denominator signs used, and verify the resulting identities exactly with SymPy. Return a concise proof and executable verification code, clearly identifying any remaining gap.
\end{commandbox}

Fix $N\ge3$ and $1\le k\le N-2$.
We first strengthen the sequence bounds used above:
\begin{equation}\label{cs:ofgm-secondary:beta-sequence-bounds}
\begin{gathered}
 \frac12<\theta_j-\theta_{j-1}<\frac23,\qquad
 \frac35(\theta_j-1)<\rho_j<\frac23(\theta_j-1),\\
 \rho_j-\rho_{j-1}>\frac13,\qquad 1\le j\le N.
\end{gathered}
\end{equation}
For $1\le j\le N$, the recurrence gives
$\theta_j^2-\theta_j-\theta_{j-1}^2=0$ with $\theta_j>0$.
Since $\theta_{j-1}\ge1$,
\[
\begin{aligned}
 \left(\theta_{j-1}+\frac12\right)^2
 -\left(\theta_{j-1}+\frac12\right)-\theta_{j-1}^2
 &=-\frac14<0,\\
 \left(\theta_{j-1}+\frac23\right)^2
 -\left(\theta_{j-1}+\frac23\right)-\theta_{j-1}^2
 &=\frac{\theta_{j-1}}3-\frac29>0.
\end{aligned}
\]
The positive root $\theta_j$ therefore lies between
$\theta_{j-1}+1/2$ and $\theta_{j-1}+2/3$, proving the first bounds.
Starting from $\theta_0=1$ and $\rho_0=0$, the recurrence in
\eqref{cs:ofgm-secondary:moments} transports the non-strict bounds at
index $j-1$ to
\[
\begin{aligned}
 \rho_j-\frac35(\theta_j-1)
 &\ge\frac{\theta_j-1}{5\theta_j}\bigl(2-3(\theta_j-\theta_{j-1})\bigr)>0,\\
 \frac23(\theta_j-1)-\rho_j
 &\ge\frac{\theta_j-1}{3\theta_j}\bigl(2(\theta_j-\theta_{j-1})-1\bigr)>0.
\end{aligned}
\]
Both margins are positive since $\theta_j>1$ and
$1/2<\theta_j-\theta_{j-1}<2/3$. This proves the strict bounds by induction, and
\[
 \rho_j-\rho_{j-1}
 =1-\frac{\rho_{j-1}+1}{\theta_j}
 \ge\frac13+\frac{2(\theta_j-\theta_{j-1})-1}{3\theta_j}>\frac13.
\]

For the fixed $k$, we have $\theta_k>1$,
$\theta_{k+1}^2-\theta_{k+1}=\theta_k^2$, and
$m_k=(\theta_k-1)/\theta_{k+1}\in(0,1)$.
Using $\delta_k=D_k-D_{k+1}=C_{k+1}-C_k$, the moment and gap
identities give
\begin{equation}\label{cs:ofgm-secondary:beta-gap-data}
\begin{aligned}
 \delta_k&=\frac12\left(\theta_k+1-\frac{\rho_k}{\theta_k-1}\right)>0,\qquad
 \delta_{k+1}=1+m_k\delta_k>1,\\
 C_{k+1}&=\theta_k^2-(\theta_k-1)\delta_k,\qquad D_k=D_{k+1}+\delta_k,\\
 D_{k+1}&=\theta_N^2+\rho_N-C_{k+1},\qquad D_{k+2}=D_{k+1}-\delta_{k+1}.
\end{aligned}
\end{equation}
Since the $C_j$ are strictly increasing and $k+2\le N$,
\[
 D_{k+2}=2C_{N+1}-1-C_{k+2}
 \ge C_{k+2}-1=C_{k+1}+\delta_{k+1}-1>C_{k+1}>0.
\]
In particular, $D_k,D_{k+1},D_{k+2},\theta_N^2,\theta_{k+1},\theta_k-1,D_0$
{are positive.}
The moment identities and \eqref{cs:ofgm-secondary:h}--\eqref{cs:ofgm-secondary:ab} yield
\begin{equation}\label{cs:ofgm-secondary:beta-local-identities}
\begin{aligned}
 a_kD_{k+1}&=\theta_k^2\left(1-\frac{(\theta_k-1)\delta_k}{\theta_N^2}\right),\\
 a_kD_{k+1}+(1-h_{N,k})\delta_k&=C_{k+1},\\
 m_k^2a_{k+1}D_{k+2}
 &=(\theta_k-1)^2\left(1-\frac{(\theta_{k+1}-1)\delta_{k+1}}{\theta_N^2}\right).
\end{aligned}
\end{equation}
Code Cell~31 verifies these three identities.
The direct definition \eqref{cs:ofgm-secondary:margin} becomes
\[
 2\beta_k=a_kD_{k+1}^2+(a_k-1)D_{k+1}
 -(h_{N,k}-1)(D_k+D_{k+1})-a_{k+1}m_k^2D_k^2.
\]
Define the real scalars $\chi_k,\varphi_k,\psi_k\in\mathbb R$ by
\[
\begin{aligned}
 \chi_k&=(m_k+2)(\rho_k+1)-\delta_k-(\theta_k-1),\\
 \varphi_k&=\frac{\theta_k-1}{\theta_N^2}\bigl[\chi_kD_{k+2}
 -(\theta_k-1)(1+\rho_{k+1}-\rho_N)(2\delta_k+\delta_{k+1})\bigr],\\
 \psi_k&=D_{k+2}\bigl(C_{k+1}-m_k^2a_{k+1}\delta_k^2\bigr).
\end{aligned}
\]
Substituting \eqref{cs:ofgm-secondary:beta-gap-data}--\eqref{cs:ofgm-secondary:beta-local-identities}
and using the recurrences for $\theta_k$ and $\rho_k$ gives
\begin{equation}\label{cs:ofgm-secondary:beta-regrouping}
 2\beta_kD_{k+2}=D_{k+1}\varphi_k+\psi_k.
\end{equation}
Code Cell~31 checks this regrouping from the direct definition.

Since $(\theta_{k+1}-1)\delta_{k+1}=\theta_{k+1}^2-C_{k+2}\in(0,\theta_{k+1}^2)$
and $\theta_N^2\ge\theta_{k+1}^2$, we have
$0<m_k^2a_{k+1}D_{k+2}<(\theta_k-1)^2$.
Also $C_{k+1}-(\theta_k-1)\delta_k=\rho_k+1>0$ and
$C_{k+1}+(\theta_k-1)\delta_k=\theta_k^2$. Hence
\[
 \psi_k>C_{k+1}^2-(\theta_k-1)^2\delta_k^2=\theta_k^2(\rho_k+1)>0.
\]
For $\varphi_k$, the sequence bounds imply
\[
 \delta_k<\frac{\theta_k}{2}+\frac15,\qquad
 C_{k+1}>\frac{\theta_k^2}{2}+\frac{3\theta_k}{10}+\frac15.
\]
Using $\rho_k>3(\theta_k-1)/5$ and the bound on $\delta_k$ gives
\[
\begin{aligned}
 \chi_k&>m_k\left(\frac{3\theta_k}{5}+\frac25\right)-\frac{3\theta_k}{10}+\frac85\\
  &=\frac{3\theta_k}{10}+1
    +\frac{3(\theta_k-1)}{5\theta_{k+1}}
     \left(\theta_k+\frac23-\theta_{k+1}\right)
   >\frac{3\theta_k}{10}+1>0,
\end{aligned}
\]
because $\theta_{k+1}-\theta_k<2/3$.
The rearrangement in this bound is checked in Code Cell~31.
Since $N\ge k+2$, we have $\rho_N-\rho_{k+1}>1/3$, so
$1+\rho_{k+1}-\rho_N<2/3$; this quantity need not be positive.
Using $\theta_k-1,2\delta_k+\delta_{k+1}>0$ and
$2\delta_k+\delta_{k+1}<3\delta_k+1<3\theta_k/2+8/5$, we obtain
\[
\begin{aligned}
 (\theta_k-1)(1+\rho_{k+1}-\rho_N)(2\delta_k+\delta_{k+1})
 &<\frac23(\theta_k-1)(2\delta_k+\delta_{k+1})\\
 &<(\theta_k-1)\left(\theta_k+\frac{16}{15}\right).
\end{aligned}
\]
Together with $D_{k+2}>C_{k+1}>0$ and the lower bounds above, this proves
\begin{equation}\label{cs:ofgm-secondary:beta-positive-lower-bound}
\begin{aligned}
 &\chi_kD_{k+2}-(\theta_k-1)(1+\rho_{k+1}-\rho_N)(2\delta_k+\delta_{k+1})\\
 &\quad>\left(\frac{3\theta_k}{10}+1\right)
          \left(\frac{\theta_k^2}{2}+\frac{3\theta_k}{10}+\frac15\right)
          -(\theta_k-1)\left(\theta_k+\frac{16}{15}\right)\\
 &\quad=\frac{45\theta_k(\theta_k-41/30)^2+(79/20)\theta_k+380}{300}>0.
\end{aligned}
\end{equation}
Code Cell~31 verifies the final equality.
Thus $\varphi_k,\psi_k>0$, and \eqref{cs:ofgm-secondary:beta-regrouping}, with
$D_{k+1},D_{k+2}>0$, proves $\beta_k>0$ throughout the interior range.
The induction and inequalities above establish the signs for all such $N,k$.

\end{itemize}
}
\end{proof}

\subsection{Proof of Theorem~\ref{thm:case-ogm} (Conjecture~4 in \texorpdfstring{\citet{taylor2017smooth}}{Taylor et al. (2017)})}
\label{app:case-ogm-proof}

\subsubsection{Input prompt}

\begin{commandbox}
/pep-implement

Function: f is convex and L-smooth
Parameters: L, R
Initial condition: ||x_0 - x_star|| <= R, where grad f(x_star) = 0
Performance metric: f(y_N) - f(x_star)
Algorithm: Optimized gradient method with fixed step size 1/L

Parameter sequence: theta_0 = 1 and theta_{k+1} = (1 + sqrt(1 + 4 * theta_k^2)) / 2 for k = 0,...,N-2
Initialization: x_0 = y_0 = z_0
For k = 0,...,N-2:
y_{k+1} = x_k - (1/L) * grad f(x_k)
z_{k+1} = z_k - (2 * theta_k/L) * grad f(x_k)
x_{k+1} = (1 - 1/theta_{k+1}) * y_{k+1} + (1/theta_{k+1}) * z_{k+1}

Final step:
y_N = x_{N-1} - (1/L) * grad f(x_{N-1})
z_N = z_{N-1} - (2 * theta_{N-1}/L) * grad f(x_{N-1})

Reported point: y_N

Conjectured rate: f(y_N) - f(x_star) <= L * R^2 / (2 * (2 * theta_{N-1}^2 + 1))
\end{commandbox}
Then, we repeatedly entered:
\begin{commandbox}
proceed with the next step
\end{commandbox}

\begin{proof}[Proof outline for Theorem~\ref{thm:case-ogm}]
Write \(f_\star=f(x_\star){\in\mathbb R}\) {and} set
\(\tau_N=1/[2(2\theta_{N-1}^2+1)]{\in\mathbb R}\). Define the potentials by
{\(V_{-1}=0\) and}
{%
\begin{equation}\label{cs:ogm:eq:V-interior}
\begin{aligned}
 V_k&=a_{k+1}\bigl(f(x_k)-f_\star\bigr)
 -\tau_NL\norm{x_0-x_\star}^2
 -\frac{a_{k+1}}{2L}\norm{\nabla f(x_k)}^2\\
 &\quad+L\boldsymbol w_k^\top {\vQ_k}\boldsymbol w_k,
 \quad 0\leq k<N.
\end{aligned}
\end{equation}
}
At the reported point, use
\begin{equation}\label{cs:ogm:eq:V-end}
 V_N=f(y_N)-f_\star-\tau_NL\norm{x_0-x_\star}^2.
\end{equation}
The exact decrement identities are\footnote{{For \(N=1\), the middle range is empty; the base and terminal lines give
the two transitions \(V_{-1}\to V_0\to V_1\), with the same boundary formulas.}}
{%
\begin{equation}\label{cs:ogm:eq:decrements}
 V_k-V_{k-1}=
 \begin{cases}
  \displaystyle a_1\mathcal I_f(x_\star,x_0)-S_0, & k=0,\\
  \displaystyle b_{k+1}\mathcal I_f(x_\star,x_k)+a_k\mathcal I_f(x_{k-1},x_k)-S_k, & 1\le k<N,\\
  \displaystyle b_{N+1}\mathcal I_f(x_\star,y_N)+a_N\mathcal I_f(x_{N-1},y_N)-S_N, & k=N.
 \end{cases}
\end{equation}
}
{The full proof establishes these identities, and Lemma~\ref{cs:ogm:lem:signs} proves positivity of every used interpolation and square coefficient.} Since \(\mathcal I_f\leq0\) and
\(S_k\geq0\), they imply \(V_N\leq\cdots\leq V_{-1}=0\).
Finally,
\begin{equation}\label{cs:ogm:eq:normalization}
 f(y_N)-f_\star-\tau_NLR^2
 =V_N+\tau_NL\bigl(\norm{x_0-x_\star}^2-R^2\bigr)\leq0,
\end{equation}
where the last term is nonpositive by the radius assumption.
This proves \eqref{eq:case-ogm-rate}.
\end{proof}

\begin{theorem}[{Theorem~\ref{thm:case-ogm} with full details of the Lyapunov function}]
\label{cs:ogm:thm:main}
\normalfont
Let \(f\in\mathcal F_L\), let \(x_\star{\in\mathcal H}\) minimize \(f\), and write
\(f_\star=f(x_\star){\in\mathbb R}\). Suppose that \(\norm{x_0-x_\star}\leq R\), with \(R\geq0\).
For a fixed integer horizon \(N\geq1\), run the OGM iteration in
\eqref{eq:case-ogm-parameters} and set
\[
 \tau_N=\frac1{2(2\theta_{N-1}^2+1)}{\in\mathbb R}.
\]
{%
{Define the real scalar coefficients \(a_k,b_k\in\mathbb R\) by}
\begin{align}
 a_0&=0,\qquad
 a_k=\frac{\theta_{k-1}^2(2\theta_{N-1}^2-\theta_{k-1}^2)}
 {\theta_{N-1}^2(2\theta_{N-1}^2-\theta_{k-1}^2+1)}
 \quad(1\leq k\leq N),\qquad a_{N+1}=1,\notag\\
 b_k&=a_k-a_{k-1},\qquad 1\leq k\leq N+1.
 \label{cs:ogm:eq:weights}
\end{align}
}

In particular,
\[
 b_{N+1}=\frac1{\theta_{N-1}^2+1}.
\]

For \(0\leq k<N\), define the state
\begin{equation}
 \boldsymbol w_k=
 \begin{bmatrix}
  z_{k+1}-x_\star\\
  y_{k+1}-x_\star
 \end{bmatrix}\in\mathcal H^2
 \label{cs:ogm:eq:w-state}
\end{equation}
{and the symmetric matrix \(\vQ_k\in\mathbb R^{2\times2}\) by}
\begin{equation}
 {\vQ_k}=
 \frac{1}
 {4\theta_{N-1}^2(2\theta_{N-1}^2+1-\theta_k^2)^3}
 \begin{pmatrix}
 (2\theta_{N-1}^2-\theta_k^2)
 (2\theta_{N-1}^2+1-\theta_k^2)^2
 &
 -\theta_k^2(2\theta_{N-1}^2+1-\theta_k^2)
 \\[2mm]
 -\theta_k^2(2\theta_{N-1}^2+1-\theta_k^2)
 &
 \theta_k^2(6\theta_{N-1}^2-4\theta_k^2+3)
 \end{pmatrix}.
 \label{cs:ogm:eq:Q}
\end{equation}
{%
Define the Lyapunov values \(V_k\in\mathbb R\) as in \eqref{cs:ogm:eq:V-interior} and \eqref{cs:ogm:eq:V-end}, \ie, set \(V_{-1}=0\) and
\begin{equation*}
\begin{aligned}
 V_k&=a_{k+1}\bigl(f(x_k)-f(x_\star)\bigr)
 -\tau_NL\norm{x_0-x_\star}^2
 -\frac{a_{k+1}}{2L}\norm{\nabla f(x_k)}^2\\
 &\quad+L\boldsymbol w_k^\top {\vQ_k}\boldsymbol w_k,
 \quad 0\leq k<N.
\end{aligned}
\end{equation*}
At the reported point, set
\begin{equation*}
 V_N=f(y_N)-f(x_\star)-\tau_NL\norm{x_0-x_\star}^2.
\end{equation*}
}

The quadratic form denotes the corresponding sum of inner products of
the vector components of \(\boldsymbol w_k\).

{For \(1\leq k<N\), define the scalars \(P_k,\alpha_k,\beta_k,\rho_k\in\mathbb R\) by}
\begin{align}
 P_k={}&(8\theta_k-6)(\theta_{N-1}^2-\theta_k^2)
 +4(\theta_k-1)^3+10(\theta_k-1)^2
 +12(\theta_k-1)+3,\notag\\
 \alpha_k={}&
 \frac{\theta_k^2P_k}
 {4\theta_{N-1}^2(2\theta_{N-1}^2+1-\theta_k^2)^3},\notag\\
 \beta_k={}&
 \frac{3\theta_k(\theta_k-1)}
 {4\theta_{N-1}^2
  (2\theta_{N-1}^2+1-\theta_{k-1}^2)P_k},\qquad
 \rho_k=
 \frac{2\theta_{N-1}^2+1-2\theta_{k-1}^2}
 {2\theta_{N-1}^2+1-\theta_{k-1}^2}.
 \label{cs:ogm:eq:two-square-weights}
\end{align}
{Define \(M_k\in\mathbb R^{2\times2}\), \(\vc_k,\vd_k\in\mathbb R^2\),
and the square directions \(s_k,t_k\in\mathcal H\), together with \(S_k\), for \(1\le k<N\) by}
{%
\begin{align}
 M_k&=
 \begin{pmatrix}
  1&0\\
  1/\theta_k&(\theta_k-1)/\theta_k
 \end{pmatrix},\notag\\
 \vc_k&=\begin{bmatrix}2\theta_k\\1\end{bmatrix},\qquad
 \vd_k=\frac{a_{k+1}}2
 \begin{bmatrix}1/\theta_k\\(\theta_k-1)/\theta_k\end{bmatrix}
 -\frac{a_k}2\begin{bmatrix}0\\1\end{bmatrix},
 \label{cs:ogm:eq:transition-data}
\end{align}
\begin{align}
 s_k&=\frac{\nabla f(x_k)}L
 +\frac{(-\vd_k+M_k^\top\vQ_k\vc_k)^\top
 \boldsymbol w_{k-1}}{\alpha_k},\quad
 t_k=z_k-x_\star-\rho_k(y_k-x_\star),\notag\\
 S_k&=L\alpha_k\norm{s_k}^2+L\beta_k\norm{t_k}^2.
 \label{cs:ogm:eq:interior-slack}
\end{align}
}
The boundary square terms are
\begin{align}
 S_0={}&
 \frac{L}
 {32\theta_{N-1}^8(2\theta_{N-1}^2+1)}
 \norm{x_0-x_\star
 -\frac{2\theta_{N-1}^2+1}{L}\nabla f(x_0)}^2,
 \notag\\
 S_N={}&
 \frac1{2L}\norm{
 \nabla f(y_N)-\frac{L}{\theta_{N-1}^2+1}(y_N-x_\star)}^2
 +\frac{L}{4(\theta_{N-1}^2+1)}
 \norm{z_N-x_\star
 -\frac{y_N-x_\star}{\theta_{N-1}^2+1}}^2.
 \label{cs:ogm:eq:boundary-slacks}
\end{align}

{%
The square sums satisfy \(S_k\ge0\), and the following decrement identities hold:\footnote{When \(N=1\), the middle range is empty and the two boundary cases give \(V_{-1}\to V_0\to V_1\).}
\begin{equation*}
 V_k-V_{k-1}=
 \begin{cases}
  \displaystyle a_1\mathcal I_f(x_\star,x_0)-S_0, & k=0,\\
  \displaystyle b_{k+1}\mathcal I_f(x_\star,x_k)+a_k\mathcal I_f(x_{k-1},x_k)-S_k, & 1\le k<N,\\
  \displaystyle b_{N+1}\mathcal I_f(x_\star,y_N)+a_N\mathcal I_f(x_{N-1},y_N)-S_N, & k=N.
 \end{cases}
\end{equation*}
The coefficient signs in Lemma~\ref{cs:ogm:lem:signs} and the nonpositive interpolation residuals imply \(V_N\le\cdots\le V_{-1}=0\).
The terminal definition and the radius assumption therefore give
}
\[
 f(y_N)-f_\star
 \leq\tau_NL\norm{x_0-x_\star}^2
 \leq\frac{LR^2}{2(2\theta_{N-1}^2+1)}.
\]
\end{theorem}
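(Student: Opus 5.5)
The plan follows the template already used for the FGM-rational and FGM proofs (Theorems~\ref{cs:sfgm:thm:main} and~\ref{cs:ofgm-primary:thm:main}). The conclusion is immediate once three things are in hand: the decrement identities, the nonnegativity of each $S_k$, and the positivity of the interpolation weights. Telescoping then gives $V_N\le V_{-1}=0$, and \eqref{cs:ogm:eq:normalization} yields the bound. So the work splits into (i) the interior identity for $1\le k<N$, (ii) the base identity at $k=0$ and the terminal identity at $k=N$, and (iii) the sign lemma.

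For the interior step, I first record the state transition. Since $y_k=x_{k-1}-\nabla f(x_{k-1})/L$ and $z_k=z_{k-1}-2\theta_{k-1}\nabla f(x_{k-1})/L$, we have $x_k=(1-1/\theta_k)y_k+z_k/\theta_k$. From this,
\[
\boldsymbol w_k=\begin{bmatrix}z_{k+1}-x_\star\\y_{k+1}-x_\star\end{bmatrix}
=M_k\boldsymbol w_{k-1}-\vc_k\frac{\nabla f(x_k)}L ,
\]
with $M_k,\vc_k$ as in \eqref{cs:ogm:eq:transition-data}.

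Next I expand $b_{k+1}\mathcal I_f(x_\star,x_k)+a_k\mathcal I_f(x_{k-1},x_k)$ exactly as in \eqref{cs:sfgm:eq:residual-expansion}. The function values match $V_k-V_{k-1}$ because $b_{k+1}=a_{k+1}-a_k$. The leftover inner-product vector $a_k(y_k-x_k)+b_{k+1}(x_\star-x_k)$ becomes $-2\vd_k^\top\boldsymbol w_{k-1}$ after substituting the mixing step.

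Subtracting the potential difference then leaves a quadratic form $L\,[\boldsymbol w_{k-1};\nabla f(x_k)/L]^\top\vS_k[\cdots]$, where $\vS_k$ is the $3\times3$ matrix
\[
\begin{pmatrix}\vQ_{k-1}-M_k^\top\vQ_kM_k & -\vd_k+M_k^\top\vQ_k\vc_k\\ \ast & a_{k+1}-\vc_k^\top\vQ_k\vc_k\end{pmatrix}.
\]
Taking the Schur complement on the gradient coordinate, $\alpha_k$ must equal the $(3,3)$ entry, which fixes $s_k$ as defined. The remaining $2\times2$ block has to be rank one, equal to $\beta_k$ times the outer product of $[1,-\rho_k]^\top$.

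Checking this reduces to verifying scalar identities in $\theta_{k-1},\theta_k,\theta_{N-1}$, using $\theta_k^2-\theta_k=\theta_{k-1}^2$. Concretely, I will confirm three things: the $(3,3)$ entry equals $\alpha_k$ as given through $P_k$; the Schur complement has zero determinant; and its kernel direction corresponds to $\rho_k$. I will do this symbolically (SymPy), after eliminating $\theta_{k-1}^2$.

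The boundary cases are direct expansions. At $k=0$, with $V_{-1}=0$ and $\boldsymbol w_0$ expressed via $x_0-x_\star$ and $\nabla f(x_0)/L$, the identity $a_1\mathcal I_f(x_\star,x_0)-V_0=S_0$ is a $2\times2$ quadratic identity; it should be rank one with the stated coefficient. At $k=N$, with $y_N=x_{N-1}-\nabla f(x_{N-1})/L$ and $z_N$ given by its update, I expand the two residuals minus $V_N+V_{N-1}$. This is a quadratic in $(\boldsymbol w_{N-1},\nabla f(y_N)/L)$, and completing the square in $\nabla f(y_N)$ should produce the two squares of $S_N$. This uses $a_{N+1}=1$ and $b_{N+1}=1/(\theta_{N-1}^2+1)$, the latter computed from \eqref{cs:ogm:eq:weights}.

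For signs, set $T=\theta_{N-1}^2\ge\theta_k^2$. Then $a_k>0$ is clear, and $b_k>0$ follows because $x\mapsto x(2T-x)/(2T-x+1)$ is increasing on $[0,T]$ (its derivative numerator is $(2T-2x)(2T-x+1)+x(2T-x)>0$), together with $a_N<1=a_{N+1}$. Positivity of $P_k$ follows termwise from $\theta_k\ge1$ and $T\ge\theta_k^2$, which gives $\alpha_k>0$, and $\beta_k\ge0$ is likewise immediate. The main obstacle I expect is the interior rank-one verification, where the explicit $\vQ_k$ must interact correctly with $M_k$ across indices. If the direct substitution becomes unwieldy, I will first check that $\vS_k$ annihilates an explicit kernel vector, as was done in the FGM-rational proof, and then match only the diagonal entries.
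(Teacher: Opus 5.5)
Your proposal is correct and matches the paper's proof essentially step for step: the same transition $\boldsymbol w_k=M_k\boldsymbol w_{k-1}-\mathbf c_k\nabla f(x_k)/L$, the same $3\times3$ slack matrix with $(3,3)$ entry $\alpha_k$ and Schur complement $\beta_k v_kv_k^\top$ with $v_k=(1,-\rho_k)^\top$, the same rank-one base and completed-square terminal factorizations, and the same monotone-function argument for $b_k>0$. One small point: a zero determinant and the kernel direction fix that Schur complement only up to a scalar, so you must also match one entry (say the $(1,1)$ entry) to $\beta_k$; your fallback of matching diagonal entries does this.
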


\begin{proof}
{%
The companion notebook is
\path{examples_peppy/ogm_conjecture4/ogm_conjecture4_example_lyap.ipynb}.
The notebook's auxiliary vector
\(\theta_kx_k-(\theta_k-1)y_k-x_\star\) equals \(z_k-x_\star\)
(Code Cell~25), so its momentum form uses the same proof state.
}

{%
For \(1\le k<N\), subtracting two consecutive instances of \eqref{cs:ogm:eq:V-interior} cancels the common initial-radius term and gives
\begin{equation}\label{cs:ogm:eq:potential-difference}
\begin{aligned}
V_k-V_{k-1}
 &=a_{k+1}\bigl(f(x_k)-f(x_\star)\bigr)-a_k\bigl(f(x_{k-1})-f(x_\star)\bigr)\\
 &\quad-\frac{a_{k+1}}{2L}\norm{\nabla f(x_k)}^2
 +\frac{a_k}{2L}\norm{\nabla f(x_{k-1})}^2\\
 &\quad+L\boldsymbol w_k^\top\vQ_k\boldsymbol w_k
 -L\boldsymbol w_{k-1}^\top\vQ_{k-1}\boldsymbol w_{k-1}.
\end{aligned}
\end{equation}
Our goal is to identify this expression with the interior case of \eqref{cs:ogm:eq:decrements}. We first factor the quadratic remainder, then match the weighted interpolation residuals; the boundary cases are treated separately.
}

\medskip
\noindent\textbf{Step 1: factorization of the interior slack.}
For \(1\leq k<N\), Steps 1 and 2 prove the second line of
\eqref{cs:ogm:eq:decrements}, namely,
\[
 V_k-V_{k-1}
 =b_{k+1}\mathcal I_f(x_\star,x_k)
 +a_k\mathcal I_f(x_{k-1},x_k)-S_k.
\]
Step 1 factors a quadratic remainder into the squares in
\eqref{cs:ogm:eq:interior-slack}; Step 2 identifies that remainder by
subtracting the potential difference from the weighted interpolation residuals.
For \(1\leq k<N\), the OGM updates and the mixing identity for \(x_k\)
give
\begin{equation}
 \boldsymbol w_k
 ={M_k}\boldsymbol w_{k-1}
 -{\vc_k}\frac{\nabla f(x_k)}L,
 \qquad
 x_k-x_\star
 =\frac{(\boldsymbol w_{k-1})_1
 +(\theta_k-1)(\boldsymbol w_{k-1})_2}{\theta_k}.
 \label{cs:ogm:eq:state-transition}
\end{equation}
{%
Define the symmetric matrix \(\vS_k\in\mathbb R^{3\times3}\) by
\begin{equation}\label{cs:ogm:eq:slack-matrix}
 \vS_k=
 \begin{pmatrix}
 \vQ_{k-1}-M_k^\top\vQ_kM_k&-\vd_k+M_k^\top\vQ_k\vc_k\\
 (-\vd_k+M_k^\top\vQ_k\vc_k)^\top&
 a_{k+1}-\vc_k^\top\vQ_k\vc_k
 \end{pmatrix}.
\end{equation}
The claim in this step is the quadratic-form identity \eqref{cs:ogm:eq:factorization}.
}
Direct substitution of \eqref{cs:ogm:eq:Q} and
\eqref{cs:ogm:eq:transition-data}, using
\(\theta_{k-1}^2=\theta_k^2-\theta_k\), gives
{%
\begin{equation}\label{cs:ogm:eq:matrix-factorization}
\begin{aligned}
 \frac1{L^2}\norm{\nabla f(x_k)}^2&:\quad
 a_{k+1}-\vc_k^\top \vQ_k\vc_k=\alpha_k,\\
 \boldsymbol w_{k-1}^\top(\cdot)\boldsymbol w_{k-1}&:\quad
 \begin{aligned}[t]
 \vQ_{k-1}-M_k^\top \vQ_kM_k&=\frac{(-\vd_k+M_k^\top\vQ_k\vc_k)
 (-\vd_k+M_k^\top\vQ_k\vc_k)^\top}{\alpha_k}\\
 &\quad+\beta_k\begin{bmatrix}1\\-\rho_k\end{bmatrix}
 \begin{bmatrix}1\\-\rho_k\end{bmatrix}^{\!\top}.
 \end{aligned}
\end{aligned}
\end{equation}
}
These are rational identities in
\(\theta_k\) and \(\theta_{N-1}\).
For the first identity, the numerator after collecting terms is
\(\theta_k^2P_k\); expanding the entries of the second identity verifies
its four scalar entries. {Code Cell~42} verifies both identities against the
literal formulas and every entry of the resulting \(3\)-by-\(3\) factorization.
{Evaluating that factorization and using the definitions of \(s_k,t_k\) in \eqref{cs:ogm:eq:interior-slack} gives}
{%
\begin{equation}\label{cs:ogm:eq:factorization}
 L\begin{bmatrix}\boldsymbol w_{k-1}\\\nabla f(x_k)/L\end{bmatrix}^{\!\top}\vS_k\begin{bmatrix}\boldsymbol w_{k-1}\\\nabla f(x_k)/L\end{bmatrix}
 =L\alpha_k\norm{s_k}^2+L\beta_k\norm{t_k}^2=S_k,
\end{equation}
}
as claimed.

\medskip
\noindent\textbf{Step 2: verification of the interior decrement identity.}
{For \(1\leq k<N\), it remains to identify the left-hand side of
\eqref{cs:ogm:eq:factorization} with the weighted interpolation residuals
minus \(V_k-V_{k-1}\).}
{%
To match \eqref{cs:ogm:eq:potential-difference}, use \(b_{k+1}=a_{k+1}-a_k\) and \(\nabla f(x_\star)=0\) to expand the residuals:
\begin{equation}\label{cs:ogm:eq:residual-expansion}
\begin{aligned}
&b_{k+1}\mathcal I_f(x_\star,x_k)+a_k\mathcal I_f(x_{k-1},x_k)\\
 &\quad=a_{k+1}\bigl(f(x_k)-f(x_\star)\bigr)-a_k\bigl(f(x_{k-1})-f(x_\star)\bigr)\\
 &\qquad+\frac{a_k}{2L}\norm{\nabla f(x_{k-1})}^2
 +\frac{a_{k+1}}{2L}\norm{\nabla f(x_k)}^2\\
 &\qquad+\csinner{\nabla f(x_k)}
 {a_k\left(x_{k-1}-x_k-\frac{\nabla f(x_{k-1})}L\right)+b_{k+1}(x_\star-x_k)}.
\end{aligned}
\end{equation}
The function-value terms already agree. The remaining vector coefficient is determined by the gradient step and the mixing identity in \eqref{cs:ogm:eq:state-transition}:
\begin{equation}\label{cs:ogm:eq:residual-vector}
\begin{aligned}
&a_k\left(x_{k-1}-x_k-\frac{\nabla f(x_{k-1})}L\right)+b_{k+1}(x_\star-x_k)\\
 &\quad=a_k(y_k-x_\star)-a_{k+1}(x_k-x_\star)\\
 &\quad=a_k(\boldsymbol w_{k-1})_2
 -\frac{a_{k+1}}{\theta_k}\bigl[(\boldsymbol w_{k-1})_1+(\theta_k-1)(\boldsymbol w_{k-1})_2\bigr]
 =-2\vd_k^\top\boldsymbol w_{k-1}.
\end{aligned}
\end{equation}
Subtracting \eqref{cs:ogm:eq:potential-difference} cancels the function values and the old-gradient norm. The current-gradient terms give \((a_{k+1}/L)\norm{\nabla f(x_k)}^2-2\csinner{\nabla f(x_k)}{\vd_k^\top\boldsymbol w_{k-1}}\).
}
Substituting \eqref{cs:ogm:eq:state-transition} in the remaining terms gives
{%
\begin{equation}\label{cs:ogm:eq:interpolation-bridge}
\begin{aligned}
 &b_{k+1}\mathcal I_f(x_\star,x_k)
 +a_k\mathcal I_f(x_{k-1},x_k)-(V_k-V_{k-1})\\
 &\quad=L\begin{bmatrix}\boldsymbol w_{k-1}\\\nabla f(x_k)/L\end{bmatrix}^{\!\top}\vS_k\begin{bmatrix}\boldsymbol w_{k-1}\\\nabla f(x_k)/L\end{bmatrix}=S_k,
\end{aligned}
\end{equation}
}
where the last equality is the factorization proved in Step 1.
Rearranging proves the second line of \eqref{cs:ogm:eq:decrements} for
every \(1\leq k<N\), including \(k=1\) whenever this range is nonempty.
{Code Cell~44} checks the bridge and full decrement in every Gram,
function-value, and constant coordinate. The remaining base and terminal
cases follow next; {positivity is established in Lemma~\ref{cs:ogm:lem:signs}.}

\medskip
\noindent\textbf{Terminal index.}
This verifies the last line of \eqref{cs:ogm:eq:decrements}, using
\eqref{cs:ogm:eq:V-end} and the terminal square sum in
\eqref{cs:ogm:eq:boundary-slacks}, for every \(N\geq1\).
The coefficient definitions give
\[
 a_N=\frac{\theta_{N-1}^2}{\theta_{N-1}^2+1},
 \qquad
 b_{N+1}=\frac1{\theta_{N-1}^2+1}.
\]
Using \(y_N=x_{N-1}-\nabla f(x_{N-1})/L\) cancels the old gradient
terms in the terminal interpolation residuals. Completing the new gradient
square in \(S_N\) leaves the exact position identity
\begin{equation}\label{cs:ogm:eq:terminal-factorization}
 {\vQ_{N-1}}
 -\begin{pmatrix}
  0&0\\
  0&1/[2(\theta_{N-1}^2+1)^2]
 \end{pmatrix}
 =
 \frac1{4(\theta_{N-1}^2+1)}
 \begin{bmatrix}1\\-1/(\theta_{N-1}^2+1)\end{bmatrix}
 {\begin{bmatrix}1\\-1/(\theta_{N-1}^2+1)\end{bmatrix}^{\!\top}}.
\end{equation}
{Code Cell~42} verifies this position factorization, and {Code Cell~46}
checks the complete terminal decrement and endpoint normalization.

\medskip
\noindent\textbf{Base index.}
{This verifies the first line of \eqref{cs:ogm:eq:decrements}, with
\(V_{-1}=0\) and the initial square in \eqref{cs:ogm:eq:boundary-slacks}.}\footnote{{The same base calculation applies when \(N=1\).}}
Since
\[
 \boldsymbol w_0=
 \begin{pmatrix}1&-2\\1&-1\end{pmatrix}
 \begin{bmatrix}x_0-x_\star\\ \nabla f(x_0)/L\end{bmatrix},
\]
direct substitution gives
\begin{equation}\label{cs:ogm:eq:base-factorization}
\begin{aligned}
&\begin{pmatrix}
  \tau_N&-a_1/2\\
  -a_1/2&a_1
 \end{pmatrix}
 -\begin{pmatrix}1&1\\-2&-1\end{pmatrix}
 {\vQ_0}
 \begin{pmatrix}1&-2\\1&-1\end{pmatrix}\\
&\qquad=
 \frac1{32\theta_{N-1}^8(2\theta_{N-1}^2+1)}
 \begin{bmatrix}1\\-(2\theta_{N-1}^2+1)\end{bmatrix}
 {\begin{bmatrix}1\\-(2\theta_{N-1}^2+1)\end{bmatrix}^{\!\top}}.
\end{aligned}
\end{equation}
Expanding
\(a_1\mathcal I_f(x_\star,x_0)-V_0\)
produces the quadratic form on the left multiplied by \(L\), and hence
equals \(S_0\). This proves the base line of
\eqref{cs:ogm:eq:decrements}. {Code Cell~42} checks the matrix factorization,
and {Code Cell~45} checks the full base decrement.

\paragraph{Positivity of the coefficients.}
{%
Lemma~\ref{cs:ogm:lem:signs} proves the required coefficient signs
and denominator bounds. Its positive square coefficients give \(S_k\geq0\)
at the interior and boundary indices.
}

\paragraph{Conclusion.}
Each interpolation residual in \eqref{cs:ogm:eq:decrements} is
nonpositive. {The coefficient signs from Lemma~\ref{cs:ogm:lem:signs} and the square decompositions above therefore}
give
\[
 V_N\leq V_{N-1}\leq\cdots\leq V_0\leq V_{-1}=0.
\]
Using \eqref{cs:ogm:eq:normalization}, checked in {Code Cell~46}, yields\footnote{{When \(N=1\), there is no interior transition; the base and terminal
identities above remain valid and give the same conclusion.
{Code Cells~45--46} also check these two transitions on the shared
one-step trajectory, where \(\theta_0=1\) and \(\tau_1=1/6\).}}
\[
 f(y_N)-f_\star
 \leq\tau_NL\norm{x_0-x_\star}^2
 \leq\frac{LR^2}{2(2\theta_{N-1}^2+1)}.
\]
\end{proof}

\begin{lemma}%
\label{cs:ogm:lem:signs}
\normalfont
{%
The coefficients are positive on their respective index ranges:\footnote{{The unused value \(a_0\) is zero.}}
\[
 a_k>0,\qquad b_k>0,\qquad \alpha_k>0,\qquad \beta_k>0.
\]
The three boundary-square coefficients in \eqref{cs:ogm:eq:boundary-slacks} are also positive.
}
\end{lemma}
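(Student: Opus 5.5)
The lemma concerns the OGM certificate of Theorem~\ref{cs:ogm:thm:main}. To prove it, I will reduce every sign to elementary facts about the sequence $\theta_0=1$, $\theta_k^2-\theta_k=\theta_{k-1}^2$. These facts are $\theta_k\ge1$, with strict inequality for $k\ge1$, and that $\theta_k$ is strictly increasing, so $\theta_k\le\theta_{N-1}$ for $k\le N-1$. Set $T=\theta_{N-1}^2\ge1$ and $u=\theta_{k-1}^2$. Then the denominator of $a_k$ is $T(2T-u+1)>0$ and its numerator $u(2T-u)\ge uT>0$, so $a_k>0$ for $1\le k\le N$. The denominators $2\theta_{N-1}^2+1-\theta_k^2\ge\theta_{N-1}^2+1>0$ appearing in $\vQ_k$, $\alpha_k$, $\beta_k$ and $\rho_k$ are nonzero for the same reason.

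\textbf{Positivity of $b_k$.} Write $a_k=\varphi(u)/T$ with $\varphi(u)=u(2T-u)/(2T-u+1)$. Differentiating gives
\[
\varphi'(u)=\frac{(2T-2u)(2T-u+1)+u(2T-u)}{(2T-u+1)^2}.
\]
For $0\le u\le T$ the first product is nonnegative and the second is strictly positive (or $T>0$ handles $u=0$). So $\varphi$ is strictly increasing on $[0,T]$. Since $u=\theta_{k-1}^2$ is strictly increasing in $k$ and stays in $[0,T]$ for $1\le k\le N$ (with $\theta_{-1}=0$ covering $a_0=0=\varphi(0)$), this gives $b_k=a_k-a_{k-1}>0$ for $1\le k\le N$. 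For the endpoint, $b_{N+1}=1-a_N=1/(\theta_{N-1}^2+1)>0$ directly from $a_N=T/(T+1)$.

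\textbf{Positivity of $\alpha_k$ and $\beta_k$.} For $1\le k<N$, both $\alpha_k$ and $\beta_k$ have positive prefactors $\theta_k^2$, $3\theta_k(\theta_k-1)$, and positive denominators, except possibly for the factor $P_k$. So the remaining task is $P_k>0$, which I expect to be the only step needing care. Each term of $P_k$ is nonnegative because $\theta_k\ge1$ (so $8\theta_k-6\ge2$) and $\theta_{N-1}^2-\theta_k^2\ge0$. The constant $3$ makes $P_k\ge3>0$. Also $\theta_k-1>0$ for $k\ge1$, so $\beta_k>0$ strictly, not merely nonnegative.

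\textbf{Boundary squares.} The boundary-square coefficients in \eqref{cs:ogm:eq:boundary-slacks} are
\[
\frac{L}{32\theta_{N-1}^8(2\theta_{N-1}^2+1)},\qquad \frac{1}{2L},\qquad \frac{L}{4(\theta_{N-1}^2+1)},
\]
all positive since $L>0$ and $\theta_{N-1}\ge1$. For $N=1$ the interior range is empty and only these boundary coefficients, together with $a_1$ and $b_2$, are used; these are covered by the same computations with $T=1$.
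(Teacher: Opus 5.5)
Your proof is correct and follows essentially the same route as the paper. Positivity of $a_k$ comes from $2\theta_{N-1}^2-\theta_{k-1}^2\ge\theta_{N-1}^2$; positivity of $b_k$ comes from strict monotonicity of $t\mapsto t(2T-t)/\bigl(T(2T+1-t)\bigr)$ on $[0,T]$, evaluated along the increasing sequence $\theta_{k-1}^2$ (with $\theta_{-1}=0$); and positivity of $\alpha_k,\beta_k$ reduces to $P_k\ge 3$ from its termwise nonnegative expansion. The only cosmetic difference is that you check the derivative's sign directly from the quotient rule, whereas the paper writes it as $\bigl(1-(2T+1)/(2T+1-t)^2\bigr)/T$ and uses $(T+1)^2-(2T+1)=T^2>0$.
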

\begin{proof}
{%
For \(0\leq k<N\), \(\theta_{N-1}^2\geq\theta_k^2\geq1\), so
\[
 2\theta_{N-1}^2+1-\theta_k^2\geq\theta_{N-1}^2+1>0.
\]
For \(1\leq k<N\), \(\theta_k>1\). Every term in the formula for \(P_k\)
in \eqref{cs:ogm:eq:two-square-weights} is nonnegative, and its constant
term is positive. Hence \(P_k>0\).
Code Cell~42 verifies the derivative, denominator,
and polynomial identities used below.

\begin{itemize}
\item \textbf{Positivity of \(a_k\).}
For \(1\leq k\leq N\),
\(2\theta_{N-1}^2-\theta_{k-1}^2\geq\theta_{N-1}^2>0\), so
\eqref{cs:ogm:eq:weights} gives \(a_k>0\). Also, \(a_{N+1}=1\).

\item \textbf{Positivity of \(b_k\).}
{Define \(p:[0,\theta_{N-1}^2]\to\mathbb R\) by}
\[
 p(t)=
 \frac{t(2\theta_{N-1}^2-t)}
 {\theta_{N-1}^2(2\theta_{N-1}^2+1-t)}.
\]
Then
\[
 p'(t)=
 \frac{1-(2\theta_{N-1}^2+1)/
 (2\theta_{N-1}^2+1-t)^2}
 {\theta_{N-1}^2}>0.
\]
Indeed,
\(2\theta_{N-1}^2+1-t\geq\theta_{N-1}^2+1\), while
\((\theta_{N-1}^2+1)^2-(2\theta_{N-1}^2+1)
=\theta_{N-1}^4>0\).
Since \(p(0)=a_0=0\), \(a_k=p(\theta_{k-1}^2)\) for \(1\leq k\leq N\),
and \(\theta_k\) strictly increases, \(b_k=a_k-a_{k-1}>0\) through \(k=N\).
At the terminal index, \(b_{N+1}=1/(\theta_{N-1}^2+1)>0\).

\item \textbf{Positivity of \(\alpha_k\).}
For \(1\leq k<N\), the numerator \(\theta_k^2P_k\) and denominator in
\eqref{cs:ogm:eq:two-square-weights} are positive, so \(\alpha_k>0\).

\item \textbf{Positivity of \(\beta_k\).}
For \(1\leq k<N\), \(3\theta_k(\theta_k-1)>0\), and every denominator
factor in \eqref{cs:ogm:eq:two-square-weights} is positive. Thus \(\beta_k>0\).
\end{itemize}
Finally, \(L>0\) and \(\theta_{N-1}^2\geq1\) make all three coefficients
in \eqref{cs:ogm:eq:boundary-slacks} positive.\footnote{{This also includes \(N=1\).}}
}
\end{proof}

\begingroup
\Needspace{10\baselineskip}
\subsection{Proof of Theorem~\ref{thm:case-fista} (nonsmooth FPGM1 conjecture in Table~1 of \texorpdfstring{\citet{TaylorHendrickxGlineur2017_exacta}}{Taylor et al. (2017)})}
\label{app:case-fista-proof}

\subsubsection{Input prompt}

\begin{commandbox}
/pep-implement

Function: F = f + g, where f is convex and L-smooth and g is closed, proper, and convex
Parameters: L, R
Initial condition: ||x_0 - x_star|| <= R, where 0 in grad f(x_star) + partial g(x_star)
Performance metric: F(y_N) - F(x_star)
Algorithm: FISTA/FPGM1 with fixed step size 1/L and inertial parameter k/(k+3)

Initialization: y_0 = x_0
For k >= 0:
y_{k+1} = prox_{g/L}(x_k - (1/L) * grad f(x_k))
x_{k+1} = y_{k+1} + (k/(k+3)) * (y_{k+1} - y_k)

Conjectured rate: F(y_N) - F(x_star) <= 2 * L * R^2 / (N^2 + 5*N + 2)
\end{commandbox}
Then, we repeatedly entered:
\begin{commandbox}
proceed with the next step
\end{commandbox}

\subsubsection{Proof outline}

{Define \(V_k\in\mathbb R\) by}
{\(V_{0}=0\) and}
{%
\begin{equation}\label{cs:fista:eq:V-endpoints}
 V_N=F(y_N)-F(x_\star)-\tau_NL\norm{x_0-x_\star}^2,
\end{equation}
}
and, for \(1\leq k<N\),
\begin{equation}\label{cs:fista:eq:V-interior}
\begin{aligned}
 V_k&=a_k\bigl[f(x_k)-f(x_\star)\bigr]
 +\widetilde a_k\bigl[g(y_k)-g(x_\star)\bigr]
 -\tau_NL\norm{x_0-x_\star}^2\\
 &\quad-2\csinner{\nabla f(x_k)}{{\vd_k}^\top\boldsymbol w_k}
 +L\boldsymbol w_k^\top {\vQ_k}\boldsymbol w_k.
\end{aligned}
\end{equation}
Thus the interior checkpoint uses the smooth oracle at \(x_k\) and the
proximal point \(y_k\). No value of \(g(y_0)\) is needed.
{To include the terminal step, define \(\widehat x_{k+1}\in\mathcal H\) by}
\[
 \widehat x_{k+1}=\begin{cases}x_{k+1},&0\leq k<N-1,\\
 y_N,&k=N-1.\end{cases}
\]
{For notational convenience, define \(\mathcal T_k\in\mathbb R\) using the nonpositive interpolation residuals defined below:}
\begin{equation}\label{cs:fista:eq:weighted-residual}
\begin{aligned}
 \mathcal T_k
 &=a_k\mathcal I_f(x_k,\widehat x_{k+1})
 +b_{k+1}\mathcal I_f(x_\star,\widehat x_{k+1})\\
 &\quad+\widetilde a_k\mathcal I_g(y_k,y_{k+1})
 +\widetilde b_{k+1}\mathcal I_g(x_\star,y_{k+1}),
 \qquad 0\leq k<N,
\end{aligned}
\end{equation}
where the term with \(\widetilde a_0=0\) is omitted, and {define \(S_k\in\mathbb R\) by}
{%
\begin{equation}\label{cs:fista:eq:slack}
S_k=\frac{La_k}{2}\norm{s_k}^2 +\frac{b_{k+1}}{2L}\norm{\nabla f(\widehat x_{k+1})-\nabla f(x_\star)}^2 +L\alpha_k\norm{Y_{k+1}-Y_k}^2.
\end{equation}
}
For \(N\geq2\), the proof establishes the exact decrements
{%
\begin{equation}\label{eq:case-fista-increment}
 V_{k+1}-V_k=
 \begin{cases}
  \displaystyle \mathcal T_0-S_0+b_0\mathcal I_f(x_\star,x_0)-\frac{b_0}{2L}\norm{\nabla f(x_0)-\nabla f(x_\star)}^2, & k=0,\\
  \displaystyle \mathcal T_k-S_k, & 1\le k<N-1,\\
  \displaystyle \mathcal T_{N-1}-S_{N-1}, & k=N-1.
 \end{cases}
\end{equation}
}
For \(N=1\), the initial and terminal conventions apply to the same step:
\begin{equation}\label{cs:fista:eq:N1-decrement}
 V_1-V_0=\mathcal T_0-S_0+b_0\mathcal I_f(x_\star,x_0)
 -\frac{b_0}{2L}\norm{\nabla f(x_0)-\nabla f(x_\star)}^2,
 \qquad \widehat x_1=y_1.
\end{equation}
{Lemma~\ref{cs:fista:lem:signs} gives nonnegative interpolation weights and $S_k\geq0$, so these}
identities imply \(V_N\leq\cdots\leq V_0=0\). The endpoint identity
\begin{equation}\label{cs:fista:eq:normalization}
 F(y_N)-F(x_\star)-\tau_NLR^2
 =V_N+\tau_NL\bigl(\norm{x_0-x_\star}^2-R^2\bigr)\leq0
\end{equation}
then gives the claimed bound, including \(R=0\).

\subsubsection{Full proof}

\begin{theorem}[{Theorem~\ref{thm:case-fista} with full details of the Lyapunov function}]
\label{cs:fista:thm:main}
\normalfont
Let \(L>0\), let \(f\in\mathcal F_L\), and let
\(g:\mathcal H\to\mathbb R\cup\{+\infty\}\) be proper, closed, and
convex, and {set \(F=f+g:\mathcal H\to\mathbb R\cup\{+\infty\}\).} Fix an integer \(N\geq1\) and \(R\geq0\),
and suppose that
\(0\in\nabla f(x_\star)+\partial g(x_\star)\) and
\(\norm{x_0-x_\star}\leq R\). Let the iterates be generated by
\ref{eq:case-fista}. For this fixed horizon, set
\[
 \tau_N=\frac{2}{N^2+5N+2}{\in\mathbb R},
\]
and use the coefficients \(a_k,b_k{\in\mathbb R}\) from
\eqref{cs:sfgm:eq:a}--\eqref{cs:sfgm:eq:b}.
Thus, as in Theorem~\ref{cs:sfgm:thm:main}, \(a_k\) weights the
smooth function value and \(b_k=a_k-a_{k-1}\).
{For the nonsmooth function value and \(1\leq k\leq N\), define the scalars \(D_k,\widetilde a_k,\widetilde b_k\in\mathbb R\) by}
\begin{align*}
 D_0&=\frac{N^2+5N+2}{4},\qquad
 D_k=\frac{2N^2+10N+2-k^2-5k}{8},\\
 \widetilde a_0&=0,\qquad
 \widetilde a_k=\frac{D_k+1}{D_k}a_{k-1},\qquad
 \widetilde b_k=\widetilde a_k-\widetilde a_{k-1}.
\end{align*}
In particular, \(D_N=1/(4\tau_N)\) and
\(a_N=\widetilde a_N=1\). {For \(0\leq k<N\), define \(\delta_k\in\mathbb R\) by}
\[
 \delta_k=\frac{(k+2)(N-k)(N+k+1)}{2N(N+1)}.
\]
For \(0\leq k<N-1\), define \(\alpha_k{\in\mathbb R}\) by the first line below;
the second line defines the terminal weight \(\alpha_{N-1}{\in\mathbb R}\):
\begin{equation}
\begin{aligned}
 \alpha_k
 &=\frac{a_k(D_{k+1}+1)^2}{2}-D_N
   -\frac{kD_k\delta_{k+1}}{k+3}
   -\frac{a_{k+1}k^2D_k^2}{2(k+3)^2},\\
 \alpha_{N-1}
 &=\frac{D_N(D_N-1)}2
   =\frac{(N-1)(N+6)(N^2+5N+2)}{128}.
\end{aligned}
\label{eq:case-fista-alpha}
\end{equation}
These definitions give \(\alpha_0=0\).\footnote{{This includes the terminal definition when \(N=1\).}}

{For \(0\leq k\leq N\), define the normalized position \(Y_k\in\mathcal H\) by
\[
 Y_k=\frac{y_k-x_\star}{D_k}.
\]
}

For \(1\leq k<N\), {define the state, coefficient vector, and matrix \(\vQ_k\in\mathbb R^{2\times2}\) by}
{%
\begin{align*}
 \boldsymbol w_k&=
 \begin{bmatrix}Y_k\\[2mm]x_k-x_\star\end{bmatrix}\in\mathcal H^2,
 \qquad \vd_k=\frac12
 \begin{bmatrix}-\widetilde a_kD_k\\a_k\end{bmatrix}\in\mathbb R^2,\\
 \vQ_k&=
 \begin{bmatrix}
 D_N-\widetilde a_kD_k+
       \dfrac{(\widetilde a_kD_k)^2-\delta_k^2}{2a_k}
     &-\dfrac{\widetilde a_kD_k}{2}\\[2mm]
 -\dfrac{\widetilde a_kD_k}{2}&\dfrac{a_k}{2}
 \end{bmatrix}.
\end{align*}
}
{%
Define the Lyapunov values \(V_k\in\mathbb R\) as in \eqref{cs:fista:eq:V-endpoints} and \eqref{cs:fista:eq:V-interior}, \ie, set \(V_{0}=0\) and
\begin{equation*}
 V_N=F(y_N)-F(x_\star)-\tau_NL\norm{x_0-x_\star}^2,
\end{equation*}
and, for \(1\le k<N\),
\begin{equation*}
\begin{aligned}
 V_k&=a_k\bigl[f(x_k)-f(x_\star)\bigr]
 +\widetilde a_k\bigl[g(y_k)-g(x_\star)\bigr]
 -\tau_NL\norm{x_0-x_\star}^2\\
 &\quad-2\csinner{\nabla f(x_k)}{{\vd_k}^\top\boldsymbol w_k}
 +L\boldsymbol w_k^\top {\vQ_k}\boldsymbol w_k.
\end{aligned}
\end{equation*}
}

Use the smooth interpolation residual \(\mathcal I_f\) from
\eqref{eq:case-interpolation-residual}. Proximal optimality gives
\(L(x_k-y_{k+1})-\nabla f(x_k)\in\partial g(y_{k+1})\), so {define the real-valued residual \(\mathcal I_g(\cdot,y_{k+1}):\operatorname{dom}g\to\mathbb R\) by}
\[
 \mathcal I_g(p,y_{k+1})
 =g(y_{k+1})-g(p)
  +\csinner{L(x_k-y_{k+1})-\nabla f(x_k)}{p-y_{k+1}}\leq0,
 \qquad p\in\operatorname{dom}g.
\]
The subgradient is attached to the indexed occurrence of \(y_{k+1}\),
even when iterates coincide. For \(0\leq k<N-1\), define \(s_k{\in\mathcal H}\)
by the first line below, and set \(s_{N-1}{\in\mathcal H}\) by the second:
{%
\begin{align*}
 s_k&=\frac{\nabla f(x_k)-\nabla f(x_{k+1})}{L}
       +Y_{k+1}-(x_k-y_{k+1}),\\
 s_{N-1}&=\frac{\nabla f(x_{N-1})-\nabla f(y_N)}{L}
       +Y_N-(x_{N-1}-y_N).
\end{align*}
}
{%
For the terminal step, use
\[
 \widehat x_{k+1}=\begin{cases}x_{k+1},&0\le k<N-1,\\y_N,&k=N-1.\end{cases}
\]
For notational convenience, define \(\mathcal T_k,S_k\in\mathbb R\), for \(0\le k<N\), by
\begin{equation*}
\begin{aligned}
 \mathcal T_k
 &=a_k\mathcal I_f(x_k,\widehat x_{k+1})
 +b_{k+1}\mathcal I_f(x_\star,\widehat x_{k+1})\\
 &\quad+\widetilde a_k\mathcal I_g(y_k,y_{k+1})
 +\widetilde b_{k+1}\mathcal I_g(x_\star,y_{k+1}),
 \qquad 0\leq k<N,
\end{aligned}
\end{equation*}
where the term with \(\widetilde a_0=0\) is omitted, and
\begin{equation*}
S_k=\frac{La_k}{2}\norm{s_k}^2 +\frac{b_{k+1}}{2L}\norm{\nabla f(\widehat x_{k+1})-\nabla f(x_\star)}^2 +L\alpha_k\norm{Y_{k+1}-Y_k}^2.
\end{equation*}
Then \(S_k\ge0\), and, for \(N\ge2\), the following decrement identities hold:\footnote{For \(N=1\), the first case holds with \(\widehat x_1=y_1\), so the initial and terminal conventions apply to the same step.}
\begin{equation*}
 V_{k+1}-V_k=
 \begin{cases}
  \displaystyle \mathcal T_0-S_0+b_0\mathcal I_f(x_\star,x_0)-\frac{b_0}{2L}\norm{\nabla f(x_0)-\nabla f(x_\star)}^2, & k=0,\\
  \displaystyle \mathcal T_k-S_k, & 1\le k<N-1,\\
  \displaystyle \mathcal T_{N-1}-S_{N-1}, & k=N-1.
 \end{cases}
\end{equation*}
The coefficient signs in Lemma~\ref{cs:fista:lem:signs} and the nonpositive interpolation residuals imply \(V_N\le\cdots\le V_0=0\).
The terminal definition and the radius assumption therefore give
}
\[
 F(y_N)-F(x_\star)\leq\tau_NL\norm{x_0-x_\star}^2
 \leq\frac{2LR^2}{N^2+5N+2}.
\]
\end{theorem}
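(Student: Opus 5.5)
The plan follows the template already used for Theorem~\ref{cs:sfgm:thm:main}: verify each decrement identity exactly by coefficient matching in Gram coordinates, then prove the signs separately. Throughout, write the proximal subgradient at $y_{k+1}$ as $u_{k+1}=L(x_k-y_{k+1})-\nabla f(x_k)$ and the optimal subgradient as $-\nabla f(x_\star)$. These two choices make $\mathcal I_g$ linear in the oracle vectors, and they let $F$-values be split into $f$- and $g$-parts.

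\textbf{Interior steps ($1\le k<N-1$).} Subtracting consecutive instances of \eqref{cs:fista:eq:V-interior} cancels the radius term. Expanding $\mathcal T_k$ and using $b_{k+1}=a_{k+1}-a_k$ and $\widetilde b_{k+1}=\widetilde a_{k+1}-\widetilde a_k$, the function-value terms match those of $V_{k+1}-V_k$ exactly. One subtlety needs bookkeeping: $g$ is evaluated at $y_k$, whereas $f$ is evaluated at $x_k$.

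What remains is a quadratic form in the variables $(Y_k,\,x_k-x_\star,\,\nabla f(x_k)/L,\,\nabla f(x_{k+1})/L,\,y_{k+1}-x_\star)$. Two relations then eliminate variables:
\begin{itemize}
\item the momentum update $x_{k+1}=y_{k+1}+\tfrac{k}{k+3}(y_{k+1}-y_k)$, written in terms of $Y_k,Y_{k+1}$;
\item the identity $u_{k+1}/L=x_k-y_{k+1}-\nabla f(x_k)/L$.
\end{itemize}
After this elimination I would form the symmetric slack matrix, analogous to \eqref{cs:sfgm:eq:slack-matrix}. I would then show it equals the sum of three rank-one pieces, corresponding to $s_k$, to $\nabla f(x_{k+1})-\nabla f(x_\star)$, and to $Y_{k+1}-Y_k$, with weights $La_k/2$, $b_{k+1}/(2L)$ and $L\alpha_k$.

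This matching reduces to scalar rational identities in $k$ and $N$. The key ones are $\widetilde a_k D_k=(D_k+1)a_{k-1}$, the increment $D_ka_k-D_{k-1}a_{k-1}$ (which should equal a multiple of $\delta_k$, as in Lemma~\ref{cs:sfgm:lem:signs}), and the definition of $(\vQ_k)_{11}$. That last entry was evidently constructed so that the $\|Y\|^2$ coefficients telescope, leaving exactly $\alpha_k$. I would check all of these symbolically, in the same way as the companion notebooks.

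\textbf{Boundary steps.}
\begin{itemize}
\item \emph{Base step.} Use $y_0=x_0$ and $\widetilde a_0=0$, so no $g(y_0)$ term appears. Include the extra $b_0\mathcal I_f(x_\star,x_0)$ term, and note that $\alpha_0=0$ removes the $Y$-difference square.
\item \emph{Terminal step.} Replace $x_{k+1}$ by $\widehat x_N=y_N$ and use $a_N=\widetilde a_N=1$, so $V_N$ contains $F(y_N)$. The $Y_N$-coefficient comes out as $D_N=1/(4\tau_N)$, which reproduces the $-\tau_NL\|x_0-x_\star\|^2$ normalization; this is where $\alpha_{N-1}=D_N(D_N-1)/2$ enters.
\item \emph{Case $N=1$.} Handle it separately: merge the base and terminal conventions, then check the resulting explicit small identity directly.
\end{itemize}

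\textbf{Signs (Lemma~\ref{cs:fista:lem:signs}).} Positivity of $a_k$ and $b_k$ is inherited from Lemma~\ref{cs:sfgm:lem:signs}, since $\tau_N$ differs but $a_k$ and $b_k$ are the same coefficients. Next, $D_k>0$ and $D_k$ is decreasing in $k$. This gives $\widetilde a_k>0$, and $\widetilde b_k>0$ should follow by writing $\widetilde b_k$ as a rational function whose numerator is a polynomial in $(k,\,N-k)$ with positive coefficients after shifting variables.

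The main obstacle I expect is $\alpha_k\ge0$ for $1\le k\le N-2$. Its definition mixes four terms of differing sign. My first attempt would be to substitute $k=m+1$ and $N=k+2+\ell$ with $m,\ell\ge0$, clear the positive denominators $N(N+1)D_k^2(k+3)^2$, and hope the numerator has all nonnegative coefficients, mirroring the $P_{m,\ell}$ argument for $\beta_k$. If that fails, I would try regrouping as $\alpha_k=\tfrac12\bigl(a_k(D_{k+1}+1)^2-a_{k+1}k^2D_k^2/(k+3)^2\bigr)-\bigl(D_N+kD_k\delta_{k+1}/(k+3)\bigr)$, bounding the first bracket from below using the monotonicity of $a_k$ and $D_k$.
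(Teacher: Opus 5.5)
Your proposal is correct and follows essentially the same route as the paper: exact coefficient matching of the interior remainder against the same three rank-one squares, separate base, terminal and $N=1$ checks, signs of $a_k,b_k$ inherited from Lemma~\ref{cs:sfgm:lem:signs}, and a shifted-variable polynomial with positive coefficients for $\alpha_k$. Two small slips: with this theorem's $D_k$ the telescoping identity is $\widetilde a_{k+1}D_{k+1}-\widetilde a_kD_k=\delta_k$ rather than an increment of $D_ka_k$, and it gives $\widetilde b_k=(\delta_{k-1}+\widetilde a_{k-1}(D_{k-1}-D_k))/D_k>0$ directly; also, $\tau_N=1/(4D_N)$ is matched in the base step through $D_0=D_1+1=2D_N$, since the radius terms cancel in the terminal decrement.
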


\begin{proof}
{%
The companion notebook is
\path{examples_peppy/fista_rational_tight/fista_rational_tight_example_lyap.ipynb}.
The notebook retains its historical potential at the proximal checkpoints
separately. Code Cell~18 verifies the interpolation redistribution relating
it to the potential used here; the checks below use
\eqref{cs:fista:eq:V-endpoints}--\eqref{cs:fista:eq:V-interior} literally.
}

\paragraph{Coefficient identities.}
The weight identities below hold for \(0\leq k<N\), and the scale
recurrence holds for \(1\leq k<N\). {Code Cell~18} verifies each identity
by exact substitution in the literal coefficient formulas:
\begin{align}
 (D_{k+1}+1)a_k&=\widetilde a_{k+1}D_{k+1},&
 \widetilde a_{k+1}D_{k+1}-\widetilde a_kD_k&=\delta_k,
       \label{eq:case-fista-weight-identities}\\
 \frac{2k+1}{k+2}D_k-\frac{k-1}{k+2}D_{k-1}
       &=D_{k+1}+1.
       \label{eq:case-fista-scale-recurrence}
\end{align}
At startup, \(D_0=D_1+1=2D_N\).
For \(1\leq k<N-1\), we also have
\begin{equation}
 \widetilde a_kD_k+\delta_k(D_{k+1}+1)-2D_N
       =\frac{kD_k\delta_{k+1}}{k+3}.
 \label{eq:case-fista-adjacent-identity}
\end{equation}
The FISTA recurrence and \eqref{eq:case-fista-scale-recurrence}
therefore give, for \(1\leq k<N\),
{%
\begin{equation}
 Y_{k+1}-(x_k-y_{k+1})
 =(D_{k+1}+1)\left(Y_{k+1}-Y_k\right)
 -\frac{(k-1)D_{k-1}}{k+2}\left(Y_k-Y_{k-1}\right).
 \label{eq:case-fista-position-recurrence}
\end{equation}
}

{%
For \(1\le k<N-1\), subtracting two consecutive instances of \eqref{cs:fista:eq:V-interior} cancels the common initial-radius term and gives
\begin{equation}\label{cs:fista:eq:potential-difference}
\begin{aligned}
V_{k+1}-V_k
 &=a_{k+1}\bigl(f(x_{k+1})-f(x_\star)\bigr)-a_k\bigl(f(x_k)-f(x_\star)\bigr)\\
 &\quad+\widetilde a_{k+1}\bigl(g(y_{k+1})-g(x_\star)\bigr)
 -\widetilde a_k\bigl(g(y_k)-g(x_\star)\bigr)\\
 &\quad-2\csinner{\nabla f(x_{k+1})}{\vd_{k+1}^\top\boldsymbol w_{k+1}}
 +2\csinner{\nabla f(x_k)}{\vd_k^\top\boldsymbol w_k}\\
 &\quad+L\boldsymbol w_{k+1}^\top\vQ_{k+1}\boldsymbol w_{k+1}
 -L\boldsymbol w_k^\top\vQ_k\boldsymbol w_k.
\end{aligned}
\end{equation}
Our goal is to identify this expression with the interior case of \eqref{eq:case-fista-increment}. We first factor the quadratic remainder, then match the weighted interpolation residuals; the boundary cases are treated separately.
}
In particular, neither \(\nabla f(x_\star)\) nor the selected subgradient of
\(g\) at \(x_\star\) is assumed to vanish separately.

\medskip
\noindent\textbf{Step 1: factorization of the interior remainder.}
For \(1\leq k<N-1\), Steps 1 and 2 prove the second line of
\eqref{eq:case-fista-increment}, namely,
\[
\begin{aligned}
 V_{k+1}-V_k
 &=a_k\mathcal I_f(x_k,x_{k+1})
 +b_{k+1}\mathcal I_f(x_\star,x_{k+1})\\
 &\quad+\widetilde a_k\mathcal I_g(y_k,y_{k+1})
 +\widetilde b_{k+1}\mathcal I_g(x_\star,y_{k+1})-S_k.
\end{aligned}
\]
{Step 1 factors the quadratic form associated with a symmetric matrix; Step 2
identifies it by expanding the interpolation residuals and the potential difference.}
{Define \(\boldsymbol r_k\in\mathcal H^5\) and \(M_k\in\mathbb R^{2\times2}\) by
\[
 \boldsymbol r_k=\begin{bmatrix}Y_k\\Y_{k+1}\\x_k-y_{k+1}\\(\nabla f(x_k)-\nabla f(x_{k+1}))/L\\(\nabla f(x_{k+1})-\nabla f(x_\star))/L\end{bmatrix},\qquad
 M_k=\begin{pmatrix}0&1\\-kD_k/(k+3)&(2k+3)D_{k+1}/(k+3)\end{pmatrix}.
\]
The FISTA update gives
\[
 \boldsymbol w_{k+1}=M_k\begin{bmatrix}Y_k\\Y_{k+1}\end{bmatrix}.
\]
Define the symmetric matrix \(\vS_k\in\mathbb R^{5\times5}\) by
\begin{equation}\label{cs:fista:eq:slack-matrix}
\begin{aligned}
 \vS_k={}&\begin{pmatrix}
 \operatorname{diag}(1,D_{k+1})\vQ_k\operatorname{diag}(1,D_{k+1})-M_k^\top\vQ_{k+1}M_k&0_{2\times3}\\
 0_{3\times2}&0_{3\times3}
 \end{pmatrix}\\
 &+\frac12\begin{pmatrix}0&0&0&0&0\\0&0&-a_k&a_k&0\\0&-a_k&a_k&-a_k&0\\0&a_k&-a_k&a_k&0\\0&0&0&0&b_{k+1}\end{pmatrix}.
\end{aligned}
\end{equation}
The identities \((\vQ_k)_{12}=-\widetilde a_kD_k/2\), \((\vQ_k)_{22}=a_k/2\), and
\(\widetilde a_{k+1}D_{k+1}=a_k(D_{k+1}+1)\) give the displayed cross terms.
The claim of Step 1 is \(L\boldsymbol r_k^\top\vS_k\boldsymbol r_k=S_k\).}
For \(1\leq k<N\), the definitions imply
{%
\begin{align}
 2\vd_k^\top\boldsymbol w_k
 &=a_k(x_k-x_\star)-\widetilde a_k(y_k-x_\star)
 =\delta_kY_k+\frac{a_k(k-1)D_{k-1}}{k+2}
 \left(Y_k-Y_{k-1}\right),\notag\\
 \boldsymbol w_k^\top \vQ_k\boldsymbol w_k
 &=(D_N-\widetilde a_kD_k)\norm{Y_k}^2
 +\frac{\delta_k(k-1)D_{k-1}}{k+2}
 \csinner{Y_k}{Y_k-Y_{k-1}}\notag\\
 &\quad+\frac{a_k(k-1)^2D_{k-1}^2}{2(k+2)^2}
 \norm{Y_k-Y_{k-1}}^2.
 \label{eq:case-fista-position-term}
\end{align}
}
For \(1\leq k<N-1\), subtracting consecutive instances of
\eqref{eq:case-fista-position-term} yields
{%
\begin{equation}
\begin{aligned}
 &\boldsymbol w_{k+1}^\top \vQ_{k+1}\boldsymbol w_{k+1}
       -\boldsymbol w_k^\top \vQ_k\boldsymbol w_k\\
 &=\csinner{x_k-y_{k+1}}
       {\widetilde a_k(y_k-x_\star)
        -\widetilde a_{k+1}(y_{k+1}-x_\star)}\\
 &\quad-\frac{a_k}{2}\norm{Y_{k+1}-(x_k-y_{k+1})}^2
 -\alpha_k\norm{Y_{k+1}-Y_k}^2.
\end{aligned}
\label{eq:case-fista-position-increment}
\end{equation}
}
{Indeed, expand both sides using
\eqref{eq:case-fista-position-recurrence} and the weight identities.
The remaining coefficient comparisons reduce to
\[
\begin{aligned}
 \csinner{Y_{k+1}}{Y_{k+1}-Y_k}&:\quad
 \widetilde a_kD_k+\delta_k(D_{k+1}+1)-2D_N
 =\frac{kD_k\delta_{k+1}}{k+3},\\
 \norm{Y_{k+1}-Y_k}^2&:\quad
 \frac{a_k(D_{k+1}+1)^2}{2}-D_N
 -\frac{kD_k\delta_{k+1}}{k+3}
 -\frac{a_{k+1}k^2D_k^2}{2(k+3)^2}=\alpha_k.
\end{aligned}
\]
These are \eqref{eq:case-fista-adjacent-identity} and
\eqref{eq:case-fista-alpha}, respectively.}

{Equation~\eqref{eq:case-fista-position-increment} shows that the position terms in \({\boldsymbol r_k^\top\vS_k\boldsymbol r_k}\) equal
\[
 \frac{a_k}{2}\norm{Y_{k+1}-(x_k-y_{k+1})}^2
 +\alpha_k\norm{Y_{k+1}-Y_k}^2.
\]
The resulting matrix factorization is
\[
 \vS_k=\frac{a_k}{2}\begin{bmatrix}0\\1\\-1\\1\\0\end{bmatrix}\begin{bmatrix}0\\1\\-1\\1\\0\end{bmatrix}^{\!\top}
 +\frac{b_{k+1}}2\begin{bmatrix}0\\0\\0\\0\\1\end{bmatrix}\begin{bmatrix}0\\0\\0\\0\\1\end{bmatrix}^{\!\top}
 +\alpha_k\begin{bmatrix}-1\\1\\0\\0\\0\end{bmatrix}\begin{bmatrix}-1\\1\\0\\0\\0\end{bmatrix}^{\!\top}.
\]
Evaluating at \(\boldsymbol r_k\) gives}
{%
\begin{equation}\label{cs:fista:eq:factorization}
\begin{aligned}
 L{\boldsymbol r_k^\top\vS_k\boldsymbol r_k}
 &=\frac{La_k}{2}\norm{\frac{\nabla f(x_k)-\nabla f(x_{k+1})}{L}
 +Y_{k+1}-(x_k-y_{k+1})}^2\\
 &\quad+\frac{b_{k+1}}{2L}\norm{\nabla f(x_{k+1})-\nabla f(x_\star)}^2
 +L\alpha_k\norm{Y_{k+1}-Y_k}^2=S_k,
\end{aligned}
\end{equation}
}
{by the definition of \(s_k\).}
{Code Cell~18} verifies the position identities and this full quadratic
factorization coefficient by coefficient.

\medskip
\noindent\textbf{Step 2: verification of the interior decrement identity.}
For \(1\leq k<N-1\), it remains to prove
\(\mathcal T_k-(V_{k+1}-V_k)=L{\boldsymbol r_k^\top\vS_k\boldsymbol r_k}\).
{%
To match \eqref{cs:fista:eq:potential-difference}, compare the smooth and nonsmooth function values separately.
The identities \(b_{k+1}=a_{k+1}-a_k\) and \(\widetilde b_{k+1}=\widetilde a_{k+1}-\widetilde a_k\) give exactly the first two lines of that difference.
Expanding the residuals and using the selected subgradient \(L(x_k-y_{k+1})-\nabla f(x_k)\) gives
\begin{equation}\label{cs:fista:eq:residual-expansion}
\begin{aligned}
\mathcal T_k
 &=a_{k+1}\bigl(f(x_{k+1})-f(x_\star)\bigr)-a_k\bigl(f(x_k)-f(x_\star)\bigr)\\
 &\quad+\widetilde a_{k+1}\bigl(g(y_{k+1})-g(x_\star)\bigr)
 -\widetilde a_k\bigl(g(y_k)-g(x_\star)\bigr)\\
 &\quad+\frac{a_k}{2L}\norm{\nabla f(x_k)-\nabla f(x_{k+1})}^2
 +\frac{b_{k+1}}{2L}\norm{\nabla f(x_{k+1})-\nabla f(x_\star)}^2\\
 &\quad+\csinner{\nabla f(x_{k+1})}
 {a_k(x_k-x_\star)-a_{k+1}(x_{k+1}-x_\star)}\\
 &\quad+\csinner{L(x_k-y_{k+1})-\nabla f(x_k)}
 {\widetilde a_k(y_k-x_\star)-\widetilde a_{k+1}(y_{k+1}-x_\star)}.
\end{aligned}
\end{equation}
The function-value terms therefore cancel upon subtracting \eqref{cs:fista:eq:potential-difference}.
Using \(2\vd_k^\top\boldsymbol w_k=a_k(x_k-x_\star)-\widetilde a_k(y_k-x_\star)\) at indices \(k\) and \(k+1\), the remaining terms linear in the gradients equal
}
{%
\[
\begin{aligned}
 &\csinner{\nabla f(x_k)-\nabla f(x_{k+1})}
 {\widetilde a_{k+1}(y_{k+1}-x_\star)-a_k(x_k-x_\star)}\\
 &\qquad=a_k\csinner{\nabla f(x_k)-\nabla f(x_{k+1})}
 {Y_{k+1}-(x_k-y_{k+1})},
\end{aligned}
\]
}
where the equality uses
\(\widetilde a_{k+1}D_{k+1}=a_k(D_{k+1}+1)\).
The two smooth interpolation norm terms give the two pure-gradient
terms in \(L{\boldsymbol r_k^\top\vS_k\boldsymbol r_k}\); {the remaining terms give its position terms.} Thus
\begin{equation}\label{cs:fista:eq:interpolation-bridge}
\begin{aligned}
 &a_k\mathcal I_f(x_k,x_{k+1})
 +b_{k+1}\mathcal I_f(x_\star,x_{k+1})
 +\widetilde a_k\mathcal I_g(y_k,y_{k+1})\\
 &\qquad+\widetilde b_{k+1}\mathcal I_g(x_\star,y_{k+1})
 -(V_{k+1}-V_k)=L{\boldsymbol r_k^\top\vS_k\boldsymbol r_k}=S_k.
\end{aligned}
\end{equation}
The final equality is Step 1. Rearranging proves the second line of
\eqref{eq:case-fista-increment} on the full range \(1\leq k<N-1\).
{Code Cell~20} checks the bridge and the complete decrement in every
independent Gram, function-value, and constant coefficient. The remaining
initial and terminal cases are verified next, {with the signs needed for monotonicity established in Lemma~\ref{cs:fista:lem:signs}.}

\paragraph{Initial and terminal increments.}
{These calculations establish the first and last lines of
\eqref{eq:case-fista-increment}.}\footnote{{When \(N=2\), these are the only two transitions; Code Cells~21 and~22 verify both.}}
{Code Cell~21} checks the initial decrement and {Code Cell~22}
{checks the terminal decrement.}
For \(N\geq2\), \(x_1=y_1\) and
\(\widetilde a_1D_1=1\), so
{%
\[
 \boldsymbol w_1^\top \vQ_1\boldsymbol w_1
       =(D_N-1)\norm{Y_1}^2,\qquad
 2\vd_1^\top\boldsymbol w_1=\delta_1Y_1.
\]
}
The initial position identity is
{%
\[
\begin{aligned}
 &-\widetilde a_1\csinner{x_0-y_1}{y_1-x_\star}
 -\frac{a_0}{2}\norm{Y_1-(x_0-y_1)}^2\\
 &\qquad=(D_N-1)\norm{Y_1}^2-\tau_N\norm{x_0-x_\star}^2.
\end{aligned}
\]
}
Together with \(b_0=a_0\), this gives the stated initial increment
by the same expansion.

For \(N\geq2\), direct substitution at the terminal step in
\eqref{eq:case-fista-position-term} gives
{%
\begin{align*}
 -\boldsymbol w_{N-1}^\top \vQ_{N-1}\boldsymbol w_{N-1}
 &=\csinner{x_{N-1}-y_N}
       {\widetilde a_{N-1}(y_{N-1}-x_\star)-(y_N-x_\star)}\\
 &\quad-\frac{a_{N-1}}2\norm{Y_N-(x_{N-1}-y_N)}^2\\
 &\quad-\alpha_{N-1}\norm{Y_N-Y_{N-1}}^2.
\end{align*}
}
In \(\mathcal T_{N-1}-S_{N-1}\), all terms involving
\(\nabla f(y_N)\) cancel, using
\(a_{N-1}(D_N+1)=D_N\) and \(a_N=\widetilde a_N=1\).
Thus the terminal increment follows as well.
For \(N=1\), the values
\[
 D_1=1,\quad \tau_1=\frac14,\quad
 a_0=b_0=b_1=\frac12,\quad
 a_1=\widetilde a_1=\widetilde b_1=1,\quad \alpha_0=0
\]
give both boundary conventions in a single expansion. {Code Cell~21}
checks the literal one-step potential and
\eqref{cs:fista:eq:N1-decrement} directly, omitting \(g(y_0)\).

\paragraph{{Positivity of the coefficients.}}
{%
Lemma~\ref{cs:fista:lem:signs} proves the coefficient signs, including
the zero boundary values, and the denominator bounds.
}

Every interpolation weight and every square weight in
\eqref{eq:case-fista-increment} is therefore nonnegative.
Since \(\mathcal I_f,\mathcal I_g\leq0\),
\eqref{eq:case-fista-increment} and \eqref{cs:fista:eq:N1-decrement}
give \(V_N\leq V_{N-1}\leq\cdots\leq V_0=0\).
The endpoint normalization \eqref{cs:fista:eq:normalization}, checked in
{Code Cell~23}, proves the claimed rate.\footnote{{Code Cell~23 covers both \(N=1\) and general \(N\).}}
\end{proof}

\begin{lemma}%
\label{cs:fista:lem:signs}
\normalfont
{%
The coefficients are positive on their respective index ranges:\footnote{{The boundary values \(\widetilde a_0=\alpha_0=0\) are excluded from these strict inequalities; the unused value \(a_{-1}\) is also zero.}}
\[
 a_k>0,\qquad b_k>0,\qquad \widetilde a_k>0,\qquad \widetilde b_k>0,\qquad \alpha_k>0.
\]
}
\end{lemma}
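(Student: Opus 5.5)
The plan is to treat the five families separately. The smooth weights are inherited, the nonsmooth weights follow from the weight identities \eqref{eq:case-fista-weight-identities}, and only \(\alpha_k\) requires real work.

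\textbf{Smooth weights.} The coefficients \(a_k,b_k\) are literally those of \eqref{cs:sfgm:eq:a}--\eqref{cs:sfgm:eq:b}. Those formulas involve only \(N\) and \(k\), and their \(D_k\) is the FGM-rational one, not the FISTA \(D_k\) defined here. Hence Lemma~\ref{cs:sfgm:lem:signs} applies verbatim: \(a_k>0\) for \(0\le k<N\) (with \(a_N=1\)), and \(b_k>0\) for \(0\le k\le N\).

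\textbf{Nonsmooth weights.} First, the FISTA scale \(D_k=(2N^2+10N+2-k^2-5k)/8\) is strictly decreasing in \(k\ge0\). Its minimum is \(D_N=(N^2+5N+2)/8>0\), and \(D_0=2D_N>0\). Hence \(D_k>0\) and \(D_k-D_{k+1}=(2k+6)/8>0\) on the relevant range.

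For \(1\le k\le N\), the definition \(\widetilde a_k=(D_k+1)a_{k-1}/D_k\) is then a product of positive factors, since \(a_{k-1}>0\). This gives \(\widetilde a_k>0\).

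For the increments, \(\widetilde b_1=\widetilde a_1>0\). For \(1\le k<N\), I would use the second identity in \eqref{eq:case-fista-weight-identities}, \(\widetilde a_{k+1}D_{k+1}=\widetilde a_kD_k+\delta_k\). This rewrites
\[
 \widetilde b_{k+1}=\frac{\delta_k+\widetilde a_k(D_k-D_{k+1})}{D_{k+1}}.
\]
Here \(\delta_k=(k+2)(N-k)(N+k+1)/[2N(N+1)]>0\) for \(k<N\), and the other two terms are positive by the preceding paragraph.

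\textbf{Square weight \(\alpha_k\).} At the terminal index, \(\alpha_{N-1}=(N-1)(N+6)(N^2+5N+2)/128\). This is positive for \(N\ge2\), and \(\alpha_0=0\) at the boundary. The remaining case is \(1\le k\le N-2\), with \(N\ge3\), and this is the main obstacle. Here the definition \eqref{eq:case-fista-alpha} is a difference of positive quantities, so no term-by-term argument works.

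I would first substitute the closed forms of \(a_k\), \(a_{k+1}\), \(D_k\), \(D_{k+1}\), \(D_N\) and \(\delta_{k+1}\), and bring everything over the common positive denominator. That denominator is built from \(N(N+1)\), \((k+3)^2\), and the FGM-rational denominators \(D^{\mathrm{sfgm}}_k D^{\mathrm{sfgm}}_{k+1}\), each of which is \(\ge N^2+5N+10\). This reduces positivity to that of a single polynomial numerator \(P(k,N)\).

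Following the device already used for \(\beta_k\) in Lemma~\ref{cs:sfgm:lem:signs}, I would shift to \(m=k-1\ge0\) and \(\ell=N-k-2\ge0\). The aim is to show that \(P\) expands with all coefficients nonnegative and a positive constant term, and to confirm this expansion exactly with SymPy.

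If some coefficients turn out negative, the fallback is to regroup. I would use \eqref{eq:case-fista-adjacent-identity} to replace \(\widetilde a_kD_k+\delta_k(D_{k+1}+1)-2D_N\) by \(kD_k\delta_{k+1}/(k+3)\). I would then express \(\alpha_k\) as a sum of squares plus a manifestly positive remainder in \(D_k,D_{k+1},\delta_k\), proceeding in the spirit of the regrouping \eqref{cs:ofgm-secondary:beta-regrouping}.
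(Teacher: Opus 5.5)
Your proposal follows the paper's proof essentially step for step: \(a_k,b_k\) are inherited from Lemma~\ref{cs:sfgm:lem:signs}, \(\widetilde a_k>0\) follows from \(a_{k-1}>0\) and \(D_k>0\), \(\widetilde b_{k+1}=(\delta_k+\widetilde a_k(D_k-D_{k+1}))/D_{k+1}\) follows from \eqref{eq:case-fista-weight-identities}, \(\alpha_{N-1}>0\) follows from its closed form, and the interior \(\alpha_k\) is handled by clearing denominators to a polynomial with positive coefficients. What you leave undone is exhibiting that certificate, which the paper writes out for \(\alpha_{k-1}\) in \(\ell=N-k\geq1\) and \(k\geq2\) (the FISTA factor \(D_{k+1}+1\) equals one eighth of the FGM-rational \(D_k\) in the denominator of \(a_k\), so the clearing is cheap); since the substitutions \(\ell\mapsto\ell+1\), \(k\mapsto m+2\) preserve positive coefficients, your shifted numerator (up to positive-coefficient factors such as \(k=m+1\)) does expand with positive coefficients, and the regrouping fallback is not needed.
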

\begin{proof}
{%
For \(1\leq k\leq N\),
\[
 D_k=D_N+\frac{(N-k)(N+k+5)}8>0,\qquad D_0=2D_N>0.
\]
Also, \(\delta_k>0\) for \(0\leq k<N\), \(D_0-D_1=1\), and
\(D_{k-1}-D_k=(k+2)/4>0\) for \(k\geq2\).
Code Cell~19 verifies the denominator identities, weight
differences, and the polynomial certificate below.
\begin{itemize}
\item \textbf{Positivity of \(a_k\).}
These coefficients are those of \eqref{cs:sfgm:eq:a}, so
Lemma~\ref{cs:sfgm:lem:signs} gives \(a_k>0\) for \(0\leq k\leq N\).

\item \textbf{Positivity of \(b_k\).}
The same lemma gives \(b_k>0\) for \(0\leq k\leq N\).

\item \textbf{Positivity of \(\widetilde a_k\).}
For \(1\leq k\leq N\), \(\widetilde a_k=(D_k+1)a_{k-1}/D_k>0\).
At index zero, \(\widetilde a_0=0\).

\item \textbf{Positivity of \(\widetilde b_k\).}
Equation~\eqref{eq:case-fista-weight-identities} gives, for \(1\leq k\leq N\),
\[
 \widetilde b_k
 =\frac{\delta_{k-1}+\widetilde a_{k-1}(D_{k-1}-D_k)}{D_k}>0.
\]

\item \textbf{Positivity of \(\alpha_k\).}
The value \(\alpha_0=0\) follows directly from \eqref{eq:case-fista-alpha}.
For \(N\geq2\), the terminal formula gives \(\alpha_{N-1}>0\).
For \(2\leq k<N\), {write \(\ell=N-k\in\mathbb Z\), with \(\ell\geq1\).}
Substitution in \eqref{eq:case-fista-alpha} gives
\begin{align*}
 &\frac{1024N(N+1)(k+2)^2(D_{k+1}+1)}{k-1}\alpha_{k-1}\\
 &\quad=8(5k+4)\ell^6+8(5k+4)(6k+11)\ell^5\\
 &\qquad+4(133k^3+599k^2+772k+296)\ell^4\\
 &\qquad+8(78k^4+491k^3+969k^2+699k+148)\ell^3\\
 &\qquad+2(223k^5+1854k^4+5153k^3+5774k^2+2408k+176)\ell^2\\
 &\qquad+2(70k^6+779k^5+2964k^4+4855k^3+3576k^2+988k+16)\ell\\
 &\qquad+k(k+1)(k+6)(k^2+5k+2)(15k^2+35k+22)>0.
\end{align*}
The multiplier of \(\alpha_{k-1}\) on the left and every term on the right
are positive, so \(\alpha_{k-1}>0\) for \(2\leq k<N\).
\end{itemize}
}
\end{proof}
\endgroup

\subsection{{Matching the original FGM conjectures}}
\label{app:fgm-conjecture-relation}

{The following lemma reconciles the \(C_k\) notation used in
Theorems~\ref{thm:case-ofgm-primary} and~\ref{thm:case-ofgm-secondary}
with the notation of the original conjectures in \citet{taylor2017smooth}.}
\begin{lemma}[Relation to the FGM conjecture notation]
\label{lem:case-fgm-conjecture-relation}
\normalfont
{Following \citet{taylor2017smooth}, define the fixed-step
coefficients \(h_{i,j}\in\mathbb R\) for \(1\leq i\leq N\) by}
{%
\begin{equation}
 x_i=x_0-\frac1L\sum_{j=0}^{i-1}h_{i,j}\nabla f(x_j),\qquad
 y_i=x_{i-1}-\frac1L\nabla f(x_{i-1}).
 \label{eq:case-fgm-h-definition}
\end{equation}
}
{Then, for \(0\leq i\leq N\), with the sum understood to be empty when \(i=0\),}
{%
\[
 \sum_{j=0}^{i-1}h_{i,j}=C_{i+1}-1.
\]
}
\end{lemma}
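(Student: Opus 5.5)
The plan is to reduce the claim to a scalar recurrence for the row sums $H_i:=\sum_{j=0}^{i-1}h_{i,j}$ (with $H_0=0$), and then compare that recurrence with \eqref{eq:case-C-recurrence}.

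\textbf{Step 1: justify the substitution.} Unrolling \eqref{eq:case-ofgm} with \eqref{eq:case-theta-recursive} shows that $x_i$, $y_i$ and $z_i$ are each $x_0$ minus $\tfrac1L$ times a linear combination of $\nabla f(x_0),\dots,\nabla f(x_{i-1})$. The coefficients depend only on $\theta_0,\theta_1,\dots$, not on $f$. So I will treat the gradients as formal symbols $g_0,\dots,g_{N-1}$, in which case the $h_{i,j}$ in \eqref{eq:case-fgm-h-definition} are exactly these coefficients. Replacing every $g_j$ by one common symbol $g$ is a linear map, so it sends each coefficient vector to its sum. Hence $x_i\mapsto x_0-\tfrac1L H_i\,g$, $y_i\mapsto x_0-\tfrac1L(H_{i-1}+1)\,g$ and $z_i\mapsto x_0-\tfrac1L Z_i\,g$, where $Z_i=\sum_{j=0}^{i-1}\theta_j$.

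\textbf{Step 2: evaluate the $z$-sum.} Since $\theta_{-1}=0$, the identity \eqref{eq:case-theta-identity} gives $\theta_j=\theta_j^2-\theta_{j-1}^2$ for every $j\ge0$. Summing this telescopes to $Z_i=\theta_{i-1}^2$.

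\textbf{Step 3: derive the recurrence for $H_i$.} For $i\ge1$, the mixing step $x_i=(1-1/\theta_i)y_i+z_i/\theta_i$ yields
\[
 H_i=\Bigl(1-\frac1{\theta_i}\Bigr)(H_{i-1}+1)+\frac{\theta_{i-1}^2}{\theta_i}
    =\Bigl(1-\frac1{\theta_i}\Bigr)(H_{i-1}+1)+\theta_i-1 .
\]
The second equality uses $\theta_{i-1}^2=\theta_i^2-\theta_i$. Rearranging gives
\[
 H_i+1=\theta_i+\Bigl(1-\frac1{\theta_i}\Bigr)(H_{i-1}+1).
\]
This is exactly the recurrence \eqref{eq:case-C-recurrence} for $C_{i+1}$ in terms of $C_i$.

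\textbf{Step 4: induction.} The base case is $H_0+1=1=\theta_0^3/\theta_0^2=C_1$, which also covers the empty-sum case $i=0$. Induction on $i$ then gives $H_i+1=C_{i+1}$ for $0\le i\le N$, which is the claim.

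The only delicate point is Step 1. One must be precise that $h_{i,j}$ is defined by the formal expansion, independently of any particular $f$, so that collapsing all gradients to a single symbol is legitimate. Everything else is the one-line algebra above.
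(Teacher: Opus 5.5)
Your proof is correct and follows essentially the same route as the paper: obtain the row-sum recurrence from the mixing step, evaluate the $z$-contribution via $\sum_{j=0}^{k-1}\theta_j=\theta_{k-1}^2$ and $\theta_{k-1}^2=\theta_k^2-\theta_k$ to match \eqref{eq:case-C-recurrence}, and induct from $C_1=1$. The only difference is presentational: you collapse all gradient symbols to a single symbol to get the row sums directly, whereas the paper writes out each $h_{k,j}$ explicitly before summing.
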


\begin{proof}
{Let \({\widetilde C_k}\in\mathbb R\) be given by}
\[
 {\widetilde C_k}=\sum_{j=0}^{k-1}{h_{k,j}},\qquad {\widetilde C_0}=0.
\]
For \(1\leq k\leq N\), expanding the \(y\)- and \(z\)-updates gives
\[
\begin{aligned}
 y_k&=x_0-\frac1L\left[
 \sum_{j=0}^{k-2}{h_{k-1,j}}\nabla f(x_j)
 +\nabla f(x_{k-1})\right],\\
 z_k&=x_0-\frac1L\sum_{j=0}^{k-1}\theta_j\nabla f(x_j),
\end{aligned}
\]
{with empty sums understood as zero. Substituting these expressions into
\(x_k=(1-1/\theta_k)y_k+z_k/\theta_k\) from \eqref{eq:case-ofgm} and collecting the terms involving
 each \(\nabla f(x_j)\) gives}
{%
\[
\begin{aligned}
 x_k=x_0
 &-\frac1L\sum_{j=0}^{k-2}
 \left[\left(1-\frac1{\theta_k}\right)h_{k-1,j}
 +\frac{\theta_j}{\theta_k}\right]\nabla f(x_j)
 -\frac1L\left[1-\frac1{\theta_k}
 +\frac{\theta_{k-1}}{\theta_k}\right]\nabla f(x_{k-1}).
\end{aligned}
\]
}
{Writing this expansion in the form of \eqref{eq:case-fgm-h-definition}
with \(i=k\) identifies the bracketed expressions as \(h_{k,j}\). Hence}
{%
\[
 \widetilde C_k=\sum_{j=0}^{k-1}h_{k,j}
 =\sum_{j=0}^{k-2}
 \left[\left(1-\frac1{\theta_k}\right)h_{k-1,j}
 +\frac{\theta_j}{\theta_k}\right]
 +\left[1-\frac1{\theta_k}+\frac{\theta_{k-1}}{\theta_k}\right].
\]
}
{Collecting terms and using the definition of \(\widetilde C_{k-1}\) gives}
{%
\begin{equation}
 \widetilde C_k
 =\left(1-\frac1{\theta_k}\right)
 \left(\sum_{j=0}^{k-2}h_{k-1,j}+1\right)
 +\frac1{\theta_k}\sum_{j=0}^{k-1}\theta_j
 =\left(1-\frac1{\theta_k}\right)(\widetilde C_{k-1}+1)
 +\frac1{\theta_k}\sum_{j=0}^{k-1}\theta_j.
 \label{eq:case-fgm-coefficient-sum}
\end{equation}
}
{Using \(\sum_{j=0}^{k-1}\theta_j=\theta_{k-1}^2\), obtained by telescoping
\eqref{eq:case-theta-identity}, and \(\theta_{k-1}^2=\theta_k^2-\theta_k\), we obtain}
\[
 {\widetilde C_k}+1=\theta_k+\left(1-\frac1{\theta_k}\right)({\widetilde C_{k-1}}+1).
\]
This is the recurrence \eqref{eq:case-C-recurrence} for \(C_{k+1}\).
Since \({\widetilde C_0}+1=C_1=1\), induction yields
\({\widetilde C_k}=C_{k+1}-1\) for every \(0\leq k\leq N\).

\end{proof}

{Applying Lemma~\ref{lem:case-fgm-conjecture-relation} with
\(i=N-1\) and \(i=N\) shows that the bounds in
Theorems~\ref{thm:case-ofgm-primary} and
\ref{thm:case-ofgm-secondary} are exactly the FGM expressions in
Conjectures~4 and~5 of \citet{taylor2017smooth}, respectively:}
{%
\begin{align*}
 \frac{LR^2\theta_{N-1}^2}
 {2\left(2\sum_{j=0}^{N-1}\theta_j^3+\theta_{N-1}^2\right)}
 &=\frac{LR^2}{2(2C_N+1)}
 =\frac{LR^2}{2}\frac1{
 2\sum_{j=0}^{N-2}h_{N-1,j}+3},\\[1ex]
 \frac{LR^2\theta_N^2}
 {2\left(2\sum_{j=0}^{N}\theta_j^3-\theta_N^2\right)}
 &=\frac{LR^2}{2(2C_{N+1}-1)}
 =\frac{LR^2}{2}\frac1{
 2\sum_{j=0}^{N-1}h_{N,j}+1}.
\end{align*}
}
{The first equalities follow from \eqref{eq:case-C-definition};
the second follow from Lemma~\ref{lem:case-fgm-conjecture-relation}.}

\subsubsection{{Smooth lower bounds}}

{\citet[Section~4.2, Conjectures~4--5]{taylor2017smooth}
give Huber instances for the two outputs of FGM.
For rational coefficients,
\citet[Section~4.2.2, Table~1 and the following paragraph]{TaylorHendrickxGlineur2017_exacta}
give the value at \(y_N\) and identify a Huber instance.
The \(z\) form of \eqref{eq:case-ofgm} has inertial coefficient
\((\theta_k-1)/\theta_{k+1}=k/(k+3)\) for FGM-rational, matching
that paper's FPGM1 update and \(y_N\) output.
We verify attainment for every horizon.}

\begin{lemma}\label{lem:case-smooth-lower}
{Fix \(N\geq1\), \(L>0\), and \(R\geq0\).
For \(R>0\), work on \(\mathbb R\), set \(x_0=y_0=z_0=R\)
and \(x_\star=0\), and use the convex function
\(f_\tau:\mathbb R\to\mathbb R\) given by}
\[
{f_\tau(t)=
\begin{cases}
 \dfrac{L}{2}t^2,& |t|\leq\tau,\\
 L\tau|t|-\dfrac{L}{2}\tau^2,& |t|\geq\tau.
\end{cases}}
\]
{For \eqref{eq:case-theta-recursive} at \(y_N\) and \(x_N\),
and \eqref{eq:case-theta-linear} at \(y_N\), choose, respectively,}
\[
{\tau=\frac{R}{2C_N+1},\qquad
\tau=\frac{R}{2C_{N+1}-1},\qquad
\tau=\frac{4R}{N^2+5N+6}.}
\]
{Then \eqref{eq:case-ofgm-primary-rate},
\eqref{eq:case-ofgm-secondary-rate}, and \eqref{eq:case-sfgm-rate}
hold with equality in the corresponding cases.
For \(R=0\), the zero function attains all three bounds.}
\end{lemma}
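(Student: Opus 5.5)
The plan is a direct computation on the one-dimensional Huber instance. The key observation is that as long as an evaluated point $t$ satisfies $t\geq\tau$, we have $\nabla f_\tau(t)=L\tau$. All three methods are fixed-step, so if every gradient evaluation $x_0,\dots,x_{N-1}$ lies in the linear region, then Lemma~\ref{lem:case-fgm-conjecture-relation}'s representation \eqref{eq:case-fgm-h-definition} collapses to
\[
 x_i=R-\tau\sum_{j=0}^{i-1}h_{i,j},\qquad y_i=x_{i-1}-\tau .
\]
So the whole trajectory is governed by the row sums of the $h$ coefficients. The case $R=0$ is trivial: with $x_0=x_\star$ and $f\equiv0$, both sides are zero. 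Below I assume $R>0$.

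For \eqref{eq:case-theta-recursive}, Lemma~\ref{lem:case-fgm-conjecture-relation} gives $\sum_{j<i}h_{i,j}=C_{i+1}-1$, hence $x_i=R-\tau(C_{i+1}-1)$. Since $C_k$ is increasing (shown in the sign lemmas) and $C_1=1$, the $x_i$ decrease in $i$. It therefore suffices to check the last evaluated point.

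\emph{Output $y_N$, with $\tau=R/(2C_N+1)$.} We get $x_{N-1}=\tau(C_N+2)\geq\tau$, so the induction keeps all gradients equal to $L\tau$. Then $y_N=R-\tau C_N=\tau(C_N+1)\geq\tau$, and
\[
 f(y_N)=L\tau y_N-\tfrac{L}{2}\tau^2=\tfrac{L}{2}\tau^2(2C_N+1)=\frac{LR^2}{2(2C_N+1)} .
\]
\emph{Output $x_N$, with $\tau=R/(2C_{N+1}-1)$.} The same argument gives $x_{N-1}\geq x_N=\tau C_{N+1}\geq\tau$, and
\[
 f(x_N)=\tfrac{L}{2}\tau^2(2C_{N+1}-1)=\frac{LR^2}{2(2C_{N+1}-1)} .
\]
In both cases, converting back through the displayed identities after Lemma~\ref{lem:case-fgm-conjecture-relation} recovers the right-hand sides of \eqref{eq:case-ofgm-primary-rate} and \eqref{eq:case-ofgm-secondary-rate} exactly.

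For \eqref{eq:case-theta-linear} there is no telescoping identity $\sum_{j<k}\theta_j=\theta_{k-1}^2$. I would instead use the general row-sum recurrence \eqref{eq:case-fgm-coefficient-sum}, which is valid for any $\theta$ sequence, with $\theta_k=(k+2)/2$ and $\sum_{j<k}\theta_j=k(k+3)/4$. From this I would guess and prove by induction a closed form for $\widetilde C_k=\sum_{j<k}h_{k,j}$. Then I would locate $y_N=R-\tau(\widetilde C_{N-1}+1)$ relative to the kink at $\tau=4R/(N^2+5N+6)$, evaluate $f_\tau(y_N)$ on the appropriate branch, and compare with $2LR^2/[(N+2)(N+3)]$.

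The main obstacle is this rational case. A naive guess that $y_N$ lands exactly at $\tau$ gives $\tfrac L2\tau^2$, which does not match the target. So I expect the closed form for $\widetilde C_k$ to place $y_N$ somewhere else: either strictly inside the linear region, or symmetrically at $-\tau$ or beyond, where the sign of the gradient matters. I would first compute $\widetilde C_k$ exactly for small $k$ and fit a polynomial in $k$. I would then check whether the final point crosses zero, and if so redo the evaluation on the branch $t\leq-\tau$, using evenness of $f_\tau$. If only the last step overshoots, the gradient-constancy induction still holds, because only $x_0,\dots,x_{N-1}$ need to lie in $[\tau,\infty)$.
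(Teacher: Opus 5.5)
Your treatment of the two FGM outputs is correct and is essentially the paper's argument. For the respective $\tau$, your $y_N=\tau(C_N+1)$ and $x_N=\tau C_{N+1}$ both equal $(R+\tau)/2$, and your values $\tfrac L2\tau^2(2C_N+1)$ and $\tfrac L2\tau^2(2C_{N+1}-1)$ are both $LR\tau/2$, which is how the paper writes them. The paper gets monotonicity of the $x_k$ for both parameter sequences at once: \eqref{eq:case-fgm-coefficient-sum} is a convex combination when $\theta_k\ge1$, which yields $\widetilde C_{k+1}\ge\widetilde C_k+1$. Your appeal to monotonicity of $C_k$ is also fine. The genuine gap is the \ref{eq:case-theta-linear} case, which is one of the three claims of the lemma, and you leave it as an unexecuted plan. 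Without a closed form for $\widetilde C_k$ you establish neither that $x_0,\dots,x_{N-1}$ stay in $[\tau,\infty)$ nor the value of $f_\tau(y_N)$, so that equality is unproved. Your uncertainty about which branch $y_N$ lands on reflects that this computation is missing.

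The missing step is short. With $\theta_k=(k+2)/2$, the recurrence \eqref{eq:case-fgm-coefficient-sum} becomes $\widetilde C_{k+1}=\frac{k+1}{k+3}(\widetilde C_k+1)+\frac{(k+1)(k+4)}{2(k+3)}$. Induction from $\widetilde C_0=0$ gives $\widetilde C_k=k(k+7)/8$, because the combined numerator factors as $(k+1)(k+3)(k+8)$. Using $R=\tau(N^2+5N+6)/4$, the iterates $x_k=R-\tau k(k+7)/8$ decrease in $k$, and
\[
 x_{N-1}=\frac{\tau(N^2+5N+18)}{8}>\tau,\qquad
 y_N=R-\frac{\tau(N^2+5N+2)}{8}=\frac{R+\tau}{2}.
\]
So the obstacle you anticipate does not occur. $y_N$ lies strictly inside the linear region, at the same point $(R+\tau)/2$ as in the FGM cases, and $f_\tau(y_N)=L\tau(y_N-\tau/2)=LR\tau/2=2LR^2/[(N+2)(N+3)]$. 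The unifying observation you missed is that in all three cases $\tau=R/(2S+1)$, where $S$ is the coefficient sum that determines the output ($C_N$, $C_{N+1}-1$, and $(N^2+5N+2)/8$, respectively). Hence the output is $R-\tau S=(R+\tau)/2$. Applied to the given $\tau$, this would have told you $\widetilde C_{N-1}=(N-1)(N+6)/8$ without any polynomial fitting.
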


\begin{proof}
{Assume \(R>0\).
The derivative of \(f_\tau\) is continuous, nondecreasing,
and has Lipschitz constant \(L\). Thus \(f_\tau\in\mathcal F_L\),
\(x_\star=0\) is a minimizer, and \(|x_0-x_\star|=R\).}

{For each method, write the expansion of \eqref{eq:case-ofgm}
in the form \eqref{eq:case-fgm-h-definition} and set
\(\widetilde C_k=\sum_{j=0}^{k-1}h_{k,j}\in\mathbb R\), with \(\widetilde C_0=0\).
Equation~\eqref{eq:case-fgm-coefficient-sum} uses only the common updates
\eqref{eq:case-ofgm} with \(\eta=1\), without \eqref{eq:case-theta-identity},
so it applies to both parameter sequences.
Replacing \(k\) by \(k+1\) in \eqref{eq:case-fgm-coefficient-sum} gives}
{%
\[
\widetilde C_{k+1}=
\left(1-\frac1{\theta_{k+1}}\right)(\widetilde C_k+1)
+\frac1{\theta_{k+1}}\sum_{j=0}^{k}\theta_j.
\]
}
{Since \(\theta_k\geq1\), the recurrence is a convex combination of
\(\widetilde C_k+1\) and \(\sum_{j=0}^{k}\theta_j\).
Starting from \(\widetilde C_0=0\), induction gives
\(0\leq\widetilde C_k\leq\sum_{j=0}^{k-1}\theta_j\) and
\(\widetilde C_{k+1}\geq\widetilde C_k+1\), since
\(\widetilde C_k+1\leq\sum_{j=0}^{k-1}\theta_j+1\leq\sum_{j=0}^{k}\theta_j\).
We first verify identities involving only \(\widetilde C_k\) and \(\tau\):}
{%
\begin{itemize}
\item \textbf{FGM at \(y_N\) and \(x_N\).}
Lemma~\ref{lem:case-fgm-conjecture-relation} gives
\(\widetilde C_k=C_{k+1}-1\).
For \(\tau=R/(2C_N+1)\), the identity \(\widetilde C_{N-1}+1=C_N\)
gives \(\tau(\widetilde C_{N-1}+1)=(R-\tau)/2\).
For \(\tau=R/(2C_{N+1}-1)\), the identity \(\widetilde C_N=C_{N+1}-1\)
gives \(\tau\widetilde C_N=(R-\tau)/2\).

\item \textbf{FGM-rational at \(y_N\).}
Here \(\theta_k=(k+2)/2\) and \(\sum_{j=0}^{k}\theta_j=(k+1)(k+4)/4\).
Substituting these into the common recurrence proves
\(\widetilde C_k=k(k+7)/8\) by induction from \(\widetilde C_0=0\), since
\[
\widetilde C_{k+1}=
\frac{k+1}{k+3}\left(\frac{k(k+7)}8+1\right)
+\frac{(k+1)(k+4)}{2(k+3)}
=\frac{(k+1)(k+8)}8.
\]
Thus \(\widetilde C_{N-1}+1=(N^2+5N+2)/8\), so again
\(\tau(\widetilde C_{N-1}+1)=(R-\tau)/2\).
\end{itemize}
}

{These identities imply \(0<\tau\leq R\) and, by monotonicity,
\(\tau\widetilde C_k\leq(R-\tau)/2\) for \(0\leq k<N\) in each case.
We now prove by induction that \(x_k\in[\tau,\infty)\) for \(0\leq k<N\).
On this interval, the Huber function satisfies
\(f_\tau(t)=L\tau t-L\tau^2/2\) and \(\nabla f_\tau(t)=L\tau\).
}
{%
\begin{itemize}
\item Since \(x_0=R\geq\tau\), we have \(\nabla f_\tau(x_0)=L\tau\).

\item For \(1\leq k<N\), assume \(\nabla f_\tau(x_j)=L\tau\) for every \(0\leq j<k\).
The gradient expansion \eqref{eq:case-fgm-h-definition} for the chosen method gives
\[
 x_k=R-\frac1L\sum_{j=0}^{k-1}h_{k,j}\nabla f_\tau(x_j)
 =R-\tau\widetilde C_k
 \geq\frac{R+\tau}{2}\geq\tau.
\]
Thus \(x_k\) also lies in this affine interval and
\(\nabla f_\tau(x_k)=L\tau\), closing the induction.
\end{itemize}
}

{Using the same expansion, the \(y\)-update in \eqref{eq:case-ofgm},
and the respective choices of \(\tau\), we obtain}
{%
\[
 y_N=R-\tau(\widetilde C_{N-1}+1)=\frac{R+\tau}{2},\qquad
 x_N=R-\tau\widetilde C_N=\frac{R+\tau}{2}.
\]
}
{Thus each final output lies in the affine part of \(f_\tau\), and}
\[
{f_\tau\!\left(\frac{R+\tau}{2}\right)-f_\tau(0)
=L\tau\left(\frac{R+\tau}{2}-\frac{\tau}{2}\right)
=\frac{LR\tau}{2}.}
\]
{Substituting the three choices of \(\tau\) gives the claimed bounds.}
\end{proof}

\Needspace{8\baselineskip}
\subsubsection{{Composite lower bound}}

{\citet[Section~4.2.2, Table~1 and the following paragraph]{TaylorHendrickxGlineur2017_exacta}
give the constrained linear instance and its \(y_N\) value for
FPGM1. Its update and output coincide with \eqref{eq:case-fista}
after shifting the index by one. We verify its value directly.}

\begin{lemma}\label{lem:case-fista-lower}
{Fix \(N\geq1\), \(L>0\), and \(R\geq0\).
On \(\mathbb R\), set \(x_0=y_0=R\), \(x_\star=0\), and
\(c_N=4LR/(N^2+5N+2)\). Let \(f:\mathbb R\to\mathbb R\)
and \(g:\mathbb R\to\mathbb R\cup\{+\infty\}\) be}
\[
{f(t)=c_N t,\qquad
g(t)=
\begin{cases}
0,&t\geq0,\\
+\infty,&t<0.
\end{cases}}
\]
{Then \eqref{eq:case-fista} attains
\(F(y_N)-F(x_\star)=2LR^2/(N^2+5N+2)\).}
\end{lemma}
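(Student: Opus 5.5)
The plan is a direct trajectory computation, just like the proof of Lemma~\ref{lem:case-smooth-lower}. Set \(c=c_N\). Since \(f'(t)=c\) is constant, \(f\) is linear and therefore convex and \(L\)-smooth with \(\nabla f\) Lipschitz with any constant. The function \(g\) is the indicator of \([0,\infty)\), which is proper, closed and convex. At \(x_\star=0\) we have \(\partial g(0)=(-\infty,0]\), so \(0\in c+\partial g(0)\) holds and \(x_\star\) is a minimizer of \(F\) with \(F(x_\star)=0\). In addition \(|x_0-x_\star|=R\). For \(R=0\) everything is identically zero and the claim is trivial, so assume \(R>0\).

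Next we compute the iterates. The prox of an indicator is projection onto \([0,\infty)\), so \(y_{k+1}=\max\{x_k-c/L,0\}\). The key claim, proved by induction, is that as long as \(x_k\ge c/L\) the projection is inactive and the method reduces to plain accelerated gradient on a linear function with constant gradient. Writing \(s=c/L=4R/(N^2+5N+2)\), we get \(y_{k+1}=x_k-s\) and \(x_{k+1}=y_{k+1}+\frac{k}{k+3}(y_{k+1}-y_k)\).

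To make this explicit, I would guess a closed form \(y_k=R-s\,E_k\) and \(x_k=R-s\,(E_{k+1}-1)\). Mirroring the FGM-rational identity \(\widetilde C_k=k(k+7)/8\) from Lemma~\ref{lem:case-smooth-lower}, together with the index shift noted in the text, I expect \(E_k=k(k+5)/8\) and hence \(x_k=R-s\,\frac{k^2+7k+2}{8}\)-type formulas. The exact expressions will be fixed by checking the recurrences \(E_{k+1}=E_k^x+1\) and \(E^x_{k+1}=E_{k+1}+\frac{k}{k+3}(E_{k+1}-E_k)\) with \(E_0=E^x_0=0\). From these one derives \(E_N=(N^2+5N)/8\), which is the value the final formula requires.

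With \(E_N=(N^2+5N)/8\), we get \(y_N=R-sE_N=R\bigl(1-\frac{N^2+5N}{2(N^2+5N+2)}\bigr)=R\,\frac{N^2+5N+4}{2(N^2+5N+2)}>0\). Since \(g(y_N)=0\), this gives \(F(y_N)=c\,y_N\).

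The resulting value is then \(F(y_N)-F(x_\star)=c\,y_N=\frac{4LR}{N^2+5N+2}\cdot\frac{R(N^2+5N+4)}{2(N^2+5N+2)}\). This does not visibly equal \(2LR^2/(N^2+5N+2)\). So the main obstacle is pinning down the exact closed form, and in particular the index shift between this paper's FISTA-rational and FPGM1, so that the last value comes out as \(y_N=R/2\). That is exactly what \(c\,y_N=2LR^2/(N^2+5N+2)\) needs, and it is equivalent to \(sE_N=R/2\), i.e.\ \(E_N=(N^2+5N+2)/8\).

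I would therefore first solve the linear recurrence carefully, by hand for \(N=1,2\) and then in general by induction, and determine the correct \(E_k\) that gives \(E_N=(N^2+5N+2)/8\). The \(N=1\) case checks out: \(y_1=R-s=R-4R/8=R/2\), so \(E_1=1=(1+5+2)/8\). This suggests \(E_k=(k^2+5k+2)/8\) for \(k\ge1\), with \(E_0=0\).

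Finally, I would verify the positivity needed to keep the projection inactive. The quantities \(x_k-s\) must be nonnegative for \(k<N\), so that every prox step is a pure gradient step. This follows from monotonicity of \(E_k\) together with \(sE_N=R/2\le R\), which gives \(y_k,x_k\ge R/2-\text{(small)}\ge0\). This step needs a short inequality check but should not be hard.
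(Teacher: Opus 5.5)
Your corrected plan is the paper's proof. The paper proves by induction that $x_k=R-\frac{c_N}{8L}k(k+7)$ and $y_k=R-\frac{c_N}{8L}(k^2+5k+2)$, checks that every proximal step is an inactive projection, and reads off $y_N=R/2$.

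As written, however, the proposal is not yet a proof. It first asserts that the recurrences yield $E_N=(N^2+5N)/8$ and suggests $x_k=R-s\frac{k^2+7k+2}{8}$. Both are false: the recurrences give $E_1=1$, $E_2=2$, $E_3=13/4$, so these lines should be deleted. The replacement $E_k=(k^2+5k+2)/8$ is only conjectured from the case $N=1$.

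The missing induction is short. Carry the pair $E_k=(k^2+5k+2)/8$ for $k\ge1$ and $E^x_k=k(k+7)/8$ for $k\ge0$. Then $E_{k+1}=E^x_k+1=(k^2+7k+8)/8$, which is the $y$-formula at index $k+1$. For $k\ge1$ one has $E_{k+1}-E_k=(k+3)/4$, so
\[
E^x_{k+1}=\frac{k^2+7k+8}{8}+\frac{k}{k+3}\cdot\frac{k+3}{4}=\frac{(k+1)(k+8)}{8}.
\]
At $k=0$ the coefficient $k/(k+3)$ vanishes, so $E^x_1=E_1=1$ and the off-pattern value $E_0=0$ never enters.

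Your ``mirroring'' instinct was sound. With the projection inactive, the method is FGM-rational on a linear function, so the computation $\widetilde C_k=k(k+7)/8$ in Lemma~\ref{lem:case-smooth-lower} gives $E^x_k=\widetilde C_k$ and $E_k=\widetilde C_{k-1}+1$. The error was only in the index shift.

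The positivity check must sit inside the same induction, because the closed form holds only while the projections are inactive. It also involves no ``small'' loss: for $0\le k<N$,
\[
x_k-s=R-sE_{k+1}\ge R-sE_N=\frac R2>0,
\]
since $E_k$ is increasing and $sE_N=R/2$. This is exactly the paper's bound $x_k-c_N/L=R-\frac{c_N}{8L}(k^2+7k+8)\ge R/2$. With it, $y_N=R/2$ and $F(y_N)-F(x_\star)=c_NR/2=2LR^2/(N^2+5N+2)$.
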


\begin{proof}
{The gradient of \(f\) is constant, so \(f\in\mathcal F_L\).
The function \(g\) is proper, closed, and convex.
The point \(0\) minimizes \(F=f+g\), satisfies
\(0\in\nabla f(0)+\partial g(0)\), and lies at distance \(R\)
from \(x_0\). If \(R=0\), then \(c_N=0\) and the claim is immediate.
Assume \(R>0\). The proximal step is projection onto
\([0,\infty)\). We prove by induction that}
\[
{x_k=R-\frac{c_N}{8L}k(k+7)\quad(0\leq k\leq N),
\qquad
y_k=R-\frac{c_N}{8L}(k^2+5k+2)\quad(1\leq k\leq N).}
\]
{The formula for \(x_0\) is immediate.
At each step \(0\leq k<N\), the inductive expression for \(x_k\)
gives}
\[
{x_k-\frac{c_N}{L}
=R-\frac{c_N}{8L}(k^2+7k+8)
\geq R-\frac{c_N}{8L}(N^2+5N+2)
=\frac R2>0.}
\]
{Thus the projection is inactive, and
\(y_{k+1}=x_k-c_N/L\) has the stated form.
For \(k=0\), the extrapolation coefficient vanishes and
\(x_1=y_1\), as required. For \(k\geq1\), the formulas give
\(y_{k+1}-y_k=-c_N(k+3)/(4L)\), so the extrapolation in
\eqref{eq:case-fista} yields the stated expression for
\(x_{k+1}\). This closes the induction.
In particular \(y_N=R/2\), and hence}
\[
{F(y_N)-F(0)=c_N\,\frac R2
=\frac{2LR^2}{N^2+5N+2}.}
\]
\end{proof}

}{%
}

\end{document}